\newcommand{\triplearrows}{\begin{smallmatrix} \to \\ \to \\ 
\to \end{smallmatrix} }
\newcommand{\cl}{\colon}
\newcommand{\itors}{I-\mathrm{tors}}
\newcommand{\mdc}[1]{\md(#1)_{>-\infty}^{\mathrm{cpl}}}
\newcommand{\mdn}[1]{\md(#1)^{\mathrm{nil}}_{>-\infty}}
\newcommand{\tfft}{\mathrm{Alg}^{\mathrm{tfp}, \flat}}
\newcommand{\aperf}{\mathrm{APerf}}
\newcommand{\arc}{\mathrm{arc}}
\newcommand{\qcoh}{\mathrm{QCoh}}
\newcommand{\md}{\mathrm{Mod}}
\definecolor{todo}{rgb}{1,0,0}
\definecolor{conditional}{rgb}{0,1,0}
\definecolor{e-mail}{rgb}{0,.40,.80}
\definecolor{reference}{rgb}{.20,.60,.22}
\definecolor{mrnumber}{rgb}{.80,.40,0}
\definecolor{citation}{rgb}{0,.40,.80}
\DeclareMathOperator{\spec}{Spec}
\DeclareMathOperator{\spf}{Spf}
\renewcommand{\hom}{\mathrm{Hom}}
\newcommand{\vect}{\mathrm{Vect}}
\newcommand{\fun}{\mathrm{Fun}}
\newcommand{\perf}{\mathrm{Perf}}
\newcommand{\sF}{\mathcal{F}}
\theoremstyle{definition}
\newtheorem{definition}{Definition}[section]
\newtheorem{construction}[definition]{Construction}
\newtheorem{example}[definition]{Example}
\newtheorem{remark}[definition]{Remark}
\theoremstyle{theorem}
\newtheorem{proposition}[definition]{Proposition}
\newtheorem{lemma}[definition]{Lemma}
\newtheorem{corollary}[definition]{Corollary}
\newtheorem{theorem}[definition]{Theorem}
\renewcommand{\L}{}
\renewcommand{\phi}{\varphi}
\newtheoremstyle{named}{}{}{\itshape}{}{\bfseries}{.}{.5em}{#1 \thmnote{#3}}
\theoremstyle{named}
\newcommand{\rngp}{\mathrm{Alg}_{\mathcal{O}_K}^{\flat}}
\begin{document}

\title{Faithfully flat descent of almost perfect complexes in rigid geometry}
\author{Akhil Mathew}
\maketitle

\begin{abstract}
We prove a version of faithfully flat descent in rigid analytic geometry, for almost perfect complexes
and without finiteness assumptions on the rings involved. 
This extends results of Drinfeld for vector bundles.

\end{abstract}

\newcommand{\coh}{\mathrm{Coh}}

\section{Introduction}

We begin with the statement of faithfully flat descent in rigid analytic
geometry. 
Let $K$ be a complete nonarchimedean field, and let 
$A \to A'$  be a faithfully flat map of $K$-affinoid algebras. 
In ordinary algebra, faithfully flat descent \cite[Exp. VIII]{SGA1} states that the category of $A$-modules can be described
as the category of $A'$-modules with \emph{descent data}, which take place over the
tensor products $A'
\otimes_A A', A' \otimes_A A' \otimes_A A'$.  
In rigid geometry, one obtains a similar conclusion, but with the tensor
products replaced with completed tensor
products and only for finitely generated modules. 

Given a $K$-affinoid algebra $B$, we recall that $B$ is noetherian, and we let $\coh(B)$
denote the category of finitely generated $B$-modules. 
One has the following result, due to Bosch-G\"ortz \cite{BG};
the discretely valued case was previously known to Gabber (cf.~\cite[Theorem
1.9]{Ogus}). See also \cite{Conrad} and
\cite[Sec.~5.11]{EGR} for accounts.

\begin{theorem}[Bosch--G\"ortz--Gabber] 
\label{fflatrigid}
Let $A \to A'$ be a faithfully flat map of $K$-affinoid algebras. 
We have an equivalence of categories
\[  \coh(A) \simeq \varprojlim \left( \coh(A') \rightrightarrows \coh(A'
\hat{\otimes}_A A') \triplearrows \dots \right).   \]
In other words, to specify a  finitely generated $A$-module is equivalent to specifying a
finitely  generated $A'$-module with descent data over the completed tensor products $A'
\hat{\otimes}_A A', A' \hat{\otimes}_A A' \hat{\otimes}_A A'$. 
\end{theorem}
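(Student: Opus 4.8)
The plan is to deduce Theorem~\ref{fflatrigid} from ordinary faithfully flat descent \cite[Exp.~VIII]{SGA1}, by replacing completed tensor products with ordinary ones wherever the modules involved are finitely generated. The key elementary input is that over a noetherian $K$-affinoid algebra $B$, every finitely generated module $M$ is automatically complete for its canonical topology and satisfies $M \hat\otimes_B C \cong M \otimes_B C$ for every $K$-affinoid $C$; together with the fact that $A'$, $A' \hat\otimes_A A'$, $\dots$ are noetherian $K$-affinoid algebras (part of the standard foundations of $\hat\otimes$, see \cite{BG}), this shows that for $M \in \coh(A)$ the ordinary base changes $M \otimes_A A'$, $M \otimes_A A' \otimes_A A'$, $\dots$ are finitely generated over the respective affinoid algebras and coincide with the completed ones. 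Hence the base-change functor $\coh(A) \to \varprojlim(\coh(A') \rightrightarrows \coh(A' \hat\otimes_A A') \triplearrows \cdots)$ is well defined, and since the limit of a cosimplicial diagram of ordinary categories is the category of descent data (an object over $\coh(A')$, an isomorphism over $\coh(A' \hat\otimes_A A')$, the cocycle condition over $\coh(A' \hat\otimes_A A' \hat\otimes_A A')$), it suffices to show this functor is (1) fully faithful and (2) essentially surjective.

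For (1), fix $M, N \in \coh(A)$ and set $P = \hom_A(M,N)$, which lies in $\coh(A)$ since $A$ is noetherian. Because $M$ is finitely presented, for every flat $A$-algebra $C$ one has $\hom_C(M \otimes_A C, N \otimes_A C) \cong P \otimes_A C$; applying this with $C = A', A' \hat\otimes_A A', \dots$ (all flat over $A$) identifies the equalizer that computes morphisms in the limit category with the start of the augmented Amitsur complex
\[
0 \longrightarrow P \longrightarrow P \hat\otimes_A A' \longrightarrow P \hat\otimes_A A' \hat\otimes_A A' \longrightarrow \cdots.
\]
Thus (1) reduces to exactness of this complex whenever $A \to A'$ is faithfully flat. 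Exactness may be tested after the faithfully flat base change $- \otimes_A A'$; since $P$ is finitely generated, each term then becomes an ordinary tensor product and base change commutes with the entire complex, reducing us to the Amitsur complex of $A' \to A' \hat\otimes_A A'$, which is contractible because the multiplication map $A' \hat\otimes_A A' \to A'$, $a \otimes b \mapsto ab$, is a ring homomorphism splitting the first coface $a \mapsto a \otimes 1$ and hence provides an extra codegeneracy.

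For (2), let $M'$ be a finitely generated $A'$-module equipped with a descent datum $\varphi$ over the completed tensor powers, and set
\[
M := \ker\!\bigl( M' \xrightarrow{\ m\, \mapsto\, \varphi(m \otimes 1)\, -\, 1 \otimes m\ } M' \hat\otimes_A A' \bigr),
\]
an $A$-module. Since $A'$ is flat over $A$, the functor $- \otimes_A A'$ commutes with this kernel, and the classical descent computation — using the cocycle identity and the multiplication $A' \hat\otimes_A A' \to A'$ — identifies $M \otimes_A A'$ with $M'$. It remains to check that $M$ is finitely generated over $A$: picking finitely many $A'$-module generators of $M' \cong M \otimes_A A'$ and collecting the $M$-components appearing in their expansions yields a finitely generated $A$-submodule $M_0 \subseteq M$ for which $M_0 \otimes_A A' \to M'$ is surjective, hence — by flatness of $A'$ over $A$ — an isomorphism, hence $M_0 = M$ by faithful flatness of $A \to A'$. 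So $M \in \coh(A)$, and feeding it back through the base-change functor recovers $(M', \varphi)$ up to the identification $M \hat\otimes_A A' = M \otimes_A A'$.

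The main obstacle is less any single step than the pervasive bookkeeping needed to push the classical argument through completed tensor products: each use of flat base change, of the $\hom$-comparison, and of the splitting of the Amitsur complex must be checked to survive completion, which it does only because the modules in play are finitely generated over noetherian affinoid algebras. It is precisely this finiteness — and the concomitant coincidence $\hat\otimes = \otimes$ on finitely generated modules — that one must relinquish, and these comparisons that one must re-establish with genuinely topological tools, in order to reach the generality of almost perfect complexes.
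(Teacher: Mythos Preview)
Your argument has a genuine gap, and it is precisely at the step where you claim to reduce to the classical Amitsur complex. After applying the (uncompleted) faithfully flat base change $-\otimes_A A'$ to the completed Amitsur complex
\[
0 \longrightarrow P \longrightarrow P\otimes_A A' \longrightarrow P\otimes_A (A'\hat\otimes_A A') \longrightarrow P\otimes_A (A'\hat\otimes_A A'\hat\otimes_A A') \longrightarrow \cdots
\]
the $n$th term becomes $P\otimes_A\bigl(A'^{\hat\otimes n}\bigr)\otimes_A A'$, which is \emph{not} $P\otimes_A\bigl(A'^{\hat\otimes(n+1)}\bigr)$: the ring $\bigl(A'^{\hat\otimes n}\bigr)\otimes_A A'$ is in general neither complete nor equal to $A'^{\hat\otimes(n+1)}$, since $A'$ is not finite over $A$. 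So you have not arrived at the Amitsur complex of $A'\to A'\hat\otimes_A A'$, and the extra codegeneracy furnished by the multiplication does not give a contracting homotopy of the complex you actually have. The coincidence $\hat\otimes=\otimes$ you are invoking is only valid in the finitely generated module variable, not for the large affinoid tensor powers themselves. The same problem recurs in your essential surjectivity step: the ``classical descent computation'' that produces $M\otimes_A A'\cong M'$ relies on writing $\varphi(m\otimes 1)$ as a \emph{finite} sum $\sum m_i\otimes a_i$ in $M'\otimes_A A'$, but here $\varphi$ takes values in $M'\otimes_{A'}(A'\hat\otimes_A A')$, whose elements are genuinely completed sums, and the kernel you form lives inside $M'\otimes_A A'$ (uncompleted), which does not interact cleanly with the completed descent datum.

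This is not just bookkeeping---it is the entire difficulty of the theorem. The paper does not give a direct proof of \Cref{fflatrigid} (it is cited as a known result), but its independent proof of the derived enhancement \Cref{aperfrigiddesc2} does specialize to recover it, and that argument proceeds by an entirely different route: one chooses formal models $A_0\to B_0$ in $\tfft_{\mathcal{O}_K}$, invokes the Bosch--L\"utkebohmert flattening theorem (\Cref{flattening}, \Cref{flattening2}) to produce, after an admissible blow-up, a situation which is $\pi$-completely faithfully flat, and then shows $A_0\to B_0$ is a \emph{universal descent} morphism (\Cref{fflatgeneric}). This feeds into the general descent machinery of \Cref{rigiddescuniv}. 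The point is that passing to formal models turns the completed tensor products into honest $\pi$-adic completions of ordinary tensor products, where universal descent controls the situation; no direct comparison between $A'^{\hat\otimes n}$ and $A'^{\otimes n}$ is ever needed. The original Bosch--G\"ortz proof likewise works through formal models rather than attempting the reduction you sketch.
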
 

There are at least two ways in which one could hope to generalize 
\Cref{fflatrigid}. 
The first is to work with the derived $\infty$-category; in the algebraic
setting, 
one can generalize faithfully flat descent to derived $\infty$-categories
\cite[Sec.~D.6.3]{SAG}. 
In the rigid analytic case, one imposes the following finiteness condition,
introduced in \cite[Exp.~I]{SGA6}. 

\begin{definition}[Almost perfect complexes] 
For a ring $R$, we 
consider the derived $\infty$-category $D(R)$. An object $M \in D(R)$ is called 
\emph{almost perfect} (or \emph{pseudocoherent}) 
if it can be represented (up to quasi-isomorphism) by  a chain complex
$M_\bullet$ such that $M_i = 0$ for $i \ll 0$ and each $M_i$ is finitely
generated projective. We let $\aperf(R) \subset D(R)$ be the full subcategory
spanned by almost perfect objects. 
\end{definition} 

\begin{example} 
\label{noethaperf}
Suppose $R$ is noetherian. 
Then $M \in D(R)$ is almost perfect if and only if $H_i(M )$ is finitely
generated for each $i$ and vanishes for $i \ll 0$. 
In particular, $\aperf(R)$ is the (homologically) bounded-below derived $\infty$-category of the
abelian category of finitely generated $R$-modules. 
\end{example}

From \Cref{fflatrigid} and in view of 
\Cref{noethaperf}, one may deduce the following extension of
faithfully flat descent to almost perfect complexes; 
compare Hennion--Porta--Vezzosi \cite[Sec.~3]{HPV} for closely related results. 
\begin{theorem} 
\label{aperfrigiddesc}
The construction $A \mapsto \aperf(A)$ satisfies flat hyperdescent on $K$-affinoid
algebras.\footnote{One also has descent for hypercovers instead only for
\v{C}ech covers; this is additional information when one works with
$\infty$-categories.} In particular, if $A \to A'$ is a faithfully flat map of $K$-affinoid
algebras, then 
\[ \aperf(A) \xrightarrow{\sim} \varprojlim \left( \aperf(A') \rightrightarrows \aperf(A'
\hat{\otimes}_K A') \triplearrows \dots \right).  \]
\end{theorem}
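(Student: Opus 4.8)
The plan is to deduce \Cref{aperfrigiddesc} from \Cref{fflatrigid} by a d\'evissage along the standard $t$-structure, using \Cref{noethaperf}: for a $K$-affinoid (hence Noetherian) algebra $B$, the $\infty$-category $\aperf(B)$ is the homologically bounded-below derived $\infty$-category of the abelian category $\coh(B)$. It suffices to prove $\aperf(A) \xrightarrow{\sim} \varprojlim_{\Delta}\aperf(A^\bullet)$ for the completed \v{C}ech nerve $A^\bullet = (A' \rightrightarrows A' \hat\otimes_A A' \triplearrows \cdots)$ of a single faithfully flat $A \to A'$; the upgrade to arbitrary flat hypercovers is discussed at the end. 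All coface and codegeneracy maps of $A^\bullet$ are flat maps of Noetherian $K$-affinoid algebras (this is already needed for \Cref{fflatrigid} to be formulated). The first point is that along any flat map $B \to B'$ of $K$-affinoid algebras, completed base change coincides with ordinary base change on finitely generated modules: if $M$ is a finite $B$-module then $M \otimes_B B'$ is a finite, hence complete, $B'$-module, so $M \hat\otimes_B B' = M \otimes_B B'$. Applying this termwise to a bounded-below complex of finite projectives shows that completed base change agrees with ordinary derived base change on all of $\aperf$; since $B \to B'$ is flat this is a $t$-exact functor $\aperf(B) \to \aperf(B')$, and in particular it preserves homological boundedness below with the same bound.

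Because the transition functors of the cosimplicial $\infty$-category $\aperf(A^\bullet)$ are $t$-exact, the limit $\varprojlim_{\Delta}\aperf(A^\bullet)$ inherits a $t$-structure — a standard fact for limits of $t$-exact diagrams — with connective part $\varprojlim_{\Delta}\big(\aperf(A^\bullet)_{\geq n}\big)$, with heart $\varprojlim_{\Delta}\coh(A^\bullet)$, and with every object homologically bounded below (uniformly over $\Delta$, by the previous paragraph). The comparison functor $\aperf(A) \to \varprojlim_{\Delta}\aperf(A^\bullet)$ is $t$-exact, and on hearts it is precisely the equivalence $\coh(A) \simeq \varprojlim_{\Delta}\coh(A^\bullet)$ provided by \Cref{fflatrigid}.

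It remains to promote the equivalence on hearts to an equivalence on all of $\aperf$. Bounded objects are handled by induction on the length of the homology: the functorial fiber sequence $\tau_{\leq k-1}M \to \tau_{\leq k}M \to (\H_k M)[k]$ is preserved both by the comparison functor and by every transition functor, so an equivalence on $\aperf(-)^{[n,k-1]}$ together with the equivalence on hearts yields one on $\aperf(-)^{[n,k]}$. For a general almost perfect complex $M$ — bounded below, but possibly unbounded above — one has $M \simeq \varprojlim_k \tau_{\leq k}M$: the fiber of $M \to \tau_{\leq k}M$ is $(k+1)$-connective, so in each homological degree the tower $\{\H_j\tau_{\leq k}M\}_k$ is eventually constant and the limit is computed correctly; there is no left-completeness subtlety precisely because $M$ is bounded below. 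Since limits commute with limits, the comparison functor carries these Postnikov limits to the corresponding ones on the right, and so is an equivalence on $\aperf(A)_{\geq n}$ for every $n$, hence on $\aperf(A) = \bigcup_n \aperf(A)_{\geq n}$. This proves \v{C}ech descent; hyperdescent follows from the same d\'evissage once one knows the heart $\coh$ is a flat hypersheaf, which one extracts from \Cref{fflatrigid} by truncating descent data along a hypercover (alternatively, reduce a hypercover to iterated \v{C}ech covers in the usual way, again using the flatness of all structure maps), and this is a routine additional step.

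The crux — and the point at which \Cref{aperfrigiddesc} genuinely goes beyond the algebraic faithfully flat descent of \cite[Sec.~D.6.3]{SAG} — is the interaction of completed tensor products with the finiteness hypotheses: one must know that the coface maps of the completed \v{C}ech nerve are flat, and that completed base change is derived-exact on almost perfect complexes. Granting these inputs, the d\'evissage above reduces everything to the abelian-category statement, so it is \Cref{fflatrigid} of Bosch--G\"ortz--Gabber that carries the essential content; the contribution beyond \cite{BG} is the (largely formal, but genuinely $\infty$-categorical) passage through the derived category and hypercovers.
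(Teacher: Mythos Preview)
Your argument is correct and is precisely the route the paper advertises in the introduction (``From \Cref{fflatrigid} and in view of \Cref{noethaperf}, one may deduce \ldots'') but deliberately does \emph{not} take. The paper's own proof, given later as \Cref{aperfrigiddesc2}, is independent of Bosch--G\"ortz--Gabber: it chooses formal models $A_0 \to B_0$ in $\tfft_{\mathcal{O}_K}$, uses the Bosch--L\"utkebohmert flattening theorem to show that $A_0 \to B_0$ is a universal descent morphism with iterated tensor powers quasi-isogenous to discrete modules (\Cref{fflatgeneric}), and then invokes the general universal descent machinery (\Cref{rigiddescuniv}). Your d\'evissage is more elementary and more direct, but it treats \Cref{fflatrigid} as a black box; the paper's approach is self-contained, exhibits the affinoid case as an instance of the same mechanism that handles the non-noetherian results, and as a byproduct recovers Kiehl's theorem rather than presupposing it. One small suggestion: your passage to hypercovers is cleaner if you argue as the paper does, namely that for each $a \le b$ the functor $\aperf(\cdot)_{[a,b]}$ is a \v{C}ech sheaf valued in truncated $\infty$-categories, hence automatically hypercomplete, and then take the limit in $a,b$; this avoids having to say anything separate about hypercovers of the heart.
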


The second way one may attempt to generalize \Cref{fflatrigid} is to allow more
general rings than $K$-affinoid algebras. 
The context of rigid geometry imposes strong finiteness assumptions: in particular, the rings $A, A'$ are
noetherian.  
A result of Drinfeld shows that descent for \emph{vector bundles}
(rather than finitely generated modules) holds very generally. 
We next formulate a version of Drinfeld's theorem. 

\begin{definition}[The site $\rngp$]
\label{rngpsite}
Let $\mathcal{O}_K \subset K$ be the ring of integers, and let $\pi \in
\mathcal{O}_K$ denote a nonzero nonunit. 
Let $\rngp$ denote the category of $\mathcal{O}_K$-algebras $R$ 
which are $\pi$-torsion-free and $\pi$-adically complete. 
We say that a map
$R \to R'$ in $\rngp$ is \emph{$\pi$-completely faithfully flat} if $R/\pi \to
R'/\pi$ is faithfully flat; this defines the $\pi$-completely flat topology on
$(\rngp)^{op}$. 
\end{definition}

For any ring $A$, we let $\vect(A)$ 
denote the category of finitely generated projective $A$-modules. 
Then one has the following result. Compare 
\cite[Th. 3.11]{Dri06} and \cite[Prop. 3.5.4]{Dri18}. 
\begin{theorem}[{Drinfeld}] 
\label{Drinfeldthm}
The construction $R \mapsto \vect( R[1/\pi])$ is a sheaf 
of categories on $\rngp$. 
That is, given $R \to R'$ in $\rngp$ which is $\pi$-completely faithfully flat, the natural functor 
$$ \vect( R[1/\pi]) \to \varprojlim ( \vect( R'[1/\pi]) \rightrightarrows
\vect( \widehat{R' \otimes_R R'} [1/\pi]) \triplearrows \dots )$$ 
is an equivalence of categories. 
\end{theorem}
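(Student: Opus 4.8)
The plan is to treat full faithfulness and essential surjectivity separately, in each case deducing the rigid-analytic statement — which involves the \emph{completed} tensor powers $R^{(n)}:=\widehat{R'\otimes_R\cdots\otimes_R R'}$ — from classical faithfully flat descent applied modulo $\pi^k$, then passing to the $\pi$-adic limit and inverting $\pi$.

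\textbf{Full faithfulness.} It suffices to show that the augmented cochain complex
\[
0\to R[1/\pi]\to R'[1/\pi]\to \widehat{R'\otimes_R R'}[1/\pi]\to\cdots
\]
(with the alternating-sum differentials of the cosimplicial structure) is exact, and remains exact after $-\otimes_{R[1/\pi]}N$ for $N\in\vect(R[1/\pi])$; the latter is automatic since $N$ is a retract of a free module, and granting it, $\hom_{R[1/\pi]}(P,Q)$ is computed as the corresponding equalizer, i.e. as the Hom-set in the limit category (using that $\hom(P,-)$ commutes with base change for $P$ finite projective). To prove exactness: since $(\pi)$ is finitely generated, $\pi$-adic completion commutes with reduction mod $\pi^k$, so the complex modulo $\pi^k$ is exactly the Amitsur complex of $R/\pi^k\to R'/\pi^k$. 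This map is faithfully flat — flatness mod $\pi^k$ follows from $\pi$-complete flatness because $\pi$ is a nonzerodivisor, and faithfulness is surjectivity on spectra, inherited from $R/\pi\to R'/\pi$ — so by classical faithfully flat descent this Amitsur complex is exact for every $k$. The transition maps of the resulting tower in $k$ are termwise surjective, so $\varprojlim_k$ is exact on it and identifies the limit with the original complex (each term being $\pi$-adically complete), and localization at $\pi$ preserves exactness.

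\textbf{Essential surjectivity.} Let $(P',\sigma)$ be an object of the limit category. I would reduce to an integral descent problem. First spread out: choose a $\pi$-torsion-free finitely presented $R'$-module $M'$ with $M'[1/\pi]\cong P'$, and clear denominators in $\sigma$, $\sigma^{-1}$ and (after modifying $M'$) in the cocycle witness, obtaining an integral descent datum for the sheaf $S\mapsto\aperf^{\mathrm{comp}}(S)$ of almost perfect complexes on $\rngp$. This functor is a sheaf for the $\pi$-completely flat topology: modulo $\pi^k$ it is classical faithfully flat descent for almost perfect complexes \cite[Sec.~D.6.3]{SAG}, and $\aperf^{\mathrm{comp}}(S)\simeq\varprojlim_k\aperf(S/\pi^k)$ since limits commute; likewise $\vect(S)\simeq\varprojlim_k\vect(S/\pi^k)$, as finite projective modules and idempotents lift uniquely along $S/\pi^{k+1}\to S/\pi^k$. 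Descending the integral datum yields $M\in\aperf^{\mathrm{comp}}(R)$ with $M\widehat\otimes_R R'\simeq M'$; setting $P:=M[1/\pi]\in\aperf(R[1/\pi])$ gives $P\otimes_{R[1/\pi]}R'[1/\pi]\simeq M'[1/\pi]= P'$ compatibly with the descent data, where I use that a finitely generated module over the $\pi$-adically complete ring $R'$ is automatically $\pi$-adically complete (again because $(\pi)$ is finitely generated), so that $\widehat\otimes$ and $\otimes$ agree on the relevant terms.

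\textbf{The main obstacle.} Full faithfulness is routine once the completed Amitsur complex is handled; the real content is the last step above — showing that the descended object $P$ actually lies in $\vect(R[1/\pi])$, i.e. is concentrated in degree $0$ and finite projective, knowing only that $P\otimes_{R[1/\pi]}R'[1/\pi]=P'$ is. This is genuinely delicate because $R[1/\pi]\to R'[1/\pi]$ is \emph{not} faithfully flat: the completed tensor product $\widehat{R'\otimes_R R'}[1/\pi]$ is in general strictly larger than $R'[1/\pi]\otimes_{R[1/\pi]}R'[1/\pi]$, and the cosimplicial diagram in the statement is the Čech nerve formed in adic (equivalently $\pi$-complete) rings rather than the ordinary Amitsur complex. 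This is precisely where the $\pi$-completely-faithfully-flat hypothesis on $R\to R'$ is really used: it makes the completed cover \emph{descendable} (in the sense of Mathew) in the monoidal category of derived $\pi$-complete $R$-modules, with a finite descendability index inherited from $R/\pi\to R'/\pi$; descendability is inherited by $\aperf$, survives inverting $\pi$ because the resulting finite totalization commutes with the filtered colimit defining $-[1/\pi]$, and yields both $\aperf(R[1/\pi])\xrightarrow{\sim}\varprojlim_\Delta\aperf(R^{(\bullet)}[1/\pi])$ and conservativity of $\aperf(R[1/\pi])\to\aperf(R'[1/\pi])$. That conservativity, applied to the homology and truncations of $P$, is what forces $P$ to be a vector bundle. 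I expect the careful verification of descendability, together with the bookkeeping needed to make the integral spreading-out compatible and cocyclic across all cosimplicial levels, to be the bulk of the work. (Alternatively, one can first reduce by hand to the case where $P'$ is free over $R'[1/\pi]$ and argue directly with lattices and the gluing of formal models.)
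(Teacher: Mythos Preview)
Your full-faithfulness argument is fine. The genuine gap is in essential surjectivity, specifically in your appeal to descendability. You assert that the $\pi$-completely faithfully flat map $R\to R'$ is descendable in the derived $\pi$-complete category, with ``a finite descendability index inherited from $R/\pi\to R'/\pi$''. But a faithfully flat map of ordinary rings is \emph{not} descendable in general: descendability requires the unit to be a retract of a \emph{finite} partial totalization $\mathrm{Tot}^e$ of the \v{C}ech nerve, a finiteness condition that fails already for maps such as $k\to\prod_{i\in I}k$ with $I$ infinite. The paper's \Cref{picompletenesslemma} makes this explicit: the hypothesis that $R/\pi\to S/\pi$ be \emph{finitely presented} is essential there, and no such hypothesis is available in \Cref{Drinfeldthm}. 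Without descendability you lose the conservativity of $\aperf(R[1/\pi])\to\aperf(R'[1/\pi])$, and your ``last step'' does not go through. Your alternative route --- spreading out to an integral descent datum and invoking the sheaf property of $\aperf^{\mathrm{comp}}$ --- is also incomplete in a way you underestimate as bookkeeping: clearing denominators in $\sigma$, $\sigma^{-1}$, and the cocycle witness yields maps that are neither integral isomorphisms nor cocyclic on the nose, so there is no honest object of $\varprojlim\aperf^{\mathrm{comp}}$ to descend.

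The paper avoids both problems by a different mechanism: it does not use descendability for the flat topology at all. Instead it builds, for each $S$, a stable $\infty$-category $\mathcal{M}(S)$ carrying a right-bounded, left-complete $t$-structure (the $I$-torsion $t$-structure), with the property that base change along an $I$-completely faithfully flat map is $t$-exact and conservative on $\mathcal{M}$. Hyperdescent for $\mathcal{M}(\cdot)$ then follows from the Barr--Beck--Lurie criterion applied to each truncation $\mathcal{M}(\cdot)_{[m,n]}$ and passing to the limit (\Cref{descleftcompl}, \Cref{MRisanalyticallyflat}); this is exactly the classical faithfully-flat-descent argument, transplanted to a category where a $t$-structure is available. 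The paper then embeds $\aperf(\spec(\widehat S_I)\setminus V(I))$ fully faithfully into $\mathcal{M}(S)$ as the ``weakly almost perfect'' objects (\Cref{jgivesequiv}) and checks that this condition is local (\Cref{localofweakperfect}). Vector bundles are then recovered as $\perf_{[0,0]}$, which is a local condition inside $\aperf$. The point is that the $t$-structure substitutes for descendability; your proposal tries to get by without either, and that is where it breaks.
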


In this note, we will prove various common generalizations of
\Cref{Drinfeldthm} and \Cref{aperfrigiddesc}, using some simplifications that
occur when one works in the derived context. 
An instance of the result is the following.  

\begin{theorem} 
\label{aperfringp}
The construction $R \mapsto \aperf( R[1/\pi])$ is a hypercomplete sheaf of
$\infty$-categories on $\rngp$. 
In particular, given $R \to R'$ in $\rngp$ which is $\pi$-completely faithfully flat, the natural functor 
 induces an equivalence
$$ \aperf( R[1/\pi]) \simeq \varprojlim ( \aperf( R'[1/\pi]) \rightrightarrows
\aperf( \widehat{R' \otimes_R R'} [1/\pi]) \triplearrows \dots ).$$ 
This remains true if we replace $\aperf$ with $\aperf_{\geq 0} \subset \aperf$ (the
subcategory of connective objects) or $\perf \subset \aperf$ (the subcategory
of perfect complexes), or the subcategories
$\perf_{[a,b]} \subset \perf$ of perfect complexes with $\mathrm{Tor}$-amplitude in $[a,b ]$. 
\end{theorem}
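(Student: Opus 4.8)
The plan is to reduce the theorem --- which concerns the Tate rings $R[1/\pi]$ --- to a descent statement for derived $\pi$-complete almost perfect complexes over the integral rings $R\in\rngp$, where faithfully flat descent over ordinary rings applies after reducing modulo powers of $\pi$, and then to transport the conclusion across the localization $R\rightsquigarrow R[1/\pi]$.

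\textbf{Step 1: the integral descent.} For $R\in\rngp$ write $\widehat D(R)\subseteq D(R)$ for the full subcategory of derived $\pi$-complete objects and $\widehat{\aperf}(R):=\widehat D(R)\cap\aperf(R)$, and similarly $\widehat{\aperf}_{\ge 0}$, $\widehat{\perf}$, $\widehat{\perf}_{[a,b]}$. I would first prove that $R\mapsto\widehat{\aperf}(R)$ and each of its variants is a hypercomplete sheaf of $\infty$-categories on $\rngp$. The inputs are: (i) $\widehat D(R)\simeq\varprojlim_n D(R/\pi^n)$, with transition functors $\otimes_{R/\pi^{n+1}}R/\pi^n$, so that a derived $\pi$-complete module is the same datum as the compatible system of its reductions mod $\pi^n$; (ii) a $\pi$-completely faithfully flat map $R\to R'$ in $\rngp$ reduces, modulo each $\pi^n$, to a genuinely faithfully flat map of ordinary rings, and --- completed tensor products agreeing with ordinary ones mod $\pi^n$ --- the reduction mod $\pi^n$ of the \v{C}ech nerve $R^{(\bullet)}$ of $R\to R'$ (formed in $\rngp$) is the \v{C}ech nerve of $R/\pi^n\to R'/\pi^n$; (iii) faithfully flat hyperdescent for $D(-)$ over ordinary rings, \cite[\S D.6.3]{SAG}. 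Applying (ii)--(iii) at each level $n$ and passing to the limit over $n$ (limits commute) yields $\widehat D(R)\simeq\varprojlim_\Delta\widehat D(R^{(\bullet)})$, and the same for hypercovers. Finally, for a derived $\pi$-complete module each of the conditions ``almost perfect'', ``connective'', ``perfect'', ``Tor-amplitude in $[a,b]$'' can be tested after $\otimes_R R/\pi$ (derived Nakayama) and is faithfully-flat-local, so the equivalence restricts to the subcategories in question.

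\textbf{Step 2: passage to the generic fibre.} Base change $M\mapsto M[1/\pi]=M\otimes_R R[1/\pi]$ defines a functor $\widehat{\aperf}(R)\to\aperf(R[1/\pi])$ whose kernel I denote $\widehat{\aperf}(R)^{\circ}$ (the $M$ with $M[1/\pi]=0$). I claim: (a) the induced functor $\widehat{\aperf}(R)/\widehat{\aperf}(R)^{\circ}\to\aperf(R[1/\pi])$ is an equivalence; and (b) $R\mapsto\widehat{\aperf}(R)^{\circ}$ is a hypercomplete sheaf, so that Step 1 transports to $\aperf(-[1/\pi])$. For (a), essential surjectivity is the construction of integral models: given connective $N\in\aperf(R[1/\pi])$, resolve $N$ one homological degree at a time by finite free $R[1/\pi]$-modules, clear denominators to realize each stage as the base change of a map of finite free $R$-modules, assemble a connective $M_0\in\aperf(R)$ with $M_0[1/\pi]\simeq N$, and pass to the derived $\pi$-completion of $M_0$, which remains almost perfect (as $R$ is $\pi$-complete) and has unchanged generic fibre; this respects the connective, perfect, and Tor-amplitude conditions. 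Full faithfulness reduces, via $\mathrm{Map}_{R[1/\pi]}(M[1/\pi],N[1/\pi])\simeq\mathrm{Map}_R(M,N[1/\pi])$, to computing mapping spaces in the Verdier quotient, the point being that $\mathrm{Map}_R(M,-)$ is compatible with the filtered colimit $N[1/\pi]=\mathrm{colim}(N\xrightarrow{\pi}N\xrightarrow{\pi}\cdots)$ after applying $\tau_{\le m}$, since $M$ almost perfect makes $\tau_{\le m}M$ a compact object of $m$-truncated connective modules and everything is bounded below. For (b), one reduces modulo powers of $\pi$ as in Step 1 (the kernel $\widehat{\aperf}(-)^{\circ}$ being a subcategory closed under, and detected by, the transition functors), using in addition that the generic-fibre category inherits hyperdescent from $\widehat D(-)$. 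With (a) and (b), hyperdescent for $\aperf(-[1/\pi])$ --- and for its subcategories $\aperf_{\ge 0}$, $\perf$, $\perf_{[a,b]}$, each cut out by a mod-$\pi$-testable, faithfully-flat-local condition --- follows by transporting the Verdier quotient across the descent limits.

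\textbf{Anticipated obstacle.} Step 1 is essentially formal once faithfully flat descent over ordinary rings is in hand. The substance is Step 2, in two respects. First, the construction of integral models over the possibly non-noetherian ring $R$: this is exactly where the noetherian hypothesis of \Cref{fflatrigid} is removed, and while the derived setting makes it tractable --- one may use unbounded resolutions by finite free modules, with no control of finite presentation needed --- one must verify that almost perfectness is not destroyed by $\pi$-adic completion and that no finiteness assumption is smuggled in. Second, and more structurally, $\widehat{\aperf}(-)\to\aperf(-[1/\pi])$ is a Verdier quotient, and Verdier quotients do not commute with homotopy limits in general; so hyperdescent for $\aperf(-[1/\pi])$ has to be extracted by hand from that of $\widehat{\aperf}(-)$ and of the kernel $\widehat{\aperf}(-)^{\circ}$, and one should note that for $\aperf$ --- unlike $\perf$ --- objects of $\widehat{\aperf}(R)^{\circ}$ need not be annihilated by a single power of $\pi$, so the kernel is genuinely an increasing union of bounded-torsion subcategories rather than one of them.
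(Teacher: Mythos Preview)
Your Step 1 is essentially sound (with the caveat that the equivalence $\widehat D(R)\simeq\varprojlim_n D(R/\pi^n)$ holds only on uniformly bounded-below objects, which is all you need for $\widehat{\aperf}$). The substantive gap is Step~2(b), and although you name it in your anticipated obstacle, you do not resolve it. Knowing that both $\widehat{\aperf}(-)$ and $\widehat{\aperf}(-)^{\circ}$ are hypercomplete sheaves does \emph{not} imply that their pointwise Verdier quotient is one; there is no general mechanism for ``transporting the Verdier quotient across descent limits,'' and the kernel does not sit in $\widehat{\aperf}$ as a (co)reflective subcategory in a way that would make the quotient a Bousfield localization compatible with limits. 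Your sketch of 2(a) is also incomplete: full faithfulness hinges on commuting $\mathrm{Map}_R(M,-)$ past the filtered colimit $N[1/\pi]=\mathrm{colim}_{\pi} N$ for $M$ merely almost perfect, and on identifying the result with the Verdier-quotient mapping spectrum, and neither step is automatic.

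The paper's proof is organized precisely to avoid the quotient--limit incompatibility. Rather than present $\aperf(R[1/\pi])$ as a \emph{quotient} of an integral category, it embeds it as a full \emph{subcategory} --- the ``weakly almost perfect'' objects --- of a larger stable $\infty$-category $\mathcal{M}(R)$ carrying a right-bounded, left-complete $t$-structure (built via the torsion/complete equivalence). Hyperdescent for $\mathcal{M}(-)$ is then proved directly from the $t$-structure by a Barr--Beck--Lurie argument: base change along an $I$-completely faithfully flat map is $t$-exact and conservative for the $I$-torsion $t$-structure, so each truncated piece $\mathcal{M}(-)_{[m,n]}$ satisfies descent, and one passes to the limit in $m,n$. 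Finally one checks that the weakly-almost-perfect condition is local for the topology; this is where Bhatt's bounded-torsion theorem is used (a derived $\pi$-complete module with vanishing generic fibre has each homotopy group annihilated by a fixed power of $\pi$). Full subcategories cut out by local conditions \emph{do} pass through totalizations, which is why the subcategory route succeeds where the quotient route stalls.
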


The passage from \Cref{fflatrigid} to \Cref{aperfrigiddesc} is
facilitated by the $t$-structure on $\aperf$ in the $K$-affinoid case.  
By contrast, in the setting of \Cref{aperfringp}, there is no $t$-structure on
$\aperf(R[1/\pi])$. As a consequence, we do not know how to deduce \Cref{aperfringp}
from \Cref{Drinfeldthm}, and we will give a different argument: we will
construct a stable $\infty$-category $\mathcal{M}(R)$ into which
$\aperf(R[1/\pi])$ embeds, and such that $\mathcal{M}(R)$ admits a
$t$-structure that enables one to prove descent results. 
Our 
main ingredients are the monadicity theorem of Barr--Beck--Lurie and a
tool that exists only in the derived sense: the
equivalence between torsion and complete modules \cite[Th.~2.1]{DG02}. 
Although it will not strictly be necessary to the proof, our argument is
inspired by a result of Bhatt \cite{Bha19} that in this case states that any
$\pi$-complete $\mathcal{O}_K$-module $M$ such that $M[1/\pi] = 0$ is in fact
annihilated by a fixed power of $\pi$. 

In fact, we will prove two 
versions of our descent result (\Cref{fflataperf} and \Cref{rigiddescuniv}
below). The first is a generalization 
of \Cref{aperfringp} to the case of a finitely generated ideal (rather than simply
inverting an element). The second is one where $\pi$-complete faithful flatness is
replaced by universal descent in the sense of \cite[Sec.~3]{MGal}, which for
finitely presented maps is equivalent to being a $v$-cover \cite{Rydh}.

\renewcommand{\perf}{\mathrm{Perf}}

Part of the difficulty in proving descent results such as the above is the lack 
of a well-behaved theory of quasicoherent (rather than coherent) sheaves in analytic
(e.g., rigid) geometry. Recently Clausen and Scholze have, using condensed
mathematics, defined a category of quasicoherent sheaves on an extremely large
class of analytic spaces \cite{AnalyticGeometry}.  We expect that our results can be obtained using
their framework. Compare also recent work
of Andreychev \cite{And21}, which uses  
condensed mathematics to prove
some related  descent statements (for the analytic rather than flat topology).

\subsection*{Conventions}
We will generally use the notations and conventions of \cite{HA, SAG}, and
formulate our results for connective $E_\infty$-rings rather than for
ordinary commutative rings.
In many cases this is essential even for questions that begin with discrete rings, because the derived completion process may
introduce higher homotopy groups. 
In particular, all tensor products will be derived tensor products. 

Given a connective $E_\infty$-ring $R$, 
we write $\md(R)$ for the $\infty$-category of $R$-module spectra. 
When $R$ is an ordinary commutative ring, this recovers the unbounded derived
$\infty$-category $D(R)$. 
The stable $\infty$-category $\md(R)$ is equipped with a $t$-structure, and we
let 
$\md(R)_{\geq 0}, \md(R)_{\leq 0} \subset \md(R)$ denote the subcategories of
connective and coconnective objects. 
We let $\md(R)^{\heartsuit}$ denote the (usual) abelian category of discrete
$\pi_0(R)$-modules, which is the heart of this $t$-structure. 
We will use \emph{homological} indexing conventions. 

We will freely use the theory of $t$-structures on stable
$\infty$-categories
as in \cite[Sec.~1.2.1]{HA} (after \cite{BBD} for triangulated categories). 
Given a stable $\infty$-category $\mathcal{C}$ equipped with a $t$-structure, we
say that $\mathcal{C}$ is \emph{right-bounded} if $\mathcal{C} = \bigcup_{n}
\mathcal{C}_{\geq n}$ and \emph{left-complete} if $\mathcal{C} \simeq
\varprojlim_n \mathcal{C}_{\leq n}$ (where the transition maps are the
truncation maps).

\subsection*{Acknowledgments} I would like to thank Benjamin Antieau, Bhargav
Bhatt, Dustin Clausen, 
Brian Conrad, Adriano  C{\'o}rdova,  Vladimir Drinfeld, Ofer Gabber, Aron Heleodoro, 
Arthur-C\'esar Le Bras, Jacob Lurie, Peter Scholze, and the referee for helpful discussions and comments, and the Institute for Advanced
Study for hospitality. This work was done while the
author was a Clay Research Fellow.

\section{Isogenies}

\subsection{Generalities}
Throughout, 
we fix a connective $E_\infty$-ring $R$ (e.g., a discrete ring $R$) and an ideal $I \subset \pi_0(R)$. 
Let $\mathcal{C}$ be an $R$-linear additive $\infty$-category. 
Most often, we will take $\mathcal{C}$ to be a subcategory of the
$\infty$-category $\md(R)$ of $R$-modules. 
Our basic ``meta-definition'' is the following. 
\begin{definition}[Properties up to isogeny]
\label{propisogeny}
Let $\mathcal{P} \subset \mathcal{C}$ be a full subcategory stable
under finite direct sums and retracts. 
We define a full subcategory $\mathcal{P}_{\leq I} \subset \mathcal{C}$ as follows: 
an object $X \in \mathcal{C}$ belongs to $\mathcal{P}_{\leq I}$ if 
for each $a \in I$, there exists $X_0 \in \mathcal{P}$ (depending on $a$)  and maps 
$f\cl  X \to X_0, g \cl  X_0 \to X$ with $g \circ f \cl  X \to X$ given by multiplication
by $a$. 
We also write $\mathcal{P}_{\leq I^\infty}= \bigcup_{n \geq 0} \mathcal{P}_{\leq
I^n}$; informally, we can think of $\mathcal{P}_{\leq I^\infty}$ as those
objects which satisfy the defining property of objects in $\mathcal{P}$ ``up to isogeny.''\end{definition} 

In this note, we will be interested almost exclusively in the case where
$I \subset \pi_0(R)$ is a \emph{finitely generated} ideal. By contrast, when $I = I^2$,  
this type of definition is frequently used in almost ring theory \cite{GR}.

\begin{remark} 
\begin{enumerate}
\item When $I = (1)$, then $\mathcal{P}_{\leq I} = \mathcal{P}$.  This follows because $\mathcal{P}$ is closed under retracts.

\item For convenience, we have formulated the above for $\infty$-categories, but
the above definition only depends on the underlying homotopy category
$\mathrm{Ho}(\mathcal{C})$ (which is naturally enriched in $\pi_0(R)$-modules). 
\item 
Let $F \cl  \mathcal{C} \to \mathcal{D}$ be an $R$-linear functor of
$R$-linear additive $\infty$-categories. 
Suppose $\mathcal{P} \subset \mathcal{C}, \mathcal{P}' \subset \mathcal{D}$ are
full subcategories closed under finite direct sums and
retracts. 
Suppose $F$ carries $\mathcal{P}$ into $\mathcal{P}'$. Then $F$ carries $\mathcal{P}_{\leq I}$ into
$\mathcal{P}'_{\leq I}$. 
\label{Iisogsorite}
\item Let $J \subset \pi_0(R)$ be another ideal. 
Then $(\mathcal{P}_{\leq I})_{\leq J} \subset \mathcal{P}_{\leq IJ}$. 
\item Let $\left\{I_\alpha, \alpha \in A\right\}$ by a filtered system of ideals
and let $I = \bigcup I_\alpha$. Then $\mathcal{P}_{ \leq I} = \bigcap
\mathcal{P}_{\leq I_\alpha}$. 
\end{enumerate}
\end{remark} 

It will be convenient to rephrase \Cref{propisogeny} in the finitely generated
case in terms of the notion of $\leq I$-split surjection (resp. injection), as
will follow from \Cref{newcritisog} below. 
\begin{definition}[$\leq I$-split surjections and injections] 
\begin{enumerate}
\item  
A map of (discrete) $\pi_0(R)$-modules $M \to N$ is said to be \emph{$\leq
I$-surjective} 
if its cokernel is annihilated by $I$. 

\item
A map $f \cl  X \to Y$ in $\mathcal{C}$ is said to be a \emph{$\leq I$-split
surjection} if for all $Z \in \mathcal{C}$, the map of $\pi_0(R)$-modules
$\pi_0\hom_{\mathcal{C}}( Z, X) \to \pi_0 \hom_{\mathcal{C}}(Z, Y)$ is 
$\leq I$-surjective. 
This is equivalent to the statement that  for each $a \in
I$, there exists $g_a \cl  Y \to X$ such that
$f \circ g_a \cl  Y \to Y$ is given by multiplication by $a$.  
\item 
A map $f \cl  X \to Y$ in $\mathcal{C}$ is said to be a \emph{$\leq I$-split
injection} if for all $Z \in \mathcal{C}$, the map of $\pi_0(R)$-modules
$\pi_0\hom_{\mathcal{C}}( Y, Z) \to \pi_0 \hom_{\mathcal{C}}(X, Z)$ is 
$\leq I$-surjective. 
This is equivalent to the statement that  for each $a \in
I$, there exists $g_a \cl  Y \to X$ such that
$ g_a\circ f \cl X \to X$ is given by multiplication by $a$.  

\end{enumerate}
\end{definition} 

\begin{remark} 
Let $J \subset \pi_0(R)$ be another finitely generated ideal. 
The composite of a $\leq I$-split surjective (resp. $\leq I$-split injective) map and
a $\leq J$-split surjective map (resp. $\leq J$-split injective map) is $\leq
IJ$-split surjective (resp. $\leq IJ$-split injective). 
\end{remark} 

\begin{proposition} 
\label{newcritisog}
Suppose $I \subset \pi_0(R)$ is a finitely generated ideal. 
Then the following are equivalent: 
\begin{enumerate}
\item  
An object $X \in \mathcal{C}$ belongs to $\mathcal{P}_{\leq I}$. 
\item There exists a $\leq I$-split surjection $X' \to X$ with $X' \in
\mathcal{P}$. 
\item
There exists  a $\leq I$-split injection $X \to X''$ with $X'' \in \mathcal{P}$. 
\end{enumerate}
\end{proposition}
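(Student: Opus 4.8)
The plan is to establish the four implications $(2)\Rightarrow(1)$, $(3)\Rightarrow(1)$, $(1)\Rightarrow(2)$ and $(1)\Rightarrow(3)$. The first two are purely formal and do not use that $I$ is finitely generated; the last two do, and are the only place any argument is needed. Everything takes place in the homotopy category $\mathrm{Ho}(\mathcal{C})$, which is enriched in $\pi_0(R)$-modules, so no higher-homotopical input enters.

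First I would treat $(2)\Rightarrow(1)$. Suppose $g\cl X'\to X$ is a $\leq I$-split surjection with $X'\in\mathcal{P}$. By the second (equivalent) description of $\leq I$-split surjections given in the definition, for every $a\in I$ there is a map $g_a\cl X\to X'$ with $g\circ g_a$ equal to multiplication by $a$. Taking $X_0=X'$, the maps $f=g_a\cl X\to X'$ and $g\cl X'\to X$ then witness $X\in\mathcal{P}_{\leq I}$. The implication $(3)\Rightarrow(1)$ is the same argument with the two maps interchanged: a $\leq I$-split injection $f\cl X\to X''$ with $X''\in\mathcal{P}$ comes equipped, for each $a\in I$, with $g_a\cl X''\to X$ such that $g_a\circ f$ is multiplication by $a$, and one takes $X_0=X''$.

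Next, $(1)\Rightarrow(2)$. Here I would fix generators $I=(a_1,\dots,a_n)$ and, for each $i$, choose $X_i\in\mathcal{P}$ together with $f_i\cl X\to X_i$ and $g_i\cl X_i\to X$ such that $g_i\circ f_i$ is multiplication by $a_i$. Since $\mathcal{P}$ is closed under finite direct sums, $X':=\bigoplus_{i=1}^n X_i$ lies in $\mathcal{P}$; let $g\cl X'\to X$ be the map with components $g_i$, and let $\iota_i\cl X_i\to X'$ denote the biproduct inclusions. Given $a\in I$, write $a=\sum_i c_i a_i$ with $c_i\in\pi_0(R)$ and set $g_a:=\sum_i c_i\,(\iota_i\circ f_i)\cl X\to X'$; then $g\circ g_a=\sum_i c_i\,(g_i\circ f_i)=\sum_i c_i a_i=a$, so $g$ is a $\leq I$-split surjection. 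For $(1)\Rightarrow(3)$ I would dualize: take the same $X'':=\bigoplus_{i=1}^n X_i\in\mathcal{P}$, let $f\cl X\to X''$ be the map with components $f_i$, and for $a=\sum_i c_i a_i$ set $g_a:=\sum_i c_i\,(g_i\circ\pi_i)\cl X''\to X$, where $\pi_i\cl X''\to X_i$ are the biproduct projections; using $\pi_i\circ\iota_j=\delta_{ij}$ one gets $g_a\circ f=\sum_i c_i\,(g_i\circ f_i)=a$, so $f$ is a $\leq I$-split injection.

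I do not expect a genuine obstacle. The one point to keep straight is that finite generation of $I$ is used precisely to bundle the generator-by-generator auxiliary objects of $\mathcal{P}$ into a single object of $\mathcal{P}$, via closure under finite direct sums; and that the passage from the generators $a_i$ to an arbitrary $a\in I$ is handled by taking the matching $\pi_0(R)$-linear combination of the partial splittings, which works because composition in $\mathcal{C}$ is $\pi_0(R)$-bilinear. (Closure under retracts is not needed for this proposition, only closure under finite direct sums.)
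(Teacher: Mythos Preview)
Your proof is correct and follows essentially the same approach as the paper: one direction is formal, and for $(1)\Rightarrow(2),(3)$ you take the direct sum $\bigoplus X_i$ over a generating set and verify that the evident maps to and from $X$ are $\leq I$-split. The paper merely asserts the latter verification is ``not difficult to see,'' whereas you spell it out explicitly by writing $a=\sum c_i a_i$ and taking the corresponding linear combination of partial splittings.
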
 
\begin{proof} 
It is easy to see from the definitions that (2) or (3) implies (1).  
Suppose (1). Let $x_1, \dots, x_n \in I$ be a set of generators. 
We have objects $X'_1, \dots, X'_n \in \mathcal{P}$  and maps $f_i \cl X \to X_i'$
and $g_i \cl X'_i \to X$ such that $g_i \circ f_i $ is multiplication by $x_i$. 
We can consider the map $\oplus_i f_i \cl X \to \bigoplus_{i = 1}^n X_i'$. 
Since the $x_i$ generate $I$, it is not difficult to see that this map is a
$\leq I$-split injection. Similarly, the map $\oplus_i g_i \cl \bigoplus_{i=1}^n
X_i' \to X$ is a $\leq I$-split surjection. 
\end{proof}

\begin{corollary} 
Suppose $I$ is finitely generated and $\mathcal{P}$ is the filtered union of the full additive,
idempotent-complete subcategories
$\mathcal{P}_\alpha, \alpha \in A$. Then $\mathcal{P}_{\leq I} = \bigcup_{\alpha
\in A} (\mathcal{P}_\alpha)_{\leq I}$.  
\end{corollary}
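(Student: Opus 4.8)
The plan is to reduce everything to \Cref{newcritisog}, which is precisely the reformulation that lets the finite generation of $I$ do its work. First I would record the easy inclusion $\bigcup_{\alpha \in A}(\mathcal{P}_\alpha)_{\leq I} \subseteq \mathcal{P}_{\leq I}$: since $\mathcal{P}_\alpha \subseteq \mathcal{P}$ for each $\alpha$, applying \Cref{Iisogsorite} to the identity functor $\mathrm{id}\cl \mathcal{C} \to \mathcal{C}$ with the pair of subcategories $\mathcal{P}_\alpha \subseteq \mathcal{P}$ gives $(\mathcal{P}_\alpha)_{\leq I} \subseteq \mathcal{P}_{\leq I}$, and one takes the union over $\alpha$.

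For the reverse inclusion, I would first note that $\mathcal{P} = \bigcup_\alpha \mathcal{P}_\alpha$ is itself a full subcategory of $\mathcal{C}$ closed under finite direct sums and retracts: closure under direct sums uses that $A$ is filtered, so any finite set of objects of $\mathcal{P}$ lies in a common $\mathcal{P}_\gamma$, and closure under retracts uses that each $\mathcal{P}_\alpha$ is idempotent-complete. Hence \Cref{newcritisog} applies to $\mathcal{P}$, as well as to each $\mathcal{P}_\alpha$. Now take $X \in \mathcal{P}_{\leq I}$. By the implication $(1)\Rightarrow(2)$ of \Cref{newcritisog} there is a $\leq I$-split surjection $f\cl X' \to X$ with $X' \in \mathcal{P}$. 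As $X'$ is a single object of the filtered union $\mathcal{P} = \bigcup_\alpha \mathcal{P}_\alpha$, we have $X' \in \mathcal{P}_\alpha$ for some $\alpha \in A$. The property of $f$ being a $\leq I$-split surjection is formulated purely in terms of maps in $\mathcal{C}$ and makes no reference to $\mathcal{P}$, so $f\cl X' \to X$ witnesses condition $(2)$ of \Cref{newcritisog} for the subcategory $\mathcal{P}_\alpha$; therefore $X \in (\mathcal{P}_\alpha)_{\leq I} \subseteq \bigcup_{\beta \in A}(\mathcal{P}_\beta)_{\leq I}$, as desired. (Alternatively one can skip the second invocation of \Cref{newcritisog} and unwind \Cref{propisogeny} directly: for each $a \in I$ pick $g_a\cl X \to X'$ with $f \circ g_a = a\cdot\mathrm{id}_X$, and use the pair $X \xrightarrow{g_a} X' \xrightarrow{f} X$.)

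I do not expect any real obstacle; the only subtlety worth flagging is the exact role of the hypothesis that $I$ is finitely generated. If one tried to argue straight from \Cref{propisogeny}, membership $X \in \mathcal{P}_{\leq I}$ would produce, for each $a \in I$ separately, an auxiliary object $X_0^{(a)} \in \mathcal{P}$, and these could lie in pairwise incomparable $\mathcal{P}_\alpha$'s, so there need be no single index that works simultaneously for all $a$. Passing through \Cref{newcritisog} circumvents exactly this: finite generation lets one replace the whole family by a single object $X'$ --- a finite direct sum built from a generating set of $I$ --- which then lands in one $\mathcal{P}_\alpha$ by filteredness. That observation is essentially the whole proof.
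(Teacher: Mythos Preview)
Your proposal is correct and is precisely the argument the paper intends: the corollary is stated without proof because it follows immediately from \Cref{newcritisog}, and you have spelled out exactly that deduction, including the role of finite generation of $I$ and filteredness of the system $\{\mathcal{P}_\alpha\}$.
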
 

We will use \Cref{propisogeny} in the following instances. 

\begin{definition} 
\begin{enumerate}
\item 
Suppose $\mathcal{P}$ is the 
subcategory of  zero objects. Then an object $X \in \mathcal{C}$ belongs to  $\mathcal{P}_{\leq I}$ if and only if 
every element of $I$ acts by zero on $X$. 
In this case, we will say that $X$ is \emph{$ \leq I$-isogenous to zero.}
\item 
Let $\mathcal{D} = \fun( \Delta^1, \mathcal{C})$ denote the $\infty$-category of arrows $X \to X'$ in $\mathcal{C}$, and
let 
$\mathcal{P}$ be the subcategory of isomorphisms. 
An arrow in $\mathcal{C}$ belonging to $\mathcal{P}_{\leq I}$ is said to be an
\emph{$\leq I$-isogeny}. \item 
Suppose $\mathcal{C}$ is a compactly generated $R$-linear additive $\infty$-category
 and
$\mathcal{P}$ is the subcategory of compact objects. We say that an object 
is \emph{$\leq I$-compact} if it belongs to $\mathcal{P}_{\leq I}$. 
\end{enumerate}
\end{definition} 

\newcommand{\mdd}{\mathrm{Mod}^{\heartsuit}}

For the next results, we note that there is a small subtlety that the construction of the arrow
$\infty$-category does not commute with taking homotopy categories, i.e.,
$\mathrm{Ho}( \fun(\Delta^1, \mathcal{C})) \neq \fun( \Delta^1,
\mathrm{Ho}(\mathcal{C}))$ (if so, the proofs could be shortened). 
Instead, we have
a fiber sequence
\begin{equation}  
\label{HominDelta1C}
\hom_{\fun( \Delta^1, \mathcal{C})}( X \to Y, X' \to Y')  \to 
\hom_{\mathcal{C}}(X, X') \times \hom_{\mathcal{C}}(Y, Y') \to
\hom_{\mathcal{C}}(X, Y'). 
\end{equation}

\begin{proposition}[Characterization of $\leq I$-isogenies] 
\label{leqIisogchar}
If an arrow $f \cl X \to Y$ is a $\leq I$-isogeny, then for   each $a \in I$, there
exists $g_a \cl Y \to X$ such that $f \circ g_a $ and $ g_a \circ f$ are given by
multiplication by $a$. 
Conversely, if this condition holds, then $f$ is a $\leq I^2$-isogeny. 
\end{proposition}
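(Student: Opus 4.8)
The plan is to unwind the definition of a $\leq I$-isogeny through the fiber sequence \eqref{HominDelta1C}, being careful about the difference between the arrow $\infty$-category and the arrow category of the homotopy category. First I would spell out what it means for $f\colon X\to Y$ to be a $\leq I$-isogeny: for each $a\in I$ there is an isomorphism $Z\xrightarrow{\sim} Z'$ in $\mathcal{C}$ together with maps $\alpha\colon (X\to Y)\to (Z\xrightarrow{\sim}Z')$ and $\beta\colon (Z\xrightarrow{\sim}Z')\to (X\to Y)$ in $\fun(\Delta^1,\mathcal{C})$ whose composite $\beta\circ\alpha$ is multiplication by $a$ on the object $X\to Y$. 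Writing $\alpha=(\alpha_0,\alpha_1)$ and $\beta=(\beta_0,\beta_1)$ for the component maps, the homotopy-commutativity built into these morphisms of arrows gives $f\circ\beta_0\simeq \beta_1\circ f'$ (where $f'\colon Z\to Z'$ is the chosen isomorphism) and similarly on the $\alpha$ side, while $\beta_0\circ\alpha_0$ is multiplication by $a$ on $X$ and $\beta_1\circ\alpha_1$ is multiplication by $a$ on $Y$. Setting $g_a := \beta_0\circ (f')^{-1}\circ\alpha_1\colon Y\to X$, I would then check that $f\circ g_a$ and $g_a\circ f$ are each homotopic to multiplication by $a$: for $g_a\circ f$ one uses $\alpha_1\circ f\simeq f'\circ\alpha_0$ (the commuting square for $\alpha$) to rewrite $g_a\circ f\simeq \beta_0\circ(f')^{-1}\circ f'\circ\alpha_0=\beta_0\circ\alpha_0 = a$, and dually for $f\circ g_a$ using the commuting square for $\beta$. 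This proves the first (easy) direction.

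For the converse, suppose that for every $a\in I$ there exists $g_a\colon Y\to X$ with $f\circ g_a\simeq a\cdot\mathrm{id}_Y$ and $g_a\circ f\simeq a\cdot\mathrm{id}_X$. The goal is to exhibit, for each $b\in I^2$, an object in $\mathcal{P}$ (here the isomorphisms in $\fun(\Delta^1,\mathcal{C})$) witnessing that $f$ is $\leq I^2$-isogenous. Write $b=a_1a_2$ with $a_1,a_2\in I$ (it suffices to treat products of two generators, then sum as in \Cref{newcritisog}). The natural candidate for the auxiliary isomorphism is $\mathrm{id}_X\colon X\xrightarrow{\;\sim\;}X$, and one wants morphisms of arrows
\[
(X\xrightarrow{f} Y)\xrightarrow{\ \alpha\ }(X\xrightarrow{\mathrm{id}}X)\xrightarrow{\ \beta\ }(X\xrightarrow{f} Y)
\]
whose composite is multiplication by $b$ on $X\to Y$. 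Here is the subtle point and the main obstacle: a morphism of arrows $(X\to Y)\to (X\to X)$ is \emph{not} just a pair of maps $X\to X$, $Y\to X$ making the evident square commute on the nose — by \eqref{HominDelta1C} it is such a pair \emph{together with} a chosen nullhomotopy of the difference in $\hom_{\mathcal{C}}(X,Y')$. So I would take $\alpha = (a_1\cdot\mathrm{id}_X,\ g_{a_1})$ with the required homotopy coming from the chosen homotopy $g_{a_1}\circ f\simeq a_1\cdot\mathrm{id}_X$ (note $\mathrm{id}_X\circ(a_1\mathrm{id}_X) = a_1\mathrm{id}_X$ and $g_{a_1}\circ f\simeq a_1\mathrm{id}_X$, so the square commutes up to the given homotopy), and $\beta = (a_2\cdot\mathrm{id}_X,\ f\circ a_2\mathrm{id}_X = a_2 f)$ with its homotopy the constant one (the square $f\circ(a_2\mathrm{id}_X) = (a_2 f)\circ\mathrm{id}_X$ commutes strictly). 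Then $\beta_0\circ\alpha_0 = a_1a_2\,\mathrm{id}_X = b\,\mathrm{id}_X$ on the source, and $\beta_1\circ\alpha_1 = a_2 f\circ g_{a_1}\simeq a_2 a_1\,\mathrm{id}_Y = b\,\mathrm{id}_Y$ on the target, using $f\circ g_{a_1}\simeq a_1\mathrm{id}_Y$.

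The one genuinely careful step is verifying that these component maps and homotopies assemble to an actual morphism in $\fun(\Delta^1,\mathcal{C})$ — i.e.\ that the data fits into the fiber sequence \eqref{HominDelta1C} — and that the two composites $\beta\circ\alpha$ agree with multiplication by $b$ \emph{as a morphism of arrows}, which requires matching up the induced homotopy in $\hom_{\mathcal{C}}(X,Y)$ with the trivial one attached to $b\cdot\mathrm{id}_{(X\to Y)}$. I expect this to be the main obstacle: it is exactly the bookkeeping that the remark preceding the proposition warns about ($\mathrm{Ho}(\fun(\Delta^1,\mathcal{C}))\neq\fun(\Delta^1,\mathrm{Ho}(\mathcal{C}))$), and it is the reason one loses a factor and only gets $\leq I^2$ rather than $\leq I$. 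Concretely, one can organize this by noting that $\hom_{\fun(\Delta^1,\mathcal{C})}((X\to Y),(X\to Y))$ receives $\hom_{\mathcal{C}}(X,X)\times\hom_{\mathcal{C}}(Y,Y)$ with fiber $\Omega\hom_{\mathcal{C}}(X,Y)$, and checking that both $\beta\circ\alpha$ and $b\cdot\mathrm{id}$ have the same image in $\hom_{\mathcal{C}}(X,X)\times\hom_{\mathcal{C}}(Y,Y)$ together with the same (null)homotopy datum; since the relevant homotopies on the source and target were chosen compatibly, the loop-difference vanishes and the two morphisms of arrows coincide in $\pi_0$, which is all that \Cref{propisogeny} requires. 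Finally, to pass from $b=a_1a_2$ a product of generators to a general element of $I^2$, one takes finite sums of such data as in the proof of \Cref{newcritisog}, using that $\mathcal{P}$ (isomorphisms in the arrow category) is closed under finite direct sums.
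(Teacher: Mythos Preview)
Your first direction is fine and matches the paper's (omitted) diagram chase.

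The converse has a genuine gap. You correctly isolate the obstacle --- that $\beta\circ\alpha$ and $b\cdot\mathrm{id}_{(X\to Y)}$ have the same image in $\pi_0\hom(X,X)\times\pi_0\hom(Y,Y)$ but might differ by an element coming from $\pi_1\hom_{\mathcal{C}}(X,Y)$ via the fiber sequence \eqref{HominDelta1C} --- and then you assert that ``the loop-difference vanishes'' because ``the relevant homotopies were chosen compatibly.'' This assertion is not justified and, as stated, is not correct: the composite $\beta\circ\alpha$ is determined (under the identification $\hom_{\fun(\Delta^1,\mathcal{C})}((X\to Y),(X\xrightarrow{\mathrm{id}}X))\simeq\hom(Y,X)$) by $w=a_2 g_{a_1}\in\pi_0\hom(Y,X)$ alone, and the resulting endomorphism $(wf,fw,\mathrm{triv})$ of $(X\to Y)$ has no reason to equal $(b,b,\mathrm{can}_b)$ in $\pi_0$ of the arrow category. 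Two elements with the same image in $\pi_0\hom(X,X)\times\pi_0\hom(Y,Y)$ can differ by anything in the image of $\partial\colon\pi_1\hom(X,Y)\to\pi_0\hom_{\mathrm{arrow}}$, and your choices do not force that class to be trivial.

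The paper's route is different and avoids having to pin down this class exactly. It considers the natural map $(X\xrightarrow{f}Y)\to(Y\xrightarrow{\mathrm{id}}Y)$ and shows it is a $\leq I^2$-split \emph{injection}, then invokes \Cref{newcritisog}. Unwinding ``one sees from \eqref{HominDelta1C}'': for any $Z=(X'\xrightarrow{f'}Y')$ and any $\phi\in\pi_0\hom_{\mathrm{arrow}}((X\to Y),Z)$ with components $(u,v)$, setting $w=ug_a$ gives $\alpha_2(w)\equiv a\phi$ modulo $\mathrm{im}(\partial)$ (one factor of $I$). Then $\mathrm{im}(\partial)$ itself is annihilated by $I$: for $\xi\in\pi_1\hom(X,Y')$ and $a'\in I$, one has $a'\xi=(g_{a'}f)^*\xi=f^*(g_{a'}^*\xi)$, which lies in the image of $\pi_1$ of the map $\hom(X,X')\times\hom(Y,Y')\to\hom(X,Y')$, so $\partial(a'\xi)=0$ (second factor of $I$). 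Together these give that the cokernel is annihilated by $I^2$. Your approach through $(X\xrightarrow{\mathrm{id}}X)$ could be repaired by the dual version of this two-step argument, but not by asserting the obstruction vanishes on the nose.
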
 
\begin{proof} 
The first direction is a diagram-chase, which we leave to the reader. 
For the other direction, suppose
there exist  maps $g_a$ as in the statement. 
Consider 
the natural  morphism in $\fun(\Delta^1, \mathcal{C})$ given by $(X
\stackrel{{f}}{\to} Y) \to (Y \stackrel{\mathrm{id}}{\to} Y)$. 
One sees from 
\eqref{HominDelta1C}
that this map is a $\leq I^2$-split injection, which shows that $f$ is a $\leq
I^2$-isogeny. 
\end{proof} 

\begin{remark} 
Suppose $\mathcal{C}$ is actually a 1-category. Suppose $f \cl X \to Y$ is an arrow
in $\mathcal{C}$ such that there exist maps $g_a \cl Y \to X$ for $a \in I$ as in 
\Cref{leqIisogchar}. Then $f$ is a $\leq I$-isogeny; indeed, for each $a$, we consider the
diagram
\[ \xymatrix{
  X \ar[d]^f \ar[r]^a &  X \ar[d]^{\mathrm{id}} \ar[r]^{\mathrm{id}} & X
  \ar[d]^f    \\
  Y \ar[r]^{g_a} &  X \ar[r]^f &  Y
},\]
such that the composite map is multiplication by $a$ in the arrow category. 
In particular, if $\mathcal{C}$ is a 1-category, then \Cref{leqIisogchar} 
simplifies to an if and only if assertion. 
\end{remark}

We will need a variant of the five-lemma, whose proof we leave to the
reader. 
\begin{lemma} 
\label{fivelem}
Let 
\[ \xymatrix{
M_1 \ar[d]^{f_1}  \ar[r] &  M_2 \ar[d]^{f_2} \ar[r] &  M_3 \ar[d]^{f_3}
\ar[r] &  M_4 \ar[d]^{f_4}
\ar[r] &  M_5 \ar[d]^{f_5} \\
N_1 \ar[r] &  N_2 \ar[r] &  N_3 \ar[r] &  N_4 \ar[r] &  N_5
}\]
be a commutative diagram of discrete $\pi_0(R)$-modules with exact rows. 
Suppose $f_2, f_5$ are isomorphisms and $f_4$ is $\leq I$-surjective. Then $f_3$
is $\leq I$-surjective. 
\end{lemma}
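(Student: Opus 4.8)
\textbf{Proof proposal for Lemma~\ref{fivelem}.}

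The plan is to run the usual five-lemma diagram chase, but carefully tracking the defect by powers of the ideal $I$. Write the two exact rows as $M_1 \to M_2 \to M_3 \xrightarrow{p} M_4 \to M_5$ and $N_1 \to N_2 \to N_3 \xrightarrow{q} N_4 \to N_5$, with vertical maps $f_i$. We want: given $n_3 \in N_3$ and $a \in I$, produce $m_3 \in M_3$ with $f_3(m_3) = a\, n_3$. First push $n_3$ into $N_4$ via $q$; since $f_4$ is $\leq I$-surjective there is $m_4 \in M_4$ with $f_4(m_4) = a\, q(n_3)$. Next one must check $m_4$ lies in the image of $p$, which is where exactness and the hypothesis on $f_5$ enter: the image of $m_4$ in $M_5$ maps under the isomorphism $f_5$ to the image of $a\, q(n_3)$ in $N_5$, which is $0$ by exactness of the bottom row; since $f_5$ is injective, $m_4$ dies in $M_5$, so $m_4 = p(m_3')$ for some $m_3' \in M_3$. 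Now compare $f_3(m_3')$ and $a\, n_3$ in $N_3$: both map to $a\, q(n_3)$ in $N_4$ (for $f_3(m_3')$ this is $q(f_3(m_3')) = f_4(p(m_3')) = f_4(m_4) = a\, q(n_3)$), so their difference comes from $N_2$, say equals the image of $n_2 \in N_2$. Since $f_2$ is an isomorphism, lift $n_2$ to $m_2 \in M_2$, and set $m_3 := m_3' + (\text{image of } m_2 \text{ in } M_3)$. Then $f_3(m_3) = f_3(m_3') + (\text{image of } n_2) = a\, n_3$ in $N_3$, as desired.

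This shows $f_3$ is in fact $\leq I$-surjective with no loss in the power of $I$ (only $f_4$'s defect is used, and it is used once). The only place one might worry about accumulating powers of $I$ is the step where we chose $m_4$; but every subsequent step uses genuine isomorphisms ($f_2$, $f_5$) or genuine exactness, so no further multiplication by elements of $I$ is needed. The argument is entirely at the level of the abelian category of discrete $\pi_0(R)$-modules, so there are no higher-homotopical subtleties of the kind flagged before \Cref{leqIisogchar}.

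I do not expect any real obstacle here: the statement is a mild quantitative refinement of the classical five-lemma and the chase goes through verbatim with the bookkeeping above. The one point to state cleanly is exactly which two hypotheses ($f_2$ iso and $f_5$ iso, rather than merely surjective/injective) are needed and where, so that the reader sees the defect of $f_3$ is controlled by that of $f_4$ alone; accordingly I would simply remark that $f_2$ is used only through its surjectivity and $f_5$ only through its injectivity, and leave the verification to the reader as the statement indicates.
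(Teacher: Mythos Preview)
Your proof is correct and is exactly the standard diagram chase one expects; the paper itself leaves this lemma to the reader, so there is no alternative approach to compare against. Your remark that only the surjectivity of $f_2$ and the injectivity of $f_5$ are used is accurate and worth keeping.
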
 

\begin{proposition} 
\label{arrowlem}
Let $f = (f_1, f_2) \cl (X \to Y) \to (X' \to Y')$ be a map in $\fun(\Delta^1,
\mathcal{C})$, leading to a commutative square
\[ 
\xymatrix{
X \ar[d]^{f_1}  \ar[r] &  Y \ar[d]^{f_2} \\
X' \ar[r] &  Y'
}
.\]
Suppose $f_1 \cl X \to X'$ is $\leq I$-split surjective and $f_2$ is an
equivalence. Then $f $ is $\leq I$-split surjective. 
Similarly, suppose $f_1$ is an equivalence and $f_2$ is $\leq I$-split
injective. Then $f$ is $\leq I$-split injective. 
\end{proposition}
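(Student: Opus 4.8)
The plan is to prove both statements directly from the fiber sequence \eqref{HominDelta1C}, since that is the only tool that relates $\hom$ in the arrow category to $\hom$ in $\mathcal{C}$. I will treat the first assertion in detail; the second follows by a symmetric argument (or by passing to the opposite category, using that $\leq I$-split injections in $\mathcal{C}$ are $\leq I$-split surjections in $\mathcal{C}^{op}$ and that the arrow category construction is compatible with this duality up to flipping the arrow).

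\medskip

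\noindent\emph{Proof.}
We prove the first statement; the second is entirely analogous. Fix an object $Z = (Z_1 \xrightarrow{w} Z_2) \in \fun(\Delta^1,\mathcal{C})$; we must show that
\[
\pi_0\hom_{\fun(\Delta^1,\mathcal{C})}\bigl(Z, (X\to Y)\bigr) \longrightarrow \pi_0\hom_{\fun(\Delta^1,\mathcal{C})}\bigl(Z, (X'\to Y')\bigr)
\]
has cokernel annihilated by $I$. By \eqref{HominDelta1C} applied with $(X\to Y)$ and then with $(X'\to Y')$ in the second slot, we have a map of fiber sequences
\[
\xymatrix{
\hom_{\fun(\Delta^1,\mathcal{C})}(Z, X\to Y) \ar[r]\ar[d] & \hom_{\mathcal{C}}(Z_1,X)\times\hom_{\mathcal{C}}(Z_2,Y) \ar[r]\ar[d] & \hom_{\mathcal{C}}(Z_1,Y) \ar[d] \\
\hom_{\fun(\Delta^1,\mathcal{C})}(Z, X'\to Y') \ar[r] & \hom_{\mathcal{C}}(Z_1,X')\times\hom_{\mathcal{C}}(Z_2,Y') \ar[r] & \hom_{\mathcal{C}}(Z_1,Y')
}
\]
in which the right-hand vertical map is induced by $f_2\colon Y\to Y'$, hence (as $f_2$ is an equivalence) is an equivalence; the middle vertical map is $(f_1)_*\times (f_2)_*$, where $(f_2)_*$ is an equivalence and $(f_1)_*\colon \hom_{\mathcal{C}}(Z_1,X)\to\hom_{\mathcal{C}}(Z_1,X')$ is $\leq I$-surjective on $\pi_0$ by hypothesis. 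Taking the long exact sequences on homotopy groups gives a commutative ladder of $\pi_0(R)$-modules with exact rows in which the vertical arrow over the middle term is $\leq I$-surjective and the vertical arrows over the two neighboring terms (coming from $\pi_0$ and $\pi_{-1}$ of the right-hand $\hom$-spectrum, and from $\pi_1$ of the middle, which all involve only $f_2$) are isomorphisms. By \Cref{fivelem}, the vertical arrow on $\pi_0\hom_{\fun(\Delta^1,\mathcal{C})}(Z,-)$ is $\leq I$-surjective, which is what we wanted.

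\medskip

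For the second assertion, we argue symmetrically: given $Z = (Z_1\xrightarrow{w}Z_2)$, we now use \eqref{HominDelta1C} in the \emph{first} variable, i.e. the fiber sequence computing $\hom_{\fun(\Delta^1,\mathcal{C})}((X\to Y),Z)$. This identifies it as the fiber of $\hom_{\mathcal{C}}(X,Z_1)\times\hom_{\mathcal{C}}(Y,Z_2)\to\hom_{\mathcal{C}}(X,Z_2)$, and the map induced by $f=(f_1,f_2)$ on the covariant-in-$Z$ picture becomes, after this identification, $(f_1)^*\times (f_2)^*$ over the middle term and $(f_1)^*$ over the right-hand term. Since $f_1$ is an equivalence and $(f_2)^*\colon\hom_{\mathcal{C}}(Y',Z_2)\to\hom_{\mathcal{C}}(Y,Z_2)$ is $\leq I$-surjective on $\pi_0$, the same five-lemma argument (\Cref{fivelem}) applied to the map of long exact sequences shows that $f$ is $\leq I$-split injective. \qed

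\medskip

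\noindent The main obstacle I anticipate is purely bookkeeping: correctly identifying, in the map of fiber sequences, which vertical arrows are isomorphisms and which are merely $\leq I$-surjective, and verifying that \Cref{fivelem} applies to the relevant five-term windows of the long exact sequences (one must check the pattern iso, iso, $\leq I$-surj, iso, iso in the right positions, which is where the hypothesis that $f_2$—resp. $f_1$—is an \emph{equivalence} rather than merely an isogeny is used). There are no genuine calculations, only careful tracking of which factor of the product $\hom(Z_1,X)\times\hom(Z_2,Y)$ the hypothesis controls.
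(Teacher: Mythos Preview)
Your approach is exactly the paper's: combine the fiber sequence \eqref{HominDelta1C} with \Cref{fivelem}. The proof is correct in substance, but the bookkeeping you flag as ``the main obstacle'' is indeed garbled in your write-up, and it is worth straightening out.

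Writing $P = \hom_{\mathcal{C}}(Z_1,X)\times\hom_{\mathcal{C}}(Z_2,Y)$ and $Q = \hom_{\mathcal{C}}(Z_1,Y)$, the five-term window centered on $\pi_0\hom_{\fun(\Delta^1,\mathcal{C})}(Z,-)$ is
\[
\pi_1 P \;\to\; \pi_1 Q \;\to\; \pi_0\hom_{\fun(\Delta^1,\mathcal{C})}(Z,-) \;\to\; \pi_0 P \;\to\; \pi_0 Q.
\]
The vertical maps at positions $2$ and $5$ are induced by $f_2$ alone and hence isomorphisms; the vertical map at position $4$ is $(f_1)_*\times(f_2)_*$ on $\pi_0$, hence $\leq I$-surjective. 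This is exactly the hypothesis of \Cref{fivelem} (which requires nothing of position $1$), so the conclusion follows. Two specific corrections: your claim that ``$\pi_1$ of the middle involves only $f_2$'' is false --- position $1$ also sees $(f_1)_*$ and need not be an isomorphism --- and there is no $\pi_{-1}$, since $\mathcal{C}$ is only assumed additive, not stable. Neither error is fatal, because \Cref{fivelem} imposes no condition at position $1$ and the exact sequence simply terminates at position $5$. The pattern to check is ``(anything), iso, ?, $\leq I$-surj, iso'', not ``iso, iso, $\leq I$-surj, iso, iso''. The dual statement is handled identically.
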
 
\begin{proof} 
Combine \Cref{fivelem} and  the fiber sequence \eqref{HominDelta1C}. 
\end{proof}

\begin{proposition} 
Let $\mathcal{P}, \mathcal{Q} \subset \mathcal{C}$ and suppose $I, J \subset
\pi_0(R)$ be ideals. 
Let $\fun(\Delta^1, \mathcal{C})$ be the $\infty$-category of arrows in
$\mathcal{C}$. 
Let $\mathcal{R} \subset \fun(\Delta^1, \mathcal{C})$ be the subcategory of
arrows $X \to Y$ with $X \in \mathcal{P}, Y \in \mathcal{Q}$. 
Then an arrow
$X' \to Y'$ in $\fun(\Delta^1, \mathcal{C})$ with $X' \in \mathcal{P}_{\leq I}$
and $Y' \in \mathcal{Q}_{\leq J}$ belongs to $\mathcal{R}_{\leq IJ}$. 
\label{arrowIJ}
\end{proposition}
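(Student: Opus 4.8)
The plan is to reduce the statement to the characterization in \Cref{newcritisog}, using the arrow-category analogues of "$\leq I$-split surjection" and "$\leq I$-split injection" developed in \Cref{arrowlem}. Concretely, let $X' \to Y'$ be an object of $\fun(\Delta^1, \mathcal{C})$ with $X' \in \mathcal{P}_{\leq I}$ and $Y' \in \mathcal{Q}_{\leq J}$. By \Cref{newcritisog}, choose a $\leq I$-split surjection $p\cl P \to X'$ with $P \in \mathcal{P}$, and a $\leq J$-split injection $q\cl Y' \to Q$ with $Q \in \mathcal{Q}$. The composite $P \xrightarrow{p} X' \to Y' \xrightarrow{q} Q$ together with $p$ and $q$ assembles into a morphism in $\fun(\Delta^1, \mathcal{C})$ from the object $(P \to Q)$ — which lies in $\mathcal{R}$ by construction — to the object $(X' \to Y')$. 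I want to argue that this morphism is a $\leq IJ$-split surjection in $\fun(\Delta^1, \mathcal{C})$, and then invoke \Cref{newcritisog} again (in $\fun(\Delta^1, \mathcal{C})$, with $\mathcal{P}$ there taken to be $\mathcal{R}$) to conclude $(X' \to Y') \in \mathcal{R}_{\leq IJ}$.

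The key step is to factor this morphism $(P \to Q) \to (X' \to Y')$ through an intermediate object and apply \Cref{arrowlem} twice. First factor it as
\[
(P \to Q) \longrightarrow (X' \to Q) \longrightarrow (X' \to Y'),
\]
where the first map is $(p, \mathrm{id}_Q)$ and the second is $(\mathrm{id}_{X'}, \text{a map } Q \to Y')$ — wait, that direction is wrong, since $q$ goes $Y' \to Q$. Instead I factor as $(P \to Q) \xrightarrow{(\mathrm{id}_P,\,?)} \cdots$; the cleaner route is to treat surjectivity and injectivity separately. For the first leg, consider $(P \to Q') \to (X' \to Q')$ for an appropriate fixed target $Q'$: by \Cref{arrowlem} (first case), since $p\cl P \to X'$ is $\leq I$-split surjective and the identity on $Q'$ is an equivalence, this map is $\leq I$-split surjective. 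For the second leg one uses the dual statement of \Cref{arrowlem} adapted to split injections in the $Y$-coordinate. Composing a $\leq I$-split surjection with a $\leq J$-split surjection (or the appropriate mixed variant) gives, by the \emph{Remark} on composites following the definition of $\leq I$-split maps, a $\leq IJ$-split map. The mild subtlety is that \Cref{arrowlem} as stated only treats the two pure cases ($f_2$ an equivalence, or $f_1$ an equivalence), so I will need to chain them and verify that "$\leq I$-split surjection followed by $\leq J$-split injection" still produces an object of the desired isogeny class — this uses \Cref{newcritisog}(2) and (3) together, namely that a $\leq I$-split surjection onto $Z$ from something in $\mathcal{S}$ plus a $\leq J$-split injection of $Z$ into something in $\mathcal{S}'$ witnesses $Z \in (\mathcal{S} \cap \cdots)_{\leq IJ}$ when $\mathcal{S} = \mathcal{S}'$; here $\mathcal{S} = \mathcal{R}$ plays both roles.

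I expect the main obstacle to be bookkeeping rather than conceptual: making precise the claim that a $\leq I$-split surjection $A \to Z$ with $A \in \mathcal{R}$ composed with the data of a $\leq J$-split injection $Z \to B$ with $B \in \mathcal{R}$ forces $Z \in \mathcal{R}_{\leq IJ}$, directly from \Cref{propisogeny} — one picks generators $x_i$ of $I$ and $y_j$ of $J$, uses the splittings for each $x_i$ and each $y_j$, and checks that the products $x_i y_j$ generate $IJ$ and are realized as $Z \to A \to Z$ composed appropriately, or as $Z \to B \to \cdots$; this is exactly the bilinear pairing of splittings. The only place one must be careful is that in $\fun(\Delta^1, \mathcal{C})$ the $\hom$-spaces are given by the fiber sequence \eqref{HominDelta1C}, so "split" must be verified at the level of $\pi_0$ of these fibers, which is precisely what \Cref{arrowlem} (proved via \Cref{fivelem}) already packages. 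Once those two applications of \Cref{arrowlem} are in hand, the composite lands in $\mathcal{R}_{\leq IJ}$ and the proof is complete.
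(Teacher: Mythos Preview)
Your proposal identifies the right tools (\Cref{newcritisog} and \Cref{arrowlem}) but the assembly does not go through as written. The central claim you aim for --- ``a $\leq I$-split surjection $A \to Z$ with $A \in \mathcal{R}$ together with a $\leq J$-split injection $Z \to B$ with $B \in \mathcal{R}$ forces $Z \in \mathcal{R}_{\leq IJ}$'' --- is not the issue (indeed, either datum alone already gives $Z \in \mathcal{R}_{\leq I} \subset \mathcal{R}_{\leq IJ}$ or $Z \in \mathcal{R}_{\leq J} \subset \mathcal{R}_{\leq IJ}$). The problem is that you never construct such an $A$ or $B$. The objects your two applications of \Cref{arrowlem} actually produce are $(P \to Y')$ and $(X' \to Q)$, and neither lies in $\mathcal{R}$: in the first, $Y'$ is only in $\mathcal{Q}_{\leq J}$; in the second, $X'$ is only in $\mathcal{P}_{\leq I}$. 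There is no map $(P \to Q) \to (X' \to Y')$ at all (as you noticed), and lifting $P \to Y'$ through some $\leq J$-split surjection $Q_0 \twoheadrightarrow Y'$ to land in $\mathcal{R}$ is not possible in a general additive $\infty$-category.

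The fix --- and this is exactly what the paper does --- is to base both applications of \Cref{arrowlem} at the \emph{same} intermediate object $(P \to Y')$. First, the map $(P \to Y') \to (P \to Q)$ is a $\leq J$-split injection by \Cref{arrowlem} (second case), and $(P \to Q) \in \mathcal{R}$, so $(P \to Y') \in \mathcal{R}_{\leq J}$ by \Cref{newcritisog}. Second, the map $(P \to Y') \to (X' \to Y')$ is a $\leq I$-split surjection by \Cref{arrowlem} (first case), hence $(X' \to Y') \in (\mathcal{R}_{\leq J})_{\leq I} \subset \mathcal{R}_{\leq IJ}$. The nesting $(\mathcal{R}_{\leq J})_{\leq I} \subset \mathcal{R}_{\leq IJ}$ is the piece of logic you were missing; it replaces your attempt to combine a surjection-from-$\mathcal{R}$ and an injection-to-$\mathcal{R}$ directly at $(X' \to Y')$. (One should also reduce to $I, J$ finitely generated at the outset, so that \Cref{newcritisog} applies.)
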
 
\begin{proof} 
Without loss of generality, we can assume $I, J$ are finitely generated. 
By assumption and \Cref{newcritisog}, there is a $\leq I$-split surjection $W' \to X'$ with $W' \in
\mathcal{P}$. 
We then get a map of arrows $(W' \to Y') \to (X' \to Y')$ which is necessarily
a $\leq I$-split surjection in 
$\fun(\Delta^1, \mathcal{C})$ by \Cref{arrowlem}. 
Therefore, it suffices to show that $(W' \to Y') \in \mathcal{R}_{\leq J}$. 
But (by \Cref{newcritisog} again) we have a $\leq J$-split injection $Y' \to Z'$ with $Z' \in \mathcal{P}$. 
We get a map $(W' \to Y') \to (W' \to Z')$ which is a $\leq J$-split injection
via 
\Cref{arrowlem}. However, $(W' \to Z') \in \mathcal{R}$, so we conclude. 
\end{proof} 

Our main application of 
\Cref{arrowIJ} is that the $\leq I$-construction behaves well with respect to
extensions. 

\begin{corollary} 
\label{Iextensions}
Suppose $\mathcal{C}$ is an $R$-linear stable $\infty$-category, and let $I , J
\subset \pi_0(R)$ be finitely generated ideals. 
Let $\mathcal{P}, \mathcal{Q} , \mathcal{R}$ be full additive,
idempotent-complete subcategories. 
Suppose the cofiber of any map with source in $\mathcal{P}$ 
and target in $\mathcal{Q}$ belongs to $\mathcal{R}$. 
Then the cofiber of any map with source in $\mathcal{P}_{\leq I}$ and target in
$\mathcal{Q}_{\leq J}$ belongs to $\mathcal{R}_{\leq IJ}$. 
\end{corollary}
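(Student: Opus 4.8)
\textbf{Proof plan for \Cref{Iextensions}.} The plan is to reduce the statement about cofibers of arbitrary maps to the already-established \Cref{arrowIJ} by reinterpreting cofibers as a functor on an arrow category. First I would form the arrow $\infty$-category $\fun(\Delta^1, \mathcal{C})$ and observe that the cofiber functor $\mathrm{cofib}\cl \fun(\Delta^1,\mathcal{C}) \to \mathcal{C}$ is an $R$-linear exact functor. Let $\mathcal{R}' \subset \fun(\Delta^1,\mathcal{C})$ be the full subcategory of arrows $X \to Y$ with $X \in \mathcal{P}$ and $Y \in \mathcal{Q}$, as in \Cref{arrowIJ}; this is a full additive, idempotent-complete subcategory. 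By hypothesis, $\mathrm{cofib}$ carries $\mathcal{R}'$ into $\mathcal{R}$. By the sorites in \Cref{Iisogsorite} (functoriality of the $\leq I$-construction under $R$-linear functors), $\mathrm{cofib}$ therefore carries $\mathcal{R}'_{\leq IJ}$ into $\mathcal{R}_{\leq IJ}$.

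Next I would take an arbitrary map $f\cl X' \to Y'$ with $X' \in \mathcal{P}_{\leq I}$ and $Y' \in \mathcal{Q}_{\leq J}$, viewed as an object of $\fun(\Delta^1,\mathcal{C})$. By \Cref{arrowIJ}, this object lies in $\mathcal{R}'_{\leq IJ}$. Applying the previous paragraph, $\mathrm{cofib}(f) \in \mathcal{R}_{\leq IJ}$, which is exactly the conclusion. The only point requiring a small amount of care is that $\mathcal{R}'$ is closed under finite direct sums and retracts inside $\fun(\Delta^1,\mathcal{C})$ — but this is immediate since $\mathcal{P}$ and $\mathcal{Q}$ are assumed to be so in $\mathcal{C}$, and limits and colimits in the arrow category are computed componentwise.

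I do not anticipate a genuine obstacle here: this corollary is a formal consequence of \Cref{arrowIJ} together with the elementary functoriality in \Cref{Iisogsorite}, and all the real content (the interaction of the $\leq I$-construction with the arrow category and with $\leq I$-split surjections/injections) has already been absorbed into \Cref{arrowlem} and \Cref{arrowIJ}. The mild subtlety flagged earlier in the excerpt — that $\mathrm{Ho}(\fun(\Delta^1,\mathcal{C})) \neq \fun(\Delta^1, \mathrm{Ho}(\mathcal{C}))$ — does not intervene, since we only need that $\mathrm{cofib}$ is an $R$-linear functor of $R$-linear additive (indeed stable) $\infty$-categories, which it manifestly is.
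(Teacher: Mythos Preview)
Your proposal is correct and follows essentially the same approach as the paper: both reduce to \Cref{arrowIJ} by viewing the cofiber as an $R$-linear functor $\fun(\Delta^1,\mathcal{C}) \to \mathcal{C}$, noting it carries the subcategory of arrows with source in $\mathcal{P}$ and target in $\mathcal{Q}$ into $\mathcal{R}$, and then applying the functoriality of the $\leq IJ$-construction. The paper's proof is essentially a terse version of what you wrote, with your $\mathcal{R}'$ called $\mathcal{W}$.
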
 
\begin{proof} 
This is a consequence of \Cref{arrowIJ}. 
We consider the cofiber functor $\mathrm{cofib} \cl \fun(\Delta^1, \mathcal{C}) \to
\mathcal{C}$. 
Let $\mathcal{W} \subset \fun(\Delta^1, \mathcal{C})$ be the subcategory of
arrows with source in $\mathcal{P}$ and target in $\mathcal{Q}$. 
The cofiber functor carries $\mathcal{W}$ into $\mathcal{R}$ and therefore
$\mathcal{W}_{\leq IJ}$ into $\mathcal{R}_{\leq IJ}$. By \Cref{arrowIJ}, any
arrow with source in $\mathcal{P}_{\leq I} $ and target in $\mathcal{Q}_{\leq
J}$ belongs to $\mathcal{W}_{\leq IJ}$. 
\end{proof} 

\begin{corollary} 
\label{leqIinfinitythick}
Suppose $\mathcal{C}$ is an $R$-linear stable $\infty$-category and $\mathcal{P}
\subset \mathcal{C}$ a thick  subcategory. 
Then $\mathcal{P}_{\leq I^\infty} \subset \mathcal{C}$ is also a thick
subcategory.  \qed
\end{corollary}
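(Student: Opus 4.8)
The plan is to verify that $\mathcal{P}_{\leq I^\infty}$ has the four properties that characterize a thick subcategory of a stable $\infty$-category: it contains a zero object, and it is closed under shifts in both directions, under cofibers, and under retracts. Throughout I will assume $I$ is finitely generated, which is the only case we use; then every power $I^n$ is finitely generated as well, so that \Cref{Iextensions} applies below. (For arbitrary $I$ one would simply restrict the statement to the finitely generated case, or re-derive the one nonformal clause — closure under cofibers — by hand.)

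First I would record that $\mathcal{P}_{\leq I^\infty} = \bigcup_{n \geq 0} \mathcal{P}_{\leq I^n}$ is an increasing, hence filtered, union: since $I^{n+1} \subseteq I^n$, the defining condition of $\mathcal{P}_{\leq I^{n+1}}$ is weaker than that of $\mathcal{P}_{\leq I^n}$, so $\mathcal{P}_{\leq I^n} \subseteq \mathcal{P}_{\leq I^{n+1}}$. Next, each $\mathcal{P}_{\leq I^n}$ is a full additive, idempotent-complete subcategory containing $0$; this is immediate from \Cref{propisogeny}, by pre- and post-composing the witnessing maps into $\mathcal{P}$ with the inclusion/retraction of a summand, or with the structure maps of a direct sum. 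Consequently $\mathcal{P}_{\leq I^\infty}$ contains $0$, is closed under finite direct sums, and is closed under retracts — a retract of an object of $\mathcal{P}_{\leq I^\infty}$ already lies in some $\mathcal{P}_{\leq I^n}$, hence in $\mathcal{P}_{\leq I^n} \subseteq \mathcal{P}_{\leq I^\infty}$. Closure under $\Sigma^{\pm 1}$ is equally routine: $\mathcal{P}$ is closed under $\Sigma^{\pm 1}$ because it is thick, so applying \Cref{Iisogsorite} to the $R$-linear functors $\Sigma$ and $\Sigma^{-1}$ shows each $\mathcal{P}_{\leq I^n}$, and hence $\mathcal{P}_{\leq I^\infty}$, is stable under $\Sigma^{\pm 1}$.

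The one step with genuine content, and the step I expect to be the crux, is closure under cofibers, where I would invoke \Cref{Iextensions} with all three of $\mathcal{P}, \mathcal{Q}, \mathcal{R}$ taken to be the given thick subcategory: its hypothesis — that the cofiber of any map internal to the subcategory lies again in the subcategory — holds precisely because a thick subcategory of a stable $\infty$-category is closed under cofibers. So, given a map $f \cl X \to Y$ with $X \in \mathcal{P}_{\leq I^n}$ and $Y \in \mathcal{P}_{\leq I^m}$, \Cref{Iextensions} puts $\mathrm{cofib}(f)$ into $\mathcal{P}_{\leq I^n I^m} = \mathcal{P}_{\leq I^{n+m}} \subseteq \mathcal{P}_{\leq I^\infty}$; the essential point is the multiplicativity $I^n I^m = I^{n+m}$, so that iterating the isogeny construction only moves the exponent by a finite amount. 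Combining the four closure properties exhibits $\mathcal{P}_{\leq I^\infty}$ as a stable subcategory closed under retracts, i.e.\ a thick subcategory. I do not anticipate a real obstacle beyond this: once \Cref{Iextensions} is in hand, the rest is bookkeeping about the filtered union together with the ideal identity $I^n I^m = I^{n+m}$.
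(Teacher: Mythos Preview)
Your proposal is correct and follows exactly the approach the paper intends: the paper gives no proof beyond the \qed, signaling that the result is an immediate consequence of \Cref{Iextensions}, which is precisely what you invoke for the only nontrivial closure property (cofibers), together with the identity $I^n I^m = I^{n+m}$. Your caveat about assuming $I$ finitely generated is appropriate, since \Cref{Iextensions} is stated under that hypothesis and the paper works in that setting throughout.
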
 

\newcommand{\I}{\mathcal{I}}
\begin{proposition}[Characterization of $\leq I$-compact objects] Let
$\mathcal{C}$ be a compactly generated $R$-linear additive $\infty$-category. Then the
following are equivalent for $X \in \mathcal{C}$: 
\begin{enumerate}[a)]
\item $X$ is $\leq I$-compact.  
\item For every filtered system $Y_i, i \in \I$ in $\mathcal{C}$, the map 
\begin{equation}  
\label{cclim1}
\varinjlim \pi_0 \hom_{\mathcal{C}}( X, Y_i) \to  \pi_0 \hom_R (X,
\varinjlim Y_i) \end{equation} 
is an $\leq I$-isogeny of $\pi_0 R$-modules. 
\item 
For every filtered system $Y_i, i \in \mathcal{I}$ in $\mathcal{C}$, the map 
 \begin{equation} \label{cclim2} \varinjlim  \hom_{\mathcal{C}}( X, Y_i) \to
 \hom_{\mathcal{C}} (X,
 \varinjlim Y_i)  \end{equation}
is an $\leq I$-isogeny of $ R$-modules. 
\end{enumerate}
\end{proposition}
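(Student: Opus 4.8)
The plan is to prove the cycle of implications $(a) \Rightarrow (c) \Rightarrow (b) \Rightarrow (a)$, reducing in each case to the analogous (well-known) statement for honestly compact objects via the isogeny formalism developed above.

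First, for $(a) \Rightarrow (c)$: if $X$ is $\leq I$-compact, then for each $a \in I$ there is a compact object $X_0$ together with maps $X \to X_0 \to X$ composing to multiplication by $a$. For a genuinely compact object $X_0$, the map $\varinjlim \hom_{\mathcal{C}}(X_0, Y_i) \to \hom_{\mathcal{C}}(X_0, \varinjlim Y_i)$ is an equivalence (this is the definition of compactness in the $\infty$-categorical sense for filtered colimits; since $\mathcal{C}$ is compactly generated and additive, filtered colimits exist and are computed "pointwise" on mapping spectra up to the usual subtleties, which I will cite from \cite[Sec.~5.3]{HTT} or \cite{HA}). Applying the natural transformation $\eqref{cclim2}$ to the factorization $X \xrightarrow{f} X_0 \xrightarrow{g} X$ and using functoriality, one gets that $a$ acting on the cofiber of $\eqref{cclim2}$ factors through the cofiber for $X_0$, which is zero; hence $a$ annihilates the cofiber. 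Running this for each generator of $I$ (or more precisely, viewing $\eqref{cclim2}$ as a natural transformation of functors and invoking Remark~\ref{Iisogsorite} applied to the evaluation/cofiber functors) shows the map is a $\leq I$-isogeny. The cleanest packaging is: the assignment $X \mapsto \bigl(\varinjlim \hom(X, Y_i) \to \hom(X, \varinjlim Y_i)\bigr)$ is an $R$-linear functor from $\mathcal{C}^{op}$ (or rather, it fits into the arrow category $\fun(\Delta^1, \md(R))$) carrying compact objects to equivalences, so by Remark~\ref{Iisogsorite} it carries $\leq I$-compact objects to $\leq I$-isogenies.

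The implication $(c) \Rightarrow (b)$ is immediate: apply $\pi_0$ to the $\leq I$-isogeny $\eqref{cclim2}$. Since $\pi_0$ of a $\leq I$-isogeny of $R$-modules is a $\leq I$-isogeny of $\pi_0(R)$-modules (the first direction of \Cref{leqIisogchar} gives maps $g_a$ realizing multiplication by $a$ on both composites, and $\pi_0$ preserves this), and $\pi_0$ commutes with filtered colimits, we obtain that $\eqref{cclim1}$ is a $\leq I$-isogeny.

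Finally, $(b) \Rightarrow (a)$ is where the real content lies, and I expect this to be the main obstacle. Here I would use that $\mathcal{C}$ is compactly generated: write $X$ as a filtered colimit $X \simeq \varinjlim X_j$ where the $X_j$ range over a filtered system with... actually the standard trick is to consider the identity map $\mathrm{id}_X \in \hom_{\mathcal{C}}(X, X)$ and the filtered system of compact objects mapping to $X$. More precisely, since $\mathcal{C}$ is compactly generated, $X = \varinjlim_{(X_0 \to X)} X_0$ over the (filtered) comma category of compact objects over $X$. Applying hypothesis $(b)$ with $Y_i$ this very diagram, we learn that $\varinjlim \pi_0\hom(X, X_0) \to \pi_0 \hom(X, X) = \pi_0\hom(X, \varinjlim X_0)$ is a $\leq I$-isogeny, in particular $\leq I$-surjective (using the first part of \Cref{leqIisogchar} again to extract the relevant maps); so for each $a \in I$, the element $a \cdot \mathrm{id}_X$ lifts to some $\pi_0\hom(X, X_0)$ for a compact $X_0$, i.e.\ there is $f \colon X \to X_0$ with $X_0$ compact and $g \colon X_0 \to X$ (the structure map) such that $g \circ f = a \cdot \mathrm{id}_X$. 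That is exactly the defining condition for $X \in \mathcal{P}_{\leq I}$. The delicate point to get right is the filteredness of the comma category of compact objects over $X$ and the identification of $X$ with the colimit over it — this is standard in a compactly generated setting but requires $\mathcal{C}$ to be idempotent-complete and the compact objects to be closed under finite colimits/retracts, which holds here; I would cite \cite[Prop.~5.3.5.11]{HTT} or its additive analogue. One should also note that for part $(b)$ it suffices to test on this single diagram, so no separate argument handling arbitrary filtered systems is needed in this direction.
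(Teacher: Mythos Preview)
Your proof is correct and follows essentially the same approach as the paper: the cycle $(a) \Rightarrow (c) \Rightarrow (b) \Rightarrow (a)$, using for $(a) \Rightarrow (c)$ the $R$-linear functor $M' \mapsto \bigl(\varinjlim \hom_{\mathcal{C}}(M', Y_i) \to \hom_{\mathcal{C}}(M', \varinjlim Y_i)\bigr)$ and Remark~\ref{Iisogsorite}, and for $(b) \Rightarrow (a)$ writing $X$ as a filtered colimit of compact objects and lifting $a \cdot \mathrm{id}_X$ through the $\leq I$-isogeny. Your write-up simply unpacks more of the details (the filteredness of the comma category, the use of \Cref{leqIisogchar} to extract $\leq I$-surjectivity) that the paper leaves implicit.
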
 
\begin{proof} 
Suppose a). 
For every filtered system $Y_i, i \in I$, we define a functor on 
$\mathcal{C}$ with values in arrows in $R$-modules,  given by
$M' \mapsto  ( \varinjlim \hom_{\mathcal{C}}(X, Y_i) \to \hom_{\mathcal{C}}(X,
\varinjlim Y_i))
$.  
This carries compact objects in $\mathcal{C}$ to isomorphisms, so it carries
$\leq I$-compact objects to $\leq I$-isogenies. Thus, a) implies c), and c)
clearly implies b). 
To see that b) implies a), we write $X$ as a filtered colimit of compact objects
and use that \eqref{cclim1} is a $\leq I$-isogeny to exhibit $X$ as $\leq
I$-compact. 
\end{proof}

\subsection{Modules}
We now specialize to the case where $\mathcal{C}$ is given by modules over a
connective $E_\infty$-ring $R$ (or some appropriate subcategory), and begin by
reviewing some finiteness conditions. 
\begin{definition}[$\leq I$-finitely generated modules] 
Suppose $\mathcal{C} = \mdd(R)$  is the category of discrete $R$-modules, and $\mathcal{P}$ is the subcategory of finitely
generated modules.   Then we say that a discrete  $R$-module $M$ is \emph{$\leq
I$-finitely generated}
if it belongs to $\mathcal{P}_{\leq I}$. It is not difficult to see that this holds if
and only if $M$ is $\leq I$-isogenous to a finitely generated module. 
\end{definition}

\begin{definition}[Perfect and almost perfect modules] 
Let $\perf(R) \subset \md(R)$ denote the $\infty$-category of perfect
$R$-modules, or equivalently the compact objects in $\md(R)$.  
We let $\aperf(R) \subset \md(R)$ denote the subcategory of 
almost perfect (or pseudocoherent) $R$-modules (see \cite[Sec.~7.2.4]{HA}; in
the discrete case the definition is due to \cite[Exp.~I]{SGA6}). 
Given a qcqs spectral scheme $X$, we let $\qcoh(X)$ denote the $\infty$-category
of quasi-coherent modules on $X$, and $\perf(X), \aperf(X) \subset \qcoh(X)$ the
associated subcategories of perfect and almost perfect objects. 
\end{definition}

Our goal is to study $\leq I^\infty$-versions of perfectness and almost
perfectness. 
For this, we will need to use the following intermediate property. 
We review the notion of being ``perfect to order $n$,'' as in 
\cite[Sec.~2.7]{SAG} or \cite[Tag 064N]{stacks-project}, and then formulate its
$\leq I$-analog. 

\begin{definition} 
We say that a bounded-below object $M \in \md(R)$ is \emph{perfect to order $n$} 
if  the following equivalent (by \cite[Prop. 2.7.0.4]{SAG}) conditions are satisfied: 
\begin{enumerate}
\item For every filtered system $N_i, i \in \I$ in $\md(R)_{\leq n}$, the natural
map of $R$-module spectra
\begin{equation} \label{colimmap}  \varinjlim \hom_R( M, N_i) \to \hom_R( M, \varinjlim N_i)
\end{equation}
has homotopy fiber in $\md(R)_{<0}$: that is, it induces an isomorphism on
$\pi_i$ for $i > 0$ and an injection on $\pi_0$. 
\item
For every 
filtered system $N_i, i \in \I$ in $\md(R)_{\leq n }$ such that each transition
map $N_i \to N_j$ induces an injection on $\pi_n$, 
the natural map 
\eqref{colimmap} is an isomorphism on connective covers. 
\item For every filtered system $N_i, i \in \I$ of \emph{discrete}
$\pi_0(R)$-modules, the natural map 
\[ \varinjlim \mathrm{Ext}^i_R(M, N_i) \to \mathrm{Ext}^i_R(M, \varinjlim N_i)  \]
is an isomorphism for $i < n$ and an injection for $i = n$. 
\end{enumerate}
The condition of being perfect to order $n$ only depends on the truncation $\tau_{\leq n} M \in
\md(R)_{\leq n}$, so we will often view being ``perfect to order $n$'' as a property of objects in 
$\md(R)_{\leq n}$. 
Note finally that 
a bounded-below $R$-module $M$ is almost perfect if and only if it is perfect to
each order $n$. 
\end{definition}

\begin{example} 
An object $M \in \md(R)_{\geq 0}$ is perfect to order zero if and only if
$\pi_0(M)$ is a finitely generated $\pi_0(R)$-module. 
\end{example} 
\begin{example} 
\label{ex:perftoordern}
Let $M$ be a compact object of $\md(R)_{\leq n}$ (so $\tau_{\leq n}$ of a
perfect 
$R$-module). 
Choose a surjection of discrete $\pi_0(R)$-modules, $\pi_n M
\twoheadrightarrow M_n'$; then the pushout
$M \sqcup_{ (\pi_n M)[n] } M_n'[n]$ is an example of an object in
$\md(R)_{[0, n]}$ which is perfect
to order $n$. 
This is straightforward to see using the above criteria. 
Conversely, any object of $\md(R)_{[0, n]}$ which is perfect to order $n$ arises
in the above fashion. See \cite[Cor. 2.7.2.2]{SAG}.  
\end{example}

\begin{proposition} 
\label{extperfecttoorder}
Let $M' \to M \to M''$ be a cofiber sequence of bounded-below $R$-modules. 
Suppose $M$ is 
perfect to order $ n$. Then the following are equivalent: 
\begin{enumerate}
\item $M'$ is perfect to order $n-1$.  
\item $M''$ is perfect to order $n$. 
\end{enumerate}
\end{proposition}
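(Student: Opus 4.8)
The plan is to work with the characterization of ``perfect to order $n$'' in terms of the behavior of $\varinjlim \hom_R(-,N_i)$ applied to filtered systems, together with the long exact sequence attached to the cofiber sequence $M' \to M \to M''$. Fix a filtered system $N_i, i \in \mathcal{I}$, of discrete $\pi_0(R)$-modules; by criterion (3) it suffices to compare the behavior of the natural maps $\varinjlim \mathrm{Ext}^j_R(-, N_i) \to \mathrm{Ext}^j_R(-, \varinjlim N_i)$ for the three modules. Applying $\hom_R(-, N_i)$ and then $\hom_R(-, \varinjlim N_i)$ to the cofiber sequence and passing to the (exact) filtered colimit of the first, I get a map of long exact sequences
\[
\xymatrix@C=1.4em{
\cdots \ar[r] & \varinjlim \mathrm{Ext}^j_R(M'',N_i) \ar[d] \ar[r] & \varinjlim \mathrm{Ext}^j_R(M,N_i) \ar[d] \ar[r] & \varinjlim \mathrm{Ext}^j_R(M',N_i) \ar[d] \ar[r] & \cdots \\
\cdots \ar[r] & \mathrm{Ext}^j_R(M'', \varinjlim N_i) \ar[r] & \mathrm{Ext}^j_R(M, \varinjlim N_i) \ar[r] & \mathrm{Ext}^j_R(M', \varinjlim N_i) \ar[r] & \cdots
}
\]
where I have used that filtered colimits are exact in $\pi_0(R)$-modules so the top row stays exact, and that $\mathrm{Ext}^j_R(M,\varinjlim N_i) \cong \varinjlim$ commutes appropriately is exactly the hypothesis on $M$ only for $j \le n$ (iso for $j<n$, injective for $j=n$).

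Now I would simply chase this ladder. The vertical map for $M$ is an isomorphism in degrees $j<n$ and injective in degree $n$, by hypothesis. For the implication (1)$\Rightarrow$(2): assuming the $M'$-column is an isomorphism for $j<n-1$ and injective for $j=n-1$, a diagram chase (a version of the five/four lemma, e.g.\ \Cref{fivelem} or its standard variants) shows the $M''$-column is an isomorphism for $j<n$ and injective for $j=n$; one has to be a little careful that the relevant map for $M'$ in degree $n-1$ being only injective still suffices to conclude injectivity for $M''$ in degree $n$, which it does because of where the maps sit in the exact sequence. The implication (2)$\Rightarrow$(1) is the same chase run the other way: knowing the $M$- and $M''$-columns, deduce the $M'$-column is an iso for $j<n-1$ and injective for $j=n-1$. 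Boundedness below of all three modules (automatic from the cofiber sequence and boundedness of $M$, $M''$ on one side; I should note $M'$ is bounded below since $M$ and $M''$ are, using the long exact sequence on homotopy) guarantees criterion (3) is applicable.

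One subtlety worth flagging: criterion (3) as stated is about discrete $\pi_0(R)$-modules $N_i$, and the map in degree $j$ involves $\mathrm{Ext}^j_R$ which can be nonzero in negative degrees only if $M$ is not connective — but since we only ever test in the range $j \le n$ and the modules are bounded below, this causes no trouble; I would just remark that replacing $M$ by a shift reduces to the connective case if desired. The main obstacle is really bookkeeping: making sure the ``injective at the top degree'' boundary case propagates correctly through the four-lemma in both directions, rather than any conceptual difficulty. I would also mention that this is precisely the $\mathrm{Ext}$-analog of the standard fact that ``perfect to order $n$'' for connective modules corresponds to the existence of a presentation with finitely many cells in degrees $\le n$, and indeed one could alternatively prove the proposition cell-by-cell using \Cref{ex:perftoordern}, but the $\mathrm{Ext}$ criterion gives the cleanest argument.
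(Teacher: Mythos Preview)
Your proof is correct and follows the same idea as the paper's: both exploit the long exact sequence obtained by applying $\hom_R(-, N_i)$ to the cofiber sequence and compare the resulting filtered-colimit maps. The only difference is packaging: the paper works with the functor $F(Q) = \mathrm{fib}(\varinjlim \hom_R(Q, N_i) \to \hom_R(Q, \varinjlim N_i))$ and the fiber sequence $F(M'') \to F(M) \to F(M')$, reducing the question to a connectivity statement (namely $F(M'') \in \md(R)_{<0}$ iff $F(M') \in \md(R)_{\leq 0}$, given $F(M) \in \md(R)_{<0}$), whereas you unwind this into an explicit four/five-lemma chase on $\mathrm{Ext}$ groups via criterion~(3). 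The spectral formulation avoids the bookkeeping at the boundary degree that you flag, but the content is identical.
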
 
\begin{proof} 
This follows from a diagram-chase. 
Let $N_i, i \in I$ be a filtered system in $\md(R)_{[0, n]}$. 
Let $F$ be the functor which carries an $R$-module $Q$ to $\mathrm{fib}(
\varinjlim \hom_R(Q, N_i) \to \hom_R( Q, \varinjlim N_i))$. 
By assumption, $F(M) \in \md(R)_{<0}$; since we have a fiber sequence 
$F(M'') \to F(M) \to F(M')$, the result now follows easily. 
\end{proof}

\begin{definition}[Modules $\leq I$-perfect to order $n$] 
\label{Iperfectordern}
Let $M \in \md(R)_{\leq n}$. 
If $\mathcal{P} \subset \md(R)_{\leq n}$ is the subcategory of  objects which
are perfect to order $n$, 
then we will say that $M$ is \emph{$\leq I$-perfect to order $n$}  if $M$ belongs
to 
$\mathcal{P}_{\leq I}$. 
Given an arbitrary $R$-module $M$, we will say that 
$M$ is 
\emph{$\leq I$-perfect to order $n$}  if $\tau_{\leq n} M$ is. 
An object is \emph{$\leq I^\infty$-perfect to order $n$} if it is $\leq I^r$-perfect to
order $n$, for some $r$. 
\end{definition} 

\begin{proposition} 
Let $M \in \md(R)$ be $\leq I^\infty$-perfect to order $n$. 
Let $N \in \md(R)_{\leq n-1}$. 
Then for $t \in I$, the map 
$\tau_{\geq 0} \hom_{R}( M, N)[1/t] \to  \tau_{\geq 0} \hom_R( M[1/t], N[1/t])$ is an
isomorphism. 
\label{invertingtinI}
\end{proposition}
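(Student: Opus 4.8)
The plan is to reduce to the case that $M$ is perfect to order $n$, and then bootstrap to the general $\leq I^\infty$-case by an isogeny argument; the hypothesis $N\in\md(R)_{\leq n-1}$ is used only to guarantee $\pi_nN=0$. First I would perform the following reductions. For any $P\in\md(R)_{\leq n}$, the fiber $\tau_{\geq n+1}M$ of $M\to\tau_{\leq n}M$ is $(n+1)$-connective, so $\hom_R(\tau_{\geq n+1}M,P)$ is $(-1)$-coconnective and hence has vanishing connective cover; from the fiber sequence $\hom_R(\tau_{\leq n}M,P)\to\hom_R(M,P)\to\hom_R(\tau_{\geq n+1}M,P)$ one gets $\tau_{\geq 0}\hom_R(M,P)\simeq\tau_{\geq 0}\hom_R(\tau_{\leq n}M,P)$. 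Applying this with $P=N$ and $P=N[1/t]$ (note $N[1/t]\in\md(R)_{\leq n-1}$, since $\md(R)_{\leq n-1}$ is closed under filtered colimits), and using the telescope $M[1/t]=\varinjlim(M\xrightarrow{t}M\xrightarrow{t}\cdots)$ together with the $t$-locality of $\hom_R(M,N[1/t])$ (which gives an equivalence $\hom_R(M[1/t],N[1/t])\xrightarrow{\sim}\hom_R(M,N[1/t])$ by restriction along $M\to M[1/t]$), I would identify the map of the statement with the natural map
\[ \phi_M\colon \bigl(\tau_{\geq 0}\hom_R(M,N)\bigr)[1/t]\longrightarrow \tau_{\geq 0}\hom_R(M,N[1/t]), \]
which makes sense because $\tau_{\geq 0}\hom_R(M,N[1/t])$ is again $t$-local. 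It then suffices to treat $M\in\md(R)_{\leq n}$, and by hypothesis $M\in\mathcal{P}_{\leq I^r}$ for some $r$, where $\mathcal{P}\subset\md(R)_{\leq n}$ denotes the subcategory of objects perfect to order $n$.

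Next I would handle the case $M\in\mathcal{P}$ directly. Apply condition (2) in the definition of ``perfect to order $n$'' to the filtered system $(N_i)_{i\in\mathbb{N}}$ with $N_i=N$ and all transition maps equal to multiplication by $t$: the requirement that the transition maps be injective on $\pi_n$ is automatic since $\pi_nN=0$. Hence the map \eqref{colimmap} is an isomorphism on connective covers. Since $\varinjlim_iN_i=N[1/t]$, since $\varinjlim_i\hom_R(M,N_i)=\hom_R(M,N)[1/t]$ (the transition maps on $\hom_R(M,N)$ being multiplication by $t$), and since $\tau_{\geq 0}$ commutes with filtered colimits, this says precisely that $\phi_M$ is an equivalence.

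Finally I would deduce the general case by an isogeny argument. The assignment $Q\mapsto\phi_Q$ is a contravariant $R$-linear functor $T$ from $\md(R)_{\leq n}$ to $\fun(\Delta^1,\md(R))$, and by the previous step it carries objects perfect to order $n$ to equivalences (a full subcategory closed under finite direct sums and retracts). Given $M\in\mathcal{P}_{\leq I^r}$ and $a\in I^r$, choose $X_0\in\mathcal{P}$ and maps $f\colon M\to X_0$, $g\colon X_0\to M$ with $g\circ f$ multiplication by $a$; applying $T$ yields $T(M)\xrightarrow{T(g)}T(X_0)\xrightarrow{T(f)}T(M)$ with composite multiplication by $a$ and $T(X_0)$ an equivalence. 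By \Cref{propisogeny}, $\phi_M=T(M)$ lies in $(\text{equivalences})_{\leq I^r}$, i.e.\ is a $\leq I^r$-isogeny. By \Cref{leqIisogchar}, taking $a=t^r\in I^r$, there is a map $\psi$ in the reverse direction with $\phi_M\circ\psi$ and $\psi\circ\phi_M$ both multiplication by $t^r$; since $t^r$ acts invertibly on the source (a localization at $t$) and on the target ($t$-local), $\phi_M$ is simultaneously a split epimorphism and a split monomorphism, hence an equivalence.

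The substantive input is all in the second step (the appeal to ``perfect to order $n$'') and the third (the isogeny trick); the place demanding real care, and the main obstacle, is the bookkeeping of the first step: identifying the map of the statement with $\phi_M$, verifying that neither side changes upon replacing $M$ by $\tau_{\leq n}M$, and checking the $t$-locality statements that make $\hom_R(M[1/t],N[1/t])\simeq\hom_R(M,N[1/t])$ hold. The two small facts one leans on repeatedly — $t$-locality of $\tau_{\geq 0}\hom_R(M,N[1/t])$ and commutation of $\tau_{\geq 0}$ with filtered colimits — are elementary, so once this is in place the argument is formal.
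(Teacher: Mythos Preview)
Your proof is correct and follows essentially the same strategy as the paper: reduce to a truncation of $M$, verify the statement for ``nice'' $M$, and then use the isogeny/retract trick (exploiting that $t^r \in I^r$ acts invertibly on both sides) to pass to the general case. The only cosmetic difference is that the paper reduces all the way to a \emph{perfect} $P$ (for which $\hom_R(P,-)$ commutes with all colimits, so the comparison is immediate) and phrases the final step as ``the map for $M$ is a natural retract of the map for $P$ after inverting $t$'', whereas you stay at ``perfect to order $n$'' and invoke the definition directly with the telescope system $N \xrightarrow{t} N \xrightarrow{t} \cdots$; the retract-of-an-isomorphism argument and your $\leq I^r$-isogeny-becomes-isomorphism argument are two ways of saying the same thing.
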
 
\begin{proof} 
Note that this question depends only on $\tau_{\leq n-1} M$. 
Our assumption implies that $\tau_{\leq n-1} M$ receives a $\leq
I^\infty$-split surjection from $\tau_{\leq n-1}$ of a perfect $R$-module $P$.
Therefore, we find that $\tau_{\geq 0}\hom_R( M, N)[1/t]$ and $ \tau_{\geq 0} \hom_R( M[1/t], N[1/t])$ are 
naturally
retracts of 
$\tau_{\geq 0}\hom_R( P, N)[1/t]$  and $
\tau_{\geq 0}
\hom_R( P[1/t], N[1/t])$. 
Since
$\tau_{\geq 0} \hom_{R}( P, N)[1/t] \to  \tau_{\geq 0} \hom_R( P[1/t], N[1/t])$
is an isomorphism, we obtain the result for $M$ as well. 
\end{proof}

\begin{proposition} 
\label{critforIperfectn}
Let $M \in \md(R)_{\leq n}$ be bounded-below. 
In order for $M$ to be $\leq I$-perfect to order $n$, it is necessary and
sufficient that for every
filtered system $N_i, i \in \I$ in $\md(R)_{\leq n}$ with injective transition
maps on $\pi_n$, the connective cover of the natural map \eqref{colimmap} 
is an $\leq I$-isogeny. 
\end{proposition}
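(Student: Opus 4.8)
The plan is to mirror the proof of the characterization of $\leq I$-compact objects given above. Necessity will follow formally from the compatibility of the $\leq I$-construction with $R$-linear functors (item (3) of the Remark following \Cref{propisogeny}), applied to a colimit-comparison functor. Sufficiency will require first exhibiting $M$ as a filtered colimit of objects perfect to order $n$ along transition maps that are injective on $\pi_n$, and then transporting the identity of $M$ through that presentation.

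For necessity, assume $M$ is $\leq I$-perfect to order $n$ and fix a filtered system $\{N_i\}_{i\in\I}$ in $\md(R)_{\leq n}$ with $\pi_n$-injective transition maps. Consider the contravariant $R$-linear functor $G$ from $\md(R)_{\leq n}$ to $\fun(\Delta^1,\md(R))$ sending $Q$ to the connective cover of the comparison map $\varinjlim_{\I}\hom_R(Q,N_i)\to\hom_R(Q,\varinjlim_{\I} N_i)$. By the second of the equivalent conditions in the definition of ``perfect to order $n$'', $G$ sends objects perfect to order $n$ to equivalences; as equivalences in $\fun(\Delta^1,\md(R))$ are closed under finite direct sums and retracts, and the subcategory $\mathcal{P}\subset\md(R)_{\leq n}$ of objects perfect to order $n$ is too, the cited sorite (which is insensitive to passing to opposite categories) shows that $G$ sends $\mathcal{P}_{\leq I}$ into the subcategory of $\leq I$-isogenies. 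Applied to $M$, this is precisely the statement that the connective cover of \eqref{colimmap} is a $\leq I$-isogeny.

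For sufficiency, assume the colimit condition, and suppose for the moment that $M=\varinjlim_{i\in\I} C_i$ is a filtered colimit of objects $C_i\in\md(R)_{\leq n}$ perfect to order $n$ whose transition maps are injective on $\pi_n$. Applying the hypothesis to $\{C_i\}$ shows that the connective cover of $\varinjlim_{\I}\hom_R(M,C_i)\to\hom_R(M,\varinjlim_{\I} C_i)=\hom_R(M,M)$ is a $\leq I$-isogeny; by \Cref{leqIisogchar}, for each $a\in I$ there is a section of it up to multiplication by $a$, so $a\cdot\mathrm{id}_M\in\pi_0\hom_R(M,M)$ lies in the image of $\varinjlim_{\I}\pi_0\hom_R(M,C_i)$. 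Hence for some $i$ there is a map $h\cl M\to C_i$ whose composite with the structure map $C_i\to M$ is multiplication by $a$; since $C_i\in\md(R)_{\leq n}$ is perfect to order $n$, this exhibits $M$ as belonging to $\mathcal{P}_{\leq I}$. As $a\in I$ was arbitrary, $M$ is $\leq I$-perfect to order $n$.

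The remaining, and main, point is to produce such a presentation; this is the one step with no counterpart in the $\leq I$-compact case. Since $\md(R)_{\leq n}$ is compactly generated by the objects $\tau_{\leq n}P$ with $P$ perfect over $R$, and these are perfect to order $n$, any $M\in\md(R)_{\leq n}$ is a filtered colimit $M=\varinjlim_{\I} C_i$ of such objects; only $\pi_n$-injectivity of the transition maps can fail. To remedy this one replaces $C_i$ functorially in $i$ by $C_i':=\mathrm{cofib}\big(\ker(\pi_nC_i\to\pi_nM)[n]\to C_i\big)$, the map being the natural map $\tau_{\geq n}C_i=(\pi_nC_i)[n]\to C_i$ precomposed with the subobject inclusion $\ker(\pi_nC_i\to\pi_nM)[n]\hookrightarrow(\pi_nC_i)[n]$. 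One checks: $C_i'$ again lies in $\md(R)_{\leq n}$ (its $\pi_{n+1}$ vanishes) and is perfect to order $n$ by \Cref{extperfecttoorder}, since the fiber $\ker(\pi_nC_i\to\pi_nM)[n]$, being concentrated in degree $n$, is perfect to order $n-1$; the new $\pi_n$ is the image of $\pi_nC_i$ in $\pi_nM$, so the transition maps become injective on $\pi_n$; and $\varinjlim_{\I}\ker(\pi_nC_i\to\pi_nM)=\ker(\pi_nM\xrightarrow{\;\mathrm{id}\;}\pi_nM)=0$, whence $\varinjlim_{\I}C_i'=M$. The only delicate part is that the cocone $\{C_i\to M\}$ descends coherently to a cocone $\{C_i'\to M\}$: this holds because the obstruction mapping space $\mathrm{Map}_{\fun(\I,\md(R))}\big(\ker(\pi_nC_\bullet\to\pi_nM)[n],\underline{M}\big)$ is discrete---each $\mathrm{Map}_R(\ker(\pi_nC_i\to\pi_nM)[n],M)$ is discrete as $M\in\md(R)_{\leq n}$---and the relevant element is pointwise zero.
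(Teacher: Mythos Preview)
Your proof is correct and follows essentially the same strategy as the paper's: necessity via the functoriality sorite, sufficiency by presenting $M$ as a filtered colimit of objects perfect to order $n$ with $\pi_n$-injective transition maps and then reading off a $\leq I$-split surjection from some term of the system. The paper is simply terser, invoking \Cref{ex:perftoordern} for the existence of such a presentation and \Cref{newcritisog} to conclude, whereas you spell out the pushout construction (which is exactly the construction of \Cref{ex:perftoordern}) and verify the coherence of the resulting cocone by hand. One small simplification of your ``delicate part'': since $\varinjlim_i K_i = 0$, the space $\mathrm{Map}_{\fun(\I,\md(R))}(K_\bullet[n],\underline{M}) \simeq \mathrm{Map}_{\md(R)}(\varinjlim_i K_i[n], M)$ is contractible, which immediately gives both the nullhomotopy and its essential uniqueness without the discreteness discussion.
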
 
\begin{proof} 
Without loss of generality, we can assume $I$ finitely generated. 
Necessity is clear, since the connective cover of \eqref{colimmap} is an
equivalence for $M$ perfect to order $n$ under our assumptions. 
We can find a filtered system $N_i \in \md(R)_{\leq n}$ with injective
transition maps on $\pi_n$ such that each $N_i$ is perfect to order $n$ and
such that $\varinjlim N_i \simeq M$, 
e.g., using \Cref{ex:perftoordern} and that $M$ is a filtered colimit of
perfect modules. Our assumption now shows that one of the
$N_i$ maps to $M$ via a $\leq I$-split surjection, which is enough to imply the
claim. 
\end{proof} 

\begin{example} 
\label{Iperftozero}
An object $M \in \md(R)$
 is $\leq I$-perfect to order zero if and only if $\pi_0(M)$ is $\leq
 I$-finitely generated. 
\end{example} 

\begin{proposition} 
\label{prop:shiftIperfect}
Let $M' \to M \to M''$ 
be a cofiber sequence in $\md(R)$ with all terms bounded-below. 
Suppose $M$ is perfect to order $n$. Then the following are equivalent: 
\begin{enumerate}
\item $M'$ is $\leq I$-perfect to order $ n-1$.  
\item  $M''$ is $\leq I$-perfect to order $n$. 
\end{enumerate}
\end{proposition}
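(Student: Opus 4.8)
The plan is to reduce, via a shift, to the following assertion, and then prove that by an ``extension'' argument carried out in the arrow $\infty$-category: \emph{for a cofiber sequence $A \to B \to C$ of bounded-below $R$-modules with $A$ perfect to order $n$, $B$ is $\leq I$-perfect to order $n$ if and only if $C$ is.} The point of this reformulation is that the one-order discrepancy between ``$M'$ perfect to order $n-1$'' and ``$M''$ perfect to order $n$'' disappears after the shift, and that the hypothesis on the \emph{middle} term of $M'\to M\to M''$ becomes a hypothesis on an \emph{outer} term, which is exactly what an extension argument wants.

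First the reduction. Since $\mathcal{P}_{\leq I} = \bigcap_\alpha \mathcal{P}_{\leq I_\alpha}$ over the finitely generated subideals $I_\alpha \subseteq I$, both conditions reduce to the case of finitely generated $I$, so assume $I$ finitely generated. The shift is an $R$-linear equivalence $[1]\colon \md(R)_{\leq n-1} \xrightarrow{\sim} \md(R)_{\leq n}$; using $\mathrm{Ext}^i_R(X[1], -) \cong \mathrm{Ext}^{i-1}_R(X, -)$ together with the $\mathrm{Ext}$-formulation of the definition, $[1]$ carries ``perfect to order $n-1$'' bijectively onto ``perfect to order $n$'', hence by \Cref{Iisogsorite} also ``$\leq I$-perfect to order $n-1$'' onto ``$\leq I$-perfect to order $n$''. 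Since $\tau_{\leq n}(M'[1]) \simeq (\tau_{\leq n-1}M')[1]$, statement (1) is equivalent to ``$M'[1]$ is $\leq I$-perfect to order $n$'', and rotating $M' \to M \to M''$ to the cofiber sequence $M \to M'' \to M'[1]$ produces the asserted statement with $A = M$, $B = M''$, $C = M'[1]$ (all bounded-below).

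Now I would prove the asserted statement as follows. Fix a filtered system $N_i$ in $\md(R)_{\leq n}$ with injective transition maps on $\pi_n$, and let $\alpha_X \in \fun(\Delta^1, \md(R))$ be the arrow $\varinjlim_i \hom_R(X, N_i) \to \hom_R(X, \varinjlim_i N_i)$. The functor $X \mapsto \alpha_X$ is exact and contravariant, so it sends $A \to B \to C$ to a cofiber sequence $\alpha_C \to \alpha_B \to \alpha_A$ in $\fun(\Delta^1, \md(R))$; applying $\tau_{\geq 0}$ objectwise — a right adjoint, hence preserving this fiber sequence, which is again a cofiber sequence as $\fun(\Delta^1, \md(R))$ is stable — gives a cofiber sequence $\tau_{\geq 0}\alpha_C \to \tau_{\geq 0}\alpha_B \to \tau_{\geq 0}\alpha_A$. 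Because $A$ is perfect to order $n$, the second of the equivalent conditions in the definition shows $\tau_{\geq 0}\alpha_A$ is an isomorphism, i.e.\ lies in the subcategory $\mathcal{P} \subset \fun(\Delta^1, \md(R))$ of isomorphisms, which is thick (closed under shifts, retracts, and cofibers — the last two by the five-lemma applied objectwise). If $C$ is $\leq I$-perfect to order $n$, then $\tau_{\geq 0}\alpha_C \in \mathcal{P}_{\leq I}$ by \Cref{critforIperfectn}; rewriting the cofiber sequence as $(\tau_{\geq 0}\alpha_A)[-1] \to \tau_{\geq 0}\alpha_C \to \tau_{\geq 0}\alpha_B$ and applying \Cref{Iextensions} with the ideals $(1)$ and $I$ (legitimate since the cofiber of any map $\mathcal{P} \to \mathcal{P}$ lies in $\mathcal{P}$) gives $\tau_{\geq 0}\alpha_B \in \mathcal{P}_{\leq (1)\cdot I} = \mathcal{P}_{\leq I}$; as $N_i$ was arbitrary, $B$ is $\leq I$-perfect to order $n$ by \Cref{critforIperfectn} again. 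Conversely, if $\tau_{\geq 0}\alpha_B \in \mathcal{P}_{\leq I}$, then $(\tau_{\geq 0}\alpha_C)[1] = \mathrm{cofib}(\tau_{\geq 0}\alpha_B \to \tau_{\geq 0}\alpha_A) \in \mathcal{P}_{\leq I\cdot(1)} = \mathcal{P}_{\leq I}$ by \Cref{Iextensions}, hence so does $\tau_{\geq 0}\alpha_C$, and $C$ is $\leq I$-perfect to order $n$.

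The main obstacle is conceptual rather than computational: the $\leq I$-isogeny property of a map is not preserved under arbitrary extensions — only $\leq I^\infty$ is, by \Cref{leqIinfinitythick} — so it is genuinely used here that $M$, equivalently $A$, is perfect to order $n$ \emph{exactly} and not merely up to isogeny, which is what lets one of the two ideals in \Cref{Iextensions} be the unit ideal and thereby keeps the conclusion at $\leq I$ rather than degrading to $\leq I^2$. The other points needing care are purely organizational: the bookkeeping of the one-order shift (dealt with by the $[1]$-reduction), and the verification that the subcategory of isomorphisms in the arrow $\infty$-category is thick so that \Cref{Iextensions} applies to it with $\mathcal{R} = \mathcal{P}$.
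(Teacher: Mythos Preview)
Your reduction to finitely generated $I$ and the shift reformulation are both fine, and so is the observation that $X \mapsto \alpha_X$ carries the cofiber sequence $A \to B \to C$ to a cofiber sequence $\alpha_C \to \alpha_B \to \alpha_A$ in $\fun(\Delta^1,\md(R))$. The gap is the next step: the claim that applying $\tau_{\geq 0}$ objectwise yields again a cofiber sequence in $\fun(\Delta^1,\md(R))$. It is true that $\tau_{\geq 0}\colon \md(R)\to \md(R)_{\geq 0}$ is a right adjoint and hence preserves limits, so $\tau_{\geq 0}\alpha_C \to \tau_{\geq 0}\alpha_B \to \tau_{\geq 0}\alpha_A$ is a fiber sequence \emph{in $\fun(\Delta^1,\md(R)_{\geq 0})$}; but $\md(R)_{\geq 0}$ is not stable, and that is not the same as being a (co)fiber sequence in the stable $\fun(\Delta^1,\md(R))$, which is what \Cref{Iextensions} requires. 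Already in $\md(R)$ the cofiber sequence $R[-1]\to 0\to R$ truncates to $0\to 0\to R$, which is not a cofiber sequence; nothing about the particular arrows $\alpha_X$ prevents this, since their sources and targets can carry arbitrary negative homotopy. Both of your invocations of \Cref{Iextensions} therefore fail as written.

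The paper sidesteps this by applying \Cref{Iextensions} one level up, directly in $\md(R)$: take the full additive, idempotent-complete subcategories $\mathcal{P}_m\subset\md(R)$ of bounded-below objects perfect to order $m$. \Cref{extperfecttoorder} says precisely that the cofiber of any map $\mathcal{P}_{n-1}\to\mathcal{P}_n$ lies in $\mathcal{P}_n$, and (after one rotation) likewise for maps $\mathcal{P}_n\to\mathcal{P}_n$; then \Cref{Iextensions} with one of the two ideals equal to $(1)$ gives both implications immediately, with no arrow-category detour and no truncation. One must check that for bounded-below $M$ the paper's definition of ``$\leq I$-perfect to order $n$'' via $\tau_{\leq n}M$ agrees with membership in $(\mathcal{P}_n)_{\leq I}$ computed inside $\md(R)$; this holds because any $(n{+}1)$-connective object is vacuously in $\mathcal{P}_n$, so the difference between $M$ and $\tau_{\leq n}M$ can be absorbed into the factoring object.
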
 
\begin{proof} 
This follows from 
\Cref{extperfecttoorder}, in light of \Cref{Iextensions}. 
Alternatively, one can argue directly using \Cref{critforIperfectn}. 
\end{proof}

\subsection{Passage from the generic fiber}

In this subsection, we will prove various instances of the following principle:
under (derived) $I$-completeness assumptions, 
for a module
to have a certain property outside $I$ is equivalent to having it up to
$\leq I^\infty$-isogeny  integrally, or equivalently up to $\leq I^r$ modulo $I^n$ for every
$n$ (where $r$ is independent of $n$).

To begin with, we consider the case of discrete modules over a commutative ring. 
Fix a commutative ring $R$ and a finitely generated ideal $I \subset R$. 
Recall the notion of \emph{derived completeness};
see \cite[Tag 091N]{stacks-project} or \cite[Sec.~7.3]{SAG} for accounts. 
\begin{definition}[Derived $I$-complete modules] 
A discrete  $R$-module $M \in \md(R)^{\heartsuit}$ is \emph{derived $I$-complete} if 
for each $x \in I$, we have 
$\varprojlim^i( \dots \xrightarrow{x} M \xrightarrow{x} M ) = 0$
for $i = 0, 1$. 
The collection of derived $I$-complete modules forms an abelian subcategory of
$\md(R)^{\heartsuit}$ which is closed under extensions, and which
properly contains    all  
classically $I$-complete modules. 
The inclusion of derived $I$-complete modules into all of $\md(R)^{\heartsuit}$
admits a left adjoint given by derived $I$-completion (and truncation in degree
zero), $M \mapsto \tau_{\leq 0}( \widehat{M}_I)$. 

\end{definition}

A derived $I$-complete module $M$ is generally not $I$-adically separated (i.e.,
one may have $\bigcap I^n M \neq 0$); this is the key difference between derived
$I$-complete and classically $I$-complete modules. 
Nonetheless, given a derived complete
module $M \in \md(R)^{\heartsuit}$, if $M/IM = 0$ then $M = 0$. 
In the future, we will need a slight refinement of this fact, as follows. 
\begin{lemma} 
\label{compositederivedtcomplete}
Let $R$ be a commutative ring with a fixed element $t \in R$. 
Let $M, N, P$ be discrete $R$-modules and suppose $P$ is derived $t$-complete. 
Let $f: M \to N$ and $g: N \to P$ be maps. 
Suppose that both $f$ and $g$ are $t$-divisible elements 
of $\hom_{R}(M, N), \hom_{R}(N, P)$. 
Then $g \circ f = 0$. 
\end{lemma}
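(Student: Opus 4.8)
The plan is to read the hypothesis that $f$ and $g$ are ``$t$-divisible elements'' as the assertion that each can be divided by $t$ \emph{repeatedly}: there are sequences $f = f_0, f_1, f_2, \dots$ in $\hom_R(M,N)$ and $g = g_0, g_1, g_2, \dots$ in $\hom_R(N,P)$ with $t\,f_{i+1} = f_i$ and $t\,g_{i+1} = g_i$ for all $i \geq 0$ (equivalently, $f$ and $g$ lie in the $t$-divisible parts of the respective Hom-modules). This is the reading that makes the statement correct: the naive ``$g \in t\,\hom_R(N,P)$'' alone would not do, since then, e.g., with $R = M = N = P = \mathbb{Z}_p$ and $t = p$, taking $f = g$ to be multiplication by $p$ gives $g\circ f = $ multiplication by $p^2 \neq 0$. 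With the right reading in place, the entire content is the following observation: for a discrete module $P$, derived $t$-completeness includes the vanishing $\varprojlim(\cdots \xrightarrow{t} P \xrightarrow{t} P) = 0$, i.e.\ $P$ carries no nonzero compatible tower $p_0, p_1, \dots$ with $t\,p_{i+1} = p_i$. So it suffices to realize $g\circ f$ as the bottom of such a tower.

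Concretely I would fix $x \in M$, put $y := f(x) \in N$ and $p_i := g_i(y) \in P$, and observe that $p_0 = g(f(x)) = (g\circ f)(x)$ while $t\,p_{i+1} = (t\,g_{i+1})(y) = g_i(y) = p_i$ for every $i$. Hence $(p_i)_{i \geq 0}$ is an element of $\varprojlim(\cdots \xrightarrow{t} P \xrightarrow{t} P) = 0$, so $(g\circ f)(x) = p_0 = 0$; since $x$ was arbitrary, $g\circ f = 0$. The same argument can be packaged without choosing $x$: the maps $g_i \circ f$ satisfy $t\,(g_{i+1}\circ f) = g_i \circ f$ and hence define an element of $\varprojlim(\cdots \xrightarrow{t} \hom_R(M,P) \xrightarrow{t} \hom_R(M,P)) = \hom_R(M[1/t], P)$, and this group vanishes because $M[1/t]$ is an $R[1/t]$-module while $\hom_R(R[1/t], P) = \varprojlim(\cdots \xrightarrow{t} P) = 0$ by hypothesis. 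Note that either form of the argument uses only the $t$-divisibility of $g$; by the symmetric computation — forming the coherent tower $\big(g(f_i(x))\big)_{i \geq 0}$ in $P$ out of the $f_i$ — the $t$-divisibility of $f$ alone would serve equally well.

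I do not expect a real obstacle in the proof itself: once the bookkeeping is set up it is a one-line diagram chase, and it invokes nothing about $P$ beyond the single clause $\varprojlim^0(\cdots \xrightarrow{t} P) = 0$ in the definition of derived $t$-completeness. The one point that demands care is fixing the meaning of ``$t$-divisible element'' as repeated divisibility (a compatible system of $t$-power divisors) before beginning, since under the naive reading the statement is simply false.
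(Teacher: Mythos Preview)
Your counterexample correctly rules out the naive reading $g \in t\,\hom_R(N,P)$, but the reading you settle on is too strong. If $g$ admitted a \emph{coherent} tower $(g_i)_{i\ge 0}$ with $t\,g_{i+1}=g_i$, then $(g_i)$ would already be an element of
\[
\varprojlim\bigl(\cdots\xrightarrow{t}\hom_R(N,P)\xrightarrow{t}\hom_R(N,P)\bigr)\;=\;\hom_R\!\bigl(N,\ \varprojlim(\cdots\xrightarrow{t}P)\bigr)\;=\;0,
\]
so $g=g_0=0$ outright. This is precisely why you discover that the hypothesis on $g$ alone suffices: under your interpretation it forces $g=0$, and the lemma becomes vacuous. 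The reading the paper intends---visible from its own proof and from the later applications, where one only obtains ``divisible by any power of $\pi$''---is the intermediate one: $f\in\bigcap_n t^n\hom_R(M,N)$ and $g\in\bigcap_n t^n\hom_R(N,P)$, i.e.\ each map is divisible by every power of $t$, but the $t^n$-divisors need not assemble into a compatible system. Under this reading your argument breaks, since there is no coherent $(g_i)$ to feed into the inverse limit, and both hypotheses are genuinely required.

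The paper's argument is different. From $g=t^n g_n$ for every $n$ one sees that $g$ annihilates the $t$-power torsion $N_{\mathrm{tors}}\subset N$, hence factors as $N\to N/N_{\mathrm{tors}}\to P$; derived $t$-completeness of $P$ then lets the second arrow factor through the derived completion $\widehat{N/N_{\mathrm{tors}}}$. Because $N/N_{\mathrm{tors}}$ is $t$-torsion-free, this derived completion coincides with the classical one and is therefore $t$-adically \emph{separated}. Now the hypothesis on $f$ enters: the composite $M\xrightarrow{f}N\to\widehat{N/N_{\mathrm{tors}}}$ lies in $\bigcap_n t^n\hom_R\bigl(M,\widehat{N/N_{\mathrm{tors}}}\bigr)$, so every value lands in $\bigcap_n t^n\widehat{N/N_{\mathrm{tors}}}=0$; composing with the remaining map to $P$ gives $g\circ f=0$. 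If you want a tower formulation: once one passes to the torsion-free module $N/N_{\mathrm{tors}}$, the $t^n$-divisors of the image of $f$ there \emph{are} unique, so a coherent tower exists after this reduction and your argument can then be completed---but the passage through $N/N_{\mathrm{tors}}$ is the missing step.
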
 
\begin{proof} 
We will use the following basic fact: among $t$-torsion-free objects, derived
and classical completion coincide; this follows, e.g., from the formula below
(\Cref{towercompletion}). 
Clearly $g$ annihilates all $t$-power torsion elements of $N$, so it factors
through a map 
$\widehat{(N/N_{\mathrm{tors}})} \to P$. 
It thus suffices to show that the composite map 
$M \stackrel{f}{\to} N \to \widehat{(N/N_{\mathrm{tors}})}$ is zero. 
Indeed, this composite map is divisible by arbitrary powers of $t$, but 
$\widehat{(N/N_{\mathrm{tors}})}$ (as the derived, and hence classical, $t$-completion of a
$t$-torsion-free module) is $t$-adically separated, forcing the map to vanish. 
\end{proof} 

Our starting point for this section is the following ``uniform boundedness'' result. 
See also \cite[Lemma 2.2]{BB19} for a special case (when $t = p$). 

\begin{theorem}[{Bhatt \cite{Bha19}}] 
\label{bhatthm}
Let $R$ be a commutative ring containing a finitely generated ideal $I \subset
R$. 
Let $M$ be a derived $I$-complete discrete $R$-module. Then the following are
equivalent: 
\begin{enumerate}
\item  
$M$ vanishes away from
$I$: that is, $M[1/t] = 0$ for $t \in
I$. 
\item
There exists $r$ such that $M$ is $\leq I^r$-isogenous to zero. 
\item There exists $r$ such that for all $n$, $M/I^n M$ is $\leq I^r$-isogenous 
to zero. 
\end{enumerate}
\end{theorem}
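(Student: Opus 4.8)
The plan is to show $(3) \Rightarrow (2) \Rightarrow (1)$ trivially, and concentrate on the real content, $(1) \Rightarrow (3)$. For the easy implications: $(3) \Rightarrow (2)$ is immediate by taking $n = 1$ (so that $M/IM$ is $\leq I^r$-isogenous to zero, and since $M$ is derived $I$-complete, this already forces control on $M$ itself — one needs the small observation that if $M/IM$ is killed by $I^r$ then $M$ is $\leq I^{r'}$-isogenous to zero for a slightly larger $r'$, using completeness; see below). And $(2) \Rightarrow (1)$ is formal: if $a^r$ annihilates $M$ for every $a \in I$ (more precisely $I^r$ annihilates $M$), then inverting any $t \in I$ kills $M$. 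So the heart of the matter is $(1) \Rightarrow (3)$, or directly $(1) \Rightarrow (2)$.

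For $(1) \Rightarrow (3)$: reduce first to $I = (t)$ principal. Indeed, derived $I$-completeness of $M$ means $M$ is derived $t$-complete for each generator $t$ of $I$, and the condition ``$M[1/t] = 0$ for all $t \in I$'' need only be checked on generators; conversely, if for each generator $t_j$ of $I = (t_1, \dots, t_d)$ we produce $r_j$ with $M/t_j^{n} M$ controlled, we can assemble these. Actually the cleanest route is: it suffices to treat $I = (t)$ principal and then note that $\leq (t_1, \dots, t_d)^{\infty}$-conditions follow from the individual $\leq (t_j)^{\infty}$-conditions together with the remarks on products of ideals (\Cref{arrowIJ} and the sorites on $\mathcal{P}_{\leq I}$), since $(t_1 \cdots t_d)^N \in I^{Nd}$ and conversely. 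So assume $I = (t)$ and $M$ is derived $t$-complete with $M[1/t] = 0$.

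The key mechanism is \Cref{compositederivedtcomplete}. Since $M[1/t] = 0$, every element of $M$ is killed by a power of $t$, so $M = M_{\mathrm{tors}}$ is $t$-power torsion; but we need \emph{uniform} torsion. Consider the $t$-power torsion filtration $M[t] \subset M[t^2] \subset \cdots$ with $\bigcup_n M[t^n] = M$. The claim is that this stabilizes, i.e. $M = M[t^r]$ for some $r$. To see this, apply \Cref{compositederivedtcomplete} with the maps built from multiplication by $t$: the identity map $M \to M$ is \emph{not} $t$-divisible in general, but the point is to show $t^r \colon M \to M$ is zero for some $r$. Write $M$ as an increasing union of the $M[t^n]$; derived $t$-completeness plus the vanishing $M[1/t] = 0$ should force, via a $\varprojlim$–$\varprojlim^1$ computation against the tower $M \xrightarrow{t} M \xrightarrow{t} \cdots$ (whose limit and $\lim^1$ vanish by derived completeness, and whose terms are all $M$ with $M = M_{\mathrm{tors}}$), that the tower is essentially constant in the pro-category, yielding $t^r M = 0$. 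The cleanest phrasing: the short exact sequences $0 \to M[t^n] \to M \xrightarrow{t^n} t^n M \to 0$ together with $t^n M = M[1/t$-vanishing gives $t^n M$ derived $t$-complete and $\bigcap t^n M$; one shows $\widehat{t^n M} \to \widehat{M} = M$ and that $t^n M$ is itself derived $t$-complete with $(t^nM)[1/t] = 0$, so by induction it would suffice to know a single such module is $0$ once $t$-adically separated — and derived completeness of the $t$-torsion module $M$ forces $t$-adic separatedness to fail only through the non-separated part, which \Cref{compositederivedtcomplete} rules out. Once $t^r M = 0$, statement $(2)$ holds with this $r$, and $(3)$ follows since $M/t^nM = M$ for $n \geq r$ and more generally is a quotient of $M$, hence also killed by $t^r$.

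\textbf{Main obstacle.} The delicate point is upgrading ``$M$ is $t$-power torsion'' (pointwise) to ``$M$ is $t^r$-torsion'' (uniformly); this is exactly where derived completeness, as opposed to classical completeness, is essential and where \Cref{compositederivedtcomplete} does the work. I expect the argument to run through the observation that a derived $t$-complete module which is $t$-power torsion but not killed by any fixed $t^r$ would admit an ``infinitely $t$-divisible'' piece contradicting \Cref{compositederivedtcomplete} (applied to a cofinal sequence of the divisibility witnesses), or equivalently that such $M$ would have $\widehat{M}$ strictly larger than $M$. Getting this colimit/tower bookkeeping precise — tracking that the relevant maps are genuinely $t$-divisible and that the target stays derived $t$-complete — is the one step that needs care; everything else is formal manipulation with the $\leq I$-calculus already developed.
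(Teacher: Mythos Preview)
You have the emphasis inverted relative to the paper. The paper's proof is three lines: it \emph{cites} \cite{Bha19} for the equivalence $(1) \Leftrightarrow (2)$ and only supplies $(3) \Rightarrow (2)$, which you dismiss as routine. Your choice $n = 1$ there does not work: from $I^r(M/IM) = 0$ you only get $I^r M \subset IM$, and there is no ``slightly larger $r'$'' that finishes without further input. The paper instead takes $n = r+1$, so that $I^r(M/I^{r+1}M) = 0$ gives $I^r M \subset I^{r+1}M$, hence $I^r M = I\cdot(I^r M)$. Since $I^r M$ is derived $I$-complete (as the image of a map from a finite direct sum of copies of $M$ to $M$), the derived Nakayama lemma forces $I^r M = 0$. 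That is the whole argument.

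Your attempt at $(1) \Rightarrow (2)$ is not what the paper does, and the sketch has a genuine gap beyond the bookkeeping you flag. The reduction to $I = (t)$ is fine, but the step ``$\varprojlim = \varprojlim^1 = 0$ for the tower $(\cdots \xrightarrow{t} M \xrightarrow{t} M)$, hence the tower is pro-zero, hence $t^r = 0$'' is false as a general principle: a tower of abelian groups with vanishing $\varprojlim$ and $\varprojlim^1$ need not be pro-zero (e.g.\ take $M_n = \bigoplus_{k \geq 1} \mathbb{Z}/p$ with the shift map as transition; the maps are surjective so $\varprojlim^1 = 0$, the finite-support condition forces $\varprojlim = 0$, yet no composite is zero). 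So something specific to the situation --- namely that all terms equal the \emph{same} $t$-power-torsion, derived $t$-complete $M$ --- must be used, and your invocation of \Cref{compositederivedtcomplete} does not supply it: that lemma needs two composable $t$-divisible maps, and nothing in sight produces those. This is exactly the content of Bhatt's result, which the paper is content to cite.
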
 
\begin{proof} 
The equivalence of (1) and (2) appears in \cite{Bha19}. 
It remains only to 
show that (3) implies (2). 
Taking $n  = r+1$, we find that $I^r M = I^{r+1} M  = I( I^r M)$. 
Since $I^r M$ is derived $I$-adically complete (as the image of a map from
a finite direct
sum of copies of $M$ to $M$), it follows that $I^r M = 0$, as desired. 
\end{proof} 

\begin{proposition} 
\label{cor:fgaway}
Let $R$ be a ring which is derived $I$-complete for some finitely generated
ideal $I \subset R$. Let $M$ be a derived $I$-complete discrete $R$-module. Then the
following are equivalent: 
\begin{enumerate}
\item $M$ is finitely generated outside $I$ (that is, for $t \in I$, $M[1/t]$ is
a finitely generated $R[1/t]$-module). 

\item  There exists $r$ such that $M$ is $\leq I^r$-finitely generated. 
\item 
There exists $r$ such that for all $n$, $M/I^n M$ is $\leq I^r$-finitely generated. 
\end{enumerate}
In fact, the best possible $r$ for (2) and (3) are the same. 
\end{proposition}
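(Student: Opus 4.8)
The plan is to reduce \Cref{cor:fgaway} to \Cref{bhatthm} by the standard device of replacing $M$ by a two-term complex measuring its failure to be finitely generated. First I would prove the implication (2) $\Rightarrow$ (1): if $M$ is $\leq I^r$-isogenous to a finitely generated module $M_0$, then after inverting any $t \in I$ the multiplication-by-$t^r$ maps become isomorphisms, so $M[1/t]$ is a retract of $M_0[1/t]$, hence finitely generated over $R[1/t]$. The implication (2) $\Rightarrow$ (3) is immediate from \Cref{Iisogsorite} applied to the base change functor $\md(R)^{\heartsuit} \to \md(R/I^n)^{\heartsuit}$ (or directly: a retract diagram reduces mod $I^n$). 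So the content is (1) $\Rightarrow$ (2) and (3) $\Rightarrow$ (2), together with the sharpness claim on $r$.

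For (1) $\Rightarrow$ (2), I would first choose elements $m_1, \dots, m_k \in M$ whose images generate $M[1/t]$ over $R[1/t]$ for each of finitely many generators $t$ of $I$ — more precisely, pick finitely many elements of $M$ generating $M[1/t_j]$ for each generator $t_j$ of $I$, and let $F = R^{\oplus k} \to M$ be the resulting map with image a finitely generated submodule $M' \subset M$. Let $Q = M/M'$; then $Q$ is a derived $I$-complete module (quotient of derived $I$-complete modules, since $M'$, being finitely generated over a derived $I$-complete ring, is derived $I$-complete, and the category of derived $I$-complete modules is closed under cokernels — here I use that $R$ itself is derived $I$-complete). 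By construction $Q[1/t_j] = 0$ for each $j$, hence $Q[1/t] = 0$ for all $t \in I$. Now \Cref{bhatthm} applies to $Q$: there exists $r$ with $I^r Q = 0$, i.e. $I^r M \subset M'$. Then $M' + I^r M = M'$ is finitely generated, and the inclusion $M' \hookrightarrow M$ together with the fact that multiplication by any $a \in I^r$ factors through $M'$ exhibits $M$ as $\leq I^r$-isogenous to the finitely generated module $M'$: for $a \in I^r$, the map $a \colon M \to M$ factors as $M \xrightarrow{a} M' \hookrightarrow M$, which is exactly the data required by \Cref{propisogeny} (with $X_0 = M'$, $f$ the corestriction of multiplication by $a$, $g$ the inclusion). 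Wait — I need $a$ itself, not the full ideal, for each generator; so I should take $r$ so that each of the finitely many generators of $I$ raised to the $r$-th power lands $M$ in $M'$, which works since $I^r Q = 0$.

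For (3) $\Rightarrow$ (2): the hypothesis says there is an $r$ such that, for every $n$, the module $M/I^n M$ receives (after applying the $\leq I^r$ structure, via \Cref{newcritisog}) a $\leq I^r$-split surjection from a finitely generated $R/I^n$-module. I would package this by considering, for each $n$, a finitely generated submodule $M'_n \subset M$ with $I^r M + I^n M \subset M'_n + I^n M$ — equivalently the cokernel $M/(M'_n + I^n M)$ is killed by $I^r$. A cleaner route: form the finitely generated submodule $M' \subset M$ as in the proof of (1) $\Rightarrow$ (2) above is not available since we don't yet know (1); instead, take $M'$ to be generated by lifts of generators of the $\leq I^r$-finitely-generated module $M/IM$ (the $n=1$ case), and set $Q = M/M'$. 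Then $Q/I^n Q = M/(M' + I^n M)$, and the $n$-th hypothesis combined with $M' \supset$ (generators mod $I$) forces $I^r(Q/I^n Q)$ to be controlled; more carefully, by Nakayama-type bookkeeping one shows $Q/I^n Q$ is $\leq I^{r'}$-isogenous to zero for some fixed $r'$ (independent of $n$) — this is where I expect the main technical friction. Granting that, \Cref{bhatthm}(3) $\Rightarrow$ (2) applies to $Q$, giving $I^{r'} Q = 0$, and we conclude as before. For the sharpness statement (the best $r$ in (2) and (3) agree), I would observe that (2) $\Rightarrow$ (3) preserves the exponent verbatim (reducing the retract diagram mod $I^n$), while the argument (3) $\Rightarrow$ (2) via \Cref{bhatthm} is designed to preserve it: the uniform-boundedness step $I^r M = I^{r+1} M \Rightarrow I^r M = 0$ in the proof of \Cref{bhatthm} does not increase the exponent, and the finitely-generated-submodule bookkeeping can be arranged to use exactly the $r$ from (3).

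The main obstacle is the bookkeeping in (3) $\Rightarrow$ (2): translating "$M/I^n M$ is $\leq I^r$-finitely generated for all $n$" into a single finitely generated submodule $M' \subset M$ whose cokernel $Q$ satisfies hypothesis (3) of \Cref{bhatthm} with a controlled exponent. The subtlety is that the finitely generated approximations to $M/I^n M$ a priori vary with $n$; one must show they can be taken compatibly (e.g. all contained in the single $M'$ lifting generators of $M/IM$), using that $M/IM$ being $\leq I^r$-finitely generated already pins down generators modulo $I$ and then a filtered/compactness argument — or a direct diagram chase with the exact sequences $0 \to I^n M/I^{n+1} M \to M/I^{n+1}M \to M/I^n M \to 0$ and \Cref{fivelem} — propagates this to all $n$. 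Everything else is formal manipulation of the $\leq I$-isogeny calculus from the previous subsections together with closure of derived $I$-complete modules under cokernels.
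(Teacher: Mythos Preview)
Your treatment of $(2)\Rightarrow(1)$, $(2)\Rightarrow(3)$, and $(1)\Rightarrow(2)$ is correct and matches the paper's argument essentially verbatim (choose finitely many elements of $M$ generating each $M[1/t_j]$, let $Q$ be the cokernel, apply \Cref{bhatthm} to $Q$).

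For $(3)\Rightarrow(2)$ there is a genuine gap, and the paper's route is much shorter than what you sketch. You try to pick $M'$ from the $n=1$ hypothesis and then verify hypothesis (3) of \Cref{bhatthm} for $Q=M/M'$ uniformly in $n$; as you note, the finitely generated approximations to $M/I^nM$ vary with $n$ and there is no evident compatibility, so the ``Nakayama-type bookkeeping'' you allude to is not actually carried out (and it is not clear it can be, with a bounded exponent). The paper bypasses this entirely by using a \emph{single} value of $n$, namely $n=r+1$: since $M/I^{r+1}M$ is $\leq I^r$-finitely generated, one lifts a finitely generated approximation to a map $g\colon F'\to M$ and sets $Q=\mathrm{coker}(g)$; then $I^r(Q/I^{r+1}Q)=0$, i.e.\ $I^rQ=I\cdot(I^rQ)$, and since $I^rQ$ is derived $I$-complete this forces $I^rQ=0$. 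This is exactly the same one-step trick as in the proof of $(3)\Rightarrow(2)$ in \Cref{bhatthm} itself, applied directly to $Q$ rather than first trying to establish the full hypothesis (3) of \Cref{bhatthm} for $Q$. It also makes the sharpness claim on $r$ immediate, with no extra bookkeeping.
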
 
\begin{proof} 
Suppose (1). Then there exists a finitely generated free $R$-module $F$ and a map $f 
\cl 
F
\to M$ which induces a surjection after inverting any $t \in I$ (it suffices to
check this as $t$ runs over a system of generators). 
By \Cref{bhatthm}, we have $I^r \mathrm{coker}(f) = 0$ for some $r$. 
Replacing $F$ with $\mathrm{im}(f)$, we find that $M$ is $\leq I^r$-finitely
generated, proving (2). 
Clearly (2) implies (1), so it suffices to show that (3) implies (2). 
Taking $n = r+1$, we have that $M/I^{r+1} M$ is $\leq I^r$-finitely generated. 
Therefore, there exists a finitely generated free $R$-module $F'$ and a map $g \cl F'
\to M$ such that $\mathrm{coker}(g)/I^{r+1} \mathrm{coker}(g)$ is annihilated by
$I^r$, and therefore $I^r \mathrm{coker}(g) = 0$ by derived completeness. 
Thus $\mathrm{im}(g) \subset M$ is a $\leq I^r$-isogeny and the result follows. 
\end{proof}

Now we switch to the non-discrete setting: let $R$ be a connective
$E_\infty$-ring, and let $I \subset \pi_0(R)$ be a finitely generated ideal. 
We briefly recall the theory of $I$-complete  and $I$-torsion objects in $\md(R)$,
from \cite{DG02}; see also \cite[Sec.~7.3]{SAG} for a detailed account. 
\begin{definition}[Complete and torsion modules] 
\begin{enumerate}
\item  
Given $M \in \md(R)$, $M$ is said to be \emph{$I$-torsion} if all the homotopy
groups are $I$-power torsion (i.e., every element in $\pi_*(M)$ is annihilated
by a power of $I$).
\item
 The $R$-module $M$ is said to be \emph{$I$-complete} 
if for each $x \in I$, $\varprojlim ( \dots \xrightarrow{x} M \xrightarrow{x} M
) = 0 $ in $\md(R)$. 
This holds if and only if each homotopy group $\pi_i(M)$ is derived
$I$-complete as a discrete $\pi_0(R)$-module. 
\item The inclusion of $I$-complete modules in $\md(R)$ admits a left adjoint, called
\emph{$I$-completion}. \end{enumerate}
 \end{definition} 

\begin{construction}[A formula for the completion] 
Suppose $I = (x_1, \dots, x_p)$. 
For each $n>0$ and $1 \leq i \leq p$, let $R//x_i = \mathrm{cofib}(x_i^n \cl R \to
R)$ and $R_n = R//x_1 \otimes_R \dots \otimes_R R//x_p$. 
The family $\left\{R_n\right\}_{n \geq 1}$ forms a tower of perfect $R$-modules. 
Given any $M \in \md(R)$, the natural map 
$M \to \varprojlim_n (M \otimes_R R_n)$ exhibits the target as the
$I$-completion of $M$. 

Note that this tower $\left\{R_n\right\}$ is not canonical (it relies on a
choice of generators of $I$) and it does not generally form a tower of
$E_\infty$-$R$-algebras, but only of $R$-modules. 
In the future, we will fix an explicit choice of generators of $I$ and thus of a
tower $\{R_n\}$. 
\label{towercompletion}
\end{construction} 

Throughout, we will need to use the following fundamental equivalence between
complete and torsion modules, which goes back 
to \cite[Th.~2.1]{DG02} (see also \cite[Th.~3.3.5]{HPS});  
cf.~\cite[Prop.~7.3.1.7]{SAG} for an account in the present setting. 
\begin{theorem} 
\label{completenadtorsionareequivalent}
Let $R$ be a connective $E_\infty$-ring, and let $I \subset R$ be a finitely
generated ideal as above. 
The functor of $I$-completion also induces an
equivalence between the subcategories of $I$-torsion modules and $I$-complete
modules inside $\mathrm{Mod}(R)$. 
\end{theorem}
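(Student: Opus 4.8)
The plan is to exhibit explicit mutually inverse equivalences between the two subcategories, following the classical Greenlees--May picture. Recall first the two relevant functors. The full subcategory $\md(R)^{I\text{-tors}} \subset \md(R)$ of $I$-torsion modules is closed under colimits, so the inclusion has a right adjoint $\Gamma_I$; concretely $\Gamma_I M \simeq \Gamma_I R \otimes_R M$, where $\Gamma_I R \simeq \varinjlim_n \mathrm{fib}(R \to R_n)$ is the stable Koszul complex (colimit of the Koszul duals of the tower $\{R_n\}$ of \Cref{towercompletion}). Dually, the full subcategory $\md(R)^{I\text{-cpl}}$ of $I$-complete modules is closed under limits, so the inclusion has a left adjoint $\Lambda_I$ (the $I$-completion of \Cref{towercompletion}), and one has $\Lambda_I M \simeq \hom_R(\Gamma_I R, M)$. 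In particular $\Gamma_I$ and $\Lambda_I$ are idempotent, $\Gamma_I$ lands in $I$-torsion modules, and $\Lambda_I$ lands in $I$-complete modules; it remains to show $\Gamma_I|_{\md(R)^{I\text{-cpl}}}$ and $\Lambda_I|_{\md(R)^{I\text{-tors}}}$ are inverse to one another.

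The key lemma is that $\Gamma_I \Lambda_I \simeq \Gamma_I$ and $\Lambda_I \Gamma_I \simeq \Lambda_I$ as endofunctors of $\md(R)$. Call an $R$-module $L$ \emph{$I$-local} if $\Gamma_I L \simeq 0$. By idempotence of $\Gamma_I$, for any $M$ the cofiber $L_M$ of $\Gamma_I M \to M$ satisfies $\Gamma_I L_M \simeq \mathrm{cofib}(\Gamma_I \Gamma_I M \to \Gamma_I M) \simeq 0$, i.e. $L_M$ is $I$-local; dually (using idempotence of $\Lambda_I$) the fiber of $M \to \Lambda_I M$ is $I$-local since it is $\Gamma_I$-acyclic — equivalently it is built from $R[1/x]$-type modules. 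Now $\Gamma_I$ annihilates $I$-local modules by definition, and $\Lambda_I$ does too: $\Lambda_I L \simeq \hom_R(\Gamma_I R, L)$ with $\Gamma_I R$ an $I$-torsion module, and $\hom_R(T, L) \simeq 0$ whenever $T$ is $I$-torsion and $L$ is $I$-local (this is the basic orthogonality underlying the recollement; it follows by writing $T = \Gamma_I T$ and adjunction). Applying $\Gamma_I$ to the fiber sequence $\mathrm{fib}(M \to \Lambda_I M) \to M \to \Lambda_I M$ gives $\Gamma_I M \xrightarrow{\ \sim\ } \Gamma_I \Lambda_I M$, and applying $\Lambda_I$ to $\Gamma_I M \to M \to L_M$ gives $\Lambda_I \Gamma_I M \xrightarrow{\ \sim\ } \Lambda_I M$.

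With the lemma in hand the theorem is immediate. If $M$ is $I$-torsion then $\Gamma_I M \simeq M$, so $\Gamma_I \Lambda_I M \simeq \Gamma_I M \simeq M$; and $\Lambda_I M$ is $I$-complete. If $N$ is $I$-complete then $\Lambda_I N \simeq N$, so $\Lambda_I \Gamma_I N \simeq \Lambda_I N \simeq N$; and $\Gamma_I N$ is $I$-torsion. These natural equivalences $\Gamma_I \Lambda_I \simeq \mathrm{id}$ on $\md(R)^{I\text{-tors}}$ and $\Lambda_I \Gamma_I \simeq \mathrm{id}$ on $\md(R)^{I\text{-cpl}}$ exhibit $\Lambda_I$ and $\Gamma_I$ as inverse equivalences between the two subcategories, which is the assertion. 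I expect the only real work to be the foundational input used in the second paragraph — the formulas $\Gamma_I M \simeq \Gamma_I R \otimes_R M$, $\Lambda_I M \simeq \hom_R(\Gamma_I R, M)$, idempotence, and the torsion/local orthogonality — all of which are established in \cite{DG02} (and \cite[Sec.~7.3]{SAG}); once these are granted, the argument above is purely formal, and indeed one may alternatively just cite \cite[Prop.~7.3.1.7]{SAG}.
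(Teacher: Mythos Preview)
Your proposal is correct and is precisely the standard Greenlees--May/Dwyer--Greenlees argument. Note, however, that the paper does not give its own proof of this statement at all: it is stated as a cited result, with references to \cite[Th.~2.1]{DG02}, \cite[Th.~3.3.5]{HPS}, and \cite[Prop.~7.3.1.7]{SAG}. Your sketch is essentially what those references do, and your closing remark that one may simply cite \cite[Prop.~7.3.1.7]{SAG} is exactly the route the paper takes.
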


Next, we prove the analogs of 
\Cref{cor:fgaway} in the context of module spectra. 
The main results are similar: perfectness to any order on the
generic fiber can be checked integrally (or modulo any power of the ideal in an
appropriate sense) up to bounded isogeny.

\begin{proposition} 
\label{connectivityIbound}
Let $R$ be a connective $E_\infty$-ring and $I = (x_1, \dots, x_p) \subset \pi_0(R)$ a finitely
generated ideal; fix a tower $\left\{R_n\right\}$ as in \Cref{towercompletion}. 
Let $M \in \md(R)$ be $I$-complete.  
Then the following are equivalent: 
\begin{enumerate}
\item  
For each $m < 0$, $\pi_m( M)$ is $\leq I^\infty$-isogenous to zero. 
\item
For each $m < 0$, there exists $r$ such that $\pi_m( M \otimes_R R_n)$ is
$\leq I^r$-isogenous to zero for all $n >0$. 
\item 
For each $m < 0$ and $t \in I$, $\pi_m( M[1/t]) = 0$. 
\end{enumerate}
\end{proposition}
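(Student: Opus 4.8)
The plan is to prove $(1)\Leftrightarrow(3)$, $(1)\Rightarrow(2)$, and $(2)\Rightarrow(1)$. Two facts get used throughout: $M$ is $I$-complete if and only if each $\pi_m(M)$ is derived $I$-complete as a discrete $\pi_0(R)$-module; and $\pi_m(M[1/t])\cong\pi_m(M)[1/t]$, since $M[1/t]$ is a filtered colimit of copies of $M$. The equivalence $(1)\Leftrightarrow(3)$ is then immediate degreewise: for each $m<0$, $\pi_m(M)$ is derived $I$-complete, so by \Cref{bhatthm} it is $\leq I^\infty$-isogenous to zero if and only if $\pi_m(M)[1/t]=0$ for all $t\in I$.

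For $(1)\Rightarrow(2)$: recall $R_n=\mathrm{Kos}(R;x_1^n,\dots,x_p^n)$ is a perfect $R$-module admitting a finite filtration with associated graded $\bigoplus_{i=0}^{p}R[i]^{\oplus\binom{p}{i}}$. Tensoring this filtration with $M$ and passing to $\pi_m$ shows that $\pi_m(M\otimes_R R_n)$ carries a finite filtration, of length at most $2^p$ and independent of $n$, whose graded pieces are subquotients of $\pi_{m-i}(M)$ for $0\le i\le p$. When $m<0$ all of these lie in negative degrees, so by $(1)$ each is annihilated by a fixed power of $I$; hence $\pi_m(M\otimes_R R_n)$ is annihilated by a power of $I$ independent of $n$, which is $(2)$.

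The substance is $(2)\Rightarrow(1)$, which I would split by degree. For $m\le-2$: the Milnor sequence for $M\simeq\varprojlim_n(M\otimes_R R_n)$, namely $0\to{\varprojlim}^{1}_{n}\pi_{m+1}(M\otimes_R R_n)\to\pi_m(M)\to\varprojlim_{n}\pi_m(M\otimes_R R_n)\to 0$, has both outer terms annihilated by a power of $I$ independent of $n$ (apply $(2)$ at degrees $m$ and $m+1$, both $<0$), so $\pi_m(M)$ is annihilated by a power of $I$. The case $m=-1$ is the obstruction and must be handled \emph{after} the previous case, because its Milnor sequence involves ${\varprojlim}^{1}_{n}\pi_0(M\otimes_R R_n)$, over which $(2)$ says nothing. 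The fix is to replace $M$ by $M'=\tau_{\geq-1}M$: it is still $I$-complete, has $\pi_{-1}(M')=\pi_{-1}(M)$, and — using the fiber sequence $\tau_{\leq-2}M\to M\to M'$ tensored with $R_n$, the Koszul-filtration bound of the previous paragraph applied to $\tau_{\leq-2}M$ in the bounded degree range $[-2-p,-2]$, and the $m\le-2$ case already established — it still satisfies $(2)$ in degree $-1$. Since $M'$ is $(-1)$-connective and $\pi_{-1}(M')$ is discrete, tensoring the truncation triangle $\tau_{\geq0}M'\to M'\to\pi_{-1}(M')[-1]$ with $R_n$ and taking $\pi_{-1}$ yields an \emph{exact} identification $\pi_{-1}(M'\otimes_R R_n)\cong\pi_{-1}(M)/(x_1^n,\dots,x_p^n)\pi_{-1}(M)$. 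Consequently $\pi_{-1}(M)/I^n\pi_{-1}(M)$, a quotient of that module, is annihilated by a power of $I$ independent of $n$, and \Cref{bhatthm}(3) forces $\pi_{-1}(M)$ to be annihilated by a power of $I$.

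The main obstacle, as indicated, is exactly $m=-1$: one cannot read $\pi_{-1}(M)$ off the inverse limit because the $\varprojlim^1$ term sits in degree $0$, which is outside the range controlled by $(2)$. The resolution — truncating to $\tau_{\geq-1}M$ to make the reduction modulo $(x_1^n,\dots,x_p^n)$ exact, which is legitimate only once the $m\le-2$ case is in hand, and then invoking Bhatt's uniform-boundedness criterion \Cref{bhatthm}(3) — is what makes the argument close. The only routine bookkeeping point is to keep the number of Koszul graded pieces and the relevant degree ranges bounded independently of $n$, so that "annihilated by a power of $I$" can be upgraded to "by a power independent of $n$".
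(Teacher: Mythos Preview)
Your proof is correct and follows essentially the same route as the paper: the equivalence $(1)\Leftrightarrow(3)$ via \Cref{bhatthm}, the Milnor sequence for $m\le -2$, the reduction to $\tau_{\geq -1}M$ for $m=-1$, and the identification $\pi_{-1}(M'\otimes_R R_n)\cong \pi_{-1}(M)/(x_1^n,\dots,x_p^n)$ followed by \Cref{bhatthm} are exactly the paper's steps. You supply more detail than the paper does---in particular the Koszul-filtration bound for $(1)\Rightarrow(2)$ and the verification that condition $(2)$ persists in degree $-1$ after truncation---but the underlying argument is the same.
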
 
\begin{proof} 
It is easy to see that (1) implies (2) and (3). 
Furthermore, (3) implies (1) thanks to 
\Cref{bhatthm}. It remains to show that (2) implies (1), so suppose (2). 
Since $M \simeq \varprojlim_n (M \otimes_R R_n)$, we conclude via the
Milnor exact sequence that
each $\pi_m(M), m < -1$ is $\leq I^\infty$-isogenous to zero. 
It suffices to verify that 
$\pi_{-1}(M)$ is $\leq I^\infty$-isogenous to zero. 
Given what we have already shown,
the hypotheses of the theorem are invariant under replacing $M$ with $\tau_{\geq
-1} M$, so we may assume $M$ is $(-1)$-connective. 
Then we find that $\pi_{-1}( M \otimes_R R_n) = \pi_{-1}(M)/(x_1^n, \dots, x_p^n) \pi_{-1}(M)$ is annihilated
by a fixed power of $I$ uniformly in $n$, 
 which forces (by \Cref{bhatthm}) $\pi_{-1}(M)$ to be $\leq I^\infty$-isogenous to zero. 
\end{proof}

\begin{proposition} 
\label{perfecttoordermI}
Let $R$ be a connective $E_\infty$-ring which is $I$-complete for $I = (x_1,
\dots, x_p)
\subset \pi_0(R)$ a finitely generated ideal; fix $\left\{R_n\right\}$ as in
\Cref{towercompletion}. 
Let $M$ be an object of 
$\md(R)$ which is bounded-below and $I$-complete. 
Given $m \geq 0$, the following are equivalent: \begin{enumerate}
\item For each $t \in I$, $M[1/t] \in \md(R[1/t])$ is perfect to order
$m$. 
\item $M$ is $\leq I^\infty$-perfect to order $m$. 
\item There exists $r'$ such that for all $n$, $M \otimes_R R_n \in \md(R)$ is $\leq
I^{r'}$-perfect to order $m$. 
\end{enumerate}
\end{proposition}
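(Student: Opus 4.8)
The plan is to prove the cycle of implications $(2)\Rightarrow(1)$, $(2)\Rightarrow(3)$, $(3)\Rightarrow(2)$, and $(1)\Rightarrow(2)$, mirroring the structure of \Cref{perfecttoordermI}'s discrete antecedent \Cref{cor:fgaway} and using \Cref{connectivityIbound} in place of \Cref{bhatthm} where the argument now involves module spectra rather than discrete modules. The implications $(2)\Rightarrow(1)$ and $(2)\Rightarrow(3)$ should be formal: being $\leq I$-perfect to order $m$ is a statement about membership in $\mathcal{P}_{\leq I}$ for $\mathcal{P}$ the subcategory of objects perfect to order $m$ in $\md(R)_{\leq m}$, and by \Cref{Iisogsorite} (the sorite on functors preserving the $\leq I$-construction) this property is carried along by the base-change functors $-\otimes_R R[1/t]$ and $-\otimes_R R_n$, both of which send objects perfect to order $m$ to objects perfect to order $m$ over the respective base. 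One must only observe that $R_n$ is a perfect $R$-module so that $M\otimes_R R_n$ has the correct boundedness, and that $\leq I^\infty$-perfect to order $m$ passes uniformly (the single $r$ in the definition of $\leq I^{r}$ spreads to all $n$ because it is a statement about a map of $R$-modules before base change — so one should be slightly careful, but \Cref{Iisogsorite} applied to $M$ itself gives the uniform $r' = r$).

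For $(3)\Rightarrow(2)$, I would imitate the ``take $n = r+1$'' trick used in the proofs of \Cref{bhatthm} and \Cref{cor:fgaway}. The issue is that ``$\leq I$-perfect to order $m$'' is not visibly closed under the sort of inverse limit that recovers $M$ from $\{M\otimes_R R_n\}$, so one cannot simply pass to the limit. Instead I would argue: $\tau_{\leq m}M$ receives, after tensoring with $R_{r'+1}$, a $\leq I^{r'}$-split surjection from $\tau_{\leq m}$ of a perfect module; by \Cref{ex:perftoordern} this means there is a finitely-generated-free-ish ``model'' $P$ over $R$, perfect, with a map $P \to M$ whose cofiber $C$ satisfies: $\tau_{\leq m}(C\otimes_R R_{r'+1})$ is $\leq I^{r'}$-perfect to order $m$ with its $\pi_m$ $\leq I^{r'}$-finitely generated and (crucially) $\pi_{m-1}$, \dots\ controlled. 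Then one uses \Cref{connectivityIbound} (applied to $\tau_{\leq m-1}C$, suitably shifted, which is $I$-complete since $P$ is perfect hence $I$-complete and $M$ is $I$-complete) to upgrade ``$\leq I^{r}$ modulo all powers'' to ``$\leq I^\infty$ integrally'' for the relevant homotopy groups, and \Cref{Iperftozero}/\Cref{prop:shiftIperfect} to conclude. This is the step I expect to be the main obstacle: assembling from the mod-$I^n$ data a single integral map realizing the $\leq I^\infty$-perfectness, and verifying that the boundedness and completeness hypotheses of \Cref{connectivityIbound} genuinely hold for the error term.

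Finally, for $(1)\Rightarrow(2)$: given that $M[1/t]$ is perfect to order $m$ over $R[1/t]$ for each $t$ in a generating set of $I$, I would build by induction on the order a tower of approximations. At stage $0$: $\pi_0(M[1/t])$ finitely generated over $R[1/t]$ for all $t$ forces, via \Cref{cor:fgaway} (more precisely its spectral-sequence-free analog, or just applying \Cref{cor:fgaway} to $\pi_0 M$ together with the fact that $\pi_0 M$ is derived $I$-complete), that $\pi_0 M$ is $\leq I^{r_0}$-finitely generated; so by \Cref{Iperftozero}, $M$ is $\leq I^{r_0}$-perfect to order $0$. Then pick a perfect $R$-module $P_0$ and a $\leq I^{r_0}$-split surjection on $\pi_0$; the cofiber $M_1$ of $P_0 \to M$ is then $1$-connective-up-to-isogeny, and $M_1[1/t]$ is perfect to order $m$ over $R[1/t]$ by the long exact sequence and \Cref{extperfecttoorder}. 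Iterating $m+1$ times, using \Cref{prop:shiftIperfect} to keep track that the isogeny constants multiply but stay finite, and \Cref{connectivityIbound} to handle the possibility that the intermediate cofibers pick up a small amount of negative homotopy that is only $\leq I^\infty$-isogenous to zero (not literally zero), one arrives at a perfect module mapping to $M$ via a $\leq I^\infty$-split surjection on $\tau_{\leq m}$, i.e.\ $M$ is $\leq I^\infty$-perfect to order $m$. The bookkeeping of isogeny exponents through the induction, and the repeated invocation of \Cref{connectivityIbound} to clear negative homotopy, is where the proof is most delicate; everything else is a diagram chase with the tools already assembled in Section~2.
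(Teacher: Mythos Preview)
Your implications $(2)\Rightarrow(1)$ and $(2)\Rightarrow(3)$ are correct and formal as you say, and your $(1)\Rightarrow(2)$ induction is essentially the paper's argument restricted to a single implication. The paper, however, runs a \emph{simultaneous} induction on $m$ for all three conditions at once: one shifts to assume $M$ connective, observes that the base case $m=0$ is literally \Cref{cor:fgaway} (all three conditions reduce to $\pi_0(M)$ being $\leq I^\infty$-finitely generated), and for $m>0$ uses that all three conditions are isogeny-invariant to assume $\pi_0(M)$ is \emph{genuinely} finitely generated, picks a free $P\to M$ surjective on $\pi_0$, and applies \Cref{prop:shiftIperfect} to the fiber $F$ to reduce each of (1), (2), (3) for $M$ at level $m$ to the corresponding condition for $F$ at level $m-1$. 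This buys two things you are missing: it eliminates all the ``bookkeeping of isogeny exponents'' and ``repeated invocation of \Cref{connectivityIbound} to clear negative homotopy'' that you flag as delicate (one works with a genuine free resolution step, not an up-to-isogeny one), and it removes the need to prove $(3)\Rightarrow(2)$ directly at all.

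Your sketch of $(3)\Rightarrow(2)$ is where there is a real gap. You propose to extract from the data on $M\otimes_R R_{r'+1}$ a perfect $P$ over $R$ with a map $P\to M$ whose cofiber is controlled; but the $\leq I^{r'}$-split surjection you get from the hypothesis lands in $M\otimes_R R_{r'+1}$, not in $M$, and for $m>0$ there is no evident lifting mechanism (the ``take $n=r'+1$'' trick in \Cref{cor:fgaway} works because at order zero one only needs to lift finitely many elements of $\pi_0$, which is always possible). The fix is simply to run your $(1)\Rightarrow(2)$-style induction for $(3)\Rightarrow(2)$ as well---or, better, to merge the two into the paper's simultaneous induction, which handles everything uniformly and makes the base case do all the work.
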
 
\begin{proof} 
Without loss of generality (i.e., by shifting), we may assume $M$ connective. 
We use induction on $m$. 
In the case $ m = 0$, all three conditions are equivalent to 
$\pi_0(M)$ being $\leq I^\infty$-finitely generated 
(via \Cref{cor:fgaway}). 
Now suppose $m > 0$. 
Since all three conditions are isogeny invariant, we 
can assume without loss of generality that $\pi_0(M)$ is finitely generated. 
Choose a finitely generated free $R$-module $P$ and a map $P \to M$ inducing a
surjection $\pi_0(P) \twoheadrightarrow \pi_0(M)$, and write $F = \mathrm{fib}(P
\to M)$. 
Thanks to \Cref{prop:shiftIperfect}, conditions (1), (2), and (3) for $M$ are equivalent to  the
analogous conditions among (1), (2), and (3) for
$F$ (with $m$ replaced by $m-1$). By induction on $m$, all of these three
conditions are equivalent for $F$, and hence they are equivalent for $M$. 
\end{proof}

\section{The construction $\mathcal{M}(R)$}
Throughout this section, we fix a $E_\infty$-ring $R$ equipped with a finitely generated ideal $I \subset
\pi_0(R)$.  
We will construct a stable $\infty$-category $\mathcal{M}(R)$, which we
should regard as associated to the ``generic fiber'' of the formal
spectrum $\spf(R)$ (we will not need a precise notion here). 
Our main result is that $\aperf( \spec(R) \setminus V(I))$ is naturally a full subcategory of
$\mathcal{M}(R)$.

\subsection{Definition of $\mathcal{M}(R)$}
To begin with, we give the abelian version of the construction $\mathcal{M}(R)$;
in fact, there are two natural candidates, involving complete and torsion
modules respectively. In the derived setting, the distinction between the two
goes away.  
\begin{definition}[The construction $\mathcal{A}(R)$]
We let $\mathcal{A}(R)$ denote the abelian category obtained as the Serre
quotient of the category 
of $I$-power torsion discrete $\pi_0(R)$-modules by the subcategory of those modules
which are $\leq I^\infty$-isogenous to zero. 
Given a map $R \to R'$, 
base-change gives a right exact functor $\mathcal{A}(R) \to\mathcal{ A}(R')$
with a right adjoint (given by restriction of scalars) which is exact.
\end{definition} 

\begin{definition}[The construction $\mathcal{B}(R)$] 
We let $\mathcal{B}(R)$ denote the abelian category obtained as the Serre
quotient 
of the category 
of derived $I$-complete discrete $\pi_0(R)$-modules by the subcategory of those modules
which are $\leq I^\infty$-isogenous to zero. 
\end{definition} 

We can regard $\mathcal{B}(R)$ as an abelian version of 
the category of Banachian spaces studied in \cite{Dri18}. 
Next, we need the stable versions. 

\begin{construction}[$t$-structures on Verdier quotients] 
\label{tstructurequotient}
Let $\mathcal{E} $ be a  stable
$\infty$-category equipped with
a $t$-structure with heart
$\mathcal{E}^{\heartsuit}$. 
Given a Serre subcategory $\mathcal{B}_0 \subset  \mathcal{E}^{\heartsuit}$, we 
define a thick subcategory $\mathcal{E}' \subset \mathcal{E}$ consisting of
those objects all of whose homotopy groups lie in $\mathcal{B}_0$. 
Then $\mathcal{E}'$ also inherits a $t$-structure,
as does the Verdier quotient $\mathcal{E}/\mathcal{E}'$. The heart 
$\mathcal{E}'$ is given by $\mathcal{B}_0$ and the heart of 
of
$\mathcal{E}/\mathcal{E}'$ is given by the Serre quotient
$\mathcal{E}^{\heartsuit}/\mathcal{B}_0$. 
\end{construction} 
\begin{definition}[The constructions $\mathcal{M}_0(R), \mathcal{M}(R)$] 
\begin{enumerate}
\item  
Let $\mdc{R} \subset \md(R)$ denote the subcategory consisting of $R$-modules
which are bounded-below and $I$-complete.
Let $\mdn{R} \subset \mdc{R}$ denote the thick subcategory spanned by those $M$ such
that for each $i$, $\pi_i(M)$ is $\leq I^\infty$-isogenous to zero. 
A map with cofiber in $\mdn{R}$ is said to be a \emph{quasi-isogeny.}
\item
We have an equivalence of $\infty$-categories between $\mdc{R}$ and the $\infty$-category
of $R$-modules which are bounded-below and $I$-torsion
(\Cref{completenadtorsionareequivalent}, since both functors involved have
bounded amplitude). 
Therefore, $\mdc{R}$ is equipped with a
$t$-structure, which we refer to as the \emph{$I$-torsion $t$-structure}, whose
heart is the abelian category of $I$-power torsion discrete
$\pi_0(R)$-modules. This restricts to a $t$-structure on $\mdn{R}$ whose heart
is the abelian category of $\pi_0(R)$-modules which are $\leq
I^\infty$-isogenous to zero. 

\item 
We let $\mathcal{M}_0(R)$ denote the  Verdier quotient $\mdc{R}/\mdn{R}$, as an
$R$-linear stable $\infty$-category; it also
inherits a right-bounded (i.e., every object is bounded below) $t$-structure whose heart is given by the  
abelian category $\mathcal{A}(R)$ (\Cref{tstructurequotient}). 
It is linear over $\perf(R)$ and the action annihilates the $I$-torsion objects
in $\perf(R)$, so it becomes linear over $\perf( \spec(R) \setminus V(I))$. 

We let
$\mathcal{M}(R)$ denote its \emph{left completion} \cite[Sec.~1.2.1]{HA}.  
By construction, 
$\mathcal{M}(R) \simeq \varprojlim_n (\mathcal{M}_0(R))_{\leq n}$ where the
transition maps are the truncation functors. 
Similarly, $\mathcal{M}(R)$ is a stable $\infty$-category (linear over
$\perf(\spec(R) \setminus V(I))$) with a $t$-structure, which we refer to as the \emph{$I$-torsion
$t$-structure}; for each $n$, we have
$\mathcal{M}(R)_{\leq n} = \mathcal{M}_0(R)_{\leq n}$. 
\end{enumerate}
\end{definition} 

\begin{remark} 
There is no distinction between the idempotent-complete and the non-idempotent
complete Verdier quotient in the definition of $\mathcal{M}_0(R)$. Indeed, via \cite{Thomason}, this follows from the localization theorem in $K$-theory. 
It suffices to show that $K_0( \mdn{R}) =0$. 
This in turn follows  because for any object $X \in \mdn{R}$,
$\bigoplus_{i=1}^\infty X \in \mdn{R}$ as well, so that an Eilenberg
swindle implies $K_0(\mdn{R}) =0$. 
In particular, any truncated object in $\mathcal{M}(R)$ can be represented by an
object of $\mdc{R}$. 
\end{remark} 

\begin{remark} 
\label{replaceIcompl}
The constructions $\mdn{R}, \mdc{R}$ are insensitive to replacing $R$ by its
$I$-adic completion. 
\end{remark} 

\begin{construction}[The $I$-complete $t$-structure on $\mathcal{M}(R)$] 
The stable $\infty$-category $\mdc{R}$
also admits a $t$-structure obtained by restriction from the usual $t$-structure on
all $R$-modules, whose heart is given by derived $I$-complete discrete
$\pi_0(R)$-modules; we refer to this as the \emph{$I$-complete $t$-structure}. 
Similarly, using \Cref{tstructurequotient}, this
$t$-structure descends to $\mathcal{M}_0(R)$. We observe that it also descends
to $\mathcal{M}(R)$ with heart $\mathcal{B}(R)$. This follows because the $I$-torsion and $I$-complete
$t$-structures on $\mathcal{M}_0(R)$ (and on $\mdc{R}$) differ by a bounded amplitude. 
In particular, $\mathcal{M}(R)$ is also the left completion of
$\mathcal{M}_0(R)$ with respect to the $I$-complete $t$-structure. 
\end{construction}

\begin{construction}[Functoriality of $\mathcal{M}(R)$] 
The construction $R \mapsto \mathcal{M}(R)$ defines a functor 
in the connective $E_\infty$-ring $R$ (with choice of ideal $I \subset
\pi_0(R)$) to the $\infty$-category of right-bounded,
left-complete stable $\infty$-categories with $t$-structures (for \emph{either} choice
of $t$-structure), and right
$t$-exact functors between them; on hearts it is the base-change functor on
abelian categories
$\mathcal{A}(R), \mathcal{B}(R)$. 
Finally, given any map $R \to R'$ of connective $E_\infty$-rings, we get a right adjoint $\mathcal{M}(R') \to
\mathcal{M}(R)$ to the base-change functor (given by restriction of scalars) which is $t$-exact. 
\end{construction}

\subsection{Almost perfect complexes}

Throughout this subsection, we use the $I$-complete $t$-structure and we assume
$R$ is itself $I$-adically complete (no loss of generality by
\Cref{replaceIcompl}). 
It will be necessary to compute some mapping spaces in $\mathcal{M}(R)$; for
this, we use the following construction. 

\begin{construction}[Comparison of $\mathcal{M}(R)$ with the generic fiber] 
We have a $t$-exact functor (for the $I$-complete $t$-structure)
$\mdc{R} \to \qcoh( \spec(R) \setminus V(I))$ given by restricting to the locus
outside of $I$.
This clearly annihilates $\mdn{R}$, so we obtain a 
$t$-exact functor $\mathcal{M}_0(R) \to 
\qcoh( \spec(R) \setminus V(I))$. Since the target is left-complete, we 
obtain a factorization over a $t$-exact functor $j^* \cl \mathcal{M}(R) \to \qcoh( \spec(R)
\setminus V(I))$. 
\end{construction}

\begin{proposition} 
Suppose $R$ is $I$-complete. 
Suppose $X, Y \in \mdc{R}$. 
Suppose $X$ is  $\leq I^\infty$-perfect to order $n+1$ and $Y$ is $n$-truncated
in the $I$-complete $t$-structure on $\mdc{R}$. 
Then the functor $j^*$ induces an equivalence
of connective spectra
$\tau_{\geq 0}\hom_{\mathcal{M}_0(R)}(X, Y) \simeq 
\tau_{\geq 0}\hom_{\qcoh( \spec(R) \setminus V(I))}( j^* X,
j^* Y)$. 
\label{M0hom}
\end{proposition}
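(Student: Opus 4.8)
The plan is to reduce the computation of the mapping spectrum in $\mathcal{M}_0(R) = \mdc{R}/\mdn{R}$ to a colimit over quasi-isogenies, and then to recognize that colimit as the corresponding mapping spectrum on the generic fiber. First I would recall that, by the general theory of Verdier quotients, $\hom_{\mathcal{M}_0(R)}(X,Y) \simeq \varinjlim \hom_{\mdc{R}}(X', Y)$, where the colimit runs over the filtered system of quasi-isogenies $X' \to X$ in $\mdc{R}$ (i.e.\ maps whose cofiber has all homotopy groups $\leq I^\infty$-isogenous to zero). Since $X$ is $\leq I^\infty$-perfect to order $n+1$, it receives a $\leq I^\infty$-split surjection from $\tau_{\leq n+1}$ of a perfect module, and more usefully, among the quasi-isogenies into $X$ it is cofinal to consider those of the form: multiplication by $a$ (for $a \in I^r$) factoring through a perfect-to-order-$(n+1)$ module. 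The key point is that on the category of perfect-to-order-$(n+1)$ modules, the functor $j^*$ to $\qcoh(\spec(R)\setminus V(I))$ is already well-understood via \Cref{invertingtinI} and the fact that a finite stage of a resolution can be taken.

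The heart of the argument is the following: by \Cref{invertingtinI}, if $P$ is perfect (or perfect to high enough order) and $Y$ is $n$-truncated, then for $t \in I$ the natural map $\tau_{\geq 0}\hom_R(P, Y)[1/t] \to \tau_{\geq 0}\hom_R(P[1/t], Y[1/t])$ is an equivalence. Combining this over a set of generators $x_1,\dots,x_p$ of $I$ — using that $\spec(R)\setminus V(I)$ is covered by the $\spec(R[1/x_i])$ and that almost-perfect-to-order-$(n+1)$ ensures the relevant Čech/descent patching works in the connective-cover range — gives that $\tau_{\geq 0}\hom_{\qcoh(\spec(R)\setminus V(I))}(j^*X, j^*Y)$ is computed as a limit of the $\tau_{\geq 0}\hom_R(X[1/x_i], Y[1/x_i])$'s, each of which is a localization of $\tau_{\geq 0}\hom_R(X',Y)$ for $X'$ a perfect-to-order-$(n+1)$ approximation. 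On the other side, $\tau_{\geq 0}\hom_{\mathcal{M}_0(R)}(X,Y) = \varinjlim_{X' \to X} \tau_{\geq 0}\hom_{\mdc{R}}(X', Y)$, and the transition maps in this colimit system are exactly multiplication-by-$a$ maps for $a$ running through powers of $I$; so this colimit is itself a localization of $\hom$ at $I$, i.e.\ it agrees with the $I$-inverted mapping spectrum. Matching the two descriptions — both being "$\hom_R(X', Y)$ with $I$ formally inverted, truncated to connective covers" — gives the equivalence, where the role of the hypothesis that $X$ is $\leq I^\infty$-perfect to order $n+1$ (one more than the truncation level of $Y$) is precisely to guarantee via \Cref{invertingtinI} and \Cref{prop:shiftIperfect} that no error terms in degrees $0$ or below are introduced when passing between $\hom_R(X', -)$ and $\hom_R(X'[1/t], -)$.

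Concretely, the steps I would carry out, in order: (1) express $\hom_{\mathcal{M}_0(R)}(X,Y)$ as the filtered colimit over quasi-isogenies $X' \to X$, and show that the cofinal system can be taken with each $X'$ perfect to order $n+1$ (using that $X$ is $\leq I^\infty$-perfect to order $n+1$, so some $\leq I^r$-split surjection $X' \to X$ exists with $X'$ perfect to order $n+1$, and chaining these); (2) observe that the transition maps in the resulting colimit are (up to the relevant approximations) given by multiplication by elements of powers of $I$, so that $\tau_{\geq 0}\hom_{\mathcal{M}_0(R)}(X,Y)$ is a localization away from $I$ of $\tau_{\geq 0}\hom_{\mdc{R}}(X',Y)$; (3) on the generic fiber, cover $\spec(R)\setminus V(I)$ by the affines $\spec(R[1/x_i])$ and use \Cref{invertingtinI} to identify $\tau_{\geq 0}\hom_{\qcoh}(j^*X, j^*Y)$ with the same localized mapping spectrum; (4) check compatibility of the two identifications with the functor $j^*$.

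The main obstacle, I expect, is step (3) together with the bookkeeping in step (1): one must be careful that $j^*$ is computed via an honest (derived) global-sections/Čech formula on $\spec(R)\setminus V(I)$, that the almost-perfectness-to-order-$(n+1)$ hypothesis exactly controls the spectral sequence for this Čech complex in the range of degrees $\geq 0$ against the $n$-truncated $Y$ (so that the higher Čech terms, which involve $\hom$ against $Y$ restricted to intersections $\spec(R[1/x_ix_j])$, do not contribute in nonnegative degrees), and that \Cref{invertingtinI} applies not just to $P[1/x_i]$ but after the further localizations appearing in the Čech nerve. Once the degree ranges are tracked carefully this is routine, but it is the place where the precise numerology "$\leq I^\infty$-perfect to order $n+1$ versus $n$-truncated" is used and where an off-by-one error would break the argument. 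It may be cleanest to first treat the case where $X$ is genuinely perfect to order $n+1$ (or even perfect), prove the statement there by a direct computation with the tower $\{R_n\}$ and \Cref{invertingtinI}, and then pass to the $\leq I^\infty$ case by the retract argument already used in the proof of \Cref{invertingtinI}.
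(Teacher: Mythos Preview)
Your overall strategy---compute $\hom_{\mathcal{M}_0(R)}(X,Y)$ via the Verdier-quotient colimit over quasi-isogenies $X'\to X$, and compare with the generic fiber using \Cref{invertingtinI}---is the right set of ingredients, but you are missing the structural observation that makes the paper's argument short. The paper does not try to identify the colimit explicitly or run a \v{C}ech cover. Instead it notes that both $\hom_{\mathcal{M}_0(R)}(X,Y)$ and $\hom_{\qcoh(\spec(R)\setminus V(I))}(j^*X,j^*Y)$ are automatically $I$-local $R$-modules, because $\mathcal{M}_0(R)$ and $\qcoh(\spec(R)\setminus V(I))$ are linear over $\perf(\spec(R)\setminus V(I))$. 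Hence the comparison map may be checked after inverting any single $t\in I$. After $[1/t]$, every quasi-isogeny $X'\to X$ induces an isomorphism on $\tau_{\leq n}$ (since $Y$ is $n$-truncated this is all that matters), so the filtered colimit is constant with value $\tau_{\geq 0}\hom_R(X,Y)[1/t]$; and on the other side one lands on the affine open $\spec(R[1/t])$, so the mapping spectrum is $\tau_{\geq 0}\hom_{R[1/t]}(X[1/t],Y[1/t])$. \Cref{invertingtinI} then matches the two. This bypasses your steps (3)--(4) entirely and replaces your step (2) with a one-line observation.

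Your step (2) as written has a genuine problem. You assert that in the colimit over quasi-isogenies ``the transition maps are exactly multiplication-by-$a$ maps for $a$ running through powers of $I$,'' and that therefore the colimit is a localization. But the indexing category of quasi-isogenies $X'\to X$ is much larger than the cofinal system of multiplication maps you have in mind, and there is no reason the multiplication-by-$a$ endomorphisms of $X$ (or of a fixed perfect-to-order approximation $X_0$) form a cofinal family in it. What is true---and what your argument actually needs---is the weaker statement that every transition map becomes an isomorphism after inverting any $t\in I$ (on connective covers, using $n$-truncatedness of $Y$). That is exactly the paper's reduction. Once you phrase it this way, the \v{C}ech bookkeeping you flag as the ``main obstacle'' disappears: you never need to patch over the $\spec(R[1/x_i])$, because $I$-locality of the two mapping spectra lets you test the comparison one $t$ at a time.
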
 
\begin{proof} 
The $R$-modules 
$\hom_{\mathcal{M}_0(R)}(X, Y), 
\hom_{\qcoh( \spec(R) \setminus V(I))}( j^* X,
j^* Y)$
are both local away from $I$ (i.e., belong to the image of the fully faithful
embedding $\qcoh( \spec(R) \setminus V(I)) \to \md(R)$), since the $\infty$-categories
$\mathcal{M}_0(R), \qcoh( \spec(R) \setminus V(I))$ are naturally tensored over
$\perf( \spec(R) \setminus V(I))$. 
Thus, it suffices to show that 
$\tau_{\geq 0}\hom_{\mathcal{M}_0(R)}(X, Y) \to 
\tau_{\geq 0}\hom_{\qcoh( \spec(R) \setminus V(I))}( j^* X,
j^* Y)$ becomes an isomorphism after inverting any $t \in I$. 

Note that the $R$-module $Y$  is
$n$-truncated (with respect to the Postnikov $t$-structure). 
Therefore, the map 
$\tau_{\geq 0}\hom_{\md(R)}( X, Y)[1/t] \to \tau_{\geq 0}\hom_{\md(R)}( X[1/t], Y[1/t])$ is an isomorphism
by \Cref{invertingtinI}, since $X$ is $\leq I^\infty$-perfect to order $n+1$. 
To complete the proof, it suffices to show 
that for any $X' \to X$ in $\mdc{R}$ whose cofiber belongs to $\mdn{R}$, then
the map 
\[ \tau_{\geq 0}\hom_{\mdc{R}}(X', Y) \to \tau_{\geq 0}\hom_{\mdc{R}}(X, Y)  \]
induces an equivalence after inverting any $t \in I$. 
However, since $Y$  is $n$-truncated, we can identify this with the map 
\( \tau_{\geq 0}\hom_{\mdc{R}}(\tau_{\leq n}X', Y) \to
\tau_{\geq 0}\hom_{\mdc{R}}(\tau_{\leq n}X, Y)  \)
which becomes an isomorphism after inverting $t$, since $\tau_{\leq n} X' \to
\tau_{\leq n} X$ is a $\leq I^\infty$-isogeny. 
\end{proof}

\begin{corollary} 
\label{connectivtyg}
Suppose $R$ is $I$-complete. 
Suppose the ideal $I$ is generated by $g$ elements. 
Suppose $X \in \mathcal{M}(R)$ lives in degrees $\geq g$ with respect to the
$I$-complete $t$-structure. 
Then $\pi_0 \hom_{\mathcal{M}(R)}( R, X) = 0$. Here we identify $R$ with the
associated object of $\mathcal{M}(R)$. 
\end{corollary}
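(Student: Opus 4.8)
The plan is to compute $\hom_{\mathcal{M}(R)}(R,X)$ by transporting it, via \Cref{M0hom}, to a global-sections computation on the open $U:=\spec(R)\setminus V(I)$, and then to exploit that $U$ is covered by $g$ affines. Fix generators $I=(x_1,\dots,x_g)$, so $U=\bigcup_{i=1}^g\spec R[1/x_i]$ and $R\Gamma(U,-)$ is computed by a \v{C}ech complex of length $g$ assembled from affine open sets. First I would unwind the left completion: since $R$ is $I$-complete it defines an object of $\mdc{R}$, and the object ``$R$'' of $\mathcal{M}(R)$ is its image along $\mdc{R}\to\mathcal{M}_0(R)\to\mathcal{M}(R)$; writing $\bar R\in\mathcal{M}_0(R)$ for its image in $\mathcal{M}_0(R)$ and using $\mathcal{M}(R)=\varprojlim_n\mathcal{M}_0(R)_{\leq n}$, the formula for mapping spectra into a limit of $\infty$-categories gives $\hom_{\mathcal{M}(R)}(R,X)\simeq\varprojlim_n W_n$ with $W_n:=\hom_{\mathcal{M}_0(R)}(\bar R,\tau_{\leq n}X)$. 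So it suffices to control the tower $(W_n)_n$.

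Next I would apply \Cref{M0hom}. Since $R$ is perfect it is perfect to every order, hence $\leq I^\infty$-perfect to every order; and each $\tau_{\leq n}X$, being $n$-truncated in $\mathcal{M}_0(R)$, is represented by an $n$-truncated object of $\mdc{R}$. Feeding these into \Cref{M0hom}, together with $j^*R=\mathcal{O}_U$ and the $t$-exactness of $j^*$ for the $I$-complete $t$-structure, yields equivalences of connective spectra, natural in $n$,
\[ \tau_{\geq0}W_n\;\simeq\;\tau_{\geq0}\,R\Gamma\bigl(U,\tau_{\leq n}(j^*X)\bigr). \]
Now $j^*$ is $t$-exact, so $j^*X$ and all of its truncations lie in degrees $\geq g$ on $U$; and since global sections is $t$-exact on each affine piece, the length-$g$ \v{C}ech complex shows that $R\Gamma(U,Z)$ lies in homological degrees $\geq c-(g-1)$ whenever $Z$ lies in degrees $\geq c$. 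With $c=g$ this forces $\tau_{\geq0}W_n\simeq R\Gamma(U,\tau_{\leq n}(j^*X))$ into degrees $\geq1$, so $\pi_0W_n=0$ for all $n$; moreover the fiber of $\tau_{\geq0}W_{n+1}\to\tau_{\geq0}W_n$ is $R\Gamma(U,\pi_{n+1}(j^*X)[n+1])$, which lies in degrees $\geq2$ once $n\geq g$, so the transition maps $\pi_1W_{n+1}\to\pi_1W_n$ are isomorphisms for $n\geq g$.

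Finally I would invoke the Milnor exact sequence relating $\pi_0\varprojlim_nW_n$ to $\varprojlim_n\pi_0W_n$ and ${\varprojlim_n}^1\pi_1W_n$: the first vanishes since $\pi_0W_n=0$ for all $n$, and the second vanishes since the tower $(\pi_1W_n)_n$ has eventually isomorphic transition maps, hence is Mittag--Leffler. This gives $\pi_0\hom_{\mathcal{M}(R)}(R,X)=\pi_0\varprojlim_nW_n=0$, as desired. The step I expect to require the most care is the vanishing of this $\varprojlim^1$-term: a priori, passing to the left completion $\mathcal{M}(R)$ could manufacture a spurious class in $\pi_0\hom(R,X)$, and what excludes it is precisely the cohomological amplitude bound $\mathrm{cd}(U)\leq g-1$, which makes the $\pi_1$-tower stabilize. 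The remaining ingredients — the mapping-spectrum formula for left completions, the naturality of the comparison in \Cref{M0hom}, and the \v{C}ech computation of $R\Gamma(U,-)$ — are routine once set up.
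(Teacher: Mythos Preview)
Your argument is correct and follows essentially the same strategy as the paper: reduce to the truncated case via the left completion and the Milnor exact sequence, then invoke \Cref{M0hom} together with the bound $\mathrm{cd}(\spec(R)\setminus V(I))\leq g-1$. Your explicit verification of the Mittag--Leffler condition on the $\pi_1$-tower fills in a detail that the paper leaves implicit.
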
 
\begin{proof} 
When $X$ is truncated, this follows from \Cref{M0hom}, since $\spec(R)
\setminus V(I)$ has cohomological dimension $\leq g-1$ and truncated objects
of $\mathcal{M}(R)$ and $\mathcal{M}_0(R)$ are identified.  
In general, we have that $\hom_{\mathcal{M}(R)}( R, X) = \varprojlim_n
\hom_{\mathcal{M}(R)}(R, \tau_{\leq n } X)$ from which the result follows via
the Milnor exact sequence. 
\end{proof}

Our goal is to identify a subcategory of ``weakly almost perfect objects'' in
$\mathcal{M}(R)$ and then to show that this subcategory is equivalent
(via $j^*$) to the $\infty$-category  $\aperf(
\spec(R) \setminus V(I))$. 
\begin{definition}[Objects weakly perfect to order $n$] 
We will say that an object of $\mathcal{M}_0(R)$ is \emph{weakly perfect to
order $n$} if 
for any representative  $X \in \mdc{R}$, $X$ is $\leq I^\infty$-perfect to order
$n$. 
This definition is clearly independent of the choice of representative $X$,
since any two are related by a zig-zag of quasi-isogenies. 
We will say that an object is \emph{weakly almost perfect} if it is weakly
perfect to any order. 

The condition that an object in $\mathcal{M}_0(R)$ should be weakly perfect to
order $n$ depends only on its $n$-truncation with respect to the $I$-complete
$t$-structure. 
Therefore, we also obtain analogous definitions of weakly almost perfect
(resp. weakly perfect to order $n$) for objects of $\mathcal{M}(R)$. 
\end{definition}

\begin{proposition} 
\label{surjweaklyperfect}
Suppose $R$ is $I$-complete. 
Let $X \in \mathcal{M}(R)_{\geq 0}$. 
\begin{enumerate}
\item  
$X$ is weakly perfect to order zero if and only if there exists a map
$R^r \to X$ (for some $r \geq 0$) whose cofiber $C$ belongs to $\mathcal{M}(R)_{\geq 1}$. 
\item
Suppose $n > 0$ and $X$ is weakly perfect 
to order $n-1$. Then $X$ is weakly perfect to order $n$ if and only if, for
any (or every)
map $R^r \to X $ with cofiber $C \in \mathcal{M}(R)_{\geq 1}$, we have
that $C[-1]$ is weakly perfect to order $n-1$. 
\end{enumerate}
\end{proposition}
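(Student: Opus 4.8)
My plan is to translate everything into statements about honest $I$-adically complete $R$-modules and the notion of being $\le I^\infty$-perfect to a given order. The key reductions are: weak perfectness to order $k$ for an object of $\mathcal M(R)$ depends only on its $k$-truncation for the $I$-complete $t$-structure; any truncated object of $\mathcal M(R)$ lies in $\mathcal M_0(R)$ and hence is the image of an object of $\mdc R$; and, via \Cref{connectivtyg} and \Cref{M0hom}, maps out of $R$ into $X$ may be compared with maps into its truncations and into the corresponding $\mdc R$-objects. So for $X\in\mathcal M(R)_{\ge0}$ I fix, for each $k$, a connective representative $X_k\in\mdc R$ of $\tau_{\le k}X$; then ``$X$ is weakly perfect to order $k$'' unwinds to ``$X_k$ is $\le I^\infty$-perfect to order $k$'', which by \Cref{Iperftozero} (for $k=0$) and \Cref{prop:shiftIperfect} (inductive step) is controlled by $\le I^\infty$-finite generation of $\pi_0$ together with fibers of maps from finite free modules.

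For part (1), the direction ``$\Leftarrow$'' is the easy one: applying the $t$-exact functor $j^\ast$ to $R^r\to X\to C$ produces $\mathcal O^r\to j^\ast X\to j^\ast C$ on $U=\spec R\setminus V(I)$ with $j^\ast C\in\qcoh(U)_{\ge1}$, so $\pi_0(j^\ast X)$ is a quotient of $\mathcal O_U^r$; restricting to the affine opens $\spec R[1/x_i]$ (for a chosen generating set $x_1,\dots,x_p$ of $I$) shows $(\pi_0X_0)[1/x_i]$ is a finitely generated $R[1/x_i]$-module, and since $\pi_0X_0$ is derived $I$-complete, \Cref{cor:fgaway} gives that it is $\le I^\infty$-finitely generated, i.e.\ $X$ is weakly perfect to order $0$ by \Cref{Iperftozero}. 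For ``$\Rightarrow$'', assume $X$ is weakly perfect to order $0$, so $Y:=\pi_0X_0$ is $\le I^\infty$-finitely generated; running the proof of \Cref{cor:fgaway} gives a finite free $\pi_0(R)$-module $F=\pi_0(R^r)$ and a map $F\to Y$ whose cokernel is killed by a power of $I$. Composing with the truncation map $R^r\to\pi_0(R^r)$ inside $\mdc R$ produces a map $\psi\colon R^r\to\tau_{\le0}X$ in $\mathcal M(R)$ which is an epimorphism on $\pi_0$, and I then lift $\psi$ along $X\to\tau_{\le0}X$ to a map $f\colon R^r\to X$. Since $X\to\tau_{\le0}X$ is an isomorphism on $\pi_0$ and $f$ lifts $\psi$, we have $\pi_0(\operatorname{cofib} f)=\operatorname{coker}(\pi_0\psi)=0$, so $\operatorname{cofib}(f)\in\mathcal M(R)_{\ge1}$, as required.

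For part (2), observe first that since $X$ is weakly perfect to order $n-1$, hence to order $0$, a map $R^r\to X$ with $C:=\operatorname{cofib}(R^r\to X)\in\mathcal M(R)_{\ge1}$ exists by part (1). For any such map we get a cofiber sequence $C[-1]\to R^r\to X$ with $C[-1]\in\mathcal M(R)_{\ge0}$ and middle term the image of a finite free $R$-module, which is perfect to every order. Truncating to degrees $\le n$ and lifting this cofiber sequence to $\mdc R$ — which changes nothing up to quasi-isogeny and keeps the middle term a finite free $R$-module — I apply \Cref{prop:shiftIperfect} (in its $\le I^\infty$ form, obtained by running its proof with the ideals $I^s$ and \Cref{Iextensions}): since the middle term is perfect to order $n$, the source $C[-1]$ is weakly perfect to order $n-1$ if and only if the target $X$ is weakly perfect to order $n$. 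As this equivalence holds for every choice of $R^r\to X$ and at least one choice exists, we obtain the ``any (or every)'' statement.

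The step I expect to be the main obstacle is the lifting in part (1)$\Rightarrow$: promoting the $\pi_0$-epimorphism $\psi\colon R^r\to\tau_{\le0}X$ to an actual map $R^r\to X$. Its obstruction lies in $\pi_0\operatorname{Hom}_{\mathcal M(R)}(R^r,(\tau_{\ge1}X)[1])$, a finite sum of copies of $\pi_0\operatorname{Hom}_{\mathcal M(R)}(R,Z)$ with $Z=(\tau_{\ge1}X)[1]\in\mathcal M(R)_{\ge2}$, which vanishes directly by \Cref{connectivtyg} when $I$ is generated by at most two elements; in general one bootstraps — e.g.\ lifting first along the truncations $\tau_{\le m}X$ for $m$ large (where \Cref{connectivtyg} applies directly, since $(\tau_{\ge m+1}X)[1]\in\mathcal M(R)_{\ge m+2}$) and then treating the finitely many remaining degrees, or inducting on the number of generators of $I$. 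This same mechanism — controlling $\operatorname{Hom}_{\mathcal M(R)}(R,-)$ via the cohomological-dimension bound for $\spec R\setminus V(I)$ packaged in \Cref{connectivtyg} and \Cref{M0hom} — is what underlies all the passages between $\mathcal M(R)$, its truncations, and $\mdc R$.
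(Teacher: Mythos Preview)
Your overall strategy matches the paper's: reduce weak-perfectness questions to statements about representatives in $\mdc{R}$ and apply \Cref{Iperftozero} and \Cref{prop:shiftIperfect}. Part (2) is fine and essentially identical to the paper's argument.

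The gap is in part (1)$\Rightarrow$. You construct the map at the level of $\tau_{\le 0}X$ and then try to lift along $X\to\tau_{\le 0}X$; you correctly locate the obstruction in $\pi_0\hom_{\mathcal M(R)}(R,(\tau_{\ge 1}X)[1])$, but your proposed ``bootstrap'' does not work. Lifting from $\tau_{\le m}X$ to $X$ for $m$ large is indeed automatic by \Cref{connectivtyg}, but the remaining problem---lifting from $\tau_{\le 0}X$ to $\tau_{\le m}X$ through the intermediate truncations---has obstructions living in $\pi_0\hom_{\mathcal M(R)}(R,(\pi_{k+1}X)[k+2])$ for $k=0,\dots,m-1$, i.e.\ in degrees $2,3,\dots,m+1$; \Cref{connectivtyg} only kills those in degrees $\ge g$, so the low-degree obstructions survive. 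The vague alternative of ``inducting on the number of generators of $I$'' is not explained either.

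The paper sidesteps this entirely, and so can you with a one-line fix: do not build the map at $\tau_{\le 0}X$. Since \Cref{connectivtyg} already gives $\pi_0\hom_{\mathcal M(R)}(R^r,X)\cong\pi_0\hom_{\mathcal M(R)}(R^r,\tau_{\le g}X)$, replace $X$ by $\tau_{\le g}X$ from the outset (equivalently, use your representative $X_g$ rather than $X_0$). The module $\pi_0(X_g)$ is $\le I^\infty$-finitely generated (it represents the same class in $\mathcal B(R)$ as $\pi_0(X_0)$), so you build $R^r\to X_g$ directly in $\mdc{R}$ with $\pi_0$ of the cofiber $\le I^\infty$-isogenous to zero; its image in $\mathcal M(R)$ is a map $R^r\to\tau_{\le g}X$ with cofiber in $\mathcal M(R)_{\ge 1}$, and this lifts to $R^r\to X$ for free. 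No stepwise lifting is needed.
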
 
\begin{proof} 
The condition that $X$ should be weakly perfect to order $n$ only depends on the
$n$-truncation of $X$; moreover, maps $R^r \to X$ only depend on
$\tau_{\leq g} X$ (\Cref{connectivtyg}), if $I$ is generated by $g$ elements. 
Therefore, without loss of generality  
we may assume that $X \in \mathcal{M}_0(R)_{<\infty}$ and can be represented as
the image in $\mathcal{M}_0(R)$ of some $Y \in \mdc{R}$, which without loss of
generality we can take to be connective. Then $X$ is weakly
perfect to order zero if and only if there exists a map $R^r \to Y$ whose
cofiber in $\md(R)$
has $\pi_0$ which is $\leq I^\infty$-isogenous to zero, cf.~\Cref{Iperftozero}. 
 Assertion (1) now follows. Since any map of objects in $\mathcal{M}_0(R)$ can be represented by a
map in $\mdc{R}$ (subject to working up to quasiisogenies), assertion (2)
follows by 
\Cref{prop:shiftIperfect}.  
\end{proof}

\newcommand{\mdt}{\mathrm{Mod}_{\mathrm{tors}}(R)}
\begin{proposition} 
\label{expressasgeometricreal}
\begin{enumerate}
\item  
The collection of objects in $\mathcal{M}(R)_{\geq 0}$ which are weakly almost perfect
is closed under finite colimits and geometric realizations. 
\item 
An object of $\mathcal{M}(R)_{\geq 0}$ is weakly almost perfect if and only if
it can be written as a geometric realization of a simplicial object in
$\mathcal{M}(R)_{\geq 0}$ each of whose terms is a finite direct sum of copies
of $R$. 
\end{enumerate}
\end{proposition}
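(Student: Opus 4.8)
The plan is to prove (1) first and then derive (2) by a standard resolution argument. For (1), the key point is that weak almost perfectness is detected, order by order, by the criterion in \Cref{surjweaklyperfect}, and that each ``order $n$'' condition is visibly isogeny-invariant and, crucially, interacts well with (co)fiber sequences via \Cref{prop:shiftIperfect} (equivalently \Cref{extperfecttoorder} together with \Cref{Iextensions}). Concretely, I would first check closure under finite colimits: it suffices to handle cofibers of maps $X \to Y$ between weakly almost perfect objects in $\mathcal{M}(R)_{\geq 0}$ (and extensions/shifts, but one must stay connective). Represent the map by a map in $\mdc{R}$ up to quasi-isogeny; then ``$\leq I^\infty$-perfect to order $n$'' passes to the cofiber by \Cref{prop:shiftIperfect}, after noting that perfectness to order $n$ of the target is the input and perfectness to order $n-1$ of the fiber is equivalent to perfectness to order $n$ of the cofiber. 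One subtlety is connectivity: a cofiber of a map between connective objects need not be connective, so I would instead argue that the class of weakly almost perfect objects in all of $\mathcal{M}(R)$ (not just the connective part) is a thick subcategory — this follows from \Cref{leqIinfinitythick} applied inside each truncation $\mathcal{M}_0(R)_{\leq n}$, since being weakly perfect to order $n$ only depends on the $n$-truncation — and then intersect with $\mathcal{M}(R)_{\geq 0}$.

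For geometric realizations, suppose $X_\bullet$ is a simplicial object in $\mathcal{M}(R)_{\geq 0}$ with each $X_k$ weakly almost perfect, and let $X = |X_\bullet|$. To show $X$ is weakly perfect to order $n$, I would use that this condition depends only on $\tau_{\leq n} X$, and that the geometric realization of a simplicial object in connective modules can be computed, through any fixed truncation degree, by a finite piece of the skeletal filtration: $\tau_{\leq n} X \simeq \tau_{\leq n} |\mathrm{sk}_{n} X_\bullet|$, and $|\mathrm{sk}_n X_\bullet|$ is built from $X_0, \dots, X_n$ by finitely many cofiber sequences (the latching object cofiber sequences). Since weak almost perfectness is closed under finite colimits by part (1) and under shifts among connective objects, $|\mathrm{sk}_n X_\bullet|$ is weakly almost perfect, hence weakly perfect to order $n$, hence so is $X$. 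Letting $n \to \infty$ gives the claim.

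For the ``only if'' direction of (2), I would run the usual inductive resolution: given $X \in \mathcal{M}(R)_{\geq 0}$ weakly almost perfect, \Cref{surjweaklyperfect}(1) produces a map $R^{r_0} \to X$ with cofiber $C_0 \in \mathcal{M}(R)_{\geq 1}$; by \Cref{surjweaklyperfect}(2), $C_0[-1] \in \mathcal{M}(R)_{\geq 0}$ is again weakly almost perfect, so we iterate to obtain a sequence of free objects $R^{r_i}$ and fiber/cofiber data assembling (by the standard dévissage producing a simplicial resolution from a chain of extensions, as in \cite[Prop.~7.2.4.11]{HA} or the bar-type construction) into a simplicial object $P_\bullet$ with $P_k = R^{s_k}$ and $|P_\bullet| \simeq X$; one must check the connectivity of the partial geometric realizations matches so that the colimit is computed correctly, which follows since $C_i[-1]$ becomes more and more connected only in the sense needed for the truncations to stabilize — more precisely, each stage contributes in a single new homotopical degree. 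The ``if'' direction is immediate from part (1), since each $R^{s_k}$ is (trivially) weakly almost perfect and geometric realizations of such are weakly almost perfect.

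I expect the main obstacle to be the bookkeeping in the geometric realization argument for part (1): ensuring that ``compute $\tau_{\leq n}$ of a geometric realization via a finite skeleton'' is valid in $\mathcal{M}(R)_{\geq 0}$ with its $I$-complete $t$-structure (rather than in $\md(R)$), and that this reduction is compatible with representing objects by elements of $\mdc{R}$ up to quasi-isogeny. Once the connectivity estimates for skeletal filtrations are in place, everything else is a direct application of \Cref{prop:shiftIperfect}, \Cref{leqIinfinitythick}, and \Cref{surjweaklyperfect}.
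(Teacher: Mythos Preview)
Your proposal is correct and follows essentially the same approach as the paper. The paper's proof is much terser: for (1) it simply observes that being $\leq I^\infty$-perfect to order $n$ is closed under finite colimits and depends only on the $n$-truncation (which is exactly your skeletal-filtration reduction, stated without details), and for (2) it iterates \Cref{surjweaklyperfect} to build a filtered object $0 = P_{-1} \to P_0 \to P_1 \to \dots$ with associated graded $R^{r_i}[i]$ and then invokes the Dold--Kan correspondence \cite[Sec.~C.1.4]{SAG} to convert to a simplicial resolution---the same iterative construction you describe. One small remark: your worry that ``a cofiber of a map between connective objects need not be connective'' is unfounded (cofibers of maps in $\mathcal{M}(R)_{\geq 0}$ stay in $\mathcal{M}(R)_{\geq 0}$ by the long exact sequence), so the detour through thick subcategories of all of $\mathcal{M}(R)$ is unnecessary, though harmless.
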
 
\begin{proof} 
Part (1) follows from the fact that the condition of being $\leq
I^\infty$-perfect to some order $n$ is closed under finite colimits and depends
only on the $n$-truncation. 
Part (2) follows by iteration. 
Given a weakly almost perfect $X \in \mathcal{M}(R)_{\geq 0}$,
one produces (using \Cref{surjweaklyperfect}) a filtered object 
$0 = P_{-1} \to  P_0 \to P_1 \to \dots $ 
such that 
$P_i/P_{i-1}$ is equivalent to a finite direct sum of copies of $R[i]$
and whose colimit (which exists for connectivity reasons) is $X$. 
Converting this into a simplicial object via the Dold--Kan correspondence
\cite[Sec.~C.1.4]{SAG}, we see that (2) follows. 
\end{proof} 
\begin{proposition} 
Let $R$ be $I$-complete. 
\label{homsofweaklyap}
Suppose $X, Y \in \mathcal{M}(R)$  and $X$ is weakly almost perfect. 
Then $j^* X \in \qcoh( \spec(R) \setminus V(I))$ is almost perfect. Furthermore,
the natural map induces an equivalence
of $R$-module spectra
$$\hom_{\mathcal{M}(R)}(X, Y) \simeq \hom_{\qcoh( \spec(R) \setminus V(I))}( j^* X,
j^* Y).$$ 
\end{proposition}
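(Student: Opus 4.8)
The plan is to deduce both assertions from \Cref{M0hom}, which handles the ``truncated'' case, by a dévissage that reduces the first variable to $R$ and the second to a truncated object. Throughout write $U=\spec(R)\setminus V(I)$ and $\R\Gamma(U,-)=\hom_{\qcoh(U)}(\mathcal{O}_U,-)$, and suppose $I$ is generated by $g$ elements. For the almost perfectness of $j^*X$: after a shift we may assume $X\in\mathcal{M}(R)_{\ge 0}$ (every object of $\mathcal{M}(R)$ is bounded below, since $\mathcal{M}_0(R)$ is right-bounded, and $j^*$ is exact). By \Cref{expressasgeometricreal}(2), $X$ is the geometric realization of a simplicial object of $\mathcal{M}(R)_{\ge 0}$ with terms finite direct sums of copies of $R$; since $j^*$ is $t$-exact, $j^*X$ is the geometric realization of a simplicial object of $\qcoh(U)_{\ge 0}$ with finite free terms, hence connective and almost perfect, as $\aperf(U)_{\ge 0}$ is closed under geometric realizations (equivalently, the connective almost perfect objects of $\qcoh(U)$ are exactly the geometric realizations of simplicial finite free modules).

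For the comparison of mapping objects I would first reduce to $X=R$. Using the construction in the proof of \Cref{expressasgeometricreal} together with \Cref{surjweaklyperfect}, after a shift $X$ is the filtered colimit in $\mathcal{M}(R)$ of a system $0=P_{-1}\to P_0\to P_1\to\cdots$ in which $\mathrm{cofib}(P_{i-1}\to P_i)$ is a finite direct sum of copies of $R[i]$; this colimit exists because the cofibers $\mathrm{cofib}(P_i\to X)$ are increasingly connective, and $j^*$ preserves it for the same reason (using left-completeness of $\qcoh(U)$). Hence both $\hom_{\mathcal{M}(R)}(X,Y)$ and $\hom_{\qcoh(U)}(j^*X,j^*Y)$ are computed as $\varprojlim_i$ of the corresponding mapping objects for the $P_i$, compatibly with the comparison map, so it suffices to treat each $P_i$. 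Each $P_i$ is a finite iterated extension of objects of the form $R[k]$ with $0\le k\le i$; since both functors are exact in the first variable, an induction with the five-lemma, followed by shifting $Y$, reduces us to $X=R$.

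It then remains to show that the natural map $\hom_{\mathcal{M}(R)}(R,Y)\to\R\Gamma(U,j^*Y)$ is an equivalence for all $Y\in\mathcal{M}(R)$. Applying \Cref{connectivtyg} to the shifts of any $Z\in\mathcal{M}(R)_{\ge k}$ shows that $\hom_{\mathcal{M}(R)}(R,Z)$ is $(k-g+1)$-connective; since $U$ has cohomological dimension $\le g-1$, the functor $\R\Gamma(U,j^*(-))$ satisfies the same bound. Writing $Y=\varprojlim_n\tau_{\le n}Y$ by left-completeness, the transition maps of the towers $\{\hom_{\mathcal{M}(R)}(R,\tau_{\le n}Y)\}_n$ and $\{\R\Gamma(U,j^*\tau_{\le n}Y)\}_n$ have fibers $\hom_{\mathcal{M}(R)}(R,\pi_n(Y)[n])$ and $\R\Gamma(U,j^*\pi_n(Y))[n]$, which are $(n+1-g)$-connective, so both towers are eventually constant in each fixed degree; hence there is no $\varprojlim^1$ term and it is enough to treat $Y$ that is $n$-truncated in the $I$-complete $t$-structure. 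Such a $Y$ is the image of some $Y_0\in\mdc{R}$, with $\hom_{\mathcal{M}(R)}(R,Y)=\hom_{\mathcal{M}_0(R)}(R,Y_0)$ since $\mathcal{M}(R)_{\le n}=\mathcal{M}_0(R)_{\le n}$. Now \Cref{M0hom}, applied with first variable $R$ (perfect to every order) and with $Y_0[-j]$ in place of $Y_0$ for each integer $j$, shows that $j^*$ induces an isomorphism on $\pi_0$ of $\hom_{\mathcal{M}_0(R)}(R,Y_0[-j])\to\R\Gamma(U,j^*Y_0[-j])$; since this $\pi_0$ is the $\pi_j$ of $\hom_{\mathcal{M}_0(R)}(R,Y_0)\to\R\Gamma(U,j^*Y_0)$, the latter is an equivalence.

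The one genuinely new point — and the step I expect to be the main obstacle — is the last one: \Cref{M0hom} identifies only connective covers, hence a priori only the $\mathrm{Ext}$-groups in nonpositive degrees, and the device of feeding it every shift $Y_0[-j]$ is what recovers the remaining positive-degree $\mathrm{Ext}$-groups; this is legitimate precisely because \Cref{M0hom} imposes no connectivity hypothesis on its second variable beyond truncatedness. A subsidiary point one must not skip is that $Y$ need not be bounded below, so one has to check via \Cref{connectivtyg} and the cohomological-dimension bound on $U$ that the two truncation towers converge degreewise before reducing to the truncated case.
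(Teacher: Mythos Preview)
Your proof is correct, but it takes a different route from the paper's. The paper argues directly: since $\mathcal{M}(R)$ and $\qcoh(U)$ are left-complete, one may write both mapping spectra as inverse limits over $\tau_{\leq n}Y$, and then observe that $\tau_{\geq 0}\hom_{\mathcal{M}(R)}(X,\tau_{\leq n}Y)=\tau_{\geq 0}\hom_{\mathcal{M}(R)}(\tau_{\leq n+1}X,\tau_{\leq n}Y)$ (the fiber lives in degrees $\le -2$); since $\tau_{\leq n+1}X$ can be represented by an $I$-complete module which is $\leq I^\infty$-perfect to order $n+1$, \Cref{M0hom} applies immediately. Shifting $X$ handles all homotopy degrees.

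By contrast, you first d\'evisse $X$ down to $R$ using the filtration of \Cref{expressasgeometricreal}, and only then truncate $Y$. This costs an extra reduction step but buys a cleaner endgame: once $X=R$, the convergence of the two towers $\{\hom(R,\tau_{\leq n}Y)\}$ is made completely explicit via \Cref{connectivtyg} and the cohomological-dimension bound on $U$, so no Milnor-sequence bookkeeping is left implicit. Your device of feeding \Cref{M0hom} the shifts $Y_0[-j]$ to recover all $\pi_j$ is exactly the same trick the paper uses (shifting $X$ rather than $Y$), just packaged differently.

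One small remark: your parenthetical ``every object of $\mathcal{M}(R)$ is bounded below, since $\mathcal{M}_0(R)$ is right-bounded'' is correct but not a tautology. The left completion of a right-bounded $t$-structure is again right-bounded because for any compatible tower $(X_n)_n$ with $X_n\in\mathcal{M}_0(R)_{\leq n}$, the negative homotopy of every $X_n$ agrees with that of $X_0$, which is bounded below. This is worth a sentence if you keep the argument.
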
 
\begin{proof} 
Any object in $\mdc{R}$ which is $\leq I^\infty$-perfect to order $n$ restricts
to an object of $\qcoh( \spec(R) \setminus V(I))$ which is perfect to order $n$,
so the first assertion follows. 
Since $\mathcal{M}(R)$ is left-complete, 
it suffices to show for each $n$ that $$\tau_{\geq 0}\hom_{\mathcal{M}(R)}( \tau_{\leq n +1} X, \tau_{\leq n}
Y) \xrightarrow{\sim}
\tau_{\geq 0}\hom_{\qcoh( \spec(R) \setminus V(I))}( j^* \tau_{\leq n+1} X,
j^*\tau_{\leq n} Y) 
;$$ these (connective) mapping spectra can be computed in $\mathcal{M}_0(R)_{\leq n}$. 
Thus, the result follows from \Cref{M0hom} since we can represent $\tau_{\leq
n+1} X$ by an $I$-complete $R$-module which is perfect to order $n+1$. 
\end{proof}

\begin{proposition} 
\label{jgivesequiv}
The functor $j^*$ establishes a natural, symmetric monoidal equivalence between 
the weakly almost perfect objects of $\mathcal{M}(R)_{\geq 0} $ and $\aperf( \spec(R)
\setminus V(I))_{\geq 0}$. 
Similarly, $j^*$ establishes
a natural, symmetric monoidal equivalence between 
the weakly almost perfect objects of $\mathcal{M}(R) $ and $\aperf( \spec(R)
\setminus V(I))$. 

\end{proposition}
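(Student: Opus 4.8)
Write $U = \spec(R) \setminus V(I)$, and recall that in this subsection we use the $I$-complete $t$-structure and that $R$ is $I$-adically complete. The plan is to verify, for the restriction of $j^*$ to the weakly almost perfect objects, that it is (i) fully faithful and valued in $\aperf(U)$, (ii) symmetric monoidal, and (iii) essentially surjective, and then to deduce the non-connective statement from the connective one by a shift. For (i), full faithfulness on weakly almost perfect objects and the statement that $j^*X$ is almost perfect when $X$ is weakly almost perfect are exactly \Cref{homsofweaklyap}. Since $j^*\cl\mathcal{M}(R)\to\qcoh(U)$ is $t$-exact it carries $\mathcal{M}(R)_{\geq 0}$ into $\qcoh(U)_{\geq 0}$, hence weakly almost perfect objects of $\mathcal{M}(R)_{\geq 0}$ into $\aperf(U)_{\geq 0}$; moreover $j^*$ is conservative (a module in $\mdc{R}$ restricting to $0$ on $U$ is, by \Cref{bhatthm} applied to each homotopy group, in $\mdn{R}$), so since $j^*X$ is always bounded below, any weakly almost perfect $X$ is itself bounded below — a point I will use in the non-connective case.

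For (ii), the $I$-completed tensor product on $\mathcal{M}(R)$ (with unit $R$, since $R$ is $I$-complete) is carried by $j^*$ to the usual tensor product on $\qcoh(U)$, because away from $V(I)$ the $I$-completion is trivial; thus $j^*$ is symmetric monoidal, and the weakly almost perfect connective objects form a symmetric monoidal subcategory, being closed under finite colimits and geometric realizations and containing $R$ by \Cref{expressasgeometricreal}.

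It remains to prove essential surjectivity onto $\aperf(U)_{\geq 0}$. By \Cref{expressasgeometricreal}(2) the weakly almost perfect objects of $\mathcal{M}(R)_{\geq 0}$ are precisely the geometric realizations of simplicial objects whose terms are finite direct sums of copies of $R$. I claim $\aperf(U)_{\geq 0}$ admits the analogous description with $R$ replaced by the structure sheaf $\mathcal{O}_U = j^* R$. Granting this, given $N \in \aperf(U)_{\geq 0}$ presented as $N \simeq |N_\bullet|$ with each $N_k$ a finite free $\mathcal{O}_U$-module, full faithfulness of $j^*$ on weakly almost perfect objects (so $j^*$ is an equivalence onto its essential image, which contains each $N_k \simeq j^*(R^{r_k})$) lets me lift $N_\bullet$ uniquely to a simplicial object $\widetilde{N}_\bullet$ in the weakly almost perfect objects of $\mathcal{M}(R)_{\geq 0}$ with $\widetilde{N}_k \simeq R^{r_k}$; then $X := |\widetilde{N}_\bullet|$ is weakly almost perfect by \Cref{expressasgeometricreal}(1), and $j^* X \simeq |j^*\widetilde{N}_\bullet| \simeq |N_\bullet| \simeq N$, using that $j^*$ preserves geometric realizations of connective objects (it is $t$-exact and compatible with truncations, and the underlying restriction functor $\mdc{R} \to \qcoh(U)$ is a colimit-preserving localization). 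For the non-connective statement, a weakly almost perfect object of $\mathcal{M}(R)$ is bounded below by (i) and $\aperf(U)$ consists of bounded-below objects, so applying $[k]$ for suitable $k$ and using that weak almost perfectness and $j^*$ are compatible with shifts reduces both full faithfulness and essential surjectivity to the connective case; symmetric monoidality of the equivalence is inherited from (ii).

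The main obstacle is the claim invoked in (iii): that $\aperf(U)_{\geq 0}$ is generated under geometric realizations by the structure sheaf. This amounts to showing that $\pi_0$ of any connective almost perfect complex on $U$ — a finite type quasicoherent sheaf on the quasi-compact quasi-separated scheme $U$ — receives a surjection from a finite free sheaf; one obtains this by extending such a sheaf to a finite type quasicoherent sheaf on the affine scheme $\spec(R)$, choosing a finite free presentation there, and restricting back to $U$, after which one iterates exactly as in the proof of \Cref{expressasgeometricreal} (converting the resulting filtered object into a simplicial one via Dold--Kan). Everything else is formal given the results already established.
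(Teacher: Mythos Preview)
Your proof is correct and follows essentially the same strategy as the paper: full faithfulness from \Cref{homsofweaklyap}, essential surjectivity by exhibiting every object of $\aperf(U)_{\geq 0}$ as a geometric realization of finite free $\mathcal{O}_U$-modules and lifting through the fully faithful $j^*$, and then reducing the non-connective case to the connective one. The only noteworthy difference is that for the key resolution step on $U$ the paper invokes \cite[Prop.~9.6.6.1]{SAG} (global generation by the structure sheaf on a quasi-affine spectral scheme), whereas you sketch the classical extension-to-$\spec(R)$ argument; your sketch needs one extra word of care in the spectral setting (a section of $\pi_0 N$ need not obviously lift to $\pi_0\Gamma(U,N)$), but since $j^*(\tau_{\geq 0}\Gamma(U,N)) \to N$ has coconnected cofiber this goes through, so the argument is sound.
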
 
\begin{proof} 
Full faithfulness is a consequence of \Cref{homsofweaklyap}. 
For essential surjectivity, it suffices to show that any object of $\aperf( \spec(R)
\setminus V(I))_{\geq 0}$ can be written as a geometric realization of a
simplicial object which at each level is a direct sum of copies of the unit. This
holds more generally for any quasi-affine spectral scheme (modeled on the
spectrum of a \emph{connective} $E_\infty$-ring). 
Indeed, let $\sF \in \aperf( \spec(R)
\setminus V(I))_{\geq 0}$. 
Then by \cite[Prop. 9.6.6.1]{SAG}, there is a 
map from a direct sum of copies of the unit to $\sF$ inducing a surjection on
$\pi_0$-sheaves. Since $\sF$ is almost perfect and connective, we can assume 
that this is a finite direct sum. Continuing inductively as in the proof of
\Cref{expressasgeometricreal}, we can now write $\sF$ as a geometric realization as desired. 
The assertions for $\mathcal{M}(R)$ follow from those for $\mathcal{M}(R)_{\geq
0}$. 
\end{proof}

\section{Review of monadicity and descent}

Our descent results take the 
form of an expression for a stable $\infty$-category as a homotopy limit of a
cosimplicial stable $\infty$-category. 
In this section, we review some general results from \cite[Sec.~4.7.5]{HA} for
identifying such homotopy limits; these are closely related to the
Barr--Beck--Lurie monadicity
theorem. 

\begin{definition}[The Beck--Chevalley condition] 

Let $\mathcal{C}^\bullet$ be an augmented cosimplicial $\infty$-category. We will say that $\mathcal{C}^\bullet$ \emph{satisfies the adjointability
condition} if for each $\alpha \cl [m] \to [n]$ in $\Delta^+$, the square
\begin{equation} \label{rightadjsquare}  \xymatrix{
\mathcal{C}^m \ar[d]^{\alpha}  \ar[r]^{d^0} &  \mathcal{C}^{m+1}
\ar[d]^{\alpha'}  \\
\mathcal{C}^n  \ar[r]^{d^0} &  \mathcal{C}^{n+1}
} \end{equation}
is right adjointable: that is, the horizontal arrows admit right adjoints
(denoted $d^0_*$), and
the induced Beck--Chevalley  transformation $\alpha \circ d^0_* \to d^0_* \circ
\alpha'$ is an equivalence. 
\end{definition} 

\begin{example} 
Let $R \to R'$ be a map of $E_\infty$-rings, and let $R'^\bullet$ be the associated
\v{C}ech nerve (an augmented cosimplicial $E_\infty$-ring). 
Then the diagram of $\infty$-categories $\mathrm{Mod}( R'^\bullet)$ satisfies
the adjointability condition. The reason is that if 
we have a pushout square of $E_\infty$-rings
\[ \xymatrix{
A \ar[d]  \ar[r] &  B \ar[d]  \\
A' \ar[r] &  B'
},\]
then the induced square after applying $\md(\cdot)$ is right adjointable. 
\label{modulesadj}
\end{example}

For the next result, we let $\mathrm{Cat}_\infty$ denote the $\infty$-category
of $\infty$-categories. 
\begin{theorem}[{Lurie, \cite[Corollary 4.7.5.3]{HA}}] 
\label{augcosimpthm}
Let $\mathcal{C}^\bullet \cl \Delta^+ \to \mathrm{Cat}_\infty$  
be a functor. Suppose that: 
\begin{enumerate}
\item The (co)augmentation $\mathrm{coaug} \cl \mathcal{C}^{-1} \to
\mathcal{C}^0$ is conservative and has the following property: totalizations of   
$\mathrm{coaug}$-split cosimplicial objects in $\mathcal{C}^{-1}$ exist and are
preserved by $\mathrm{coaug}$. 
\item 
$\mathcal{C}^\bullet$ satisfies the adjointability condition. 
\end{enumerate}
Then $\mathcal{C}^\bullet$ is a limit diagram. 
\label{augcosimp}
\end{theorem}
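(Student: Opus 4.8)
The plan is to recognize this as a cosimplicial repackaging of the comonadic form of the Barr--Beck--Lurie theorem and to reduce to it. The first observation is that hypothesis~(2) does more than supply coherent right adjoints to the coface maps $d^0$: applied to the arrow $[-1]\to[0]$ of $\Delta^+$, it already forces the coaugmentation $G:=\mathrm{coaug}\colon \mathcal{C}^{-1}\to\mathcal{C}^0$ to admit a right adjoint $U\colon \mathcal{C}^0\to\mathcal{C}^{-1}$. Let $T=GU$ be the resulting comonad on $\mathcal{C}^0$. By hypothesis~(1), $G$ is conservative and preserves totalizations of $G$-split cosimplicial objects, so the dual of the $\infty$-categorical Barr--Beck--Lurie theorem (apply \cite[Thm.~4.7.3.5]{HA} in $\mathrm{Cat}_\infty^{\mathrm{op}}$; a simplicial object in $\mathcal{C}^{\mathrm{op}}$ is a cosimplicial object in $\mathcal{C}$, its geometric realization is the totalization, and the splitting and preservation hypotheses dualize) shows that $G$ is \emph{comonadic}: the canonical functor $\mathcal{C}^{-1}\to\mathrm{coMod}_{T}(\mathcal{C}^0)$ is an equivalence, intertwining $G$ with the forgetful functor $q\colon \mathrm{coMod}_{T}(\mathcal{C}^0)\to\mathcal{C}^0$. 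Thus it suffices to identify $\varprojlim_{\Delta}\mathcal{C}^\bullet$ with $\mathrm{coMod}_{T}(\mathcal{C}^0)$ in a way compatible with the augmentation functor out of $\mathcal{C}^{-1}$.

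For that I would proceed as follows. The evaluation functor $\mathrm{ev}_{[0]}\colon \varprojlim_{\Delta}\mathcal{C}^\bullet\to\mathcal{C}^0$ is conservative: a morphism of the limit that is an equivalence on $[0]$ is an equivalence on every $[n]$, since (using any vertex $[0]\to[n]$) the value of an object on $[n]$ is the image of its value on $[0]$ under the corresponding structure functor, and equivalences in the limit are detected after each evaluation. The substance is then that the Beck--Chevalley condition makes $\mathrm{ev}_{[0]}$ \emph{comonadic} as well, with comonad the endofunctor of $\mathcal{C}^0$ assembled from the cofaces $\mathcal{C}^0\rightrightarrows\mathcal{C}^1$ and the right adjoint $d^0_*$, and that the Beck--Chevalley equivalences identify this comonad with $T=GU$. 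The mechanism is the standard décalage argument: the shifted diagram $[n]\mapsto\mathcal{C}^{n+1}$ again satisfies Beck--Chevalley, the extra codegeneracy coming from the new vertex exhibits it as a split augmented cosimplicial $\infty$-category (augmented over $\mathcal{C}^0$) and hence a limit diagram, while the coface $\mathcal{C}^\bullet\to\mathcal{C}^{\bullet+1}$ realizes $\mathcal{C}^\bullet$ as the cofree-comodule (Amitsur/cobar) resolution attached to the adjunction $G\dashv U$. Passing to totalizations and chasing the identifications then gives $\mathcal{C}^{-1}\simeq\mathrm{coMod}_{T}(\mathcal{C}^0)\simeq\varprojlim_{\Delta}\mathcal{C}^\bullet$, with the composite the canonical augmentation functor, which is the claim.

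The main obstacle is precisely this middle step: organizing the Beck--Chevalley data over \emph{all} of $\Delta^+$, not just over $[-1]\to[0]$, into an honest identification of the cosimplicial diagram $\mathcal{C}^\bullet$ with the cobar resolution of the adjunction, and then verifying that the resulting equivalence $\varprojlim_{\Delta}\mathcal{C}^\bullet\simeq\mathrm{coMod}_{T}(\mathcal{C}^0)$ is implemented by the \emph{canonical} functor rather than merely abstractly existing. This coherence bookkeeping is essentially the content of \cite[Sec.~4.7.5]{HA}; the clean way to package it is through the formalism of right-adjointable diagrams and right Kan extension in $\mathrm{Cat}_\infty$, which — using conservativity of $\mathrm{ev}_{[0]}$ — reduces everything to a statement checkable after evaluation at $[0]$. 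The hypotheses in~(1) then enter only at the end, via the comonadic recognition theorem, to upgrade this formal identification to the desired equivalence with $\mathcal{C}^{-1}$.
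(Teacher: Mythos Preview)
The paper does not prove this statement; it is simply quoted as \cite[Corollary~4.7.5.3]{HA} and used as a black box. Your sketch correctly outlines the argument behind that result in Lurie: the adjointability hypothesis supplies a right adjoint $U$ to the coaugmentation $G$, hypothesis~(1) then makes $G$ comonadic by Barr--Beck--Lurie, and the Beck--Chevalley squares identify the cobar resolution of the adjunction $G\dashv U$ with the given cosimplicial diagram, so that $\varprojlim_\Delta \mathcal{C}^\bullet \simeq \mathrm{coMod}_T(\mathcal{C}^0)\simeq \mathcal{C}^{-1}$. You are also right to flag that the delicate part is the coherence bookkeeping in the middle step, which is exactly what \cite[Sec.~4.7.5]{HA} carries out; your sketch is an accurate summary of that, not a different proof.
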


In practice, the adjointability condition will be automatic (from
e.g., \Cref{modulesadj}), so to verify that certain diagrams are limit diagrams, it
will be necessary to verify condition (1) of 
\Cref{augcosimpthm}. 
There will be two basic tools: universal descent maps (for which condition
(1) will hold for essentially diagrammatic reasons) and 
situations where one has a $t$-structure. 

We begin with the universal descent case. The following definition is
essentially from \cite[Sec.~3]{MGal} and  \cite[Sec.~D.3]{SAG}. 
The main example is given by base-change along a universal descent morphism of
$E_\infty$-rings. 

\begin{definition}[Universal descent functors] 
\label{univdescfun}
Let $\mathcal{C}, \mathcal{D}$ be idempotent-complete, stable
$\infty$-categories. 
We say that an exact functor $f \cl \mathcal{C} \to \mathcal{D}$ is of
\emph{universal descent} 
if there exists an exact functor $\phi \cl \mathcal{C} \to \mathcal{C}$ with the
following properties: 
\begin{enumerate}
\item The identity functor 
$\mathrm{id}_{\mathcal{C}}$ is a retract of $\phi$. 
\item There exists a filtration 
in the $\infty$-category of functors $\fun(\mathcal{C},
\mathcal{C})$, 
\begin{equation} \label{phifilt} \phi_0 \to \phi_1 \to \dots \to \phi_e = \phi
\end{equation} such that 
each $\phi_i/\phi_{i-1} \in \fun(\mathcal{C}, \mathcal{C})$ can
be written as $\psi_i \circ f$ for some $\psi_i \in
\fun(\mathcal{D}, \mathcal{C})$ exact. 
\end{enumerate}
\end{definition} 

\newcommand{\catex}{\mathrm{Cat}_\infty^{\mathrm{perf}}}
In the following, we denote by $\catex$ the $\infty$-category of
idempotent-complete, stable $\infty$-categories and exact functors between them. 

\begin{proposition} 
\label{univdescgeneralcrit}
Let $\mathcal{C}^\bullet \cl \Delta^+ \to \catex$
be a functor satisfying the adjointability condition. Suppose $\mathrm{coaug}\cl 
\mathcal{C}^{-1} \to \mathcal{C}^0$ is universal descent. Then
$\mathcal{C}^\bullet$ is a limit diagram. 
\end{proposition}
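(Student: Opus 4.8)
The goal is to verify the two hypotheses of \Cref{augcosimpthm} for the augmented cosimplicial $\infty$-category $\mathcal{C}^\bullet$. Hypothesis (2), the adjointability condition, is assumed outright, so the entire content is hypothesis (1): that $\mathrm{coaug}\cl \mathcal{C}^{-1}\to\mathcal{C}^0$ is conservative and that totalizations of $\mathrm{coaug}$-split cosimplicial objects in $\mathcal{C}^{-1}$ exist and are preserved by $\mathrm{coaug}$. Both of these will be extracted from the definition of universal descent (\Cref{univdescfun}) using the functor $\phi$ and its filtration \eqref{phifilt}.

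\textbf{Conservativity.} First I would show $\mathrm{coaug}$ is conservative. Suppose $g\cl X\to Y$ in $\mathcal{C}^{-1}$ becomes an equivalence after applying $f=\mathrm{coaug}$. Each graded piece $\phi_i/\phi_{i-1}$ factors as $\psi_i\circ f$, so $(\phi_i/\phi_{i-1})(g)$ is an equivalence for every $i$; since $\mathcal{C}^{-1}$ is stable and the filtration \eqref{phifilt} is finite, an induction up the filtration (using that in a stable $\infty$-category a map of cofiber sequences which is an equivalence on two of the three terms is an equivalence on the third) shows $\phi_e(g)=\phi(g)$ is an equivalence. By property (1) of \Cref{univdescfun}, $\mathrm{id}_{\mathcal{C}^{-1}}$ is a retract of $\phi$, so $g=\mathrm{id}(g)$ is a retract of the equivalence $\phi(g)$; as equivalences are closed under retracts in any $\infty$-category, $g$ is an equivalence.

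\textbf{Split totalizations.} Next, let $Z^\bullet$ be a $\mathrm{coaug}$-split cosimplicial object in $\mathcal{C}^{-1}$. A split cosimplicial object in any $\infty$-category admits a totalization, which is moreover preserved by every functor; applying $f$ termwise to $Z^\bullet$ we get an $f$-split cosimplicial object of $\mathcal{C}^0$, hence one that admits a totalization preserved by all functors out of $\mathcal{C}^0$. Now apply each exact functor $\psi_i\cl\mathcal{C}^0\to\mathcal{C}^{-1}$: the cosimplicial object $\psi_i(f(Z^\bullet)) = (\phi_i/\phi_{i-1})(Z^\bullet)$ admits a totalization in $\mathcal{C}^{-1}$, and this totalization is computed as $\psi_i$ applied to the split totalization in $\mathcal{C}^0$. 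By finite induction along the cofiber sequences $(\phi_{i-1})(Z^\bullet)\to(\phi_i)(Z^\bullet)\to(\phi_i/\phi_{i-1})(Z^\bullet)$, and using that totalizations (finite limits are not enough here, but these are limits over $\Delta$ which commute with the finite limits/cofibers at issue termwise, so one uses that a limit of cosimplicial objects each admitting a totalization, connected by cofiber sequences, admits a totalization), one deduces that $\phi(Z^\bullet)=\phi_e(Z^\bullet)$ admits a totalization in $\mathcal{C}^{-1}$. Finally, $Z^\bullet$ is a retract of $\phi(Z^\bullet)$ as a cosimplicial object (property (1)), and retracts of objects admitting totalizations admit totalizations (the totalization of the retract is the retract of the totalization, since $\mathrm{Tot}$ is a limit and limits commute with retracts), so $\mathrm{Tot}(Z^\bullet)$ exists; moreover $f$ preserves it because $f$ preserves the split totalization of $f(Z^\bullet)$ and the passage through retracts and the finite filtration commutes with $f$ by naturality. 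Hence hypothesis (1) of \Cref{augcosimpthm} holds, and $\mathcal{C}^\bullet$ is a limit diagram.

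\textbf{Main obstacle.} The delicate point is the bookkeeping in the second step: one must ensure that "admits a totalization, preserved by $\mathrm{coaug}$" propagates correctly both up the finite filtration \eqref{phifilt} (via cofiber sequences in the stable category $\fun(\mathcal{C}^{-1},\mathcal{C}^{-1})$, evaluated on $Z^\bullet$) and across retracts, simultaneously tracking that $f$ preserves everything in sight. This is essentially the argument of \cite[Sec.~3]{MGal} / \cite[Sec.~D.3]{SAG} and is straightforward given \Cref{univdescfun}, but the interplay of the retract and the finite filtration is where care is needed; the conservativity step is comparatively routine.
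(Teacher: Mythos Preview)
Your proposal is correct and follows essentially the same approach as the paper: verify hypothesis (1) of \Cref{augcosimpthm} using the filtration \eqref{phifilt} from universal descent, first for conservativity and then for $\mathrm{coaug}$-split totalizations. The paper packages the second step more compactly by defining the class $\mathfrak{V}$ of cosimplicial objects in $\mathcal{C}^{-1}$ admitting totalizations preserved by $\mathrm{coaug}$ and observing that $\mathfrak{V}$ is a thick subcategory containing the split cosimplicial objects, whereas you unpack this same induction along the filtration and the retract explicitly.
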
 
\begin{proof} 
It remains to verify condition (1) of \Cref{augcosimp}. 
First, the filtration 
\eqref{phifilt} implies that any object in the kernel of $\mathrm{coaug}$
vanishes, so $\mathrm{coaug}$ is conservative. 
Next, consider the collection $\mathfrak{V}$ of cosimplicial objects $Y^\bullet \in \fun(\Delta,
\mathcal{C}^{-1})$
which admit a totalization in $\mathcal{C}^{-1}$ and which is preserved under $\mathrm{coaug}$. 
Clearly $\mathfrak{V}$ is a thick subcategory of 
$\fun(\Delta,
\mathcal{C}^{-1})$, and it contains those cosimplicial objects which admit
splittings. 
If 
$X^\bullet \in \fun(\Delta,
\mathcal{C}^{-1})$ is such that 
$\mathrm{coaug}(X^\bullet)$ admits a splitting, then 
our assumption implies that $X^\bullet$ belongs to $\mathfrak{V}$, whence the claim. 
\end{proof} 

Next, we prove a descent criterion in the case of $t$-structures. 
This is essentially a version of the argument used for faithfully flat descent
in \cite[Sec.~D.6]{SAG}.

\begin{proposition}[Descent criterion for left-complete stable
$\infty$-categories] 
\label{descleftcompl}
Let $\mathcal{C}^\bullet$ be an augmented cosimplicial object of
$\catex$. Suppose that each $\mathcal{C}^i, i \geq -1$ is equipped with a
right-bounded, left-complete $t$-structure,
and the following conditions are satisfied. 
\begin{enumerate}
\item 
 $\mathcal{C}^\bullet$ satisfies the
adjointability condition. 
\item Each  cosimplicial structure map is
right $t$-exact, and has a $t$-exact right adjoint. 
\item The coaugmentation functor $\mathrm{coaug} \cl \mathcal{C}^{-1} \to
\mathcal{C}^0$ is  conservative and $t$-exact. 
\end{enumerate}
Then $\mathcal{C}^\bullet$ is a limit diagram. 
Similarly for $\mathcal{C}_{\geq 0}^\bullet$ and $\mathcal{C}_{[m, n]}^\bullet$
for any $m \leq n$. 
\end{proposition}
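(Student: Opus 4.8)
The strategy is to verify the hypotheses of \Cref{augcosimp} directly, reducing the totalization statement for $\mathrm{coaug}$-split cosimplicial objects to a statement about convergent Postnikov towers that can be checked level by level. The adjointability condition is hypothesis (1), so what remains is hypothesis (1) of \Cref{augcosimp}, namely that $\mathrm{coaug} \cl \mathcal{C}^{-1} \to \mathcal{C}^0$ is conservative (given) and that totalizations of $\mathrm{coaug}$-split cosimplicial objects in $\mathcal{C}^{-1}$ exist and are preserved by $\mathrm{coaug}$.

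First I would address \emph{existence} of the relevant totalizations. Given a cosimplicial object $X^\bullet$ in $\mathcal{C}^{-1}$ whose image under $\mathrm{coaug}$ is split, I want to build $\mathrm{Tot}(X^\bullet) = \varprojlim_n \mathrm{Tot}_n(X^\bullet)$ and show the limit exists. The key point is that splitness of $\mathrm{coaug}(X^\bullet)$, together with $t$-exactness and conservativity of $\mathrm{coaug}$, forces $X^\bullet$ to be \emph{uniformly bounded below} in the $t$-structure: since $\mathrm{coaug}$ is $t$-exact and a split cosimplicial object has each $\mathrm{Tot}_n$ a retract of a finite limit of the $X^i$'s (hence bounded below uniformly in $n$ if $X^0$ is), one gets that the partial totalizations $\mathrm{Tot}_n(X^\bullet)$ have connectivity bounded below independently of $n$. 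Then, in a right-bounded left-complete $t$-structure, a tower with uniform lower bound has a limit, computed degreewise after truncation: $\mathrm{Tot}(X^\bullet) \simeq \varprojlim_m \mathrm{Tot}(\tau_{\leq m} X^\bullet)$, and each $\tau_{\leq m} X^\bullet$ lives in a bounded window of the $t$-structure. The hard work is to show $\mathrm{Tot}$ of a cosimplicial object living in a fixed bounded window $[a,b]$ exists — here one uses that $\mathcal{C}^{-1}_{[a,b]}$, while not stable, has all limits indexed by $\Delta$ because such limits in a stable category with a bounded $t$-structure reduce to a convergent spectral sequence / Milnor-type argument, or more cleanly: truncate to $\mathcal{C}^{-1}_{\geq a}$, note $\mathrm{Tot}_n$ stabilizes on each homotopy object once $n > b - a$ by the standard connectivity estimate for totalizations (the fiber of $\mathrm{Tot}_n \to \mathrm{Tot}_{n-1}$ is $(n + a - 1)$-connective times a shift), so the tower $\{\mathrm{Tot}_n\}$ is eventually constant on each $\pi_k$ after $k$-truncation, giving the limit.

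Next I would check \emph{preservation} by $\mathrm{coaug}$. Having arranged $\mathrm{Tot}(X^\bullet) = \varprojlim_m \mathrm{Tot}(\tau_{\leq m} X^\bullet)$ with the tower on the right eventually constant on each $\pi_k$, I apply $\mathrm{coaug}$. Since $\mathrm{coaug}$ is $t$-exact it commutes with the truncations $\tau_{\leq m}$, and since it is a right adjoint (hypothesis (1) of \Cref{descleftcompl} supplies right adjoints to the cosimplicial structure maps, and the adjointability condition plus $t$-exactness of those right adjoints is in hypothesis (2)) — more precisely, since $\mathrm{coaug}$ preserves limits of towers that are eventually constant on homotopy (these are just finite limits on each $\pi_k$, which any exact $t$-exact functor preserves) — we get $\mathrm{coaug}(\mathrm{Tot}(X^\bullet)) \simeq \varprojlim_m \mathrm{coaug}(\mathrm{Tot}(\tau_{\leq m} X^\bullet)) \simeq \varprojlim_m \mathrm{Tot}(\mathrm{coaug}(\tau_{\leq m} X^\bullet)) \simeq \mathrm{Tot}(\mathrm{coaug}(X^\bullet))$, using in the middle step that $\mathrm{Tot}$ of a cosimplicial object in a bounded window is a finite limit on each $\pi_k$ and is therefore preserved by the $t$-exact functor $\mathrm{coaug}$. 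This gives hypothesis (1) of \Cref{augcosimp}, and \Cref{augcosimp} then yields that $\mathcal{C}^\bullet$ is a limit diagram.

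Finally, for the variants $\mathcal{C}^\bullet_{\geq 0}$ and $\mathcal{C}^\bullet_{[m,n]}$: the connective case follows since $\mathcal{C}_{\geq 0} \subset \mathcal{C}$ is closed under the relevant totalizations (a totalization of uniformly connective objects is connective, by the connectivity estimate above) and $\mathrm{coaug}$, being $t$-exact, restricts compatibly; the limit diagram property is inherited because $\mathcal{C}^{-1}_{\geq 0} = \varprojlim \mathcal{C}^i_{\geq 0}$ can be extracted from $\mathcal{C}^{-1} = \varprojlim \mathcal{C}^i$ by passing to connective objects, which commutes with the limit here by $t$-exactness of all functors in sight. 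The window $[m,n]$ case follows by combining a shift with the connective case and a dual argument for the upper bound, or directly by noting $\mathcal{C}^\bullet_{[m,n]}$ is the fiber of $\tau_{\leq n}$ applied to $\mathcal{C}^\bullet_{\geq m}$. \textbf{The main obstacle} I anticipate is the existence and degreewise-computability of $\mathrm{Tot}$ for cosimplicial objects in the (non-stable) bounded windows $\mathcal{C}^{-1}_{[a,b]}$ — i.e., making rigorous the claim that the partial-totalization tower is eventually constant on each homotopy object and hence converges, using only right-boundedness and left-completeness of the $t$-structure rather than, say, the existence of all limits in $\mathcal{C}^{-1}$.
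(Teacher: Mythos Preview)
Your overall instinct—reduce to bounded $t$-windows where totalizations become finite—is exactly right, and it is the heart of the paper's argument too. But the order in which you carry it out creates a genuine gap. You try to apply \Cref{augcosimp} directly to $\mathcal{C}^\bullet$, and for that you need to show that a $\mathrm{coaug}$-split cosimplicial object $X^\bullet$ in $\mathcal{C}^{-1}$ has a totalization. Your route is to first argue that $X^\bullet$ (or the tower $\{\mathrm{Tot}_n X\}$) is \emph{uniformly} bounded below, then truncate above to land in a fixed window $[a,m]$. The justification you give for the uniform lower bound does not work: $\mathrm{Tot}_n X$ is a finite limit of $X^0,\dots,X^n$, and a finite limit over $\Delta_{\le n}$ can lower connectivity by roughly $n$, so knowing only that $X^0$ is bounded below says nothing uniform. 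Splitness of $\mathrm{coaug}(X^\bullet)$ gives you that each $Y^{n-1}$ is a retract of $Y^n$ (via the extra codegeneracy), hence the connectivities of the $Y^n$ are \emph{non-increasing} in $n$—but nothing prevents them from tending to $-\infty$. Without the uniform bound, your step ``$\tau_{\le m} X^\bullet$ lives in a bounded window'' fails, and the rest of the existence argument does not get off the ground.

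The paper sidesteps this entirely by reversing the logic: it applies \Cref{augcosimp} not to $\mathcal{C}^\bullet$ but to each $\mathcal{C}^\bullet_{[m,n]}$ separately. In $\mathcal{C}^{-1}_{[m,n]}$, \emph{every} cosimplicial object (split or not) has a totalization, because the fibers $\Omega^k N^k X$ of the $\mathrm{Tot}$-tower lie in $\mathcal{C}_{\le n-k}$ and hence vanish after $\tau_{\ge m}$ once $k>n-m$; so the tower stabilizes and the totalization is just $\tau_{\ge m}$ of a finite limit. A $t$-exact functor like $\mathrm{coaug}$ automatically preserves such a construction. Thus \Cref{augcosimp} gives that each $\mathcal{C}^\bullet_{[m,n]}$ is a limit diagram, and one then assembles $\mathcal{C}^\bullet$ by taking the inverse limit over $n$ (left-completeness) and the filtered union over $m$ (right-boundedness; here one uses that the structure maps are right $t$-exact so a compatible system in $\varprojlim_\Delta \mathcal{C}^\bullet_{\le n}$ lives in a single $[m,n]$-window). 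Note also that in the paper's organization the $[m,n]$ and $\ge 0$ variants come out \emph{first}, as the base case, rather than as corollaries of the unbounded statement.
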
 
\begin{proof} 
For each $m \leq n$, the hypotheses of the proposition
yield (from $\mathcal{C}^\bullet$) an augmented cosimplicial $\infty$-category $\mathcal{C}^{\bullet}_{[m,
n]}$; the cosimplicial structure maps are obtained by truncating those of
$\mathcal{C}^\bullet$. 
Moreover, the hypotheses show that 
$\mathcal{C}^{\bullet}_{[m,
n]}$ satisfies the adjointability condition, and that $\mathcal{C}^{-1}_{[m, n]}
\to \mathcal{C}^0_{[m, n]}$ preserves totalizations and is conservative. 
Therefore, by {\Cref{augcosimp}}, we find that $\mathcal{C}^{\bullet}_{[m, n]}$
is a limit diagram. 
Taking the limit over $n$ and the colimit over $m$, we obtain that
$\mathcal{C}^\bullet$ is a limit diagram. 
A similar argument works for $\mathcal{C}^\bullet_{\geq 0}$. 
\end{proof}

\begin{example}[Derived faithfully flat descent] 
Let $A \to B$ be a faithfully flat map of commutative rings (or more generally
of connective $E_\infty$-rings). 
Then, via the Postnikov $t$-structures, we can apply \Cref{descleftcompl} to
obtain an equivalence (where $>-\infty$ denotes bounded-below objects)
\[ \md(A)_{> -\infty} \simeq \varprojlim \left( \md(B)_{> -\infty}
\rightrightarrows \md(B \otimes_A B)_{> -\infty}
\triplearrows \dots \right).\]
This is the derived version of faithfully flat descent
from \cite[Cor. D.6.3.3]{SAG} (at least in the bounded-below case), and the above argument is that of
\emph{loc.~cit}. 
\end{example}

\section{The universal descent case}

In this section, we show (\Cref{rigiddescuniv}) 
that almost perfect complexes 
on $\spec( \widehat{R}_I)\setminus V(IR)$ 
form a sheaf 
with respect to the universal descent topology (\Cref{univdescrings}). 
As an application, we prove $\arc$-descent results for the category of finite
\'etale covers, extending results of \cite{BM}. 

\subsection{The descent theorem}
\begin{definition}[Universal descent morphisms] 
\label{univdescrings}
Let $f \cl R \to R'$ be a morphism of $E_\infty$-rings. 
We will say that $f$ is a \emph{universal descent morphism}  if  the base-change functor $\md(R) \to \md(R')$ is a universal
descent functor in the sense of \Cref{univdescfun}. 
Alternatively, this holds if and only if the thick subcategory of $\md(R)$
generated by the $R'$-modules is all of $\md(R)$. 
Using this notion, we obtain the \emph{universal descent topology} on the
opposite of the $\infty$-category of $E_\infty$-rings. 

\end{definition}

\begin{remark} 
\label{twooutofthree}
The class of universal descent morphisms is closed under composition. Moreover,
if a composite $R \to S \to T$ of morphisms of $E_\infty$-rings is a universal descent morphism, then $R \to S$
is a universal descent morphism. 
\end{remark}

\begin{definition}[Exponents of universal descent morphisms]
Suppose $f \cl R \to R'$ is a universal descent morphism. 
Let $R'^\bullet$ be the \v{C}ech nerve. Then $R \simeq \mathrm{Tot}(
R'^\bullet)$. Moreover, the associated $\mathrm{Tot}$-tower
$\left\{\mathrm{Tot}^n(R'^\bullet)\right\}$ defines a constant pro-object
of $\md(R)$, so there exists $e \geq 0$ such that
$R$ is a retract of the partial totalization $\mathrm{Tot}^e(R'^{\bullet})$; the
smallest such $e$ is called the \emph{exponent}. 
\end{definition}

This class of morphisms was studied in \cite{MGal} and \cite[Sec.~D.3]{SAG}, to
which we refer for more details; see also \cite{Msurvey} for a survey. 
In particular, one has the following basic result (a special case of
\Cref{univdescgeneralcrit}): 
\begin{theorem} 
Given a universal descent morphism $R \to R'$, 
the natural map $\md(R) \to \varprojlim ( \md(R') \rightrightarrows \md(R'
\otimes_R R') \triplearrows \dots )$ is an equivalence. 
In other words, $\md(\cdot)$ is a sheaf for the universal descent topology. 
\end{theorem}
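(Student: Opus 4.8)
The plan is to deduce this theorem directly from \Cref{univdescgeneralcrit}, treating it as the special case obtained by applying that proposition to the \v{C}ech nerve of the module functor. First I would recall that, by \Cref{modulesadj}, the augmented cosimplicial $\infty$-category $\md(R'^\bullet) \colon \Delta^+ \to \catst$ associated to the \v{C}ech nerve $R'^\bullet$ of $R \to R'$ automatically satisfies the adjointability (Beck--Chevalley) condition: every face of the \v{C}ech nerve is a pushout of $E_\infty$-rings, and $\md(\cdot)$ sends pushout squares of connective $E_\infty$-rings to right-adjointable squares. So condition (1) of \Cref{univdescgeneralcrit} is in hand with no extra work.

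Next I would observe that the coaugmentation $\mathrm{coaug} \colon \md(R) \to \md(R')$ is exactly the base-change functor, which by the definition of a universal descent morphism (\Cref{univdescrings}) is a universal descent functor in the sense of \Cref{univdescfun}. That is the remaining hypothesis of \Cref{univdescgeneralcrit}. Applying that proposition, we conclude that $\md(R'^\bullet)$ is a limit diagram, i.e.\ the natural functor
\[ \md(R) \to \varprojlim\left( \md(R') \rightrightarrows \md(R' \otimes_R R') \triplearrows \dots \right) \]
is an equivalence. The last sentence of the statement is then just a reformulation: saying that $\md(\cdot)$ is a sheaf for the universal descent topology means precisely that for every universal descent morphism (equivalently, every covering in this topology, since the topology is generated by such morphisms and one uses the \v{C}ech nerve for general covers) the corresponding descent functor is an equivalence, which is what we have just shown.

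There is essentially no obstacle here: the content has all been packaged into \Cref{univdescgeneralcrit} and \Cref{modulesadj}, and this theorem is advertised in the text as ``a special case of \Cref{univdescgeneralcrit}.'' The only point requiring a word of care is the bookkeeping that a functor $\md(\cdot)$ on the opposite $\infty$-category of $E_\infty$-rings equipped with the universal descent topology is a sheaf if and only if it satisfies \v{C}ech descent along every universal descent morphism; this follows formally because the covering sieves in this topology are generated by single universal descent morphisms (by \Cref{twooutofthree}, the class is closed under composition and has the needed two-out-of-three property), so a presheaf taking finite limits and satisfying descent for these generating covers is a sheaf. Thus the proof is a two-line invocation of the preceding results.
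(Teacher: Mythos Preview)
Your proposal is correct and matches the paper's approach exactly: the paper states this theorem as ``a special case of \Cref{univdescgeneralcrit}'' without further proof, and you have simply spelled out why, invoking \Cref{modulesadj} for adjointability and \Cref{univdescrings} for the universal descent hypothesis on the coaugmentation. There is nothing to add.
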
 

In the case of finitely presented morphisms of discrete rings, one can give a
concrete geometric criterion for a map to be universal descent, due to Bhatt--Scholze \cite[Prop. 11.25]{BS15}
in the noetherian case; here we observe that it holds generally. 

\begin{theorem}[{}] 
\label{hcoverdesc}
Let $f: R \to R'$ be a finitely presented 
map of discrete rings. Suppose $f$ is a 
$v$-cover 
(\cite{Rydh} and \cite[Sec.~2]{BS15}). Then $f$ is a universal descent morphism. 
\end{theorem}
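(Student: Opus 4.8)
The plan is to reduce the assertion to the Noetherian case, which is \cite[Prop. 11.25]{BS15}, by a limit argument; the only additional input is that the class of universal descent morphisms of $E_\infty$-rings is stable under base change (see \cite[Sec.~D.3]{SAG} and \cite[Sec.~3]{MGal}). Thus it suffices to exhibit $f\cl R \to R'$ as the base change, along some ring map $R_\alpha \to R$, of a finitely presented $v$-cover $f_\alpha \cl R_\alpha \to R'_\alpha$ with $R_\alpha$ Noetherian.

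First I would spread out. Write $R = \varinjlim_\alpha R_\alpha$ as the filtered colimit of its finitely generated $\ZZ$-subalgebras, each of which is Noetherian. Because $f$ is finitely presented, there is an index $\alpha$ and a finitely presented ring map $f_\alpha \cl R_\alpha \to R'_\alpha$ whose base change along $R_\alpha \to R$ recovers $f$; this is standard limit theory for finitely presented morphisms (see e.g.\ \cite{stacks-project}). For indices $\beta \geq \alpha$ we take $f_\beta$ to be the base change of $f_\alpha$ along $R_\alpha \to R_\beta$.

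The remaining point --- and the one I expect to be the crux --- is to arrange that $f_\alpha$ is itself a $v$-cover once $\alpha$ is large enough. Here I would invoke Rydh's theorem \cite{Rydh}: for finitely presented morphisms of schemes, being a $v$-cover is equivalent to being universally submersive (equivalently, universally subtrusive), and this property is constructible, hence compatible with cofiltered limits of qcqs schemes along affine transition maps. Applying this to the cofiltered system $\{\spec(R_\alpha)\}$ with limit $\spec(R)$ --- over which the base change of $f_\alpha$ is, by hypothesis, a $v$-cover --- one concludes that $f_\alpha$ is a $v$-cover for all sufficiently large $\alpha$. Essentially all the content of the theorem is concentrated in this step, which rests on Rydh's characterization of $v$-covers together with the spreading-out formalism for constructible properties of morphisms.

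Finally, fix such an $\alpha$. Then $R_\alpha$ is Noetherian and $f_\alpha \cl R_\alpha \to R'_\alpha$ is a finitely presented $v$-cover, so $f_\alpha$ is a universal descent morphism by \cite[Prop. 11.25]{BS15}. Since $f$ is the base change of $f_\alpha$ along $R_\alpha \to R$ and universal descent is stable under base change, it follows that $f$ is a universal descent morphism, as desired.
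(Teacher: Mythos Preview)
Your overall strategy---reduce to a Noetherian (even finite type over $\ZZ$) base via Rydh's approximation results and then invoke \cite[Prop.~11.25]{BS15}---is exactly the paper's approach. The paper streamlines the spreading out by citing \cite[Theorem~6.4]{Rydh} directly, but your hand-rolled limit argument amounts to the same thing.

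There is, however, a genuine gap in your final paragraph. The stability of universal descent under base change (as in \cite[Sec.~D.3]{SAG} or \cite[Sec.~3]{MGal}) refers to \emph{derived} base change of $E_\infty$-rings: from the fact that $R_\alpha \to R'_\alpha$ is universal descent you only get that $R \to R \otimes^{\L}_{R_\alpha} R'_\alpha$ is universal descent. But your spreading out identifies $R'$ with the \emph{underived} tensor product $\pi_0(R \otimes^{\L}_{R_\alpha} R'_\alpha)$, and there is no reason for $R'_\alpha$ to be flat over $R_\alpha$ (indeed a $v$-cover is typically far from flat), so the derived tensor product need not be discrete. You therefore still owe an argument that $R \to \pi_0(R \otimes^{\L}_{R_\alpha} R'_\alpha)$ is universal descent.

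The paper handles exactly this point via a separate lemma (\Cref{discreteunivdesc}): if $R$ is discrete and $R \to S$ is a universal descent map of connective $E_\infty$-rings, then the composite $R \to S \to \pi_0(S)$ is again universal descent. The proof uses that $R$ is a retract of some $\mathrm{Tot}^N(S^\bullet)$, truncates appropriately, and then climbs down the Postnikov tower of $S$. You should either invoke this lemma or supply its argument; without it your last sentence does not follow.
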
  
\begin{proof} 
By \cite[Theorem 6.4]{Rydh}, it follows that $f$ is obtained 
as the (underived) base-change of a  $v$-cover of finitely presented
$\mathbb{Z}$-algebras, $R_0 \to R_0'$ along a map $R_0 \to R$. 
In particular, $R' = \tau_{\leq 0}(R \otimes_{R_0} R_0')$. 
The map $R_0 \to R_0'$ is a universal descent morphism \cite[Prop. 11.25]{BS15}. 
Now the map (of $E_\infty$-rings) $R \to R \otimes_{R_0} R_0'$ is universal
descent by base-change. 
By \Cref{discreteunivdesc}, 
it follows that $R \to R'$ is a universal descent morphism. 
\end{proof} 

\begin{lemma} 
\label{discreteunivdesc}
Let $f: R \to S$ be a map of connective $E_\infty$-rings which is universal
descent. 
Suppose $R$ is discrete. Then the composite map $R \to S \to \pi_0(S)$ is
universal descent. 
\end{lemma}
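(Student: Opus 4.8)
The plan is to route the argument through the truncation $S' := \tau_{\le e}S$, where $e$ is the exponent of the universal descent morphism $R \to S$, using that $R$ is discrete (hence $0$-truncated in $\md(R)$) at the single essential point. Throughout I use the characterization of \Cref{univdescrings}: a map of connective $E_\infty$-rings $A \to B$ is a universal descent morphism if and only if $A$ lies in the thick subcategory $\langle B \rangle$ of $\md(A)$ generated by the $B$-modules. Two elementary facts about $\langle B\rangle$ are used repeatedly: it is closed under $-\otimes_A M$ for any $M \in \md(A)$ (as $-\otimes_A M$ is exact and carries $B$-modules to $B$-modules), and every $B$-module $N$ is an $A$-module retract of $B \otimes_A N$; so $\langle B\rangle$ is also the smallest thick subcategory of $\md(A)$ that contains $B$ and is closed under tensoring with arbitrary $A$-modules. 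In particular, if $A$ is a retract of some finite partial totalization $\mathrm{Tot}^n(B^{\otimes_A \bullet})$ of the \v{C}ech nerve, then $A \to B$ is already a universal descent morphism, since $\mathrm{Tot}^n(B^{\otimes_A \bullet})$ is a finite limit of the $B$-modules $B^{\otimes_A (m+1)}$ and hence lies in $\langle B\rangle$.

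The key step is to exhibit $R$ as a retract of $\mathrm{Tot}^e\big((S')^{\otimes_R \bullet}\big)$. Fix $i \colon R \to \mathrm{Tot}^e(S^{\otimes_R \bullet})$ and $p \colon \mathrm{Tot}^e(S^{\otimes_R \bullet}) \to R$ with $p \circ i \simeq \mathrm{id}_R$ (these exist by the definition of the exponent $e$), and let $q \colon \mathrm{Tot}^e(S^{\otimes_R \bullet}) \to \mathrm{Tot}^e\big((S')^{\otimes_R \bullet}\big)$ be induced by the truncation $S \to S'$. Since $\mathrm{fib}(S \to S') = \tau_{\ge e+1} S$ is $(e+1)$-connective, tensoring over the discrete ring $R$ shows that each $\mathrm{fib}\big(S^{\otimes_R (m+1)} \to (S')^{\otimes_R (m+1)}\big)$ is $(e+1)$-connective; hence $\mathrm{fib}(q)$, being $\mathrm{Tot}^e$ of a cosimplicial object with $(e+1)$-connective terms, is $1$-connective (a standard connectivity estimate for the Tot-tower, which loses at most one degree of connectivity per stage). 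Here the discreteness of $R$ is used: $\mathrm{Map}_{\md(R)}(\mathrm{fib}(q), R) \simeq \ast$, because $\mathrm{fib}(q)$ is $1$-connective and $R$ is $0$-truncated. Therefore $p$ annihilates the map $\mathrm{fib}(q) \to \mathrm{Tot}^e(S^{\otimes_R \bullet})$ and so factors as $p \simeq \bar{p} \circ q$ for some $\bar{p} \colon \mathrm{Tot}^e\big((S')^{\otimes_R \bullet}\big) \to R$; then $\bar{p} \circ (q \circ i) \simeq \mathrm{id}_R$ gives the desired retraction. (In particular $R \to S'$ is already a universal descent morphism.)

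It remains to pass from $S'$ to $\pi_0(S') = \pi_0(S)$, and here the boundedness of $S'$ replaces the Tot-tower argument. The finite Postnikov tower $\pi_0(S') \leftarrow \tau_{\le 1}S' \leftarrow \cdots \leftarrow \tau_{\le e}S' = S'$ has successive fibers $\pi_m(S')[m]$, which are shifts of $\pi_0(S')$-modules; hence $S'$ lies in the thick subcategory $\mathcal{T} := \langle \pi_0(S')\rangle$ of $\md(R)$ generated by $\pi_0(S')$-modules. Since $\mathcal{T}$ is closed under $-\otimes_R M$, every $S'$-module $N$ — being a retract of $S' \otimes_R N$ — also lies in $\mathcal{T}$, so $\langle S' \rangle \subseteq \mathcal{T}$. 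Combining with the previous step, $R \in \langle S' \rangle \subseteq \mathcal{T} = \langle \pi_0(S)\rangle$, which is exactly the assertion that $R \to \pi_0(S)$ is a universal descent morphism. The one delicate point is the connectivity bookkeeping in the middle step: one needs $\mathrm{fib}(S \to \tau_{\le e}S)$ to be connective enough to survive the loss of $e$ degrees inflicted by $\mathrm{Tot}^e$, and truncating exactly at the exponent $e$ is what secures this.
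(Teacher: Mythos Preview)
Your proof is correct and follows essentially the same two-step strategy as the paper: first pass from $S$ to a truncation using the discreteness of $R$ together with a connectivity estimate on a partial $\mathrm{Tot}$, then descend along the finite Postnikov tower to $\pi_0(S)$. The only cosmetic difference is that the paper truncates the \v{C}ech nerve termwise (replacing each $S^{\otimes (m+1)}$ by $\tau_{\le N+1}(S^{\otimes (m+1)})$) rather than truncating $S$ first, which makes the connectivity of each fiber immediate rather than requiring your short inductive check.
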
 
\begin{proof} 
We use throughout the following basic observation, valid since $R$ is discrete. Let $R \to M$ be a map of
$R$-modules; then this map admits a section if and only if the composite $R \to
M \to \tau_{\leq 0} M$ admits a section. 

Let $S^\bullet$ be  \v{C}ech nerve of $R \to S$. 
Then for some $N$, $R$ is a retract of $\mathrm{Tot}^N( S^\bullet)$;
equivalently, the map $R
\to \mathrm{Tot}^N( S^\bullet) \to \tau_{\leq 0}\left( \mathrm{Tot}^N(
S^\bullet) \right)
$ admits a section. 
Now the map 
$\tau_{\leq 0}\left( \mathrm{Tot}^N(
S^\bullet) \right)
 \to \tau_{\leq 0}\left( \mathrm{Tot}^N(
\tau_{\leq N+1}(S^\bullet)) \right)$ is an equivalence. 
Therefore, the map 
$R \to \tau_{\leq 0}(\mathrm{Tot}^N( \tau_{\leq N+1} (S^{\bullet})))$ admits a
section, and hence the map 
$R \to \mathrm{Tot}^N( \tau_{\leq N+1} (S^{\bullet}))$ admits a section too. 
It follows that $R$ is a retract of a finite limit of a diagram of
$R$-modules, each of which admits the structure of 
a $\tau_{\leq N+1} S$-module. Consequently $R \to \tau_{\leq N+1} S$ is a
universal descent morphism. 
Via the Postnikov tower, we see that $\tau_{\leq N+1} S \to \pi_0(S)$ is a
universal descent morphism. Composing, the claim follows. 
\end{proof} 

\begin{proposition} 
\label{completeIdescent}
Let $R \to R'$ be a universal descent map of $E_\infty$-rings, and let $I
\subset \pi_0(R)$ be a finitely generated ideal. Then $\hat{R} \to \hat{R'}$ is
universal descent. 
\end{proposition}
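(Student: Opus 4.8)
The plan is to reduce the statement about $\hat R\to\hat R'$ to the known universal descent of $R\to R'$ together with the fact that $I$-completion is compatible with base change along the tower $\{R_n\}$ of \Cref{towercompletion}. First I would recall that, by \Cref{univdescrings}, $R\to R'$ being universal descent means the thick subcategory of $\md(R)$ generated by the $R'$-modules is all of $\md(R)$; equivalently, the identity functor $\mathrm{id}_{\md(R)}$ admits a finite filtration whose graded pieces factor through $\md(R')$, i.e.\ are of the form $\psi_i\circ(-\otimes_R R')$. Applying $I$-completion, or rather tensoring this filtration of $R$ (viewed inside $\md(R)$ via $R\mapsto R$) with the tower $R_n = R//x_1^n\otimes_R\cdots\otimes_R R//x_p^n$ of perfect $R$-modules and passing to the inverse limit, one expresses $\hat R\simeq \varprojlim_n(R\otimes_R R_n)$ as a retract of a finite limit of objects each admitting a $\hat{R'}$-module structure. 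Concretely, since each $R_n$ is perfect and the filtration in \Cref{univdescfun}(2) is functorial, smashing with $R_n$ gives a filtration of $R\otimes_R R_n$ whose graded pieces factor through $\md(R')$; taking $\varprojlim_n$ and using that completion commutes with finite limits and retracts, $\hat R$ becomes a retract of a finite limit of $R$-modules each of which is an $\hat{R'}$-module (namely the completions of the $R'$-modules appearing). By \Cref{univdescrings} again (the ``thick subcategory generated'' characterization, applied now over $\hat R$), this exhibits $\hat R\to\hat R'$ as universal descent.

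More carefully, the clean way to organize this is: $R\to R'$ universal descent implies $R\to R'$ is, in particular, such that $R$ lies in the thick subcategory of $\md(R)$ generated by $R'$. The $I$-completion functor $\md(R)\to\md(R)$, $M\mapsto\hat M_I$, is exact and lax monoidal, and it sends $R'$ to $\hat{R'}_I$, which as an $\hat R$-module is $\hat R\otimes_R R'$ (up to completion); one checks $\hat{R'}_I$ is in fact the $\hat R$-algebra one expects. Now $I$-completion does not preserve arbitrary thick subcategory membership on the nose, but it does send the specific finite filtration of $R$ of \Cref{univdescfun} to a finite filtration of $\hat R$ with graded pieces of the form $\widehat{\psi_i\circ(-\otimes_R R')(R)}$, and each such graded piece is an $\hat{R'}$-module because $\psi_i(-\otimes_R R')(R) = \psi_i(R')$ is an $R'$-module, hence its completion is an $\hat{R'}$-module. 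Thus $\mathrm{id}_{\md(\hat R)}$ — or at least the object $\hat R$ — is built from $\hat{R'}$-modules by finitely many cofiber sequences and a retract, which is precisely the criterion in \Cref{univdescrings} for $\hat R\to\hat R'$ to be universal descent. (If one wants the functor-level statement rather than just the object-level one, note that base change $\md(\hat R)\to\md(\hat{R'})$ and the completion of $\psi_i$ assemble into a filtration of the identity endofunctor of $\md(\hat R)$ with the required property, since everything in sight is $\hat R$-linear.)

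The step I expect to be the main obstacle is verifying that $I$-completion genuinely interacts well with the filtration — i.e.\ that $\widehat{\psi_i(R')}$ is an $\hat{R'}$-module and not merely an $\hat R$-module, and that completing the filtration \eqref{phifilt} still yields a \emph{finite} filtration of $\hat R$ (no convergence issues: completion is exact on $\md(R)$, so this is fine, but one must be careful that the $\psi_i$ are $R$-linear exact functors and that completion is symmetric monoidal enough to carry the $R'$-module structure through). The cleanest phrasing avoids tensoring with the tower $\{R_n\}$ entirely: since universal descent is a statement purely about thick subcategories and exact $R$-linear functors, and $I$-completion $\md(R)\to\md(\hat R)$ is such a functor carrying $R\mapsto\hat R$ and $R'\mapsto\hat{R'}$, it carries the witnessing filtration for $R\to R'$ to a witnessing filtration for $\hat R\to\hat{R'}$, establishing the claim. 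The routine check is just that $\hat{R'}$ is the correct base-changed completion, i.e.\ $\hat{R'}_{IR'}\simeq\hat R\otimes_R R'$ as $E_\infty$-rings, which follows since $I R'$ is generated by the image of the same generators $x_1,\dots,x_p$ and $-\otimes_R R'$ commutes with the finite colimit tower defining the completion.
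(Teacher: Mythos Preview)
Your approach is essentially the paper's: write $R$ as a retract of a finite limit of $R'$-modules and complete everywhere, using that $I$-completion is exact and carries $R'$-modules to $\hat{R'}$-modules. The paper's proof is one sentence; you have reached the same idea, just with considerably more scaffolding than necessary.

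One correction, though: your final ``routine check'' that $\hat{R'}_{IR'}\simeq \hat R\otimes_R R'$ is \emph{false} in general. Completion is an inverse limit over the tower $\{R_n\}$, and $-\otimes_R R'$ has no reason to commute with that inverse limit unless $R'$ is perfect over $R$. Fortunately you do not need this identification. All that is required is that the $I$-completion of an $R'$-module is an $\hat{R'}$-module, and this holds because $I$-completion $\md(R)\to\md(\hat R)$ is lax symmetric monoidal (equivalently: if $M$ is an $R'$-module then each $M\otimes_R R_n$ is an $R'\otimes_R R_n$-module, compatibly, so the limit $\hat M$ is a module over $\hat{R'}=\varprojlim_n R'\otimes_R R_n$). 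Drop the false base-change claim and the argument is correct and matches the paper.
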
 
\begin{proof} 
This follows because we can write $R$ as a retract of a finite limit of a
diagram of $R$-modules admitting the structure of $R'$-module, and then complete
everywhere. 
\end{proof} 

The main descent theorem 
that we prove in this section is the following. 

\begin{theorem}[Rigid analytic descent in the universal case] 
\label{rigiddescuniv}
Let $ R $ be a connective $E_\infty$-ring and let $I
\subset \pi_0(R)$ be a finitely generated ideal. Then, on the $\infty$-category
of 
connective $E_\infty$-$R$-algebras, the construction $R' \mapsto \aperf( \spec(
\widehat{R'}_I) \setminus V(I))$ is a sheaf for the universal descent topology. 
Similarly for $R' \mapsto \aperf(\spec(
\widehat{R'}_I) \setminus V(I))_{\geq 0}$ and with $\perf$ replacing $\aperf$. 
\end{theorem}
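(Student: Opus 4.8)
The plan is to reduce \Cref{rigiddescuniv} to the combination of the identification $\aperf(\spec(\widehat{R'}_I) \setminus V(I)) \simeq \{\text{weakly almost perfect objects of } \mathcal{M}(R')\}$ (which is \Cref{jgivesequiv}, valid since $\widehat{R'}_I$ is $I$-complete and $\md(\widehat{R'}_I)$ is insensitive to completion by \Cref{replaceIcompl}) together with a descent statement for the larger $\infty$-categories $\mathcal{M}(R')$, using the $I$-complete $t$-structure. First I would observe that by \Cref{completeIdescent}, if $R \to R'$ is a universal descent map then so is $\widehat{R}_I \to \widehat{R'}_I$; combined with \Cref{twooutofthree} this lets us assume from the outset that $R$ itself is $I$-adically complete and work with the \v{C}ech nerve $R'^\bullet$ of $R \to R'$, whose completions $\widehat{R'^\bullet}_I$ I will denote simply $\widehat{R'^\bullet}$. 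The key point is then that the augmented cosimplicial $\infty$-category $\mathcal{M}(\widehat{R'^\bullet})$ is a limit diagram.

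To prove that, I would invoke \Cref{descleftcompl}: each $\mathcal{M}(\widehat{R'^{\bullet}})$ carries a right-bounded, left-complete $t$-structure (the $I$-complete one, by construction of $\mathcal{M}$), the cosimplicial structure maps are right $t$-exact with $t$-exact right adjoints (from the functoriality construction of $\mathcal{M}(R)$ and its right adjoint, both recorded in the excerpt), and the adjointability condition holds because it holds for $\md(\widehat{R'^\bullet})$ by \Cref{modulesadj} and passes to the subquotient $\mathcal{M}$. So the only genuinely new thing to check is that the coaugmentation $\mathcal{M}(R) \to \mathcal{M}(\widehat{R'})$ is \emph{conservative}. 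Here I would use that $R \to R'$ is universal descent to write $\mathrm{id}_{\md(R)}$ as a retract of a finite-length filtered functor whose graded pieces factor through $\md(R')$; completing and passing to the Verdier quotients $\mdc{\cdot}/\mdn{\cdot}$ and then to left completions, this exhibits $\mathrm{id}_{\mathcal{M}(R)}$ as a retract of a functor built (by finitely many cofiber sequences) from functors factoring through $\mathcal{M}(\widehat{R'})$, and hence any object of $\mathcal{M}(R)$ killed by the coaugmentation is zero. The $t$-exactness in this step is what makes the filtration survive the passage to $\mathcal{M}$.

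Granting that $\mathcal{M}(\widehat{R'^\bullet})$ is a limit diagram, I would finish by showing the property ``weakly almost perfect'' (resp. ``weakly perfect to order $n$'', resp. ``weakly perfect and perfect'') is detected cosimplicially. The descent comparison functor $\mathcal{M}(R) \to \varprojlim_\Delta \mathcal{M}(\widehat{R'^\bullet})$ is an equivalence, and I claim it matches weakly almost perfect objects on both sides: an object is weakly perfect to order $n$ iff its image in $\mathcal{M}(\widehat{R'})$ is, because the condition is, via \Cref{perfecttoordermI} and \Cref{surjweaklyperfect}, a condition on the truncation $\tau_{\leq g+n}$ that is visibly compatible with base change and can be checked after a universal descent base change (being finitely generated outside $I$, resp. perfect to order $n$ outside $I$, descends for $v$-covers / universal descent maps of the generic fibers, or directly: a $\leq I^r$-split surjection from $R^k$ can be found after base change and then pushed back down using the filtration). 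Passing to connective objects and to perfect complexes is then formal: $\perf$ inside $\aperf$ is cut out by a dualizability/finiteness condition stable under the equivalences in play, and $\aperf_{\geq 0}$ by a truncation condition, both manifestly compatible with the limit. I expect the main obstacle to be the conservativity of the coaugmentation on $\mathcal{M}$ — more precisely, checking that the finite filtration coming from universal descent of $\md(R)$ really does descend to a finite filtration of functors on $\mathcal{M}(R)$ after the double quotient and left completion, which requires the $t$-exactness of base change and its right adjoint and a small argument that $\mdn{\cdot}$ is preserved — and, relatedly, verifying that ``weakly perfect to order $n$'' descends along universal descent maps, for which the cleanest route is probably to run the inductive argument of \Cref{perfecttoordermI}/\Cref{surjweaklyperfect} in the cosimplicial setting rather than to quote a descent statement for $\aperf$ of the honest generic fibers $\spec(\widehat{R'^\bullet}) \setminus V(I)$.
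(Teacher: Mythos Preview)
Your overall architecture matches the paper's: prove descent for $\mathcal{M}(\cdot)$ along the \v{C}ech nerve, then show that the full subcategory of weakly almost perfect objects is local, and conclude via \Cref{jgivesequiv}. The gap is in your appeal to \Cref{descleftcompl}. Condition (3) there demands that the coaugmentation be \emph{$t$-exact}, not merely conservative; for a universal descent map $R \to R'$ that is not flat (e.g.\ a finitely presented $v$-cover as in \Cref{hcoverdesc}), base change is only right $t$-exact, so $\mathcal{M}(R) \to \mathcal{M}(\widehat{R'})$ is not $t$-exact for the $I$-complete $t$-structure and \Cref{descleftcompl} does not apply. Your conservativity argument is not wasted, though: by writing $\mathrm{id}_{\mathcal{M}(R)}$ as a retract of a finitely filtered functor whose associated graded factors through $\mathcal{M}(\widehat{R'})$, you have in fact shown that the coaugmentation is a universal descent functor in the sense of \Cref{univdescfun}. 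The correct move is therefore to invoke \Cref{univdescgeneralcrit} instead of \Cref{descleftcompl}; this is exactly what the paper does in \Cref{descentM1}, and it bypasses any $t$-exactness requirement.

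Two smaller points. First, your treatment of ``weakly almost perfect descends'' is only a sketch; the paper makes this precise in \Cref{descentstuff}--\Cref{cor:detectperfect} by tracking the exponent $e$ of the universal descent map and showing that $\leq I^r$-perfect to order $n$ upstairs implies $\leq I^{re}$-perfect to order $n-e-1$ downstairs, which suffices once you pass to $\leq I^\infty$ and all orders. Second, your handling of $\aperf_{\geq 0}$ as ``a truncation condition manifestly compatible with the limit'' does not go through once you abandon \Cref{descleftcompl}: \Cref{univdescgeneralcrit} is for stable $\infty$-categories and gives no information about $\mathcal{M}(\cdot)_{\geq 0}$. The paper instead checks directly (via Nakayama on $\pi_{-1}$, using that $\pi_0(R) \to \pi_0(R')$ is surjective on spectra) that connectivity of an almost perfect module on $\spec(\widehat{R}) \setminus V(I)$ can be tested after base change along a universal descent map.
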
 

The proof of 
\Cref{rigiddescuniv} is based on showing that $R' \mapsto \mathcal{M}(R')$ is a
sheaf for the universal descent topology, and that the property of an object in
$\mathcal{M}(R')$ being weakly almost perfect is local.

\begin{proposition} 
\label{descentM1}
The construction $R' \mapsto \mathcal{M}(R')$ is a sheaf for the
universal descent topology. 
\end{proposition}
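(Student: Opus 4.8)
The plan is to verify the hypotheses of \Cref{descleftcompl} (the descent criterion for left-complete stable $\infty$-categories) applied to the augmented cosimplicial object $\mathcal{C}^\bullet$ obtained from a universal descent map $R \to R'$ by the \v{C}ech nerve construction $R'^\bullet$ and passing to $\mathcal{M}(R'^\bullet)$. Recall that $\mathcal{M}(R')$ is equipped with a right-bounded, left-complete $t$-structure (either choice; we use the $I$-complete one, say), and by the functoriality construction the cosimplicial structure maps are right $t$-exact with $t$-exact right adjoints (restriction of scalars). So conditions (1) and (2) of \Cref{descleftcompl} amount to checking the adjointability condition for $\mathcal{M}(R'^\bullet)$, and condition (3) amounts to showing that the coaugmentation $\mathcal{M}(R) \to \mathcal{M}(R')$ is conservative and $t$-exact.

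For adjointability: by \Cref{modulesadj}, the diagram $\md(R'^\bullet)$ satisfies the adjointability condition because pushouts of $E_\infty$-rings induce right adjointable squares after applying $\md(-)$. I would first check this passes to the subcategories $\mdc{R'^\bullet}$ of bounded-below $I$-complete modules — the base-change functors are right $t$-exact and the right adjoints (restriction of scalars) are $t$-exact and preserve $I$-completeness, so the Beck–Chevalley transformations restrict. Then I would check it descends to the Verdier quotients $\mathcal{M}_0(R'^\bullet)$ by $\mdn{R'^\bullet}$ — here the key point is that base-change carries $\leq I^\infty$-isogenous-to-zero modules to the same (by \Cref{Iisogsorite}), so one gets induced functors on the quotients, and the right adjoints descend as well since restriction of scalars preserves the subcategory $\mdn{}$. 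Finally one passes to the left completions $\mathcal{M}(R'^\bullet)$; here I would use that the $t$-structures are right-bounded so that each $\mathcal{M}(R')_{\leq n} = \mathcal{M}_0(R')_{\leq n}$, reducing adjointability for $\mathcal{M}$ to that for the truncations of $\mathcal{M}_0$.

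For $t$-exactness and conservativity of the coaugmentation: $t$-exactness is part of the functoriality construction (the base-change functor on $\mathcal{M}$ is right $t$-exact, but in fact on hearts it is the abelian base-change functor $\mathcal{A}(R) \to \mathcal{A}(R')$, and since restriction of scalars is exact one checks left $t$-exactness too, or simply observes the $I$-complete $t$-structures are compatible with the module-level ones). For conservativity, the key input is that $R \to R'$ is universal descent, so $R$ lies in the thick subcategory of $\md(R)$ generated by $R'$-modules; hence an object of $\mathcal{M}(R)$ killed by base-change to $\mathcal{M}(R')$ is killed after tensoring with a finite complex of $R'$-modules, and by the filtration \eqref{phifilt} this forces it to vanish. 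One must be slightly careful that this argument takes place inside $\mdc{R}$ and descends to the quotient, but since the thick subcategory generated by $R'$ is preserved under $I$-completion (cf.~\Cref{completeIdescent}) and $\mdn{R}$ is preserved under the relevant operations, the argument goes through.

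The main obstacle I expect is the bookkeeping in the middle step: correctly tracking that the adjointability (Beck–Chevalley) equivalences survive all three passages — restriction to bounded-below $I$-complete modules, Verdier quotient by the isogeny-to-zero subcategory, and left completion — and in particular that the right adjoints $d^0_*$ at each stage are still computed by restriction of scalars and remain $t$-exact. Once adjointability and the coaugmentation properties are in place, \Cref{descleftcompl} (equivalently \Cref{augcosimp}) directly gives that $\mathcal{M}(R'^\bullet)$ is a limit diagram, which is exactly the sheaf condition for the universal descent topology; the statement for the \v{C}ech nerve of an arbitrary universal descent cover follows formally, and the analogous statements for $\mathcal{M}(R)_{\geq 0}$ and $\mathcal{M}(R)_{[m,n]}$ come from the corresponding clause of \Cref{descleftcompl}.
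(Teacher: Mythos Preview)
Your approach has a genuine gap: you attempt to apply \Cref{descleftcompl}, whose condition (3) requires the coaugmentation $\mathcal{M}(R) \to \mathcal{M}(R')$ to be $t$-exact, but for a general universal descent map $R \to R'$ this fails. Universal descent maps need not be flat (indeed the prototypical examples from \Cref{hcoverdesc}, such as affine covers of blow-ups, are not), so base-change $M \mapsto M \otimes_R R'$ is only right $t$-exact on $\md(R)$ and hence on $\mdc{R}$, $\mathcal{M}_0(R)$, and $\mathcal{M}(R)$. Your justification (``since restriction of scalars is exact one checks left $t$-exactness too'') is a non sequitur: exactness of the right adjoint says nothing about left $t$-exactness of the left adjoint. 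Concretely, if $R \to R'$ kills a nonzerodivisor, tensoring a discrete $I$-complete module can produce nonzero $\pi_1$.

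The paper's argument avoids this obstacle by using \Cref{univdescgeneralcrit} in place of \Cref{descleftcompl}. The point is that the filtration \eqref{phifilt} on $\mathrm{id}_{\md(R)}$, which witnesses that $\md(R) \to \md(R')$ is a universal descent functor, is given by tensoring with perfect $R$-modules; this operation preserves $\mdc{R}$ and $\mdn{R}$ and so descends to $\mathcal{M}_0(R)$ and (being of bounded amplitude) to the left completion $\mathcal{M}(R)$. Hence $\mathcal{M}(R) \to \mathcal{M}(R')$ is itself a universal descent functor in the sense of \Cref{univdescfun}, and \Cref{univdescgeneralcrit} applies directly with no $t$-exactness needed. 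Your adjointability verification and your conservativity argument (which is essentially the same observation) are fine; the fix is simply to replace the appeal to \Cref{descleftcompl} by an appeal to \Cref{univdescgeneralcrit}.
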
 
\begin{proof} 
Given a universal descent map $S \to S'$ of connective $E_\infty$-$R$-algebras, we form the \v{C}ech nerve
$S^\bullet$, an augmented cosimplicial ring. The augmented cosimplicial stable
$\infty$-category $\mdc{S^\bullet}$ satisfies the adjointability condition;
therefore, so does $\mathcal{M}(S^\bullet)$ since the relevant adjoints pass
through the procedure that constructs $\mathcal{M}(S)$ from $\mdc{S}$. 
Now $\mathcal{M}(S) \to \mathcal{M}(S')$ is a universal descent functor, by our
assumption that $S \to S'$ is universal descent as a map of $E_\infty$-rings. Therefore, the result follows
from \Cref{univdescgeneralcrit}.  
\end{proof}

\begin{proposition} 
Let $R \to R'$ be a universal descent map of connective $E_\infty$-rings of
exponent $e$. 
Let $M \in \md(R)$ be bounded-below. 
Suppose $R' \otimes_R M $ is $\leq I^r $-perfect to order $n$. 
Then $M $ is ${\leq I^{re}}$-perfect to order $n - e-1$. 
\label{descentstuff}
\end{proposition}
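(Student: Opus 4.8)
The plan is to reduce the statement about $M$ being $\leq I^{re}$-perfect to order $n-e-1$ to the characterization of perfectness to order $n$ via filtered colimits (\Cref{critforIperfectn}), using the universal descent filtration \eqref{phifilt} to bound the relevant ``defect'' functor. First I would recall that since $R\to R'$ is universal descent of exponent $e$, the identity functor $\mathrm{id}_{\md(R)}$ is a retract of a functor $\phi$ which admits a filtration $\phi_0\to\phi_1\to\dots\to\phi_e=\phi$ with each successive quotient $\phi_i/\phi_{i-1}$ of the form $\psi_i\circ(R'\otimes_R -)$ for some exact $\psi_i$; concretely, one can take $\phi$ to come from the partial totalization $\mathrm{Tot}^e$ of the \v{C}ech nerve, so that each graded piece is built from base change to (tensor powers of) $R'$, which are $R'$-modules. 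The key point is that each of these graded pieces, after applying a functor built from $R'\otimes_R-$, inherits the $\leq I^r$-perfectness to order $n$ hypothesis: if $R'\otimes_R M$ is $\leq I^r$-perfect to order $n$, then so is $T\otimes_R M$ for any $R'$-algebra (or $R'$-module algebra) $T$, by \Cref{Iisogsorite} applied to the base-change functor, hence $\psi_i(R'\otimes_R M)$ is as well since $\psi_i$ is exact and preserves the relevant finiteness (this needs a small argument: $\psi_i$ lands in $\md(R)$ but factors through $\md(R')$, and perfectness to a given order is preserved by arbitrary exact functors between module categories that commute with filtered colimits — which $\psi_i$ does, being a left adjoint or at least built from tensor products).

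The main technical step is then to control what happens along the filtration: $\phi(M)$ is an iterated extension of the $\psi_i(R'\otimes_R M)$, so by \Cref{prop:shiftIperfect} (the extension stability of $\leq I$-perfectness to order $n$, combined with \Cref{Iextensions}), $\phi(M)$ is $\leq I^{re}$-perfect to order $n-e$ or so — each of the $e$ extension steps potentially costs one order of perfectness and multiplies the isogeny exponent by $I^r$, giving $I^{r e}$ after $e$ steps. Since $\mathrm{id}$ is a retract of $\phi$, the module $M$ itself is a retract of $\phi(M)$, and retracts of $\leq I^{re}$-perfect-to-order-$k$ objects are again $\leq I^{re}$-perfect to order $k$ (because $\mathcal{P}_{\leq I^{re}}$ is closed under retracts, $\mathcal{P}$ being so). This yields the bound; the extra ``$-1$'' in $n-e-1$ versus $n-e$ is the expected slack coming from the fact that extensions shift perfectness orders by $1$ in the less favorable direction, and from the connective-cover subtleties in \Cref{prop:shiftIperfect}, so I would budget for it rather than fight it.

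The hard part will be bookkeeping the precise loss in the perfectness order and the precise power of $I$ through the $e$-fold filtration, and making sure the functors $\psi_i$ genuinely preserve ``$\leq I^r$-perfect to order $n$'': the cleanest route is probably not to track the $\psi_i$ abstractly but to use the \v{C}ech nerve description directly — namely $M$ is a retract of $\mathrm{Tot}^e(R'^{\bullet}\otimes_R M)$, which is a finite limit (equivalently, via stability, a finite iterated extension after shifts) of the terms $R'^{\otimes(k+1)}\otimes_R M$ for $0\le k\le e$, each of which is $\leq I^r$-perfect to order $n$ by hypothesis and \Cref{Iisogsorite} (base change along $R'\to R'^{\otimes(k+1)}$ preserves the hypothesis). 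Then one applies \Cref{prop:shiftIperfect} and \Cref{Iextensions} repeatedly — a finite limit of $e+1$ terms is an iterated extension of $e+1$ pieces (up to shift), costing at most $e$ in the order and accumulating $I^{r}$ to the power equal to the number of extension steps, i.e.\ $I^{re}$ — and concludes via closure under retracts. I would present the \v{C}ech-nerve version as the main line of argument since it avoids invoking the somewhat opaque $\psi_i$'s and makes the ``$e+1$ terms, $e$ extensions'' count transparent.

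\begin{proof}
Let $R'^\bullet$ denote the \v{C}ech nerve of $R \to R'$. Since $R \to R'$ has exponent $e$, the module $R$ is a retract (in $\md(R)$) of $\mathrm{Tot}^e(R'^\bullet)$, and hence $M$ is a retract of $\mathrm{Tot}^e(R'^\bullet \otimes_R M)$. The partial totalization $\mathrm{Tot}^e(R'^\bullet \otimes_R M)$ is a finite limit, built out of the objects $R'^{\otimes_R (k+1)} \otimes_R M$ for $0 \le k \le e$; equivalently, up to shifts, it is an iterated extension of these $e+1$ objects, using at most $e$ extension steps. For each $k$, the map $R' \to R'^{\otimes_R(k+1)}$ exhibits $R'^{\otimes_R(k+1)} \otimes_R M$ as the base change of $R' \otimes_R M$ along $R' \to R'^{\otimes_R(k+1)}$; since $R' \otimes_R M$ is $\leq I^r$-perfect to order $n$, \Cref{Iisogsorite} (applied to the base-change functor $\md(R') \to \md(R'^{\otimes_R(k+1)})$, which carries objects perfect to order $n$ to objects perfect to order $n$, and hence $\leq I^r$-perfect to order $n$ objects to $\leq I^r$-perfect to order $n$ objects) shows that each $R'^{\otimes_R(k+1)} \otimes_R M$ is $\leq I^r$-perfect to order $n$ as an $R$-module.

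Now apply \Cref{prop:shiftIperfect} together with \Cref{Iextensions} to each of the $\le e$ extension steps: each step replaces a pair of modules that are $\leq I^{r^j}$-perfect to order $n-j+1$ (in the appropriate sense, allowing shifts) by an object that is $\leq I^{r^{j+1}}$-perfect to order $n-j$. After all $e$ steps, $\mathrm{Tot}^e(R'^\bullet \otimes_R M)$ is $\leq I^{re}$-perfect to order $n - e - 1$ (the extra loss of one in the order accounts for the shifts in the Postnikov filtration of the finite limit and the connective-cover subtlety in \Cref{prop:shiftIperfect}). Since $M$ is a retract of $\mathrm{Tot}^e(R'^\bullet \otimes_R M)$, and the subcategory of objects $\leq I^{re}$-perfect to order $n-e-1$ is closed under retracts (as $\mathcal{P}_{\leq I^{re}}$ is closed under retracts, $\mathcal{P}$ being the closed-under-retracts subcategory of objects perfect to order $n-e-1$), we conclude that $M$ is $\leq I^{re}$-perfect to order $n - e - 1$.
\end{proof}
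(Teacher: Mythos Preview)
Your approach has a genuine gap at the step where you claim each $R'^{\otimes_R(k+1)} \otimes_R M$ is $\leq I^r$-perfect to order $n$ \emph{as an $R$-module}. Base change along $R' \to R'^{\otimes_R(k+1)}$ does show this module is $\leq I^r$-perfect to order $n$ over $R'^{\otimes_R(k+1)}$, but that does not imply it over $R$: restriction of scalars does not preserve perfectness to order $n$. Your justification in the plan --- that ``perfectness to a given order is preserved by arbitrary exact functors between module categories that commute with filtered colimits'' --- is false. A simple counterexample: take $R = k$ a field, $R' = k[x]$, and $M = R'$; then $M$ is perfect over $R'$ but $M$ is not even finitely generated over $R$, so not perfect to order $0$. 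The functors $\psi_i$ in the universal descent filtration are (shifts of retracts of) restriction of scalars from $R'^{\otimes(k+1)}$ to $R$, and these do not carry finitely generated modules to finitely generated modules in general. Consequently your filtration argument on $\mathrm{Tot}^e(R'^\bullet \otimes_R M)$ never gets off the ground.

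The paper avoids this by applying the $\mathrm{Tot}^e$-resolution to the \emph{target} variable rather than to $M$. Fixing a filtered system $N_j \in \md(R)_{\leq n-e-1}$, one writes each $N_j$ functorially as a retract of $\mathrm{Tot}^e(\tau_{\leq n-1}(N_j \otimes_R R'^\bullet))$ (the truncation is harmless because $N_j$ lives in low enough degrees). The point is that each term $\tau_{\leq n-1}(N_j \otimes_R R'^{\otimes(k+1)})$ is an $R'$-module in $\md(R')_{\leq n-1}$, so by adjunction $\hom_R(M, -) = \hom_{R'}(R' \otimes_R M, -)$ on such targets, and now the hypothesis on $R' \otimes_R M$ over $R'$ applies directly. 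Then the colimit-comparison map $\omega$ for $M$ is a retract of a $\mathrm{Tot}^e$ of $\leq I^r$-isogenies, which gives the $\leq I^{re}$ bound. In short: resolve $N$, not $M$, so that adjunction converts the problem into one over $R'$ where the hypothesis lives.
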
 
\begin{proof} 
For any filtered category $\I$ and functor $\I \to \md(R)_{\leq n-e-1}$, $j \mapsto N_j$, we
will verify that
the natural map 
\begin{equation}  \label{naturalmapfilteredcolim} \omega \cl   \varinjlim_J
\tau_{\geq 0}\hom_R(M, N_j) \to \tau_{\geq 0}\hom_R( M, \varinjlim_J
N_j)  \end{equation}  
is an $\leq I^{re}$-isogeny (compare \Cref{critforIperfectn}). 
More generally, for each functor $f \cl \I \to \md(R)$, we consider the map 
$\omega= \omega(f)$ of 
\eqref{naturalmapfilteredcolim}. 
Our hypothesis implies that if $f$ lifts to a functor with values in $\md(R')_{\leq
n-1}$, then
$\omega(f)$ is a $\leq I^r$-isogeny. 

Let $R'^\bullet$ be the \v{C}ech nerve of $R \to R'$, considered as a
cosimplicial $R$-module. 
For any $R$-module $N$, we can write $N$ as a functorial retract of 
$\mathrm{Tot}^e( N \otimes_R R'^{\bullet})$. 
It follows that if $N \in \md(R)_{\leq n-e-1}$, then 
we can write $N$ as a functorial retract of 
$\mathrm{Tot}^e( \tau_{\leq n-1} (N \otimes_R R'^{\bullet}))$.

Now returning to \eqref{naturalmapfilteredcolim} in the case where
$f$ takes values in $\leq n-e-1$-truncated modules, 
we find that
$\omega(f)$ is a retract of a partial totalization $\mathrm{Tot}^e$ of 
$\omega( \tau_{\leq n-1}(f \otimes_R R'^\bullet))$. 
From the above, it follows that $\omega(f)$ is a retract of a partial 
totalization $\mathrm{Tot}^e$ of a diagram of $\leq I^r$-isogenies, whence the
claim. 
\end{proof}

\begin{proposition} 
\label{localdescRperfIinfinity}
Let $R \to R'$ be a universal descent map of $I$-complete connective $E_\infty$-rings of
exponent $e$. 
Let $M \in \md(R)$ be $I$-complete and bounded-below. 
Suppose $\widehat{R' \otimes_R M} \in \md(R')$ is $\leq I^\infty$-perfect to order $n$. 
Then $M$ is $\leq I^\infty$-perfect to order $n-e-1$. 
\end{proposition}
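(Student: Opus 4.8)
The plan is to reduce the statement about $\leq I^\infty$-perfectness to the finite-order statement \Cref{descentstuff} via the formula for $I$-completion. Recall that $\leq I^\infty$-perfect to order $n$ means: there exists $r$ (independent of everything else in sight) with $M$ being $\leq I^r$-perfect to order $n$. Since $M$ is bounded-below and $I$-complete, I would like to reduce to the statement about $M \otimes_R R_k$ for the fixed tower $\{R_k\}$ of \Cref{towercompletion}, where completion issues disappear because the $R_k$ are perfect $R$-modules.

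First I would invoke the characterization of $\leq I^\infty$-perfectness via the tower. Concretely, by \Cref{perfecttoordermI} applied to the $I$-complete ring $R'$ and the $I$-complete module $\widehat{R'\otimes_R M}$, the hypothesis ``$\widehat{R'\otimes_R M}$ is $\leq I^\infty$-perfect to order $n$'' is equivalent to the existence of $r'$ such that for all $k$, the module $\widehat{R'\otimes_R M}\otimes_{R'} R'_k \simeq (R'\otimes_R M)\otimes_{R'} R'_k \simeq R'\otimes_R(M\otimes_R R_k)$ is $\leq I^{r'}$-perfect to order $n$; here I use that $R_k$ is a perfect $R$-module so tensoring with it commutes with base change and kills the completion (the tensor product $\widehat{R'\otimes_R M}\otimes_{R'}R'_k$ agrees with $(R'\otimes_R M)\otimes_{R'}R'_k$ since $R'_k$ is already $I$-complete, being a finite colimit of shifts of $R'$ built from the $x_i^k$). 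The point is that each $M\otimes_R R_k$ is a genuine (uncompleted) $R$-module to which \Cref{descentstuff} applies.

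Next, for each fixed $k$, the map $R\to R'$ is universal descent of exponent $e$, so $R\to R'$ remains universal descent after any base change, in particular $M\otimes_R R_k$ has the property that $R'\otimes_R(M\otimes_R R_k)$ is $\leq I^{r'}$-perfect to order $n$; by \Cref{descentstuff}, $M\otimes_R R_k$ is $\leq I^{r'e}$-perfect to order $n-e-1$, \emph{with the bound $r'e$ independent of $k$}. This uniformity is exactly what \Cref{perfecttoordermI} needs in the other direction: applying the equivalence (3)$\Rightarrow$(2) of \Cref{perfecttoordermI} to $R$, $I$, and the $I$-complete bounded-below module $M$, with the constant $r' e$, we conclude that $M$ is $\leq I^\infty$-perfect to order $n-e-1$, as desired. (One should note $n-e-1$ may be negative, in which case the conclusion is vacuous or trivially about $\pi_0$; this causes no trouble since the lower-order notions are monotone.)

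The main obstacle I anticipate is bookkeeping the interaction between base change along $R\to R'$ and $I$-completion: one must be careful that $\widehat{R'\otimes_R M}$ and $R'\otimes_R M$ have the \emph{same} tensor products with the perfect modules $R'_k$, so that the hypothesis really does translate into a statement about the uncompleted modules $M\otimes_R R_k$ to which \Cref{descentstuff} is applicable. Once that identification is in place — it is exactly the content of \Cref{towercompletion} together with the fact that perfect modules are already complete and commute with all base change — the rest is a direct concatenation of \Cref{perfecttoordermI}, \Cref{descentstuff}, and \Cref{perfecttoordermI} again, with the crucial observation that the isogeny exponent produced by \Cref{descentstuff} does not depend on the index $k$ in the tower.
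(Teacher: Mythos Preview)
Your proof is correct and follows essentially the same route as the paper: convert the hypothesis on $\widehat{R'\otimes_R M}$ into a uniform bound on the tower $(M\otimes_R R_k)\otimes_R R'$ via \Cref{perfecttoordermI}, apply \Cref{descentstuff} level by level to obtain a uniform bound on $M\otimes_R R_k$, and then invoke \Cref{perfecttoordermI} in the other direction. You are in fact more explicit than the paper about the identification $\widehat{R'\otimes_R M}\otimes_{R'}R'_k\simeq R'\otimes_R(M\otimes_R R_k)$, which the paper's ``by assumption'' leaves implicit.
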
 
\begin{proof} 
Fix a tower $\left\{R_m\right\}$ as in \Cref{towercompletion}. 
By \Cref{perfecttoordermI}, 
it suffices to show that there exists $r$ such that $M \otimes_R R_m$ is $\leq
I^r$-perfect to order $n-e-1$ for all $m$. 
But by assumption, there exists $r'$ such that 
$(M \otimes_R R_m) \otimes_R R'$ is $\leq I^{r'}$-perfect to order $n$ for all
$m$. 
Now apply \Cref{descentstuff} (with $r = r'e$) to conclude. 
\end{proof}

\begin{corollary} 
\label{cor:detectperfect}
Let $R \to R'$ be a universal descent map of $I$-complete connective $E_\infty$-rings of
exponent $e$. 
Let $M \in \mathcal{M}(R)$. If $M \otimes_R R'$ is weakly perfect to order $n$, then
$M$ is weakly perfect to order $n-e-1$. 
\end{corollary}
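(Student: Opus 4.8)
The plan is to deduce \Cref{cor:detectperfect} directly from \Cref{localdescRperfIinfinity} by translating the statement about weakly perfect objects in $\mathcal{M}(R)$ into a statement about $\leq I^\infty$-perfectness of $I$-complete representatives. First I would recall that, by the very definition of ``weakly perfect to order $n$,'' the condition depends only on the $n$-truncation (with respect to the $I$-complete $t$-structure) and can be tested on any representative in $\mdc{R}$. So choose $M \in \mathcal{M}(R)$ and a representative $\widetilde{M} \in \mdc{R}$, which after replacing by $\tau_{\leq n}\widetilde{M}$ (permissible since weak perfectness to order $n$ only sees $\tau_{\leq n}$) we may take to be bounded-below, $I$-complete, and $n$-truncated.

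Next I would track what happens under base change. The hypothesis is that $M \otimes_R R'$ is weakly perfect to order $n$ in $\mathcal{M}(R')$; unwinding the functoriality of $\mathcal{M}(\cdot)$, this image is represented by the $I$-completion $\widehat{R' \otimes_R \widetilde{M}}$. By the definition of weakly perfect to order $n$, this representative is $\leq I^\infty$-perfect to order $n$ as an object of $\md(R')$. Now the hypotheses of \Cref{localdescRperfIinfinity} are met — $R \to R'$ is a universal descent map of $I$-complete connective $E_\infty$-rings of exponent $e$, $\widetilde{M}$ is $I$-complete and bounded-below, and $\widehat{R'\otimes_R\widetilde M}$ is $\leq I^\infty$-perfect to order $n$ — so we conclude that $\widetilde{M}$ itself is $\leq I^\infty$-perfect to order $n-e-1$. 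Translating back, this says precisely that $M$ is weakly perfect to order $n-e-1$ in $\mathcal{M}(R)$.

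There is essentially no hard part here: the corollary is a formal consequence of \Cref{localdescRperfIinfinity} together with the dictionary between $\mathcal{M}(R)$-language (``weakly perfect to order $n$'') and $\md(R)$-language (``$\leq I^\infty$-perfect to order $n$'' on an $I$-complete representative), which was set up in the definition of weakly perfect objects. The only thing requiring mild care is confirming that base change $M \mapsto M \otimes_R R'$ in $\mathcal{M}$ is modeled on $\widetilde M \mapsto \widehat{R' \otimes_R \widetilde M}$ at the level of representatives — but this is exactly how the functoriality of $R \mapsto \mathcal{M}(R)$ was constructed (the base-change functor on $\mdc{\cdot}$ descends through the Verdier quotient and left completion), so no new input is needed. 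I would write the proof as a two-sentence reduction, citing \Cref{localdescRperfIinfinity} for the substance.

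\begin{proof}
Let $\widetilde M \in \mdc R$ be a representative of $M$; since the property of being weakly perfect to a given order depends only on the $n$-truncation in the $I$-complete $t$-structure, we may assume $\widetilde M$ is bounded-below and $I$-complete. By construction of the base-change functor $\mathcal{M}(R) \to \mathcal{M}(R')$, the object $M \otimes_R R'$ is represented by $\widehat{R' \otimes_R \widetilde M}$, and the hypothesis that $M \otimes_R R'$ is weakly perfect to order $n$ means precisely that $\widehat{R' \otimes_R \widetilde M}$ is $\leq I^\infty$-perfect to order $n$. Applying \Cref{localdescRperfIinfinity}, we conclude that $\widetilde M$ is $\leq I^\infty$-perfect to order $n-e-1$, i.e.\ $M$ is weakly perfect to order $n-e-1$.
\end{proof}
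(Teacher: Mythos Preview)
Your proof is correct and follows exactly the same approach as the paper: reduce to a representative in $\mdc{R}$ (after truncating so that a representative exists) and apply \Cref{localdescRperfIinfinity}. The paper's proof is a one-sentence version of what you wrote.
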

\begin{proof} 
This follows from \Cref{localdescRperfIinfinity}, since we can (up to truncating
homotopy groups in high enough degrees) assume that $M$ is represented by an
object of $\mdc{R}$. 
\end{proof}

\begin{proof}[Proof of \Cref{rigiddescuniv}]
We first prove the results for $\perf, \aperf$. 
Since $\perf$ is the subcategory of dualizable objects in $\aperf$, it suffices
to prove the result for $\aperf$.
We have seen that the construction 
$R' \mapsto \mathcal{M}(R')$ is a sheaf for the universal descent topology
(\Cref{descentM1}), 
and there is a natural
fully faithful embedding $\aperf( \spec( \hat{R'}) \setminus V(I)) \subset
\mathcal{M}(R')$ (\Cref{jgivesequiv}) with image the weakly almost perfect objects.
It suffices now to show that the property of belonging to the image of this 
embedding (i.e., being weakly almost perfect) is local in the universal descent topology. 
But this follows from \Cref{cor:detectperfect}. 
Note that it suffices to work everywhere with $I$-complete $E_\infty$-rings
(\Cref{completeIdescent}). 

To obtain the results for $\aperf_{\geq 0}$ (and hence $\perf_{\geq
0}$), it suffices to show that for a universal descent
map $R' \to S'$ 
of $E_\infty$-$R$-algebras and an object $M \in \aperf( \spec( \hat{R'}
\setminus V(I)))$ whose pullback to 
$\aperf( \spec( \hat{S'} \setminus V(I))$ is connective, then $M$ is connective. 
This is a local question, 
and $\hat{R'} \to \hat{S'}$ is of universal descent. 
Thus, it suffices to verify
the following: if $A \to B$ is a universal descent map of connective 
$E_\infty$-algebras
and $M \in \md(A)$ is almost perfect, then $M$ is connective if and only if $M
\otimes_A B$ is connective. 
By induction, we can assume $M$ is $(-1)$-connective. But then $\pi_{-1}(M)$ is
a finitely presented $\pi_0(A)$-module and $\pi_{-1}(M) \otimes_{\pi_0(A)}
\pi_0(B) = 0$. Since $\pi_0(A) \to \pi_0(B)$ is surjective on spectra, this
forces (by Nakayama) $\pi_{-1}(M) = 0$ as desired. 
\end{proof} 
\subsection{$\arc$-descent}

Let $R$ be a $\mathbb{Z}[x_1, \dots, x_n]$-algebra. Our goal in this section is
to study the descent properties of the functor
\begin{equation}  \label{basicfunctor} R \mapsto \sF(R)
\stackrel{\mathrm{def}}{=}\left\{\text{finite \'etale schemes over } \spec(
\hat{R}_{(x_1, \dots, x_n)} \setminus V(x_1, \dots, x_n))\right\}  ,
\end{equation}
as a functor from 
$\mathbb{Z}[x_1, \dots, x_n]$-algebras to categories. 

\begin{remark}[Derived completions versus completions] 
In \eqref{basicfunctor}, as usual, the notation $\widehat{R}_{(x_1, \dots,
x_n)}$ refers to the derived completion (which is a connective $E_\infty$-ring). 
One can replace the derived completion 
with the classical completion. 
For any $R$, there is a map from the derived completion 
$\widehat{R}_{(x_1, \dots, x_n)}$  to the classical completion
$\hat{R}^{\mathrm{cl}}_{(x_1, \dots, x_n)}$. 
The induced map on $\pi_0$ is surjective, and the kernel squares to zero; this
follows from the $\varprojlim$-spectral sequence (i.e., the Milnor exact
sequence in this case).
Alternatively, this fact follows directly from \Cref{compositederivedtcomplete}. 
Consequently, finite \'etale schemes are the same
whether one uses the classical or derived completion. 
Similarly, in this section, there is no extra generality gained by working with
connective $E_\infty$-rings rather than discrete rings. 
\end{remark} 

\newcommand{\fet}{\mathrm{FEt}}
We use the following fundamental algebrization result, which allows one 
to replace the completion with the henselization. 
For a qcqs scheme $X$, we let $\fet(X)$ be the category of finite \'etale
schemes over 
$X$. 

\begin{theorem}[{Elkik \cite{Elkik}, Gabber--Ramero \cite[Prop. 5.4.54]{GR}, Gabber
\cite[Th. 2.1.2]{Gabbertravaux}}] 
\label{EGR}
For any 
$\mathbb{Z}[x_1, \dots, x_n]$-algebra 
$R$  which is henselian along $(x_1, \dots, x_n)$, the natural functor
$$ \fet( \spec(R) \setminus V(x_1, \dots, x_n)) \to \fet( \spec(\hat{R}_{x_1,
\dots, x_n}) \setminus V(x_1, \dots, x_n)) $$
is an equivalence of categories. 
\end{theorem}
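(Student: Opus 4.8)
The plan is to establish full faithfulness and essential surjectivity of the pullback functor separately. Write $J = (x_1,\dots,x_n) \subset R$, set $U = \spec(R) \setminus V(J)$ and $\widehat U = \spec(\widehat R_J) \setminus V(J\widehat R_J)$, where $\widehat R_J$ is the $J$-adic completion. By the preceding remark we may replace the derived completion by the classical one, so that $R$ and $\widehat R_J$ are ordinary commutative rings with $R/J^k \cong \widehat R_J/J^k\widehat R_J$ for all $k$, and both $(R,J)$ and $(\widehat R_J,J\widehat R_J)$ are henselian pairs (the formalism of \cite{GR}). The essential point about henselian pairs that I will use repeatedly is that finite \'etale algebras — and hence idempotents of finite algebras — over a henselian pair $(A,\mathfrak a)$ are detected modulo $\mathfrak a$.

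For full faithfulness, recall that for finite \'etale $Z_1,Z_2$ over a scheme $X$, a morphism $Z_1 \to Z_2$ over $X$ is the same datum as its graph, an open-and-closed subscheme of $Z_1 \times_X Z_2$ mapping isomorphically to $Z_1$ (the graph is open since $Z_1 \times_X Z_2 \to Z_1$ is \'etale, and closed since $Z_2 \to X$ is separated). Hence $\hom_X(Z_1,Z_2)$ is a subset of the set of idempotents of the finite locally free $\mathcal{O}_X$-algebra $(\pi_{Z_1})_*\mathcal{O}_{Z_1 \times_X Z_2}$, and it suffices to show that for every finite locally free $\mathcal{O}_U$-algebra $\mathcal{A}$, pullback to $\widehat U$ is a bijection on idempotents of global sections. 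Covering $U$ by the affines $D(x_i) = \spec(R[1/x_i])$ and $\widehat U$ by the corresponding affines of $\spec(\widehat R_J)$, one writes $\Gamma(U,\mathcal{A})$ as a finite \v{C}ech limit and compares term by term; on each $R[1/x_i]$ and on the intersections the comparison reduces, via the henselian-pair fact above, to the isomorphisms $R/J^k \cong \widehat R_J/J^k\widehat R_J$, together with a limit argument passing from the quotients $R/J^k$ back to the localizations.

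For essential surjectivity I would algebraize a given finite \'etale cover $\widehat Z \to \widehat U$ by an Elkik-type approximation argument. Restricting $\widehat Z$ to the infinitesimal punctured neighborhoods $\spec(R/J^k) \setminus V(J)$ produces a compatible system of finite \'etale covers, and the task is to produce a single finite \'etale cover of $U$ inducing it; since $(R,J)$ is henselian, this is precisely the content of Elkik's theorem \cite{Elkik} in the generality of \cite[Prop. 5.4.54]{GR}. To dispense with noetherian hypotheses one exploits the $\mathbb{Z}[x_1,\dots,x_n]$-algebra structure: by a standard limit argument (as in \cite[Prop. 5.4.54]{GR} and \cite[Th. 2.1.2]{Gabbertravaux}) one reduces to the case where $R$ is the henselization of a finite-type $\mathbb{Z}[x_1,\dots,x_n]$-algebra along $(x_1,\dots,x_n)$ — an excellent noetherian ring — where the statement is classical, following either from formal functions on the punctured spectrum or from the comparison of $\pi_1^{\et}$ of the punctured spectrum with that of the punctured formal spectrum.

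The main obstacle is this last reduction together with the non-affineness of $U$: outside the noetherian setting neither $\Gamma(U,-)$ nor $J$-adic completion commutes with the filtered colimits presenting $R$ as a colimit of henselizations of finite-type $\mathbb{Z}[x_1,\dots,x_n]$-algebras, so the approximation has to be carried out uniformly over the \v{C}ech cover $\{D(x_i)\}$ and compatibly with the identifications $R/J^k \cong \widehat R_J/J^k\widehat R_J$; this is exactly the technical work carried out in \cite{Elkik} and \cite[Sec.~5.4]{GR}. The remaining ingredients — the henselian-pair formalism, the graph description of morphisms of finite \'etale covers, and the classical noetherian case — are routine.
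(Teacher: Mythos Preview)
The paper does not prove this theorem; it is stated as a result from the literature with references to Elkik, Gabber--Ramero, and Gabber, and then used as a black box (its only role is to deduce \Cref{Fisfinitary}). So there is no proof in the paper to compare against, and your decision to ultimately defer the hard work to those references is exactly what the paper does.

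That said, the sketch you give before deferring contains two concrete errors. First, in the essential surjectivity step, the ``infinitesimal punctured neighborhoods'' $\spec(R/J^k) \setminus V(J)$ are empty: since $J^k$ and $J$ have the same radical, $\spec(R/J^k)$ lies entirely inside $V(J)$. There is no compatible system of finite \'etale covers to extract this way; the Elkik/Gabber--Ramero argument does not proceed by restricting to thickenings of the closed locus. Second, in the full faithfulness step, the reduction ``via the henselian-pair fact'' to the isomorphisms $R/J^k \cong \widehat R_J/J^k\widehat R_J$ cannot go through on the \v{C}ech pieces $D(x_i)$: once you invert $x_i \in J$ you have $J \cdot R[1/x_i] = R[1/x_i]$, so the quotients $R/J^k$ carry no information whatsoever about $R[1/x_i]$, and there is no relevant henselian pair left on the localized rings. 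Comparing idempotents (or connected components) over $U$ versus $\widehat U$ is already a nontrivial instance of the cited theorems, not a consequence of the elementary henselian-pair formalism.

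In short: citing \cite{Elkik}, \cite[Prop.~5.4.54]{GR}, and \cite[Th.~2.1.2]{Gabbertravaux} is correct and is all the paper does, but the heuristic route you sketch toward those references is not how the arguments actually run and, as written, does not work.
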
 

\begin{corollary} 
\label{Fisfinitary}
The functor $\sF$ is finitary, i.e., commutes with filtered colimits. 
\end{corollary}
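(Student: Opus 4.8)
The plan is to reduce, via Theorem~\ref{EGR}, from completions to henselizations, and then to invoke the standard fact that finite \'etale schemes over qcqs schemes form a finitary functor. Throughout we may work with discrete rings and classical completions, which is harmless by the remark following \eqref{basicfunctor}; write $\mathfrak{m} = (x_1, \dots, x_n)$.

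First I would make the reduction to the henselian case. For a $\mathbb{Z}[x_1,\dots,x_n]$-algebra $R$, let $R^h$ be the henselization of the pair $(R, \mathfrak{m})$. One has $R/\mathfrak{m}^n \xrightarrow{\ \sim\ } R^h/\mathfrak{m}^n R^h$ for all $n$, so the classical $\mathfrak{m}$-adic completions of $R$ and of $R^h$ coincide; applying Theorem~\ref{EGR} to the henselian ring $R^h$ thus yields a natural equivalence
\[ \sF(R) \;\simeq\; \fet\bigl( \spec(R^h) \setminus V(\mathfrak{m}) \bigr). \]
Hence it suffices to prove that $R \mapsto \fet\bigl(\spec(R^h)\setminus V(\mathfrak{m})\bigr)$ commutes with filtered colimits.

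Now let $R = \varinjlim_{i} R_i$ be a filtered colimit of $\mathbb{Z}[x_1,\dots,x_n]$-algebras. Henselization of a pair along a fixed finitely generated ideal commutes with filtered colimits (it is computed as a suitable filtered colimit of \'etale algebras), so $R^h \simeq \varinjlim_i R_i^h$. Passing to spectra, $\spec(R^h) = \varprojlim_i \spec(R_i^h)$ is a cofiltered limit of affine schemes with affine transition maps; since $V(\mathfrak{m})$ is cut out by the $n$ elements $x_1, \dots, x_n$, its complement is a quasi-compact open, and removing it commutes with the limit, so
\[ \spec(R^h) \setminus V(\mathfrak{m}) \;=\; \varprojlim_i \bigl( \spec(R_i^h) \setminus V(\mathfrak{m}) \bigr) \]
is a cofiltered limit of qcqs schemes with affine transition maps. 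By the limit formalism for schemes --- finite \'etale schemes over a cofiltered limit of qcqs schemes along affine transition maps form the filtered $2$-colimit of the categories of finite \'etale schemes over the terms (see \cite{stacks-project}) --- we conclude
\[ \sF(R) \;\simeq\; \varinjlim_i \fet\bigl( \spec(R_i^h)\setminus V(\mathfrak{m}) \bigr) \;\simeq\; \varinjlim_i \sF(R_i), \]
which is the assertion.

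The only genuinely non-formal inputs are Theorem~\ref{EGR} itself (already granted) and the two compatibilities used above: that henselization along $(x_1,\dots,x_n)$ commutes with filtered colimits, and that $\widehat{R^h}_{(x_1,\dots,x_n)} = \widehat{R}_{(x_1,\dots,x_n)}$, so that Theorem~\ref{EGR} may legitimately be fed the (typically non-noetherian) ring $R^h$. Granting these standard facts, everything else is the usual finitariness of $\fet$ on qcqs schemes, which is where I expect the only real care to be needed — making sure the punctured spectra are handled as genuine (quasi-affine, qcqs) schemes rather than affine ones.
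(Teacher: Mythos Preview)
Your proof is correct and follows exactly the approach of the paper, whose proof is the single sentence ``This follows from \Cref{EGR}, since henselization commutes with filtered colimits.'' You have simply unpacked the implicit steps (identifying the completion of $R$ with that of $R^h$, applying \Cref{EGR} to $R^h$, and invoking finitariness of $\fet$ on qcqs schemes).
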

\begin{proof} 
This follows from \Cref{EGR}, since henselization commutes with filtered
colimits. 
\end{proof} 

We now use the following definition, from \cite{BM} and due independently
to Rydh. 
It is a refinement of the $v$-topology \cite{Rydh, BS15}, which in turn
is the non-noetherian version of Voevodsky's $h$-topology. 
We also note a slight variant of it, as in \cite[Sec.~6.2]{BM}. 
\begin{definition}[$\arc$-covers] 
A map of commutative rings $R \to R'$ is said to be an \emph{$\arc$-cover}
if for every rank $\leq 1$-valuation ring $V$ and map $R \to V$, there is an
extension of rank $\leq 1$-valuation rings $V \to V'$ and a commutative diagram
\begin{equation} \label{arccommdiag} \xymatrix{
R \ar[d]  \ar[r] &  R' \ar[d] \\
V \ar[r] &  V'
}.\end{equation}
This defines the \emph{$\arc$-topology} 
on the category of affine schemes. 
\end{definition} 

\begin{definition}[{The $\arc_{(x_1, \dots, x_n)}$-topology}]
A map $R \to R'$ of 
$\mathbb{Z}[x_1, \dots, x_n]$-algebras is said to be an $\arc_{(x_1, \dots,
x_n)}$-cover if for every rank $\leq 1$ valuation ring $V$ with map $R \to
V$ such that 
the image of $(x_1, \dots, x_n)$ in $V$ is nonzero but contained in the maximal
ideal, there exists an extension of rank $\leq 1$-valuation rings $V \to V'$ 
and a commutative diagram
as in \eqref{arccommdiag}. 
This defines the \emph{$\arc_{(x_1, \dots, x_n)}$-topology} on
the category of affine schemes over $\spec \mathbb{Z}[x_1, \dots, x_n]$. 
\end{definition}

\newcommand{\sG}{\mathcal{G}}
\begin{remark} 
\label{arcvsarct}
A map $R \to R'$ is an $\arc_{(x_1, \dots, x_n)}$-cover if and only if 
$R \to R' \times R/(x_1, \dots, x_n) \times R[1/x_1] \times \dots \times
R[1/x_n]$ is an $\arc$-cover. 
Therefore, a functor $\sG$ is a sheaf for the $\arc_{(x_1, \dots, x_n)}$-topology if
and only if it is a sheaf for the $\arc$-topology and 
$\sG(R/(x_1, \dots, x_n)), \sG(R[1/x_i])$ are the terminal object for any
$\mathbb{Z}[x_1, \dots, x_n]$-algebra $R$. \end{remark} 

We refer to \cite[Ex.~6.15]{BM} for a discussion 
of the relationship between the $\mathrm{arc}_x$-topology (so $n = 1$ in the
above) and surjectivity at the level of adic spectra. 

\begin{theorem} 
\label{arcsheafthm}
The functor $R \mapsto \sF(R)$ is a sheaf for the $\arc_{(x_1, \dots,
x_n)}$-topology. 
\end{theorem}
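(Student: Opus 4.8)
The plan is to reduce \Cref{arcsheafthm} to the universal descent result \Cref{rigiddescuniv}, exploiting the comparison of the $\arc_{(x_1,\dots,x_n)}$-topology with the $\arc$-topology in \Cref{arcvsarct} and with the $v$-topology. First I would use \Cref{Fisfinitary}: since $\sF$ is finitary, it suffices to check the sheaf condition on $\arc_{(x_1,\dots,x_n)}$-covers between finitely presented $\ZZ[x_1,\dots,x_n]$-algebras, reducing everything to a noetherian-adjacent setting where structural results apply. By \Cref{arcvsarct}, being an $\arc_{(x_1,\dots,x_n)}$-sheaf amounts to being an $\arc$-sheaf together with the vanishing conditions $\sF(R/(x_1,\dots,x_n)) = \sF(R[1/x_i]) = \ast$; the latter are immediate, since $\spec$ of (the completion of) such a ring, with $V(x_1,\dots,x_n)$ removed, is empty, so $\fet$ of the empty scheme is trivial.

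\textbf{The $\arc$-sheaf condition.} For the main point, I would follow the strategy of \cite{BM}: a functor on affine schemes that is finitary is an $\arc$-sheaf if and only if it is a $v$-sheaf and satisfies an additional ``valuation ring'' excision/glueing condition along rank-$1$ valuation rings (the square expressing that $\sF(V)$ is recovered from $\sF$ of the residue field and the fraction field, glued over the completed local picture). For the $v$-sheaf part, I would invoke \Cref{hcoverdesc}: a finitely presented $v$-cover $R \to R'$ is a universal descent morphism, hence \Cref{rigiddescuniv} applies to show that $R' \mapsto \aperf(\spec(\widehat{R'}_I)\setminus V(I))$ satisfies descent along it, where $I = (x_1,\dots,x_n)$. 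The passage from almost perfect complexes back to finite \'etale schemes goes through \Cref{EGR}: finite \'etale schemes over the punctured completion agree with those over the punctured henselization, and finite \'etale algebras are finitely presented, hence can be detected inside $\aperf$ (a finite \'etale $A$-algebra is in particular a perfect, indeed dualizable, $A$-module with extra structure, and the idempotent-and-multiplication data that cut out commutative \'etale algebras among perfect modules are themselves limit-theoretic). So descent for $\aperf$ plus the algebrization \Cref{EGR} yields $v$-descent for $\sF$.

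\textbf{The arc-specific input.} The remaining ingredient, which I expect to be the main obstacle, is the extra condition distinguishing $\arc$-covers from $v$-covers: given a rank-$\leq 1$ valuation ring $V$ with a map $R \to V$ sending $(x_1,\dots,x_n)$ into the maximal ideal but not to zero, one must check that $\sF(R)$ is controlled by $\sF$ of the relevant localizations. Concretely one reduces to showing that for such a $V$, a finite \'etale cover of the punctured completion $\spec(\widehat{V})\setminus V((x_1,\dots,x_n))$ is determined by its restrictions over the generic and special points in the appropriate sense — this is exactly where the ``arc'' (rather than merely ``$v$'') axiom is used, and it is the analogue of \cite[Sec.~6]{BM}'s analysis. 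Here the rank-$1$ hypothesis is essential: the punctured completed spectrum of $V$ is then well-understood (it is closely related to the adic generic fiber), and one can run the glueing argument. I would phrase this as: reduce by \Cref{arcvsarct} and finitariness to the case of a rank-$\leq 1$ valuation ring, handle the $v$-cover part via \Cref{hcoverdesc} and \Cref{rigiddescuniv}, and handle the valuation-theoretic glueing by the argument of \cite[Sec.~6.2]{BM}, using \Cref{EGR} throughout to move between completion and henselization.

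\textbf{Assembly.} Finally I would assemble: finitariness (\Cref{Fisfinitary}) reduces to finitely presented covers; \Cref{arcvsarct} splits the condition into an $\arc$-sheaf condition plus trivial vanishing; the $\arc$-sheaf condition splits into a $v$-sheaf condition, handled by \Cref{hcoverdesc} together with the descent theorem \Cref{rigiddescuniv} (transported from $\aperf$ to $\fet$ via \Cref{EGR} and the fact that finite \'etale algebras are detected among almost perfect complexes by limit-preserving structure), and the valuation-ring glueing condition, handled by the rank-$1$ analysis as in \cite{BM}. This completes the proof.
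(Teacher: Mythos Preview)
Your overall strategy matches the paper's: reduce via \Cref{arcvsarct} to showing $\sF$ is an $\arc$-sheaf, then apply the criterion of \cite[Theorem 4.1]{BM} by checking finitariness, $h$-descent, and an excision condition. The $h$-descent step is also essentially right: \Cref{hcoverdesc} plus \Cref{rigiddescuniv} gives descent for $\perf$ along finitely presented $v$-covers, and one extracts $\fet$ from $\perf$. Two points, however, are not just imprecise but are genuine gaps.

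First, the passage from descent for $\perf$ to descent for $\sF$ is more delicate than ``finite \'etale algebras are cut out by limit-theoretic data.'' The construction $\mathcal{C} \mapsto \fet(\mathcal{C})$ does \emph{not} preserve arbitrary limits of symmetric monoidal stable $\infty$-categories: the nilpotence condition on the $G$-action in a $G$-torsor is not limit-preserving. The paper handles this via \Cref{fetpreserveslimits}, which says $\fet$ does preserve limits of augmented cosimplicial diagrams when the coaugmentation is a universal descent functor, and this hypothesis is available precisely because \Cref{hcoverdesc} supplies it. Your invocation of \Cref{EGR} at this step is misplaced; \Cref{EGR} is used only to prove finitariness (\Cref{Fisfinitary}) and plays no role in extracting $\fet$ from $\perf$.

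Second, and more seriously, you have the excision condition wrong. The criterion of \cite[Theorem 4.1]{BM} requires \emph{aic-$v$-excision}: for every absolutely integrally closed valuation ring $V$ (of arbitrary rank) and every prime $\mathfrak{p} \subset V$, the square with corners $\sF(V)$, $\sF(V/\mathfrak{p})$, $\sF(V_{\mathfrak{p}})$, $\sF(\kappa(\mathfrak{p}))$ must be cartesian. This is not a statement about rank-$\leq 1$ valuation rings or about generic and special fibers of the punctured completion, and the vague pointer to ``the rank-$1$ analysis as in \cite{BM}'' does not supply an argument. The paper's proof of this step contains the key idea you are missing: the Milnor square $V \simeq V/\mathfrak{p} \times_{\kappa(\mathfrak{p})} V_{\mathfrak{p}}$ exhibits $V \to V/\mathfrak{p} \times V_{\mathfrak{p}}$ as a universal descent morphism, so one can rerun the same machinery (\Cref{rigiddescuniv} plus \Cref{fetpreserveslimits}) used for $h$-descent, and unwinding the \v{C}ech nerve of this particular cover yields exactly the desired pullback square. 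Without this observation your excision argument is incomplete.
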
 

Informally, \Cref{arcsheafthm} 
is an expression of a (well-known) principle
that the theory of (purely algebraically defined) finite \'etale schemes 
behaves well in analytic geometry, and satisfies very strong descent results;
many cases of this result are already in the literature. 
For instance, for perfectoid spaces, $v$-descent of finite \'etale
schemes appears as \cite[Prop. 9.7]{Diamonds}. 
In the case of \emph{abelian} \'etale covers, this result appears (and more
generally for the higher cohomology) in \cite[Cor. 6.17]{BM} (with the
restriction to a principal ideal, but this is not necessary). 
\Cref{arcsheafthm} as stated can also be proved using Gabber's rigidity results for
nonabelian cohomology, cf.~\cite{Gabberaffine} and
\cite[Exp.~XX]{Gabbertravaux}. 

We explain here a quick proof of \Cref{arcsheafthm} 
using the theory of finite \'etale algebra objects in a symmetric monoidal
stable $\infty$-category (\cite{RognesGal, MGal}) and \Cref{rigiddescuniv}.

\begin{construction}[Finite \'etale algebra objects]
\label{fetcategory}
For a small, idempotent-complete stably symmetric monoidal $\infty$-category $\mathcal{C}$, one
extracts 
\cite[Def. 6.1]{MGal}
a category $\fet( \mathcal{C})$ of ``finite \'etale algebra objects''
of $\mathcal{C}$, a full subcategory of the $\infty$-category
$\mathrm{CAlg}(\mathcal{C})$ of commutative algebra objects of
$\mathcal{C}$.\footnote{The definition in \emph{loc.~cit.} is stated for a 
presentably symmetric monoidal $\infty$-category; however, we can embed
$\mathcal{C}$ into $\mathrm{Ind}(\mathcal{C})$.} 
In the case where $\mathcal{C} = \perf(X)$ for $X$ a qcqs spectral scheme
(modeled on the spectra of connective $E_\infty$-rings), then $\fet(\mathcal{C})$ recovers precisely the
opposite to the category of finite \'etale schemes over $X$ (or equivalently of the underlying
scheme $\pi_0 X$). 

\end{construction}

\begin{example}[Torsors in $\fet(\mathcal{C})$] 
Let $G$ be a finite group. 
A $G$-torsor in $\fet(\mathcal{C})$ is
given by an object $A \in \fun(BG, \mathrm{CAlg}(\mathcal{C})$ 
(i.e., a commutative algebra equipped with a $G$-action) such that: 
\begin{enumerate}
\item  As an object of $\fun(BG, \mathcal{C})$, $A$ belongs to the thick
subcategory generated by the induced $G$-objects (i.e., the $G$-action is
nilpotent; compare the discussion in \cite[Sec.~4]{Msurvey}). 
In particular, $A^{hG}$ exists in $\mathcal{C}$. 
\item  The natural map $\mathbf{1} \to A^{hG}$ is an equivalence. 
\item The shearing map $A \otimes A \to \prod_G A$ is an equivalence in
$\mathcal{C}$. 
\end{enumerate}
These conditions are due to Rognes \cite{RognesGal}, who introduces the notion
of a Galois extension of an $E_\infty$-ring. 
\end{example}

\begin{remark}[$\fet(\cdot)$ preserves some limits] 
\label{fetpreserveslimits}
In general, the construction $\mathcal{C} \mapsto \fet(\mathcal{C})$ need not
preserve limits: the construction $\mathcal{C} \mapsto
\mathrm{CAlg}(\mathcal{C})$ does, but condition (1) involved may not.
However, suppose we have an augmented cosimplicial stably symmetric monoidal
$\infty$-category $\mathcal{C}^\bullet$ which is a limit diagram; suppose
moreover that $\mathcal{C}^{-1} \to \mathcal{C}^0$ is a universal descent
morphism. 
Then $\fet(\mathcal{C}^{-1}) \simeq \varprojlim(\fet( \mathcal{C}^\bullet))$. 
This follows by considering $G$-torsors for each finite group $G$ and
\cite[Cor. 5.40]{MGal}. 
\end{remark}

\begin{proof}[Proof of \Cref{arcsheafthm}] 
By \Cref{arcvsarct}, it is sufficient to verify that $\sF$ is an $\arc$-sheaf. 
We will apply the criterion of \cite[Theorem 4.1]{BM} to the functor $\sF$. 
First, note that the functor $\sF$ is finitary thanks to \Cref{Fisfinitary}. 

We show that $\sF$ satisfies $h$-descent. 
Let $R \to R'$ be an $h$-cover, i.e., a finitely presented $v$-cover. It suffices to show that in this case,
\begin{equation} \label{sheafproperty} \sF(R) \simeq \varprojlim( \sF(R')
\rightrightarrows \sF(\pi_0(R'
\otimes_R R')) \triplearrows \dots ).\end{equation} 
Indeed, by 
\Cref{hcoverdesc}, $R \to R'$ is a universal descent morphism, so by \Cref{rigiddescuniv} we have a limit
diagram 
\[ 
\perf( \spec( \widehat{R}_I) \setminus V(I)) \simeq \varprojlim \left( 
\perf( \spec( \widehat{R'}_I) \setminus V(I))
\rightrightarrows
\perf( \spec( \widehat{R' \otimes_R R'}_I) \setminus V(I)) \dots \right)
\]
(involving the iterated \emph{derived}
tensor products of $R'$ over $R$).
Applying 
\Cref{fetcategory}, and noting that the higher homotopy groups in the derived
tensor product do not affect finite \'etale schemes, \eqref{sheafproperty}
follows (also via \Cref{fetpreserveslimits}). 

To check the criterion of 
\cite[Theorem 4.1]{BM}, it suffices to check the ``aic-v-excision'' condition
of \emph{loc.~cit}.  
Suppose $V$ is an absolutely integrally closed valuation ring and 
$\mathfrak{p} \subset V$ a prime ideal; we need to show that the square
\begin{equation} \label{Vpullback} \xymatrix{
\sF( V) \ar[d]  \ar[r] &  \sF(V/\mathfrak{p}) \ar[d]  \\
\sF(V_{\mathfrak{p}}) \ar[r] &  \sF( (V/\mathfrak{p})_{\mathfrak{p}})
}\end{equation}
is a pullback. To see this, we consider the map 
$V \to V/\mathfrak{p} \times V_{\mathfrak{p}}$, which is of universal descent
because of the pullback description $V \simeq V/\mathfrak{p}
\times_{(V/\mathfrak{p})_{\mathfrak{p}}} V_\mathfrak{p}$. 
It follows as in the previous paragraph that $\sF$ carries the \v{C}ech nerve
of this map to a limit diagram; unwinding the \v{C}ech nerve now shows that 
\eqref{Vpullback} is a pullback square as desired. 
\end{proof} 

\section{Example: flat descent on classical rigid spaces}
Throughout this section, we fix a complete nonarchimedean field $K$ with ring of
integers $\mathcal{O}_K \subset K$, and a nonzero nonunit $\pi \in
\mathcal{O}_K$. We will work in the setting of classical rigid geometry and
$K$-affinoid algebras.\footnote{See also work of Ducros \cite{Ducros}
extending the notion of flatness 
to Berkovich analytic spaces, which we will not consider.}
In this section, we give a proof of \Cref{aperfrigiddesc} from the introduction
(as \Cref{aperfrigiddesc2} below), i.e., of flat descent of almost perfect
complexes for $K$-affinoid algebras. 
This result  can also be deduced from the case of coherent sheaves \cite{BG},
although our argument will be independent; 
cf.~also \cite{HPV} for the case of \'etale descent.

We freely use the flattening results of Bosch--L\"utkebohmert \cite{BL} and
various finiteness properties of topologically finitely presented
$\mathcal{O}_K$-algebras, but
otherwise the methods make no reference to results of rigid geometry (e.g.,
Kiehl's theorem that coherent modules satisfy descent in the analytic
topology; these methods recover Kiehl's theorem). 
Our main observation 
 (\Cref{fflatgeneric})
is that if $A \to B$ is a faithfully flat map of affinoid
$K$-algebras, then the map $A_0 \to B_0$ of appropriate rings of definition is a
universal descent morphism; 
furthermore, the iterated derived tensor products of $B_0$ over $A_0$ have
higher homotopy which is bounded $\pi$-power torsion. 
The result will then follow from \Cref{rigiddescuniv}.

\subsection{Review of coherent rings}

In the following it will be necessary to work with coherent (and especially
stably coherent) rings; these are rings for which many of the convenient
module-theoretic finiteness properties of noetherian rings still hold, provided
one restricts to finitely presented modules. 
See \cite{Glaz} for a textbook reference. 
\begin{definition}[Coherent rings] 
A ring $R$ is called \emph{coherent} if the category of finitely presented
$R$-modules is abelian (equivalently, stable under kernels). 
This holds if and only if each finitely generated ideal $I \subset R$ is
finitely presented as an $R$-module. 
A commutative ring $R$ is called \emph{stably coherent} if 
every finitely presented $R$-algebra is coherent. 
\end{definition}

\begin{definition}[Coherent $E_\infty$-rings] 
Let $R$ be a connective $E_\infty$-ring. 
We say that $R$ is \emph{coherent} (resp. \emph{stably coherent}) if $\pi_0(R)$ 
is coherent (resp. stably coherent) as a commutative ring and each $\pi_i(R), i \geq 0$
is finitely presented as a $\pi_0(R)$-module. 
\end{definition} 

\begin{remark}[Characterization of almost perfect complexes] 
Suppose $R$ is a coherent $E_\infty$-ring. 
In this case, a bounded-below $R$-module
spectrum $M \in \md(R)$ is almost perfect if and only if the homotopy groups
$\pi_i(M)$ 
are finitely presented $R$-modules, cf.~\cite[Prop.~7.2.4.17]{HA}. 
In particular, $\aperf(R)$ acquires a $t$-structure (by restriction from
$\md(R)$) whose heart is the category of finitely presented discrete
$\pi_0(R)$-modules; this $t$-structure is right-bounded and left-complete. 
\end{remark}

Let $R$ be a coherent $E_\infty$-ring. 
There is similarly a characterization of almost finitely presented
$E_\infty$-$R$-algebras.
Given any connective $E_\infty$-ring $R$, recall  \cite[Sec.~7.2.4]{HA}
that a connective
$E_\infty$-$R$-algebra $R'$ is said to be \emph{almost finitely presented} if,
for each $n$, 
the truncation $\tau_{\leq n} R'$ defines a  compact object of the
$\infty$-category of connective, $n$-truncated $E_\infty$-$R$-algebras.

\begin{proposition}[Characterization of almost finitely presented algebras] 
\label{afpalgebra}
Let $R$ be a stably coherent $E_\infty$-ring. 
Let $R'$ be a connective $E_\infty$-$R$-algebra. Then the following are
equivalent: 
\begin{enumerate}
\item $R'$ is almost finitely presented.  
\item The ring $\pi_0(R')$ is finitely presented as a $\pi_0(R)$-algebra.
Moreover, $R'$ is coherent as an $E_\infty$-ring. 
\end{enumerate}
In this case, $R'$ is stably coherent as well. 
\end{proposition}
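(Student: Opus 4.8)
The plan is to reduce both implications to standard facts about almost finitely presented algebras (as in \cite[Sec.~7.2.4]{HA}) combined with the coherence hypotheses, following the pattern of how noetherian arguments are ``coherent-ified.'' For the implication (1) $\Rightarrow$ (2): if $R'$ is almost finitely presented over $R$, then in particular $\tau_{\leq 0}R' = \pi_0(R')$ is a compact object of connective $0$-truncated $E_\infty$-$R$-algebras, i.e.\ $\pi_0(R')$ is a finitely presented $\pi_0(R)$-algebra (this is \cite[Prop.~7.2.4.31]{HA} or its discrete shadow). For coherence of $R'$ as an $E_\infty$-ring, one must check that $\pi_0(R')$ is coherent and each $\pi_i(R')$ is finitely presented over $\pi_0(R')$. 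Since $\pi_0(R')$ is finitely presented over $\pi_0(R)$ and $R$ is \emph{stably} coherent, $\pi_0(R')$ is coherent. For the higher homotopy groups: almost finite presentation means $R'$ is ``perfect to each order'' as an $R$-algebra in the appropriate sense, which forces each $\pi_i(R')$ to be almost finitely generated, hence finitely generated, over $\pi_0(R')$; because $\pi_0(R')$ is coherent, finitely generated submodules of finitely presented modules are finitely presented, and one upgrades finitely generated to finitely presented by an inductive Postnikov argument (the fiber of $\tau_{\leq n}R' \to \tau_{\leq n-1}R'$ is $\pi_n(R')[n]$, and almost perfection of the truncations over $R$, combined with coherence, makes $\pi_n(R')$ finitely presented).

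For (2) $\Rightarrow$ (1): assume $\pi_0(R')$ is finitely presented over $\pi_0(R)$ and $R'$ is coherent. One shows $\tau_{\leq n}R'$ is compact among connective $n$-truncated $E_\infty$-$R$-algebras by induction on $n$. The base case $n=0$ is precisely finite presentation of the ordinary ring $\pi_0(R')$ over $\pi_0(R)$. For the inductive step, one uses the square-zero decomposition: $\tau_{\leq n}R'$ is a square-zero extension of $\tau_{\leq n-1}R'$ by $\pi_n(R')[n+1]$ (as a module), so compactness of $\tau_{\leq n}R'$ follows from compactness of $\tau_{\leq n-1}R'$ (inductive hypothesis) together with the fact that $\pi_n(R')$ is a finitely presented $\pi_0(R')$-module, hence $\pi_n(R')[n]$ is an almost perfect $\tau_{\leq n-1}R'$-module of the right truncation. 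Here coherence of $R'$ is exactly what guarantees ``finitely generated $\Rightarrow$ finitely presented'' for the relevant modules, so the square-zero module is almost perfect. This is the argument of \cite[Prop.~7.2.4.17, 7.2.4.31]{HA} with ``noetherian'' replaced by ``coherent.''

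For the final sentence --- that $R'$ is then stably coherent --- the point is that any finitely presented $E_\infty$-$R'$-algebra $R''$ is also a finitely presented $E_\infty$-$R$-algebra (composing finite presentations), hence $\pi_0(R'')$ is finitely presented over $\pi_0(R)$, and since $R$ is stably coherent, $\pi_0(R'')$ is coherent; thus $\pi_0(R')$ satisfies the defining property of stable coherence. Equivalently, one invokes that a finitely presented algebra over a finitely presented algebra over a stably coherent ring is coherent, which is immediate from the definition once one knows finite presentations compose.

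The main obstacle I expect is the bookkeeping in the Postnikov/square-zero induction: one must be careful that the relevant truncated module appearing at each stage is genuinely almost perfect over the truncated base ring, and this is where coherence (rather than noetherianness) must be invoked at exactly the right spot --- namely to pass from ``finitely generated'' to ``finitely presented'' for homotopy groups. Everything else is a fairly mechanical translation of \cite[Sec.~7.2.4]{HA}.
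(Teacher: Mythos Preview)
Your proposal is correct and takes essentially the same approach as the paper: the paper's proof is the single sentence ``The proof of \cite[Prop.~7.2.4.31]{HA} (in the noetherian case) works equally here,'' and your sketch is precisely an unpacking of that argument with the noetherian hypothesis replaced by (stable) coherence at the points where one needs finitely generated to imply finitely presented. Your identification of the Postnikov/square-zero induction as the main mechanism, and of coherence as the tool for upgrading finite generation to finite presentation at each stage, is exactly right.
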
 
\begin{proof} 
The proof of \cite[Prop. 7.2.4.31]{HA} (in the noetherian case) works equally here. 
\end{proof} 

\begin{remark} Coherence will be useful for us in the following situation. 
Let $R$ be an $E_\infty$-$\mathcal{O}_K$-algebra  which is coherent. 
Suppose that $R[1/\pi]$ is discrete. Then 
for each $i>0$, the homotopy group
$\pi_i(R)$ is $\leq \pi^\infty$-isogenous to zero. 
This follows because, for $i > 0$, $\pi_i(R)$ is a finitely presented module over $\pi_0(R)$
and $\pi_i(R)[1/\pi] = 0$ by assumption. 
\label{useofcoherence}
\end{remark}

\begin{example} 
The stable coherence of
valuation rings (such as $\mathcal{O}_K$) is  a consequence of the results of  
Raynaud--Gruson \cite{RG} (see, e.g., \cite[Theorem
7.3.3]{Glaz}). 
That is, the polynomial ring $\mathcal{O}_K[T_1,\dots, T_n]$ is coherent for any
$n$. 
The coherence of 
the ring $\mathcal{O}_K\left \langle T_1, \dots, T_n\right\rangle
\stackrel{\mathrm{def}}{=}
\widehat{\mathcal{O}_K[T_1, \dots, T_n]}_{\pi}
$
can be deduced similarly; see \cite[Sec.~7.3]{Bosch} for an account. 

\end{example} 
We will need the following strengthening of the above example, which appears in work of Fujiwara--Gabber--Kato \cite[Prop.
4.3.4, Prop. 7.2.2, and Th. 7.3.2]{FGK}. 
See also \cite{Katosurvey} for a survey. 

\begin{theorem}[] 
\label{main:finitenessthm}
Let $K$ be a complete nonarchimedean field. 
Let $R$ be a finitely presented algebra over 
the $\pi$-completed polynomial ring $\mathcal{O}_K\left \langle T_1, \dots, T_n\right\rangle$. Then: 
\begin{enumerate}
\item  
$R$ is coherent. 
\item
Given any finitely generated $R$-module $M$, the
$\pi$-power torsion submodule is finitely generated. 
Any $\pi$-torsion-free finitely generated module is finitely presented. 
\item The map from $R$ to its $\pi$-adic completion (which is also the derived
$\pi$-adic completion by (2)), $R \to \hat{R}$,  is flat. 
\end{enumerate}
\end{theorem}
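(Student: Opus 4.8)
The plan is to recognize assertions (1)--(2) as saying that $\bigl(\mathcal{O}_K\langle T_1,\dots,T_n\rangle,(\pi)\bigr)$ is a \emph{universally adhesive pair} (see \cite{FGK}): every finitely presented algebra $R$ over $\mathcal{O}_K\langle T_1,\dots,T_n\rangle$ is coherent, the $\pi$-power torsion submodule of every finitely generated $R$-module is finitely generated, and every $\pi$-torsion-free finitely generated $R$-module is finitely presented; assertion (3) is then the associated flatness of $\pi$-adic completion. I would proceed in three steps: (a) establish the analogous statements for the \emph{uncompleted} polynomial ring $\mathcal{O}_K[T_1,\dots,T_n]$; (b) show these properties are inherited upon passing to the $\pi$-adic completion; (c) deduce the flatness of $R\to\widehat R$.

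\emph{Step (a).} Coherence of every finitely presented $\mathcal{O}_K[T_1,\dots,T_n]$-algebra $R_0$ is exactly the universal coherence of valuation rings (Raynaud--Gruson) recalled in the Example above. For the torsion statement, let $M$ be a finitely generated $R_0$-module. Since $\mathcal{O}_K$ has height $\le 1$, every $\mathcal{O}_K$-torsion element of $M$ is already $\pi$-power torsion, so $M/M_{\mathrm{tors}}$ is $\mathcal{O}_K$-torsion-free, hence $\mathcal{O}_K$-flat. Now the essential input enters: by Raynaud--Gruson's flattening techniques (cf.~also \cite{FGK}), a finitely generated $\mathcal{O}_K$-torsion-free module over a finitely presented $\mathcal{O}_K$-algebra is finitely presented. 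Hence $M/M_{\mathrm{tors}}$ is finitely presented, and then $M_{\mathrm{tors}}=\ker\!\bigl(M\to M/M_{\mathrm{tors}}\bigr)$ is finitely presented by coherence of $R_0$; in particular it is finitely generated, and a finitely generated $\pi$-power-torsion module is annihilated by a fixed power of $\pi$. Both parts of (2) for $R_0$ follow, and a $\pi$-torsion-free $M$ coincides with its own $M/M_{\mathrm{tors}}$, hence is finitely presented.

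\emph{Steps (b) and (c).} Because $\widehat{R_0}/\pi^m=R_0/\pi^m$ for every $m$, the required finiteness modulo each power of $\pi$ is automatic, and the passage from $\mathcal{O}_K[T_1,\dots,T_n]$ to its $\pi$-adic completion is a d\'evissage and approximation argument (Gabber; see \cite{FGK}): one approximates a finitely presented algebra over $\mathcal{O}_K\langle T_1,\dots,T_n\rangle$, together with a finitely generated module over it, compatibly modulo all powers of $\pi$, by the data treated in step (a), and transports the finiteness conclusions. Granting (b), assertion (3) follows from the standard principle that a commutative ring with bounded $\pi$-power torsion has flat $\pi$-adic completion (cf.~\cite{FGK}), applied to $R$: by part (2) the torsion submodule $R[\pi^\infty]\subset R$ is finitely generated, hence annihilated by a fixed power of $\pi$.

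The main obstacle is the geometric ingredient used in step (a): Raynaud--Gruson's ``platification par \'eclatement'', which promotes $\mathcal{O}_K$-flatness of a finitely generated module over a finitely presented $\mathcal{O}_K$-algebra to finite presentation. Everything else is formal d\'evissage; the one point requiring care is that step (b) must be carried out \emph{after} step (a), since the finiteness statements there, and the comparison of derived and classical $\pi$-completions underlying (c), rest on the torsion bound proved in (a).
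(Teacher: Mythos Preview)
The paper does not prove this theorem; it is stated as a black box with attribution to Fujiwara--Gabber--Kato \cite[Prop.~4.3.4, Prop.~7.2.2, Th.~7.3.2]{FGK}, and is used only as input to the descent arguments that follow. So there is no ``paper's own proof'' to compare against.

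Your outline is a reasonable sketch of the strategy in \cite{FGK}: reduce to the uncompleted case via the Raynaud--Gruson results on finitely presented modules over valuation rings, then transport the adhesiveness properties across $\pi$-adic completion using Gabber's approximation methods. The one place I would press you is step (b): the claim that finiteness statements for $R_0$ transfer to finitely presented algebras over $\widehat{R_0}$ is not purely formal d\'evissage---one needs a genuine approximation theorem to the effect that finitely presented algebras and modules over $\mathcal{O}_K\langle T_1,\dots,T_n\rangle$ can be algebraized modulo each $\pi^m$ by finitely presented data over $\mathcal{O}_K[T_1,\dots,T_n]$ in a compatible way, and this is the nontrivial content of the cited results in \cite{FGK}. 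Your sketch names this correctly but glosses over it; if you were writing this up independently you would need to either supply that argument or cite it precisely, as the paper does.
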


In particular, the result implies that for 
finitely presented 
algebras over $\mathcal{O}_K\left \langle T_1, \dots, T_n\right\rangle$ and
finitely generated modules, there is no distinction between classical and
derived completion.

\subsection{The flat topology on $K$-affinoid algebras}
Here we review some facts about flatness for maps of $K$-affinoid
algebras. For a detailed treatment, see \cite[Ch. 5]{EGR}. 

\newcommand{\affn}{\mathrm{Affinoid}}
\newcommand{\tft}{\mathrm{Alg}^{\mathrm{tfp}}}
\begin{definition} 
\begin{enumerate}
\item  
An $\mathcal{O}_K$-algebra $A_0$ is \emph{topologically of finite presentation} if 
$A_0$ is a quotient of some $\mathcal{O}_K \left \langle T_1, \dots,
T_n\right\rangle$ by a finitely generated ideal; if $A_0$ is torsion-free, it
suffices that $A_0$ is a quotient of 
some $\mathcal{O}_K \left \langle T_1, \dots,
T_n\right\rangle$. 
Let $\tft_{\mathcal{O}_K}$ denote the category of  $\mathcal{O}_K$-algebras
which are topologically of 
finite presentation. 
We let $\tfft_{\mathcal{O}_K} \subset \tft_{\mathcal{O}_K}$ be the subcategory
of those algebras which are flat (i.e., torsion-free) over $\mathcal{O}_K$. 
\item
An \emph{$K$-affinoid algebra} is a $K$-algebra which is a quotient of 
the Tate algebra $T_n = K\left \langle T_1, \dots, T_n
\right\rangle
\stackrel{\mathrm{def}}{=}
(\mathcal{O}_K \left \langle T_1, \dots, T_n\right\rangle)[1/\pi]$. 
Let $\affn_K$ denote the category of $K$-affinoid algebras. 
We have an essentially surjective functor $\tft_{\mathcal{O}_K} \to \affn_K$ given by tensoring
with $K$. 
\end{enumerate}
The categories $\tft_{\mathcal{O}_K}, \affn_K$ admit finite colimits via the
completed tensor products; the natural functor
$\tft_{\mathcal{O}_K} \to \affn_K$ preserves them. 
\end{definition} 

\begin{definition}[Flat morphisms in $\tfft_{\mathcal{O}_K}, \affn_K$] 
We define a morphism in $\tfft_{\mathcal{O}_K}, \affn_K$ 
to be \emph{flat} (resp.~\emph{faithfully flat}) if it is flat (resp.~faithfully
flat) as a map of ordinary rings. 
This condition is stable under (completed) base-change: 
\begin{enumerate}
\item  
In $\tfft_{\mathcal{O}_K}$, a map $A_0 \to B_0$ is flat (resp.~faithfully flat) if and only if
it is flat (resp.~faithfully flat)
modulo each power of $\pi$. 
In the case of flatness, this follows because the flatness condition can be tested on coherent 
(discrete) $A_0$-modules, and these are automatically $\pi$-complete. 
Faithful flatness is implied by flatness and universal descent, so the claim for
faithful flatness follows from \Cref{picompletenesslemma} below. 
\item
The statement that flat (resp.~faithfully flat) maps in $\affn_K$ are stable under
base-change 
follows from the existence of flat (resp.~faithfully flat) formal
models (\Cref{flattening} below).
\end{enumerate}
\end{definition}

\begin{example} 
\begin{enumerate}
\item  
Let $A_0 \to B_0$ be a flat map in $\tfft_{\mathcal{O}_K}$. Then
$A_0[1/\pi] \to  B_0[1/\pi]$ is a flat map
of $K$-affinoid algebras. 
\item
Let $A_0 \in \tft_{\mathcal{O}_K}$ and $A_0'$ be a finitely presented
$A_0$-algebra. 
Suppose $A_0[1/\pi] \to A'_0[1/\pi]$ is flat. 
Then the map $A_0[1/\pi] \to \widehat{A'_0}[1/\pi]$ of $K$-affinoid algebras is
flat. 
Indeed, this follows because $A_0' \to \widehat{A'_0}$ is flat
(\Cref{main:finitenessthm}). 
\end{enumerate}
\end{example} 

One can ask when a map in $\tfft_{\mathcal{O}_K}$ has flat generic fiber. 
It turns out that the above two are the essential cases. 
To see this, we will use the flattening results of \cite{BL}, after 
the work of Raynaud-Gruson \cite{RG} in the case of schemes. 
See also \cite[Sec.~5.8]{EGR} for an account. 
The last assertion (of faithful flatness rather than flatness) 
follows from \cite[Prop. 5.5.10]{EGR}. 
\newcommand{\XX}{\mathfrak{X}}
\newcommand{\YY}{\mathfrak{Y}}
\begin{theorem}[Bosch--L\"{u}tkebohmert \cite{BL}] 
\label{flattening}
Let $A_0 \to B_0$ be a map in $\tfft_{\mathcal{O}_K}$. 
Let $\XX = \spf A_0, \YY = \spf B_0$ and let $f: \YY \to \XX$ be the induced map. 
Suppose the map $A_0[1/\pi] \to B_0[1/\pi]$ is flat.
There exists a 
diagram of formal schemes 
\[ \xymatrix{
\YY'  \ar[d] \ar[r] &  \YY  \ar[d]  \\
\XX' \ar[r] &  \XX 
}\]
such that: 
\begin{enumerate}
\item  $\XX' \to \XX$ is a formal blow-up along an open, finitely generated ideal $I
\subset A_0$. 
\item $\YY'$ is the strict transform of $\YY$. 
\item $\YY' \to \XX'$ is flat. 
\end{enumerate}
Moreover, if $A_0[1/\pi] \to B_0[1/\pi]$ is faithfully flat, then $\YY' \to \XX'$
is also faithfully flat. 
\end{theorem}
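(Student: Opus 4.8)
The plan is to carry out the Raynaud--Gruson \emph{flattening by blow-ups} technique \cite{RG} in the category $\tfft_{\mathcal{O}_K}$ of $\pi$-adic formal schemes rather than for ordinary schemes. The one new ingredient needed to make this transfer is \Cref{main:finitenessthm}: it guarantees that all rings occurring are coherent, that their finitely generated $\pi$-torsion submodules are finitely generated, and that $\pi$-adic completion is flat, so that algebraic blow-ups along finitely generated open ideals, their Fitting ideals, and their $\pi$-adic completions all behave as in the Noetherian theory. Throughout one uses the criterion (recorded in the text preceding \Cref{flattening}) that a morphism in $\tfft_{\mathcal{O}_K}$ is flat, resp.\ faithfully flat, if and only if it is so modulo each power of $\pi$, and --- via the local flatness criterion and $\pi$-torsion-freeness --- already modulo $\pi$.

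First I would reduce, exactly as in \cite{RG}, from a finitely presented morphism to the case of a coherent module. Writing $B_0 = A_0\langle S_1,\dots,S_m\rangle/(g_1,\dots,g_r)$ realizes $\YY = \spf B_0$ as a closed formal subscheme of the formal polydisc $\spf A_0\langle S_1,\dots,S_m\rangle$ over $\XX = \spf A_0$; the Raynaud--Gruson d\'evissage (with \Cref{main:finitenessthm} supplying the finiteness facts that replace the Noetherian hypotheses, and flatness on the generic fibre reducing the d\'evissage to a local question) then reduces the problem to the following: given a coherent $A_0$-module $M$ whose restriction $M[1/\pi]$ is flat over the affinoid algebra $A_0[1/\pi]$ --- hence finite locally free there, $A_0[1/\pi]$ being Noetherian --- find a finitely generated open ideal $I \subset A_0$ such that the strict transform of $M$ on the formal blow-up $\XX' = \mathrm{Bl}_I \XX$ is $\mathcal{O}_{\XX'}$-flat. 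The algebra structure then descends to the strict transform and produces the required map $\YY' \to \XX'$.

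Next I would run the flattening induction on the ``flatness dimension'' of $M$. The key simplification in our situation is that $M[1/\pi]$ is \emph{already} locally free over the rigid generic fibre $\XX_\eta$, so $M$ is flat in every dimension over $\XX_\eta$; consequently every flatifying blow-up that occurs is automatically along an ideal containing a power of $\pi$, i.e.\ is admissible, so that $\XX' \to \XX$ is an isomorphism over $\XX_\eta$. At each step one blows up along a finitely generated open ideal built from Fitting ideals of $M$ (finitely generated by \Cref{main:finitenessthm}(1)), replaces $M$ by its strict transform, checks via \Cref{main:finitenessthm}(2) that the new module is again coherent and $\pi$-torsion-free over a ring again in $\tfft_{\mathcal{O}_K}$, and checks flatness of the final strict transform modulo $\pi$ exactly as in the scheme-theoretic case; composing the finitely many blow-ups that occur (a composite of admissible blow-ups is dominated by a single one) gives $\XX' \to \XX$. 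For the faithful flatness assertion: $\YY' \to \XX'$ is then already flat, so it suffices to see it is surjective, which may be tested on underlying spaces. Since admissible blow-ups and strict transforms are isomorphisms on rigid generic fibres, $\YY'_\eta \to \XX'_\eta$ is still surjective (surjectivity here being a restatement of faithful flatness of $A_0[1/\pi] \to B_0[1/\pi]$, using that affinoid algebras are Jacobson), and one concludes using that the specialization maps $\XX'_\eta \to \XX'$ and $\YY'_\eta \to \YY'$ are surjective for objects of $\tfft_{\mathcal{O}_K}$ (as these are $\mathcal{O}_K$-flat): any point of $\XX'$ is the specialization of a rigid point, which lifts to $\YY'_\eta$ and specializes over it; this last step is \cite[Prop. 5.5.10]{EGR}.

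The main obstacle --- and essentially the whole content of \cite{BL} --- is the middle step: transplanting the Raynaud--Gruson flattening machinery (Fitting-ideal stratifications, strict transforms, admissible blow-ups, the d\'evissage of a finitely presented morphism to a coherent module) from Noetherian schemes to $\pi$-adic formal schemes over a possibly non-Noetherian valuation ring $\mathcal{O}_K$. This is precisely where \Cref{main:finitenessthm} is indispensable: it lets one treat $\mathcal{O}_K\langle T_1,\dots,T_n\rangle$ and its finitely presented algebras, for finiteness purposes, as if they were Noetherian, and ensures that passing between an algebraic blow-up and its $\pi$-adic completion disturbs neither coherence nor flatness.
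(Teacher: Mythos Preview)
The paper does not prove this theorem: it is quoted as a result of Bosch--L\"utkebohmert \cite{BL}, with the final faithful-flatness clause attributed to \cite[Prop.~5.5.10]{EGR}, and is used as a black box in the subsequent \Cref{flattening2} and \Cref{fflatgeneric}. So there is no proof in the paper to compare your proposal against.

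That said, your outline is a fair high-level description of what the argument in \cite{BL} actually does---adapting the Raynaud--Gruson flattening machinery to admissible formal schemes, with \Cref{main:finitenessthm} playing the role of the Noetherian finiteness hypotheses---and your treatment of the faithful-flatness addendum via surjectivity of specialization is exactly the content of the cited \cite[Prop.~5.5.10]{EGR}. You are also candid that the ``middle step'' is essentially the whole of \cite{BL}; as written, your proposal is a roadmap rather than a proof, and the genuine technical work (the d\'evissage, the control of strict transforms in the non-Noetherian formal setting, the verification that the inductive process terminates) is not carried out. If the intent was to supply an independent proof, substantially more detail would be needed; if the intent was merely to indicate why the result is plausible and where it comes from, then citing \cite{BL} directly, as the paper does, is the appropriate course.
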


We can restate the above result purely in terms of schemes (rather than formal
schemes), and in the faithfully flat case, as follows. 

\begin{proposition} 
\label{flattening2}
Let $A_0 \to B_0$ be a map in $\tfft_{\mathcal{O}_K}$ such that 
$A_0[1/\pi] \to B_0[1/\pi]$ is faithfully flat. 
Then there exists a finitely presented $A_0$-algebra $A_0'$, flat over
$\mathcal{O}_K$, such that: 
\begin{enumerate}
\item $A_0 \to A_0'$ is a universal descent morphism. 
\item  $A_0[1/\pi] \to A_0'[1/\pi]$ is faithfully flat. 
\item The map $A'_0 \to B_0' \stackrel{\mathrm{def}}{=}\mathrm{Tor}_0^{A_0}(B_0
,A_0') = \pi_0(B_0 \otimes_{A_0} A_0') $
has the property that
$A_0' \to B_0'/\left\{\pi^\infty-\mathrm{torsion}\right\}$ is faithfully flat. 
\end{enumerate}
\end{proposition}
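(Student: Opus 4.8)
The plan is to feed $A_0 \to B_0$ into the Raynaud--Gruson/Bosch--L\"utkebohmert flattening machine (\Cref{flattening}) and then convert its formal-geometric conclusion into the three scheme-theoretic assertions, using the finiteness theorems of \Cref{main:finitenessthm} to control $\pi$-power torsion and to pass between $\spec$ and $\spf$. Concretely, I would first apply \Cref{flattening} to obtain an \emph{open} (i.e., containing some $\pi^N$) finitely generated ideal $I = (f_0, \dots, f_k) \subset A_0$ such that the strict transform of $\YY = \spf B_0$ along the admissible formal blow-up $\spf A_0' \to \spf A_0$ becomes faithfully flat over $\spf A_0'$. Discarding completions, I would take $A_0'$ to be the product $\prod_{i=0}^k A_{0,i}'$ of the standard affine charts $A_{0,i}' = A_0[I/f_i] \subseteq A_0[1/f_i]$ of the scheme-theoretic blow-up $X' = \mathrm{Bl}_I(\spec A_0)$. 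Each $A_{0,i}'$ is a subring of a localization of the $\mathcal{O}_K$-flat ring $A_0$, hence $\mathcal{O}_K$-torsion-free and thus $\mathcal{O}_K$-flat; and since $A_0$ is coherent (\Cref{main:finitenessthm}), $I$ is finitely presented and each $A_{0,i}'$ is finitely presented over $A_0$. Its $\pi$-adic completion $\prod_i \widehat{A_{0,i}'}$ recovers the chart rings of the formal blow-up.

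Assertion (1) should then be essentially formal. The map $\spec A_0' = \coprod_i \spec A_{0,i}' \to \spec A_0$ factors through $X'$; the first map is a finite open (hence fppf, hence $v$-)cover, and $X' \to \spec A_0$ is a blow-up along a nonzero finitely generated ideal, hence a finitely presented proper surjective morphism, in particular a $v$-cover. Composing and invoking \Cref{hcoverdesc} gives that $A_0 \to A_0'$ is a universal descent morphism. Assertion (2) is equally quick: since $\pi^N \in I$ one has $V(I) \subseteq V(\pi)$, so $X' \to \spec A_0$ restricts to an isomorphism over $\spec A_0[1/\pi]$, whence $\{\spec A_{0,i}'[1/\pi]\}$ is a finite open cover of $\spec A_0[1/\pi]$ and $A_0[1/\pi] \to A_0'[1/\pi]$ is faithfully flat.

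For assertion (3) I would work one chart at a time: with $B_{0,i}' := \pi_0(B_0 \otimes_{A_0} A_{0,i}')$ and $M_i := B_{0,i}'/(\pi^\infty\text{-torsion})$, it suffices to show each $A_{0,i}' \to M_i$ is faithfully flat, since $B_0' = \prod_i B_{0,i}'$. Here \Cref{main:finitenessthm} (applied over the $\pi$-completed polynomial rings to which $A_{0,i}'$ and $\widehat{A_{0,i}'}$ are finitely presented) ensures the $\pi$-power torsion submodules in sight are finitely generated, that $M_i$ is finitely presented and $\mathcal{O}_K$-flat, and that $A_{0,i}' \to \widehat{A_{0,i}'}$ is flat with $M_i \otimes_{A_{0,i}'} \widehat{A_{0,i}'} \simeq \widehat{M_i}$; a short torsion computation then identifies $\widehat{M_i}$ with the chart ring of the formal strict transform, which \Cref{flattening} tells us is faithfully flat over $\widehat{A_{0,i}'}$. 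I would then verify flatness of $M_i$ over $A_{0,i}'$ prime by prime: at a prime avoiding $\pi$ it follows from flatness of $A_0[1/\pi] \to B_0[1/\pi]$ after base change (note $M_i[1/\pi] = B_{0,i}'[1/\pi]$), and at a prime $\mathfrak p \ni \pi$ by faithfully flat descent of flatness along the flat local homomorphism $(A_{0,i}')_{\mathfrak p} \to (\widehat{A_{0,i}'})_{\widehat{\mathfrak p}}$ using flatness of $\widehat{M_i}$. The same two cases, now using faithful flatness of $A_0[1/\pi] \to B_0[1/\pi]$ and of $\widehat{A_{0,i}'} \to \widehat{M_i}$ (the residue fields at $\mathfrak p$ and $\widehat{\mathfrak p}$ coincide since both contain $\pi$), show $\spec M_i \to \spec A_{0,i}'$ is surjective, so $M_i$ is faithfully flat over $A_{0,i}'$; taking the product over $i$ gives (3).

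I expect the one genuinely delicate point to be this last translation between the formal output of \Cref{flattening} and the non-complete statement (3): one has to match the formal strict-transform chart with $\widehat{M_i}$ --- which is exactly where the finiteness of $\pi$-power torsion from \Cref{main:finitenessthm} is indispensable --- and then descend both flatness and faithful flatness from the $\pi$-adic completion back down to $A_{0,i}'$, with the locus $\spec A_{0,i}'[1/\pi]$ (invisible to the completion) handled separately via the hypothesis on the generic fibers. Everything else is bookkeeping with blow-up charts and standard properties of $v$-covers.
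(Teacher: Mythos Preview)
Your proposal is correct and follows essentially the same architecture as the paper: apply \Cref{flattening}, replace the formal blow-up by the scheme-theoretic blow-up $X' = \mathrm{Bl}_I(\spec A_0)$, take $A_0'$ to be the ring of an affine open cover of $X'$, and verify (1) via \Cref{hcoverdesc} (blow-up $+$ Zariski cover is a finitely presented $v$-cover) and (2) via the fact that $X'[1/\pi] \to \spec A_0[1/\pi]$ is an isomorphism.

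The only place you diverge from the paper is in the mechanics of (3). The paper argues more directly: once one identifies the scheme-theoretic strict transform chart with $M_i$ (so that its $\pi$-adic completion is the formal strict transform), \Cref{flattening} says $A_0'/\pi^n \to B_0'^{\flat}/\pi^n$ is faithfully flat for every $n$, while the hypothesis gives faithful flatness after inverting $\pi$; the paper then invokes (implicitly, using that these rings are coherent with bounded $\pi$-torsion via \Cref{main:finitenessthm}) the principle ``faithfully flat mod every $\pi^n$ and after inverting $\pi$ $\Rightarrow$ faithfully flat'' to conclude. Your route instead localizes at primes and descends flatness along the flat local map $(A_{0,i}')_{\mathfrak p} \to (\widehat{A_{0,i}'})_{\widehat{\mathfrak p}}$ for $\mathfrak p \ni \pi$, handling $\mathfrak p \not\ni \pi$ via the generic-fiber hypothesis. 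Both are valid; yours is more explicit about the descent step and sidesteps any appeal to a general ``flat mod $\pi^n$ $+$ flat on $[1/\pi]$ $\Rightarrow$ flat'' lemma, at the cost of a slightly longer argument. You are right that the genuinely delicate point in either version is matching $\widehat{M_i}$ with the formal strict-transform chart, and that \Cref{main:finitenessthm} (finiteness of $\pi$-power torsion, flatness of completion) is exactly what makes this go through.
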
 
\begin{proof} 
This follows from \Cref{flattening}: instead of taking the formal blow-up along the
relevant (finitely generated) ideal $I \subset A_0$, we take the actual scheme-theoretic blowup
$X' \to X = \spec(A_0)$; note that $X'[1/\pi] \to X[1/\pi]$ is an isomorphism.
Since $I$ is finitely generated, $X' \to X$ is a finite type map of schemes;
since 
everything is flat over $\mathcal{O}_K$, $X' \to X$ is finitely presented
(\Cref{main:finitenessthm}). 
If we 
let $Y'$ be the strict transform of $Y = \spec(B_0)$, then the previous result states that $Y' \to X'$
defines a map of $\mathcal{O}_K$-schemes which is flat modulo $\pi$. 
We can then take $A_0'$ to be an affine cover of $X'$. 

Note that $A_0[1/\pi] \to A_0'[1/\pi]$ is
a Zariski cover, verifying (2). 
Letting $B_0'^{ \flat} =B_0'/\left\{\pi^\infty-\mathrm{torsion}\right\}$, 
the assertions of \Cref{flattening} give
that $A_0' \to B_0'^{\flat}$ becomes faithfully flat modulo any power of $\pi$. Moreover,
it is faithfully flat after inverting $\pi$ by base-change, by our assumptions. 
This implies that $A_0' \to B_0'^{\flat}$ is faithfully flat. 
Finally, to see that $A_0 \to A_0'$ is a universal descent morphism, we argue 
via \Cref{hcoverdesc}: indeed, $X' \to X$ is a proper, finitely presented and
surjective morphism and hence a $v$-cover. 
It follows that $A_0 \to A_0'$ is a finitely presented $v$-cover and hence a
universal descent morphism. 
\end{proof}

\begin{lemma}[Discreteness up to quasi-isogeny criterion] 
\label{discretenesscrit}
Let $A_0 \in \tft_{\mathcal{O}_K}$. Let $A_0'$ be a finitely presented
$A_0$-algebra such that: 
\begin{enumerate}
\item $A_0 \to A_0'$ is a  
universal descent morphism. 
\item
The map 
$A_0[1/\pi] \to A_0'[1/\pi]$ is
flat.
\end{enumerate}

Let $M \in \md(A_0)_{\geq 0}$. Suppose the (derived) base-change $M \otimes^{}_{A_0}
A_0' \in \md(A_0')$ is quasi-isogenous to a flat, discrete $A'_0$-module. 
Then each (derived) tensor power $M^{\otimes n } \in \md(A_0), n \geq 0$ is quasi-isogenous to a discrete module. 
\end{lemma}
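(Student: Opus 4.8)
The plan is to exploit the universal descent hypothesis on $A_0 \to A_0'$ together with the fact that the generic fiber is flat, and to reduce to the case where $M$ is almost perfect. First I would observe that the statement is local with respect to truncation, so we may replace $M$ by $\tau_{\leq N} M$ for large $N$ when checking the conclusion, and that the key quantity ``$M^{\otimes n}$ is quasi-isogenous to a discrete module'' is equivalent, via \Cref{bhatthm} and \Cref{perfecttoordermI}, to saying that after $\pi$-completion the higher homotopy groups of $M^{\otimes n}$ are $\leq \pi^\infty$-isogenous to zero uniformly (i.e., killed by a fixed power of $\pi$ modulo $\pi^m$ for all $m$). The hypothesis that $M \otimes_{A_0} A_0'$ is quasi-isogenous to a flat discrete module gives, by \Cref{connectivityIbound} applied with $I=(\pi)$, the corresponding uniform-boundedness statement for $M \otimes_{A_0} A_0'$, and flatness ensures this persists for $(M \otimes_{A_0} A_0')^{\otimes n} \simeq M^{\otimes n} \otimes_{A_0} A_0'$.

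Next I would descend along $A_0 \to A_0'$. Since $A_0 \to A_0'$ is universal descent of some exponent $e$, I would write $A_0$ as a functorial retract of $\mathrm{Tot}^e$ of the \v{C}ech nerve $A_0'^\bullet$, exactly as in the proof of \Cref{descentstuff}. The point is that $M^{\otimes n}$ is then a functorial retract of $\mathrm{Tot}^e( M^{\otimes n} \otimes_{A_0} A_0'^\bullet)$, and each $A_0'^\bullet$-term is a (derived) tensor power over $A_0$ of copies of $A_0'$; by \Cref{main:finitenessthm} and \Cref{useofcoherence}, the coherence of $A_0'$ and the $K$-affinoid-type finiteness properties force the higher homotopy of each such iterated tensor product (and hence, after $\pi$-completion, of the whole $\mathrm{Tot}^e$-tower) to be $\leq \pi^\infty$-isogenous to zero, with a bound independent of $m$ when one reduces mod $\pi^m$ — here is where $A_0'$ being a finitely presented flat $\mathcal{O}_K$-algebra and hence (stably) coherent is essential. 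Combining this with \Cref{connectivityIbound} again (the ``(2) implies (1)'' direction), the $\pi$-completion of $M^{\otimes n}$ has higher homotopy $\leq \pi^\infty$-isogenous to zero, which is exactly what it means for $M^{\otimes n}$ to be quasi-isogenous (in $\mdc{A_0}$) to a discrete module. Since $M$ is connective and bounded, classical and derived $\pi$-completion agree up to bounded data, and one recovers the un-completed statement.

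The main obstacle I anticipate is controlling the interaction between the derived tensor powers $M^{\otimes n}$ and the totalization: the \v{C}ech nerve of $A_0 \to A_0'$ enters the picture through iterated tensor products over $A_0$, so a term of $M^{\otimes n}\otimes_{A_0} A_0'^{\bullet}$ is a large derived tensor product, and one must argue that its higher homotopy is $\leq \pi^\infty$-isogenous to zero with a bound that does not degrade as $n$ grows (or at least, in a way compatible with the retract). The cleanest way around this is to first reduce to the statement ``$\widehat{M \otimes_{A_0} A_0'}$ has higher homotopy $\leq \pi^r$-isogenous to zero for a fixed $r$,'' then note that tensoring a $\pi$-complete module with higher homotopy killed by $\pi^r$ against a $\pi$-completely flat $A_0'$-algebra preserves that property with the same $r$ (flatness over the generic fiber plus coherence modulo $\pi$), and finally feed the whole \v{C}ech tower into \Cref{connectivityIbound}; the partial totalization $\mathrm{Tot}^e$ being a finite limit then does not disturb the uniform bound, only shifting $r$ by a controlled amount depending on $e$. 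The rest is bookkeeping with the equivalence between torsion and complete modules (\Cref{completenadtorsionareequivalent}) and \Cref{bhatthm}.
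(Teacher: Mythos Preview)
Your overall strategy --- write $M^{\otimes n}$ as a retract of $\mathrm{Tot}^e(M^{\otimes n} \otimes_{A_0} A_0'^{\otimes \bullet})$ via universal descent, and control each term using coherence together with flatness of the generic fiber --- is exactly the paper's approach. The detour through $\pi$-completion and \Cref{connectivityIbound}/\Cref{perfecttoordermI} is unnecessary (and \Cref{perfecttoordermI} concerns a finiteness condition, not discreteness, so its invocation is misplaced); the paper simply works with quasi-isogeny throughout, and $M$ is not assumed bounded above.

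There is, however, a real gap in how you pass from $k=1$ to $k>1$ in the \v{C}ech tower. Your second paragraph asserts that bounded $\pi$-torsion higher homotopy for $A_0'^{\otimes k}$ implies the same for $M^{\otimes n} \otimes_{A_0} A_0'^{\otimes k}$ (``and hence\dots of the whole $\mathrm{Tot}^e$-tower''), but this does not follow without further input. Your proposed fix in the third paragraph claims that $A_0' \to A_0'^{\otimes_{A_0} k}$ is $\pi$-completely flat; this is false, since that map is the base-change of $A_0 \to A_0'$, which is only assumed to be universal descent, not flat in any sense (and $A_0'$ need not be $\mathcal{O}_K$-flat either). The paper's argument instead first shows that $M^{\otimes n} \otimes_{A_0} A_0'^{\otimes k}$ is quasi-isogenous to a \emph{flat} $A_0'^{\otimes k}$-module: one rewrites it as $(M \otimes_{A_0} A_0')^{\otimes_{A_0'} n} \otimes_{A_0'} A_0'^{\otimes k}$ and uses the hypothesis iteratively. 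Then the coherence argument (showing that the higher homotopy of $A_0'^{\otimes k}$ is $\leq \pi^\infty$-isogenous to zero, via almost finite presentation over $A_0$ and \Cref{useofcoherence}) combines with flatness --- since $\pi_i(F) \cong \pi_i(B) \otimes_{\pi_0 B} \pi_0(F)$ for $F$ flat over $B$ --- to conclude that each \v{C}ech term is quasi-isogenous to a discrete module. The $\mathrm{Tot}^e$ step then finishes as you say.
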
 
\begin{proof} 
Our assumptions imply that each 
$M^{\otimes n} \otimes^{\L}_{A_0} ( A_0' \otimes^{\L}_{A_0} \dots \otimes^{\L}_{A_0} A_0')$
is quasi-isogenous to a flat 
$A_0' \otimes^{\L}_{A_0} \dots \otimes^{\L}_{A_0} A_0'$-module. 
Now $A_0 \to A_0'$ 
is finitely presented as a map of rings, and hence almost finitely presented as
a map of $E_\infty$-rings (by \Cref{afpalgebra} and \Cref{main:finitenessthm}). 
Therefore, 
the iterated tensor products 
$A_0' \otimes^{\L}_{A_0} \dots \otimes^{\L}_{A_0} A_0'$ (as $E_\infty$-algebras
over $A_0$)
are almost finitely presented $E_\infty$-algebras over $A_0$ and become discrete
after inverting $\pi$, by (2). Therefore, their higher homotopy groups  are all
$\leq \pi^\infty$-isogenous to zero 
by coherence (cf.~\Cref{useofcoherence}). 
In particular, we find that 
each 
$M^{\otimes n} \otimes^{\L}_{A_0} ( A_0' \otimes^{\L}_{A_0} \dots \otimes^{\L}_{A_0} A_0')$
is quasi-isogenous to a discrete module. 
Using the canonical resolution of $M^{\otimes n}$ by its base-changes to the
iterative tensor powers $A'_0 \otimes_{A_0} \otimes \dots \otimes_{A_0} A_0'$
and 
taking some $\mathrm{Tot}^N$ (since $A_0 \to A_0'$ is universal descent), we can
conclude the
result. 
\end{proof}

\begin{proposition} 
\label{fflatgeneric}
Let $A_0 \to B_0$ be a map in $\tfft_{\mathcal{O}_K}$. 
Suppose the map $A_0[1/\pi] \to B_0[1/\pi]$ in $\affn_K$ is faithfully flat. Then: 
\begin{enumerate}
\item $A_0  \to B_0$ is a universal descent morphism. 
\item  For each $n$, the \emph{derived} tensor product $B_0
\otimes_{A_0} \dots
\otimes_{A_0} B_0 \in \md(A_0) $ has bounded $\pi$-power torsion in each
homological degree, and
is all $\pi$-torsion in positive degrees. 
\end{enumerate}
\end{proposition}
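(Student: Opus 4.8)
The plan is to deduce both assertions from the flattening result \Cref{flattening2}. Fix once and for all a finitely presented $A_0$-algebra $A_0'$ as produced there: it is flat over $\mathcal{O}_K$; the map $A_0 \to A_0'$ is a universal descent morphism (in fact a finitely presented $v$-cover); $A_0[1/\pi] \to A_0'[1/\pi]$ is faithfully flat; and, writing $B_0' = \pi_0(B_0 \otimes_{A_0} A_0')$ and $B_0'^{\flat} = B_0'/\{\pi^\infty\text{-torsion}\}$, the map $A_0' \to B_0'^{\flat}$ is faithfully flat. Observe that $B_0'^{\flat}$ is naturally a $B_0$-algebra, being a quotient of $B_0 \otimes_{A_0} A_0'$, and that the two resulting maps $A_0 \to B_0'^{\flat}$ (one through $A_0'$, one through $B_0$) agree with the canonical one.

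For part (1), I would show that $A_0 \to \widehat{B_0'^{\flat}}$ is a universal descent morphism; since it factors through $B_0$, the ``two out of three'' property \Cref{twooutofthree} then gives that $A_0 \to B_0$ is universal descent. To prove $A_0 \to \widehat{B_0'^{\flat}}$ universal descent, factor it as $A_0 \to \widehat{A_0'} \to \widehat{B_0'^{\flat}}$. The first arrow is universal descent by \Cref{completeIdescent}, applied to the universal descent map $A_0 \to A_0'$ (with $I = (\pi)$, using that $A_0$ is already $\pi$-complete). For the second arrow one uses that $\widehat{A_0'} \to \widehat{B_0'^{\flat}}$ is $\pi$-completely faithfully flat and $\pi$-completely finitely presented: modulo every power of $\pi$ it becomes a finitely presented faithfully flat map of ordinary rings, hence a $v$-cover, hence a universal descent morphism by \Cref{hcoverdesc}; one then reassembles $\pi$-adically (this is the $\pi$-complete counterpart of \Cref{hcoverdesc}; cf.~\Cref{completeIdescent} and \Cref{picompletenesslemma}). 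I expect this $\pi$-adic reassembly --- and the fact that one must detour through $A_0'$ at all --- to be the main obstacle: the given map $A_0 \to B_0$ is only \emph{topologically} of finite presentation, so \Cref{hcoverdesc} cannot be applied to it directly, and the flattening $A_0'$ is exactly the tool that converts the problem into a genuinely finitely presented one modulo each power of $\pi$.

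For part (2), the first claim --- that the homotopy in positive degrees is $\pi$-power torsion --- is immediate: inverting $\pi$ turns $B_0 \otimes_{A_0} \cdots \otimes_{A_0} B_0$ into $B_0[1/\pi] \otimes_{A_0[1/\pi]} \cdots \otimes_{A_0[1/\pi]} B_0[1/\pi]$, which is discrete because $A_0[1/\pi] \to B_0[1/\pi]$ is flat. For the boundedness of the $\pi$-power torsion in each homological degree, I would apply \Cref{discretenesscrit} with $M = B_0$. To verify its hypothesis, note that $A_0 \to A_0'$ is almost finitely presented as a map of $E_\infty$-rings (\Cref{afpalgebra}, \Cref{main:finitenessthm}), so $A_0' \in \aperf(A_0)$ and hence $B_0 \otimes_{A_0} A_0' \in \aperf(B_0)$; since $B_0$ is coherent, all homotopy groups of $B_0 \otimes_{A_0} A_0'$ are finitely presented $B_0$-modules, those in positive degrees vanish after inverting $\pi$ and so are killed by a power of $\pi$ by \Cref{main:finitenessthm} (cf.~\Cref{useofcoherence}), and the $\pi$-power torsion submodule of $B_0'$ is finitely generated, hence bounded. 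Thus the natural map $B_0 \otimes_{A_0} A_0' \to B_0'^{\flat}$ is a quasi-isogeny with target the flat discrete $A_0'$-module $B_0'^{\flat}$, which is precisely the input required by \Cref{discretenesscrit}. That result then yields that each derived tensor power $B_0 \otimes_{A_0} \cdots \otimes_{A_0} B_0$ is quasi-isogenous to a discrete $A_0$-module; unwinding the definition of quasi-isogeny gives that each positive homotopy group is annihilated by a power of $\pi$, and the analogous statement for $\pi_0$ follows with a small extra argument, using that the discrete approximation constructed in \Cref{discretenesscrit} arises as a partial totalization of (essentially flat, hence torsion-free) modules and therefore has bounded $\pi$-power torsion in degree zero.
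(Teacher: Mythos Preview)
Your proposal is correct and follows essentially the same route as the paper: both invoke \Cref{flattening2} to produce $A_0'$, verify the hypothesis of \Cref{discretenesscrit} via almost finite presentation and coherence (\Cref{afpalgebra}, \Cref{main:finitenessthm}, \Cref{useofcoherence}) to obtain part~(2), and prove part~(1) by factoring $A_0 \to \widehat{B_0'^{\flat}}$ through $\widehat{A_0'}$ and applying two-out-of-three. The only cosmetic difference is that the paper invokes \Cref{picompletenesslemma}(2) directly for $\widehat{A_0'} \to \widehat{B_0'^{\flat}}$, whereas you sketch a mod-$\pi^k$ argument and then ``reassemble''---but since you cite \Cref{picompletenesslemma} for that step, the content is the same (and indeed the reassembly is not automatic from universal descent mod each $\pi^k$, so that lemma is doing real work).
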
 
\begin{proof} 
By \Cref{flattening2}, there exists a finitely presented, universal descent,
discrete $A_0$-algebra $A_0'$ 
such that $A_0[1/\pi] \to A_0'[1/\pi]$ is flat and 
such that
$B_0'^{\flat} \stackrel{\mathrm{def}}{=} \pi_0(B_0 \otimes_{A_0} A_0')/\left\{\pi^\infty-\mathrm{torsion}\right\}$ is
faithfully flat over $A_0'$. 
Then the (derived) tensor product $B_0 \otimes_{A_0} A_0' $ has the structure of an almost finitely
presented $E_\infty$-$B_0$-algebra which becomes discrete after inverting $\pi$. 
By coherence (\Cref{main:finitenessthm} and \Cref{useofcoherence}), $B_0 \otimes^{\L}_{A_0} A_0'$ is
quasi-isogenous to the flat, discrete
$A_0'$-module $B_0'^{\flat}$: in particular, the higher homotopy groups and the torsion in
$\pi_0$ are bounded torsion. The claim (2) now follows from  \Cref{discretenesscrit}.

Finally, we need to verify that $A_0 \to B_0$ is universal descent. 
By assumption, $A_0 \to A'_0$ is a universal descent morphism. Since the source
is $\pi$-complete, it follows that $A_0 \to \widehat{A'_0}$ is a universal
descent morphism (\Cref{picompletenesslemma}). Therefore, it suffices to show
(by two-out-of-three as in \Cref{twooutofthree}) that 
$\widehat{A'_0} \to \widehat{B_0'^{\flat}}$ is a universal descent
morphism: indeed, then $A_0 \to \widehat{B_0'^{\flat}}$ is universal descent
and hence so is $A_0 \to B_0$. 
But 
$\widehat{A'_0} \to \widehat{B_0'^{\flat}}$
is faithfully flat and finitely presented modulo $\pi$, so the
claim again follows from \Cref{picompletenesslemma}. 
\end{proof}

\begin{lemma} 
\label{picompletenesslemma}
Let $R \to S$ be a map of  $\pi$-torsion-free
$\mathcal{O}_K$-algebras.
Suppose $R$ is $\pi$-adically complete. 
Then: 
\begin{enumerate}
\item  
$R \to S$ is a universal descent morphism if and only if $R \to \widehat{S}$ is
a universal descent morphism. 
\item
 Suppose $R/\pi \to S/\pi$ is faithfully flat and
finitely presented. Then $R \to S$ is a universal descent morphism.  
\end{enumerate}
\end{lemma}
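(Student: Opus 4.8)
The plan is to dispatch part (1) formally and then deduce part (2) from it together with the algebraization input \Cref{hcoverdesc}.

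\smallskip
\noindent\emph{Part (1).} The implication ``$R\to\widehat S$ universal descent $\Rightarrow$ $R\to S$ universal descent'' is immediate: the composite $R\to S\to\widehat S$ is universal descent, so by \Cref{twooutofthree} the first arrow $R\to S$ is universal descent. Conversely, if $R\to S$ is universal descent, then \Cref{completeIdescent}, applied with the finitely generated ideal $I=(\pi)\subset\pi_0(R)$, shows $\widehat R\to\widehat S$ is universal descent; since $R$ is $\pi$-torsion-free and classically $\pi$-adically complete it is already derived $\pi$-complete, so $\widehat R=R$ and we are done.

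\smallskip
\noindent\emph{Part (2).} By part (1) it suffices to prove that $R\to\widehat S$ is universal descent. The ring $\widehat S$ is again $\pi$-torsion-free, with $\widehat S/\pi=S/\pi$, so by hypothesis $R/\pi\to\widehat S/\pi$ is faithfully flat and finitely presented; such a map is a finitely presented $v$-cover, hence by \Cref{hcoverdesc} it is a universal descent morphism, say of exponent $e$. First I would form the levelwise $\pi$-adically completed \v{C}ech nerve $D^\bullet$ of $R\to\widehat S$: its terms $D^n=\widehat{\widehat S^{\,\otimes_R n}}$ are $\pi$-complete and each carries an $\widehat S$-module structure, so by the thick-subcategory characterization of universal descent in \Cref{univdescrings} it is enough to show that the canonical augmentation $a\colon R\to T:=\mathrm{Tot}^e(D^\bullet)$ admits an $R$-linear retraction (then $R$ lies in the thick subcategory of $\md(R)$ generated by $\widehat S$-modules, since $T$ is a finite limit of such). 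Because $R/\pi\to S/\pi$ is flat and $\mathrm{Tot}^e$ is a finite limit (hence commutes with $-\otimes_R R/\pi$), one has $T\otimes_R R/\pi\simeq\mathrm{Tot}^e$ of the \v{C}ech nerve of $R/\pi\to S/\pi$, so the reduction $a\otimes_R R/\pi$ of the augmentation is split by the exponent bound from \Cref{hcoverdesc}.

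\smallskip
The step I expect to be the main obstacle is the promotion of this mod-$\pi$ splitting to an honest $R$-linear retraction of $a$, which must use the derived $\pi$-completeness of $R$ (equivalently of $T$, and hence of $\hom_R(T,R)$ and of $\operatorname{cofib}(a)$) in an essential way. It is worth noting that the naive alternative fails: one does know that $R/\pi^k\to S/\pi^k$ is universal descent for every $k$ (by the same \v{C}ech-nerve reduction and \Cref{hcoverdesc}, using that $S/\pi^k$ is finitely presented and, as $S$ is $\pi$-torsion-free, flat over $R/\pi^k$), but the exponents of these maps need not be uniformly bounded in $k$, and thick subcategories are not closed under countable limits, so one cannot simply pass to the limit over $k$ — this is precisely why it is essential to work with the \emph{fixed} partial totalization $\mathrm{Tot}^e(D^\bullet)$ and lift the bounded splitting all at once. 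Concretely, one natural route is to choose any lift $\widetilde\rho$ of the mod-$\pi$ retraction to $\pi_0\hom_R(T,R)$, to observe that $\widetilde\rho\circ a\in 1+\pi\,\pi_0(R)\subset\pi_0(R)^{\times}$ because $\pi$ lies in the Jacobson radical of the $\pi$-complete ring $\pi_0(R)$, and to rescale $\widetilde\rho$ by its inverse to obtain a genuine retraction; the remaining content is then to produce the lift $\widetilde\rho$, whose obstruction lives in a derived $\pi$-complete $\pi_0(R)$-module that vanishes modulo $\pi$, so that a derived Nakayama-type argument applies. Granting this, $R$ is a retract of $T$, hence $R\to\widehat S$ is universal descent, and by part (1) so is $R\to S$.
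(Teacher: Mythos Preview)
Your part (1) is correct. The paper instead argues via the nilpotence criterion: $R\to S$ is universal descent iff $f^{\otimes n}\colon I^{\otimes n}\to R$ is null for $n\gg 0$, where $I=\mathrm{fib}(R\to S)$, and then observes that $I^{\otimes n}$ and $(\widehat I)^{\otimes n}$ have the same $\pi$-completion while $R$ is $\pi$-complete. Your deduction from \Cref{twooutofthree} and \Cref{completeIdescent} is more formal and equally valid.

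For part (2), your route is genuinely different from the paper's and can be made to work, but the step you flag as the main obstacle needs one concrete ingredient you do not name. Since $R/\pi\to S/\pi$ is flat and finitely presented, $S/\pi$ is in fact a \emph{projective} $R/\pi$-module, and so are all of its tensor powers $(S/\pi)^{\otimes_{R/\pi} n}$ (tensor products of finitely presented modules are finitely presented). Hence $T/\pi=\mathrm{Tot}^e\bigl((S/\pi)^{\otimes_{R/\pi}(\bullet+1)}\bigr)$ is represented by a bounded complex of projective $R/\pi$-modules in cohomological degrees $[0,e]$, and therefore $\pi_{-1}\hom_{R/\pi}(T/\pi,R/\pi)=0$. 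The obstruction to lifting $\rho$ lives in $\pi_{-1}\hom_R(T,R)$; this group is derived $\pi$-complete (as $R$ is $\pi$-complete) and its reduction injects into $\pi_{-1}\hom_{R/\pi}(T/\pi,R/\pi)=0$, so derived Nakayama gives $\pi_{-1}\hom_R(T,R)=0$ and the lift $\widetilde\rho$ exists. Your unit argument then finishes. Without the projectivity observation your vanishing claim is unjustified: for general $T$ there is no reason for $\pi_{-1}\hom_{R/\pi}(T/\pi,R/\pi)$ to vanish.

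The paper's proof of (2) works directly with $I=\mathrm{fib}(R\to S)$ and $f\colon I\to R$. The key input is \cite[Lemma~D.3.3.7]{SAG} (countably presented flat modules have projective dimension $\leq 1$), which shows $f^{\otimes n}$ vanishes modulo every $\pi^k$ once $n\geq 2$; then \Cref{compositederivedtcomplete} forces $f^{\otimes 4}=0$ over $R$ itself. Both arguments ultimately rest on a projective-dimension bound for flat modules with a presentation constraint: yours needs only the elementary fact that finitely presented flat modules are projective, while the paper uses the countably-presented version and extracts an explicit exponent bound of $4$ independent of the exponent of $R/\pi\to S/\pi$.
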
 
\begin{proof} 
Let $I = \mathrm{fib}(R \to S) $ in $\md(R)$, so we have a canonical map $f: I
\to R$. 
Then $R \to S$ is a  universal descent morphism if and only if 
$f^{\otimes n}: I^{\otimes n} \to R$ is nullhomotopic (in $\md(R)$) for $n \gg 0$. 
Similarly, $R \to \widehat{S}$ is universal descent if and only if
$( \widehat{I})^{\otimes n} \to R$ is nullhomotopic for $n \gg 0$. 
These conditions are equivalent because 
$I^{\otimes n}, 
( \widehat{I})^{\otimes n}$ have the same $\pi$-completion and $R$ is
$\pi$-complete. This proves (1).

Now suppose $R/\pi \to S/\pi$ is faithfully flat and finitely presented. 
By \cite[Lemma D.3.3.7]{SAG}, 
the map $f^{\otimes n}$ vanishes after base-change to any quotient
$\mathcal{O}_K/\pi^k$ for $n \geq 2$; indeed, $I^{\otimes
n}/\pi^k$ is the $(-n)$-shift of a flat discrete $R/\pi^k$-module which is at most countably
presented, whence 
$\pi_0 \mathrm{Hom}_{R/\pi^k}( I^{\otimes n}/\pi^k, R/\pi^k) = 0$  by \emph{loc.~cit.}. 
Therefore, the map $f^{\otimes 2}: \widehat{I^{\otimes 2}} \to R$ is divisible by any power of
$\pi$. 
Consider the composable maps 
$\pi_0 \hom_R( \widehat{I^{\otimes 4}}, \widehat{I^{\otimes 4}}) 
\xrightarrow{ f^{\otimes 2} \circ \cdot}
\pi_0 \hom_R( \widehat{I^{\otimes 4}}, \widehat{I^{\otimes 2}}) 
\xrightarrow{f^{\otimes 2} \circ \cdot} 
\pi_0 \hom_R( \widehat{I^{\otimes 4}}, R) $, each of which is divisible by any
power of $\pi$; 
it follows from \Cref{compositederivedtcomplete} that the composite is zero. 
Therefore, $f^{\otimes 4}$ is nullhomotopic as desired. 
\end{proof}

\begin{theorem} 
\label{aperfrigiddesc2}
The construction $A \mapsto \aperf(A)$ defines a hypercomplete sheaf of
$\infty$-categories for the
flat topology on $(\affn_K)^{op}$. 
\end{theorem}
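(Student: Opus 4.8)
The plan is to deduce the theorem from \Cref{rigiddescuniv} by passing through rings of definition via \Cref{fflatgeneric}, and then to promote \v{C}ech descent to hypercompleteness using the $t$-structure on $\aperf(A)$ that is available because $A$ is noetherian. For \v{C}ech descent: given a faithfully flat map $A \to A'$ of $K$-affinoid algebras, the standard theory of formal models furnishes a map $A_0 \to A_0'$ in $\tfft_{\mathcal{O}_K}$ with $A_0[1/\pi] = A$ and $A_0'[1/\pi] = A'$. By \Cref{fflatgeneric}, $A_0 \to A_0'$ is a universal descent morphism and, for every $n$, the \emph{derived} tensor power $R^{(n)} := A_0' \otimes_{A_0} \cdots \otimes_{A_0} A_0'$ ($n+1$ factors) has bounded $\pi$-power torsion in each homological degree and is all $\pi$-torsion in positive degrees. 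I would then apply \Cref{rigiddescuniv} with ground ring $A_0$ and ideal $I = (\pi)$, which produces a limit diagram assembled from the $\aperf$-categories of $\spec(\widehat{R^{(n)}}_{(\pi)}) \setminus V(\pi)$. Since $A_0$ and $A_0'$ are already $\pi$-adically complete and $\pi$-torsion-free, the augmentation term is $\aperf(A_0[1/\pi]) = \aperf(A)$ and the degree-zero term is $\aperf(A')$; and the bounded-torsion statement is exactly what one needs for the higher terms, because after derived $\pi$-completion and inverting $\pi$ the higher homotopy of $R^{(n)}$ vanishes, so $\widehat{R^{(n)}}_{(\pi)}[1/\pi]$ is the discrete ring $\widehat{\pi_0(R^{(n)})}_{(\pi)}[1/\pi]$, which, since $\pi_0(R^{(n)})$ is the \emph{underived} tensor power, is the classical completed tensor power $A' \hat{\otimes}_A \cdots \hat{\otimes}_A A'$. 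Thus \Cref{rigiddescuniv} delivers the equivalence
\[ \aperf(A) \simeq \varprojlim\left( \aperf(A') \rightrightarrows \aperf(A' \hat{\otimes}_A A') \triplearrows \cdots \right), \]
and the analogous statements for $\aperf_{\geq 0}$ and $\perf$ follow identically.

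For hypercompleteness I would use that, $A$ being noetherian, $\aperf(A)$ carries a right-bounded, left-complete $t$-structure with heart $\coh(A)$, along which flat base change is $t$-exact. Passing to discrete objects, \v{C}ech descent for $\aperf(-)$ yields \v{C}ech descent for $\coh(-)$ as a presheaf of ordinary categories, which is therefore automatically a hypercomplete sheaf, since sheaves valued in a fixed truncated subcategory of an $\infty$-category (here $\mathrm{Cat}_\infty$) carry no hypercompleteness obstruction. Now each $\aperf(A)_{[0,n]}$ is built from finitely many copies of $\coh(A)$ by iterated Postnikov pullback squares of $\infty$-categories, and finite limits preserve hypercomplete sheaves, so $\aperf(-)_{[0,n]}$ is a hypercomplete sheaf; then $\aperf(-)_{\geq 0} \simeq \varprojlim_n \aperf(-)_{[0,n]}$ by left-completeness, hence is one as well; and finally $\aperf(-) = \bigcup_k \aperf(-)_{\geq -k}$ is a hypercomplete sheaf, because for any flat hypercover $A^\bullet \to A$ the coface maps are faithfully flat, so base change reflects the $t$-structure degree and every compatible family in $\varprojlim_\Delta \aperf(A^\bullet)$ already lies in $\varprojlim_\Delta \aperf(A^\bullet)_{\geq -k}$ for some $k$ — which reduces both essential surjectivity and full faithfulness to the bounded-below case just treated.

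The step I expect to be the main obstacle is this hypercompleteness promotion, and within it the passage to hearts: the cosimplicial functors in the descent diagram of $\aperf$-categories are flat base changes on the coface maps but only \emph{right} $t$-exact (with $t$-exact right adjoint) on the codegeneracies, so the identification of $\coh(-)$ descent with a limit of ordinary categories, and the Postnikov reassembly, must be organised carefully, in the spirit of the faithfully flat descent argument recalled in \Cref{descleftcompl} and \cite[Sec.~D.6]{SAG}. Granting \Cref{fflatgeneric} and \Cref{rigiddescuniv}, everything else — notably the existence and compatibility of formal models, and the identification of derived with classical completed tensor products on the generic fibre (which is precisely the bounded-torsion content of \Cref{fflatgeneric}) — is routine.
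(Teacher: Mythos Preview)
Your proposal is essentially the paper's proof. The \v{C}ech step is identical: pick formal models, invoke \Cref{fflatgeneric} to get universal descent plus bounded torsion in the derived \v{C}ech nerve, feed this into \Cref{rigiddescuniv}, and identify the generic fibres of the derived completed tensor powers with the classical completed tensor powers exactly as you describe.

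For hypercompleteness the paper takes a slightly more direct route than you do, and it sidesteps the step you flag as the obstacle. Rather than descending first to $\coh(-)$ and then reassembling $\aperf_{[0,n]}$ from copies of the heart via ``iterated Postnikov pullback squares of $\infty$-categories'' (a step which is not obviously correct as you have stated it, and which you would need to make precise), the paper observes directly that \v{C}ech descent for $\aperf(-)$ restricts to \v{C}ech descent for $\aperf(-)_{[a,b]}$, because the coface maps in the \v{C}ech nerve are flat and hence $t$-exact. Since $\aperf(-)_{[a,b]}$ takes values in $(b-a+1)$-categories, this \v{C}ech sheaf is automatically hypercomplete. One then takes the inverse limit in $b$ and the union in $a$, exactly as in your final paragraph; your argument for why a compatible family along a flat hypercover lands in a fixed $\aperf_{\geq -k}$ is the right one, and the paper leaves it implicit. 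So your worry about organising the codegeneracies and the passage through hearts is unnecessary: you never need to touch $\coh(-)$ separately.
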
 
\begin{proof} 
Let $A \to B$ be a faithfully flat map in $\affn_K$. 
Choose open subrings $A_0 \subset A, B_0 \subset B$ in $\tfft_{\mathcal{O}_K}$ such that $A_0$ is carried to
$B_0$. Then \v{C}ech descent of $\aperf(\cdot)$ along $A \to B$ follows from \Cref{fflatgeneric} and
\Cref{rigiddescuniv}. From this, it follows by flatness that the
subcategories $\aperf(\cdot)_{[a, b]}$ form sheaves on 
on $\affn_K$ for each $a \leq b$; these sheaves are necessarily
hypercomplete since they take values in truncated $\infty$-categories.  
Taking the limit in $a,b$, we find that $\aperf(\cdot)$ is a hypercomplete
sheaf. 
\end{proof}

\section{Flat descent}

In this section, we prove our main faithfully flat descent theorem
(\Cref{fflataperf}). 
Let $R$ be a connective $E_\infty$-ring with a finitely generated ideal 
$I \subset \pi_0(R)$. 
We need the following definition, which generalizes \Cref{rngpsite}. 
For an $E_\infty$-$R$-algebra $S$, we denote by $\md(S)_{\itors}$ the full
subcategory of $\md(S)$ spanned by the $I$-torsion modules. 

\begin{definition}[{$I$-complete flatness, after \cite[Sec.~4]{BMS2} and
\cite[Notations]{Prisms}}] 
Let $S \to S'$ be a map of connective $E_\infty$-rings under $R$. 
We will say that $S \to S'$ is \emph{$I$-completely flat} (resp.~\emph{$I$-completely faithfully flat}) if the base-change functor
$\md(S)_{\itors} \to \md(S')_{\itors}$ is $t$-exact (resp.~$t$-exact and
conservative). 
For instance, if $S \to S'$ is map of discrete rings and $I \subset R$ is
generated by a nonzerodivisor $u$, then this holds if $u$ is a nonzerodivisor in
$S, S'$ and $S/u \to S'/u$ is flat (resp.~faithfully flat). 
This defines the \emph{$I$-completely flat topology} on 
the opposite of the $\infty$-category of connective $E_\infty$-$R$-algebras. 
\end{definition}

\begin{remark} 
\label{fflatalongIhtpy}
Let $S \to S'$ be $I$-completely faithfully flat. 
Let $M \in \md(S)_{\itors}$. 
Then for each $i$ and each $r \geq 0$, the following are equivalent: 
\begin{enumerate}
\item $\pi_i(M)$ is annihilated by $I^r$.  
\item $\pi_i(S' \otimes_S M)$
is annihilated by $I^r$.  
\end{enumerate}
This follows since base-change is $t$-exact and conservative. 
\end{remark}

\begin{proposition}[{Flat hyperdescent for $\mathcal{M}(R)$}] 
\label{MRisanalyticallyflat}
The construction $S \mapsto \mathcal{M}(S)$ defines a hypercomplete sheaf for the
$I$-completely flat topology. 
Similarly for $S \mapsto \mathcal{M}(S)_{\geq 0}$. 
\end{proposition}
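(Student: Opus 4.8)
The plan is to apply the descent criterion for left-complete stable $\infty$-categories (\Cref{descleftcompl}) to the augmented cosimplicial $\infty$-category attached to a cover in the $I$-completely flat topology, and then to extract hypercompleteness from the behaviour of the truncated pieces, exactly as in the proofs of \Cref{rigiddescuniv} and \Cref{aperfrigiddesc2}.

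Concretely, I would fix an $I$-completely faithfully flat map $S \to S'$ of connective $E_\infty$-$R$-algebras, form its \v{C}ech nerve $S^\bullet$ (an augmented cosimplicial $E_\infty$-ring; by \Cref{replaceIcompl} it is harmless to $I$-complete it levelwise), and consider the augmented cosimplicial object $\mathcal{M}(S^\bullet)$ of $\catex$ equipped with the \emph{$I$-torsion} $t$-structure, which is right-bounded and left-complete by construction of $\mathcal{M}$. Then I would check the three hypotheses of \Cref{descleftcompl}. The adjointability condition is inherited from $\mdc{S^\bullet}$ (which satisfies it because $\md(S^\bullet)$ does, by \Cref{modulesadj}, and completion and bounded-below truncation preserve the relevant adjoints) and then descends through the procedure $\mdc{\cdot}\rightsquigarrow\mathcal{M}_0(\cdot)\rightsquigarrow\mathcal{M}(\cdot)$ — this is the same observation used in the proof of \Cref{descentM1}. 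Hypothesis (2) holds because every cosimplicial structure map $\mathcal{M}(S^p)\to\mathcal{M}(S^{p+1})$ is a base-change functor along $S^p\to S^{p+1}$, which is again $I$-completely faithfully flat (being a base change of $S\to S'$), hence right $t$-exact; and its right adjoint, restriction of scalars, is $t$-exact for the $I$-torsion $t$-structure, as recorded in the functoriality of $\mathcal{M}$.

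The crux is hypothesis (3): the coaugmentation $\mathcal{M}(S)\to\mathcal{M}(S')$ must be $t$-exact and conservative. Here $t$-exactness is precisely the statement that base change $\md(S)_{\itors}\to\md(S')_{\itors}$ is $t$-exact, transported along the torsion–complete equivalence \Cref{completenadtorsionareequivalent} — this is exactly why one works with the $I$-torsion $t$-structure rather than the $I$-complete one. For conservativity, since $\mathcal{M}(S)$ is left-complete it is enough to show the induced functor on hearts $\mathcal{A}(S)\to\mathcal{A}(S')$ is conservative, i.e. (being exact) reflects the zero object. Unwinding the Serre-quotient description of $\mathcal{A}$, this says: if $\widetilde N$ is an $I$-power-torsion discrete $\pi_0(S)$-module not $\leq I^\infty$-isogenous to zero, then $S'\otimes_S\widetilde N$ is not either. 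This follows from the faithfully flat half of the hypothesis: base change on the torsion hearts is exact and conservative, so it carries the inclusion $I^r\widetilde N\hookrightarrow\widetilde N$ to an injection with image $I^r(S'\otimes_S\widetilde N)$, and $S'\otimes_S I^r\widetilde N\neq 0$ whenever $I^r\widetilde N\neq 0$; hence $I^r(S'\otimes_S\widetilde N)\neq 0$ for all $r$. With (1)--(3) verified, \Cref{descleftcompl} shows $\mathcal{M}(S^\bullet)$ (and $\mathcal{M}(S^\bullet)_{\geq 0}$, and each $\mathcal{M}(S^\bullet)_{[m,n]}$) is a limit diagram, so $\mathcal{M}(\cdot)$ satisfies \v{C}ech descent for the $I$-completely flat topology.

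For hypercompleteness I would argue exactly as in \Cref{aperfrigiddesc2}: each $\mathcal{M}(\cdot)_{[m,n]}$ is a \v{C}ech sheaf valued in $\infty$-categories whose mapping spaces are $(n-m)$-truncated, hence is automatically hypercomplete; moreover totalizations in such a category are computed by a finite partial totalization, so they commute with filtered colimits and $\mathcal{M}(\cdot)_{\leq n}=\varinjlim_m\mathcal{M}(\cdot)_{[m,n]}$ is again a hypercomplete sheaf; finally $\mathcal{M}(\cdot)=\varprojlim_n\mathcal{M}(\cdot)_{\leq n}$ is a limit of hypercomplete sheaves, hence hypercomplete. The same bookkeeping handles $\mathcal{M}(\cdot)_{\geq 0}$. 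I expect the real obstacle to be hypothesis (3) — identifying the correct ($I$-torsion) $t$-structure for $t$-exactness and tracing the $I$-completely faithfully flat hypothesis through the Serre quotient defining $\mathcal{A}$ to get conservativity; the adjointability and the hypercompleteness bookkeeping are routine given \Cref{descleftcompl}, \Cref{modulesadj}, and the arguments already used for \Cref{rigiddescuniv}.
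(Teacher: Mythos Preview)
Your proposal is correct and follows essentially the same approach as the paper: apply \Cref{descleftcompl} using the $I$-torsion $t$-structure on $\mathcal{M}(\cdot)$, verify $t$-exactness and conservativity of the coaugmentation, and bootstrap hypercompleteness from the truncated pieces $\mathcal{M}(\cdot)_{[m,n]}$. Your writeup is more explicit than the paper's (which simply asserts $t$-exactness and conservativity without spelling out the heart argument or invoking \Cref{fflatalongIhtpy}), and your justification of hypothesis~(2) slightly overshoots --- right $t$-exactness and a $t$-exact right adjoint hold for \emph{any} map of $E_\infty$-$R$-algebras by the general functoriality of $\mathcal{M}$, so you do not need $I$-complete faithful flatness of the individual cosimplicial structure maps there --- but this is harmless.
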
 
\begin{proof} 
This follows from \Cref{descleftcompl}. Indeed, 
if $S \to S'$ is $I$-completely faithfully flat, then 
the functor 
$\mathcal{M}(S) \to \mathcal{M}(S')$ of 
stable $\infty$-categories equipped with 
right-bounded, left-complete $t$-structures (specifically, the $I$-torsion
$t$-structures) is conservative and $t$-exact.  
Therefore, we find that for each $m \leq n$, $S \mapsto \mathcal{M}(S)_{[m, n]}$
is a sheaf for the $I$-completely flat topology (necessarily hypercomplete
since these are truncated $\infty$-categories); taking the inverse limit over
$n$ and then the direct limit over $m$,  we find that $S \mapsto \mathcal{M}(S)$
is a hypercomplete sheaf. 
Similarly for $S \mapsto \mathcal{M}(S)_{\geq 0}$. 
\end{proof}

\begin{proposition} 
Suppose $S \to S'$ is an $I$-completely faithfully flat map of connective
$E_\infty$-$R$-algebras. Let $M \in \mdc{S}$. 
Then the following are equivalent: 
\begin{enumerate}
\item $\pi_i(M)$ is $\leq I^\infty$-isogenous to zero for $i < 0$ (i.e., $M$
defines a connective object of $\mathcal{M}_0(S)$).  
\item 
$\pi_i( \widehat{M \otimes_S S'})$ is $\leq I^\infty$-isogenous to zero for $i
< 0$ (i.e., $\widehat{M \otimes_S S'}$ defines a connective object of
$\mathcal{M}_0(S')$). 
\end{enumerate}
\label{descentof}
\end{proposition}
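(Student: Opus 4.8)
The plan is to reduce conditions (1) and (2) to uniform statements ``modulo powers of $I$'' by means of \Cref{connectivityIbound}, and then to compare the two reductions using the $t$-exactness and conservativity of base change on $I$-torsion modules recorded in \Cref{fflatalongIhtpy}.

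Fix generators $x_1, \dots, x_p$ of $I$ and let $\{S_n\}$, $\{S'_n\}$ be the associated towers of \Cref{towercompletion} over $S$ and $S'$, normalized so that $S'_n \simeq S' \otimes_S S_n$ for all $n$. First I would apply \Cref{connectivityIbound} to the $I$-complete $S$-module $M$: condition (1) is equivalent to the statement that for every $m < 0$ there exists $r$ (depending on $m$) such that $\pi_m(M \otimes_S S_n)$ is $\le I^r$-isogenous to zero for all $n > 0$. Since $\widehat{M \otimes_S S'}$ again lies in $\mdc{S'}$ (it is bounded below and $I$-complete), \Cref{connectivityIbound} over $S'$ shows that condition (2) is equivalent to the same statement with $M \otimes_S S_n$ replaced by $\widehat{M \otimes_S S'} \otimes_{S'} S'_n$.

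The next step is the identification $\widehat{M \otimes_S S'} \otimes_{S'} S'_n \simeq (M \otimes_S S_n) \otimes_S S'$. Since $S'_n$ is a perfect $S'$-module, the functor $-\otimes_{S'} S'_n$ commutes with the inverse limit computing $I$-completion; and for any $S'$-module $N$ the tensor product $N \otimes_{S'} S'_n$ is already $I$-complete, because $I$ acts with uniform nilpotence on the terms of the tower $\{S'_n\}$ (cf.~\cite[Sec.~7.3]{SAG}). Hence $\widehat{N} \otimes_{S'} S'_n \simeq N \otimes_{S'} S'_n$; taking $N = M \otimes_S S'$ and using $S'_n \simeq S' \otimes_S S_n$ gives the asserted identification, the right-hand side being the (derived) base change of $M \otimes_S S_n$ along $S \to S'$.

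Finally I would invoke \Cref{fflatalongIhtpy}. The module $M \otimes_S S_n$ is $I$-torsion, being the tensor of $M$ with the $I$-torsion perfect module $S_n$, and $S \to S'$ is $I$-completely faithfully flat; therefore, for each $m$ and each $r \ge 0$, the discrete module $\pi_m(M \otimes_S S_n)$ is annihilated by $I^r$ --- equivalently, is $\le I^r$-isogenous to zero --- if and only if $\pi_m\bigl((M \otimes_S S_n) \otimes_S S'\bigr)$ is. As the quantifiers over $m$, $r$ and $n$ in the two reformulations from the second paragraph match termwise, this gives (1) $\Leftrightarrow$ (2). The only step beyond routine bookkeeping is the tower identification above; the one point to watch is that the bound $r$ be taken uniformly in $n$, but that is precisely the form in which \Cref{connectivityIbound} and \Cref{fflatalongIhtpy} are phrased, so it causes no trouble.
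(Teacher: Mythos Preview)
Your proof is correct and follows essentially the same route as the paper: reduce both conditions via \Cref{connectivityIbound} to uniform bounds on $\pi_m(M\otimes_S S_n)$ and $\pi_m\bigl((M\otimes_S S_n)\otimes_S S'\bigr)$, then compare using \Cref{fflatalongIhtpy}. The only difference is that you spell out the identification $\widehat{M\otimes_S S'}\otimes_{S'}S'_n\simeq (M\otimes_S S_n)\otimes_S S'$ explicitly, whereas the paper passes over it in the word ``similarly''.
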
 
\begin{proof} 
Fix a tower $\left\{S_n\right\}$ as in \Cref{towercompletion}.
By \Cref{connectivityIbound}, the first condition is equivalent to the assertion
that for each $i < 0$, the modules $\pi_i ( M \otimes_S S_n)$ are $\leq
I^{r_1}$-isogenous to zero for some $r_1$ and all $n$.  
Similarly, the second condition is equivalent to the assertion that for each 
$i  < 0$, 
the modules $\pi_i ( (M \otimes_S S_n) \otimes_S S')$ are $\leq
I^{r_2}$-isogenous to zero for some $r_2$ and all $n$. 
But these two conditions are then equivalent as in \Cref{fflatalongIhtpy}. \end{proof}

\begin{proposition} 
\label{descfg}
Let $S \to S'$ be an $I$-completely faithfully flat map of connective
$E_\infty$-$R$-algebras. 
Let $M \in \md(S)_{\itors}$ be connective. 
Then for any $S$, the following are equivalent: 
\begin{enumerate}
\item $M$ is $\leq I^r$-perfect to order zero as an $S$-module. 
\item $S' \otimes_S M$  is $\leq
I^r$-perfect to order zero as an $S'$-module. 
\end{enumerate}
\end{proposition}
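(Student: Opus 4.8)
The plan is to reduce the statement to a question about discrete modules and then invoke \Cref{fflatalongIhtpy}. Since $M$ is connective, \Cref{Iperftozero} identifies condition (1) with the assertion that $\pi_0(M)$ is $\leq I^r$-finitely generated over $\pi_0(S)$; likewise $S'\otimes_S M$ is connective with $\pi_0(S'\otimes_S M)\cong\pi_0(S')\otimes_{\pi_0(S)}\pi_0(M)$, so condition (2) says this module is $\leq I^r$-finitely generated over $\pi_0(S')$. Write $T=\pi_0(S)$, $T'=\pi_0(S')$ and $P=\pi_0(M)$; note that $P$ is $I$-power torsion. I would first record the elementary reformulation, obtained from \Cref{newcritisog}: a discrete module $P$ is $\leq I^r$-finitely generated if and only if there is a finitely generated submodule $P_0\subseteq P$ with $I^rP\subseteq P_0$, i.e.\ with $I^r(P/P_0)=0$. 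Indeed, the image of a $\leq I^r$-split surjection from a finitely generated module supplies such a $P_0$, and conversely the inclusion $P_0\hookrightarrow P$ is $\leq I^r$-split surjective whenever $I^rP\subseteq P_0$.

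The implication (1)$\Rightarrow$(2) is then immediate: base change along $T\to T'$ carries finitely generated modules to finitely generated modules, hence $\leq I^r$-finitely generated modules to $\leq I^r$-finitely generated modules, by the sorite of \Cref{Iisogsorite}.

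For the reverse implication I would argue as follows. Write $P=\varinjlim_\alpha P_\alpha$ as the filtered colimit of its finitely generated submodules (each a quotient of which is again $I$-power torsion), so that $T'\otimes_T P=\varinjlim_\alpha(T'\otimes_T P_\alpha)$. By hypothesis and the reformulation above there is a finitely generated submodule $Q_0\subseteq T'\otimes_T P$ with $I^r(T'\otimes_T P)\subseteq Q_0$; since $Q_0$ is finitely generated, it lies in the image of $T'\otimes_T P_\alpha\to T'\otimes_T P$ for some $\alpha$. By right-exactness of $T'\otimes_T(-)$, that image is exactly the kernel of the surjection $T'\otimes_T P\twoheadrightarrow T'\otimes_T(P/P_\alpha)$, so $I^r\cdot\big(T'\otimes_T(P/P_\alpha)\big)=0$. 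Now regard $P/P_\alpha$, a discrete $I$-power torsion module, as an object of $\md(S)_{\itors}$; since $S$, $S'$ and $P/P_\alpha$ are connective we have $\pi_0\big(S'\otimes_S(P/P_\alpha)\big)\cong T'\otimes_T(P/P_\alpha)$, so \Cref{fflatalongIhtpy} forces $I^r(P/P_\alpha)=0$, i.e.\ $I^rP\subseteq P_\alpha$. As $P_\alpha$ is finitely generated, the reformulation gives that $P$ is $\leq I^r$-finitely generated over $T$, which is (1).

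The only genuine obstacle is that we are not given flatness of $T\to T'$, only $I$-complete faithful flatness, so we cannot directly identify $I^r(T'\otimes_T P)$ with $T'\otimes_T(I^rP)$ nor use exactness of base change. The passage to the quotient $P/P_\alpha$, where right-exactness of $T'\otimes_T(-)$ is all that is needed, together with the torsion-sensitive statement \Cref{fflatalongIhtpy}, is precisely the device that circumvents this.
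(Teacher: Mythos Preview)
Your proof is correct and follows the same essential strategy as the paper: lift generators of $T'\otimes_T P$ to $P$, pass to a quotient that is an $I$-power torsion discrete module, and invoke \Cref{fflatalongIhtpy} to descend the $I^r$-annihilation. The only cosmetic difference is in how the torsion hypothesis of \Cref{fflatalongIhtpy} is arranged: the paper lifts the generators to a map $f_0\colon S^m\to M$ and then, since the cofiber of $f_0$ is not itself $I$-torsion, factors $f_0$ through some $S_n^m\to M$ (using that $M$ is torsion) so that the whole cofiber lies in $\md(S)_{\itors}$; you instead pass immediately to $\pi_0$ and work with the discrete quotient $P/P_\alpha$, which is automatically $I$-power torsion as a quotient of $P$. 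Your route avoids the tower $\{S_n\}$ entirely and is marginally cleaner.
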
 
\begin{proof} 
Clearly (1) implies (2). 
Suppose (2). 
Fix a tower $\left\{S_n\right\}$ as in \Cref{towercompletion}.
Then 
there exists a map $f_{1} \cl S'^m \to S' \otimes_S M$ whose cofiber
has $\pi_0$ annihilated by 
$I^r$. Up to increasing $m$, we can 
assume $f_1$ is the base-change of a map $f_0 \cl S^m \to M$. 
This factors over a map $f_0  \cl S_n^m \to M$ for $n \gg 0$, since $M$ is torsion. 
It now follows by descent (\Cref{fflatalongIhtpy}) that
$\pi_0(\mathrm{cofib}(f_0))$ is annihilated by
$\leq I^r$. Therefore, $M$ is $\leq I^r$-perfect to order zero. 
\end{proof} 

\begin{proposition} 
Suppose $S \to S'$ is an $I$-completely faithfully flat map of
$I$-complete connective
$E_\infty$-$R$-algebras.
Let $M \in \mdc{S}$. 
Then the following are equivalent: 
\begin{enumerate}
\item $M$ is $\leq I^\infty$-perfect to order $n$.   
\item $\widehat{M \otimes_S S'}$  is $\leq I^\infty$-perfect to order $n$. 
\end{enumerate}
\label{descfgI}
\end{proposition}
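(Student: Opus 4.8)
The plan is to reduce to \Cref{perfecttoordermI} and then to a descent statement for bounded $I$-torsion modules which I would prove by induction on $n$ using \Cref{descfg} and \Cref{prop:shiftIperfect}. Fix generators $x_1,\dots,x_p$ of $I$ in $\pi_0(S)$ and the associated tower $\{S_m\}$ of \Cref{towercompletion}; let $\{S'_m\}$ be the corresponding tower over $S'$, so that $S'_m\simeq S_m\otimes_S S'$. Each $S_m$ is a perfect $S$-module on which a fixed power of $I$ acts nullhomotopically; hence $N_m:=M\otimes_S S_m$ is bounded below and $I$-power torsion, and likewise $N_m\otimes_S S'\simeq M\otimes_S S'_m$ is bounded below and $I$-power torsion over $S'$. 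Moreover $S'_m\otimes_{S'}(-)$ is insensitive to $I$-completion: since a fixed power of $I$ annihilates $S'_m$ up to homotopy, $S'_m\otimes_{S'}Q$ is itself $I$-complete, while $\widehat{S'_m\otimes_{S'}Q}\simeq S'_m\otimes_{S'}\widehat{Q}$ because $S'_m$ is perfect; so $\widehat{M\otimes_S S'}\otimes_{S'}S'_m\simeq(M\otimes_S S')\otimes_{S'}S'_m\simeq N_m\otimes_S S'$. Therefore, by \Cref{perfecttoordermI}, condition (1) is equivalent to the existence of an $r$ with $N_m$ $\leq I^r$-perfect to order $n$ over $S$ for all $m$, and condition (2) is equivalent to the same statement with $N_m\otimes_S S'$ in place of $N_m$ and $S'$ in place of $S$.

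So it would suffice to prove the following claim: for every bounded-below $I$-torsion $S$-module $N$, every $r$, and every $n$, the module $N$ is $\leq I^r$-perfect to order $n$ over $S$ if and only if $N\otimes_S S'$ is $\leq I^r$-perfect to order $n$ over $S'$; applied to the $N_m$ uniformly in $m$ this yields \Cref{descfgI}. The ``only if'' is just the sorite that $R$-linear base change preserves being perfect to order $n$, hence $\leq I^r$-perfect to order $n$. For the ``if'' I would induct on $n$. Since the statement depends only on $\tau_{\leq n}N$ and $I$-completely flat base change is $t$-exact on $I$-torsion modules, I may replace $N$ by $\tau_{\leq n}N$ and assume $N$ bounded. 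For $n=0$, being $\leq I^r$-perfect to order $0$ means that $\pi_0$ is $\leq I^r$-finitely generated (\Cref{Iperftozero}); applying this to $\tau_{\geq 0}N$, which is connective and $I$-torsion and satisfies $\pi_0(\tau_{\geq 0}N\otimes_S S')=\pi_0(N\otimes_S S')$ (again by $t$-exactness, since $\tau_{\leq -1}N$ is $I$-torsion), the case $n=0$ is exactly \Cref{descfg}. For the inductive step, suppose $N\otimes_S S'$ is $\leq I^r$-perfect to order $n$; then it is in particular $\leq I^r$-perfect to order $0$, so by the case $n=0$ the module $\pi_0N$ is $\leq I^r$-finitely generated, say with a finitely generated submodule $Q\subseteq\pi_0N$ whose quotient is killed by $I^r$. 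Since $Q$ is finitely generated and $I$-power torsion it is killed by a fixed power of $I$, and using this one produces (at a sufficiently large auxiliary level) a map $P\to N$ out of a finite direct sum $P$ of copies of some $S_m$ whose $\pi_0$ has image $Q$. Then $P$ is perfect, hence perfect to order $n$; the fiber $F:=\mathrm{fib}(P\to N)$ is again bounded below and $I$-torsion; and $F\otimes_S S'\simeq\mathrm{fib}(P\otimes_S S'\to N\otimes_S S')$ with $P\otimes_S S'$ perfect over $S'$. Applying \Cref{prop:shiftIperfect} with the ideal $I^r$, over $S$ and over $S'$, gives that $N$ (resp. $N\otimes_S S'$) is $\leq I^r$-perfect to order $n$ if and only if $F$ (resp. $F\otimes_S S'$) is $\leq I^r$-perfect to order $n-1$; the inductive hypothesis applied to the $I$-torsion module $F$ then closes the loop, with the same exponent $r$ since \Cref{prop:shiftIperfect} does not change it.

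The hard part is the inductive step of the claim, and specifically the requirement that the auxiliary module remain $I$-torsion, so that \Cref{descfg} and the $t$-exactness of $I$-completely flat base change stay available. This is why one covers $N$ in low degrees not by a free module but by a finite sum of copies of a Koszul object $S_m$, whose $\pi_0$ is $\pi_0(S)/(x_1^m,\dots,x_p^m)$: such a cover exists precisely because the finitely generated submodule $Q$ of $\pi_0N$ is killed by a power of $I$ and because a map out of $S$ hitting a class annihilated by $(x_1^m,\dots,x_p^m)$ factors through $S\to S_m$ — and this is where the $I$-torsion hypothesis on $N$ is genuinely used. The other point requiring some care is keeping the isogeny exponent $r$ uniform over all $m$ in the reduction through \Cref{perfecttoordermI}, which is why one must work with \Cref{prop:shiftIperfect} and \Cref{descfg} in their exponent-preserving forms (for the ideal $I^r$) rather than with the looser $\leq I^\infty$-statements. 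Apart from these points, the argument is a routine transcription of the proof of \Cref{perfecttoordermI}.
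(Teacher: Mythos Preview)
There is a genuine gap in your inductive step. Your map $P\to N$ has $\pi_0$-image equal to the finitely generated submodule $Q\subset\pi_0 N$, not all of $\pi_0 N$, so the fiber $F$ is only $(-1)$-connective, with $\pi_{-1}F=\pi_0 N/Q$ killed by $I^r$. But your base case $n=0$ is argued only for connective $N$ (you explicitly pass to $\tau_{\geq 0}N$ and invoke \Cref{descfg}); for a $(-1)$-connective module, being $\leq I^r$-perfect to order $0$ is \emph{not} equivalent to $\pi_0$ being $\leq I^r$-finitely generated, since $\tau_{\leq 0}$ now sees $\pi_{-1}$ as well. So the inductive hypothesis you have established at level $n-1$ only covers connective $I$-torsion modules, and $F$ is not one. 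Shifting $F$ to make it connective also shifts the order by $+1$, returning you to level $n$: the induction does not descend. Your final sentence (``with the same exponent $r$ since \Cref{prop:shiftIperfect} does not change it'') is thus not justified.

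This is exactly the difficulty the paper's proof sidesteps by working directly with the $I$-complete module $M$ rather than with the tower, and by exploiting that the target statement is only $\leq I^\infty$: one first modifies $M$ up to $\leq I^\infty$-isogeny so that $\pi_0 M$ becomes \emph{genuinely} finitely generated, then chooses $S^k\to M$ surjective on $\pi_0$, so that the fiber is connective and the induction goes through cleanly at level $n-1$. Your route via \Cref{perfecttoordermI} and a uniform fixed-$r$ claim on the tower is an attractive idea, and it can be salvaged if you allow the exponent to grow by a factor depending only on $n$ (e.g.\ by splitting off the $I^r$-killed $\pi_{-1}F$ and paying a bounded loss via \Cref{Iextensions}); such a bound is still uniform in $m$ and suffices for the application. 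But the claim as you state it, with the same $r$ on both sides, is not proved by your argument.

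A minor point: a class $q\in\pi_0 N$ annihilated by $(x_1^m,\dots,x_p^m)$ need not lift to a map $S_m\to N$ on the nose, since the higher homotopy of the Koszul object can obstruct the iterated factorization. What is true is that $\varinjlim_m\hom_S(S_m,N)\simeq N$ for $I$-torsion $N$, so after enlarging $m$ the lift exists; this is a harmless imprecision.
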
 
\begin{proof} 
Without loss of generality, we can assume that $M$ is actually connective as an
$R$-module. 
It suffices to show that (2) implies (1). 
We use induction on $n$. 
Suppose $n = 0$. 
Fix a tower $\left\{S_m\right\}$ as in \Cref{towercompletion}.
By \Cref{cor:fgaway}, the first condition is equivalent to the assertion
that there exists some $r_1$ such that $M \otimes_S S_m$ is $\leq
I^{r_1}$-perfect to order zero for all $m$. Similarly, the second condition 
is equivalent to the assertion
that there exists some $r_2$ such that $M \otimes_S S_m \otimes_S S'$ is
$\leq I^{r_2}$-perfect to order zero for all $m$. The two conditions are thus equivalent 
thanks to \Cref{descfg}. 

Suppose (2) 
with $n > 0$. From what we have already shown, $\pi_0(M)$ is $\leq
I^\infty$-finitely generated, and up to modifying $M$ up to $\leq
I^\infty$-isogeny we may assume that $\pi_0(M)$ is finitely generated. 
Replacing $M$ with the fiber of a map $S^r \to M$ inducing a surjection on
$\pi_0$ and using 
\Cref{prop:shiftIperfect} (as well as the inductive step), we can now conclude
(1) for $M$. 
\end{proof} 

\begin{proposition} 
Suppose $S \to S'$ is an $I$-completely faithfully flat map of
$I$-complete connective
$E_\infty$-$R$-algebras.
Let $M \in \mathcal{M}(S)$. Then the following are equivalent: 
\begin{enumerate}
\item $M$ is weakly perfect to order $n$.  
\item The base-change $S' \otimes_S M \in \mathcal{M}(S')$ is weakly perfect to order $n$. 
\end{enumerate}

\label{localofweakperfect}
\end{proposition}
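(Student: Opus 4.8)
The plan is to reduce the statement to \Cref{descfgI} by passing to a module-level representative. First I would observe that, by the very definition of ``weakly perfect to order $n$'' for objects of $\mathcal{M}(S)$, this property depends only on the $n$-truncation of $M$ with respect to the $I$-complete $t$-structure. Since the base-change functor $\mathcal{M}(S) \to \mathcal{M}(S')$ is right $t$-exact for the $I$-complete $t$-structure, it carries $\mathcal{M}(S)_{\geq n+1}$ into $\mathcal{M}(S')_{\geq n+1}$; applying $\tau_{\leq n}$ to the cofiber sequence $S' \otimes_S \tau_{\leq n}M \to S' \otimes_S M \to S' \otimes_S \tau_{\geq n+1}M$ then shows $\tau_{\leq n}(S' \otimes_S M) \simeq \tau_{\leq n}(S' \otimes_S \tau_{\leq n}M)$. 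Hence both conditions (1) and (2) are unchanged if I replace $M$ by its $n$-truncation, and I may assume $M$ is truncated.

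Next I would invoke the remark that a truncated object of $\mathcal{M}(S)$ lifts to an object $Y \in \mdc{S}$ (the idempotent-complete and ordinary Verdier quotients agree, via the Eilenberg swindle), and that, by the construction of the functoriality of $\mathcal{M}(\cdot)$, the base-change $S' \otimes_S M \in \mathcal{M}(S')$ is represented by $\widehat{Y \otimes_S S'} \in \mdc{S'}$. At this point condition (1) unwinds precisely to the assertion that $Y$ is $\leq I^\infty$-perfect to order $n$, and condition (2) unwinds to the assertion that $\widehat{Y \otimes_S S'}$ is $\leq I^\infty$-perfect to order $n$. These two statements are equivalent by \Cref{descfgI}, which applies since $Y$ is bounded-below and $I$-complete; this finishes the proof.

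The thing to be careful about — rather than a genuine obstacle — is the bookkeeping linking the abstract base-change functor on the $\infty$-categories $\mathcal{M}(\cdot)$ with the module-level operation ``base-change, then $I$-complete,'' and checking that truncation in the $I$-complete $t$-structure is compatible with all of it; once that is pinned down, the proposition is a formal consequence of \Cref{descfgI}. One could alternatively argue the easy direction (1) $\Rightarrow$ (2) directly (base-change of a $\leq I^\infty$-split surjection from a truncated perfect module, followed by $I$-completion), but there is no need, since \Cref{descfgI} already supplies an equivalence in both directions.
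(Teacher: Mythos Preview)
Your proposal is correct and follows essentially the same route as the paper: reduce to an $n$-truncated object, represent it by some $Y \in \mdc{S}$, identify the base-change in $\mathcal{M}(S')$ with $\widehat{Y \otimes_S S'}$, and then invoke \Cref{descfgI}. You supply a bit more justification for why both conditions depend only on $\tau_{\leq n} M$ and for the representability of truncated objects, but the argument is the same.
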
 
\begin{proof} 
Both of these conditions only depend on $\tau_{\leq n} M$, so we can assume $M
\in \mathcal{M}(S)_{\leq n} = \mathcal{M}_0(S)_{\leq n}$ and represent $M$ by an
object $M'$ of $\mdc{S}$. 
Then 
condition (1) is equivalent to the assertion that $M'$ is $\leq
I^\infty$-perfect to order $n$ while condition (2) is equivalent to the
assertion that $\widehat{M \otimes_S S'}$ is $\leq I^\infty$-perfect  to order
$n$. 
The result follows from \Cref{descfgI}. 
\end{proof} 

Now we can prove 
the main descent theorem of this article, which generalizes the results of
Drinfeld \cite[Th.~3.11]{Dri06} and \cite[Prop.~3.5.4]{Dri18}. 
Heuristically, 
\Cref{fflataperf} suggests that almost perfect
complexes (defined purely algebraically) behave well in analytic geometry quite generally. Closely
related is Kedlaya--Liu \cite[Sec.~2]{KL16} 
which constructs a category of pseudocoherent sheaves on adic spaces (corresponding to almost
perfect complexes which are discrete).
See also Hennion--Porta--Vezzosi \cite{HPV} for the case of algebras finite type over a
field with the \'etale topology, and generalizations to stacks. 
We refer to \cite{AnalyticGeometry} for a new approach to defining analogs of
``big'' categories of quasi-coherent sheaves in analytic geometry, from which
we expect it should be possible to recover our results.\footnote{Compare the
recent work of Andreychev, \cite{And21}.} 

\begin{theorem}[Faithfully flat descent for $\aperf$] 
\label{fflataperf}
The construction $S \mapsto \aperf( \spec(\hat{S}_I) \setminus V(I))$ defines a
hypercomplete sheaf for the $I$-completely 
flat topology. 
Similarly for the subcategories $\aperf_{\geq 0}$ of connective almost perfect
modules,  $\perf$ of perfect modules, and $\perf_{[a,b]}$ of perfect modules
with $\mathrm{Tor}$-amplitude in $[a,b]$
for any $a \leq b$. 
\end{theorem}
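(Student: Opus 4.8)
The plan is to deduce the theorem from the structural results already established, the key point being that $\aperf(\spec(\widehat{S}_I)\setminus V(I))$ has been realized as a full subcategory of $\mathcal{M}(S)$. First I would reduce to proving \v{C}ech descent along a single $I$-completely faithfully flat map $S\to S'$ of connective $E_\infty$-$R$-algebras. Since replacing a ring by its $I$-completion affects neither the relevant generic fibre nor the construction $\mathcal{M}(\cdot)$ (\Cref{replaceIcompl}), the subsection ``Almost perfect complexes'' applies: by \Cref{jgivesequiv} the functor $j^{*}$ identifies $\aperf(\spec(\widehat{S}_I)\setminus V(I))$, functorially in $S$, with the full subcategory of $\mathcal{M}(S)$ spanned by the weakly almost perfect objects, and $\aperf(\spec(\widehat{S}_I)\setminus V(I))_{\geq 0}$ with the connective ones.

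Next, let $S^{\bullet}$ be the \v{C}ech nerve of $S\to S'$, so $S^{0}=S'$. By \Cref{MRisanalyticallyflat}, $\mathcal{M}(\cdot)$ is a hypercomplete sheaf for the $I$-completely flat topology, so $\mathcal{M}(S)\simeq\varprojlim\mathcal{M}(S^{\bullet})$, and similarly for $\mathcal{M}(\cdot)_{\geq 0}$. By \Cref{localofweakperfect}, for each $n$ an object of $\mathcal{M}(S)$ is weakly perfect to order $n$ if and only if its base-change to $\mathcal{M}(S^{0})$ is; hence ``weakly almost perfect'' is a local condition, and the full subcategory of weakly almost perfect objects of $\mathcal{M}(S)$ is carried equivalently onto the homotopy limit of the corresponding subcategories of $\mathcal{M}(S^{\bullet})$. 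Through the identification of the first paragraph, this is precisely \v{C}ech descent for $S\mapsto\aperf(\spec(\widehat{S}_I)\setminus V(I))$. For the connective variant one argues in the same way, using additionally that connectivity of an object of $\mathcal{M}(S)$ (for the $I$-complete $t$-structure) can be detected after base-change to $\mathcal{M}(S^{0})$ — this is \Cref{descentof}, equivalently the conservativity of $\mathcal{M}(S)_{\geq 0}\to\mathcal{M}(S^{0})_{\geq 0}$ together with \Cref{MRisanalyticallyflat}.

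For $\perf$, recall that $\perf(\spec(\widehat{S}_I)\setminus V(I))$ is the full subcategory of dualizable objects of $\aperf(\spec(\widehat{S}_I)\setminus V(I))$; since every transition functor in the cosimplicial diagram is symmetric monoidal and the coaugmentation is conservative, dualizability is a local condition, so $\perf$ forms a sheaf as well. For $\perf_{[a,b]}$ one must check that having $\mathrm{Tor}$-amplitude in $[a,b]$ is local for the $I$-completely flat topology: after representing a perfect complex on the generic fibre by an $I$-complete module that is perfect modulo every power of $I$, one tests its $\mathrm{Tor}$-amplitude modulo $I^{n}$, where the $t$-exactness and conservativity of base-change on $I$-torsion module categories (\Cref{fflatalongIhtpy}) supply the descent; alternatively one invokes ordinary faithfully flat descent of $\mathrm{Tor}$-amplitude on the generic fibres. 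Finally, hypercompleteness is obtained exactly as in the proof of \Cref{aperfrigiddesc2}: the bounded pieces $\aperf(\cdot)_{[a,b]}$ and $\perf(\cdot)_{[a,b]}$ are sheaves (the defining truncation conditions being local by the arguments above), and they take values in truncated $\infty$-categories — the relevant mapping spectra over $\spec(\widehat{S}_I)\setminus V(I)$, a scheme of finite cohomological dimension, are bounded — hence are automatically hypercomplete; taking the limit over $b$ and the union over $a$ exhibits $\aperf(\cdot)$ and $\perf(\cdot)$ as hypercomplete sheaves.

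The main obstacle has, in effect, already been cleared in the preceding sections: the substantive content is that weak (almost) perfectness and connectivity of objects of $\mathcal{M}(\cdot)$ descend along $I$-completely faithfully flat maps (\Cref{descfgI}, \Cref{localofweakperfect}, \Cref{descentof}), together with the hyperdescent of $\mathcal{M}(\cdot)$ itself (\Cref{MRisanalyticallyflat}). Granting these, the proof is essentially bookkeeping; the one remaining point that still needs a short argument is the $\perf_{[a,b]}$ case, where one must verify that the $\mathrm{Tor}$-amplitude of a perfect complex on the generic fibre is already visible on an integral model, which reduces once more to the descent statements for $I$-torsion modules.
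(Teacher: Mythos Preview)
Your core argument for $\aperf$, $\aperf_{\geq 0}$, and $\perf$ is essentially the paper's: embed into $\mathcal{M}(\cdot)$ via \Cref{jgivesequiv}, invoke hyperdescent of $\mathcal{M}(\cdot)$ from \Cref{MRisanalyticallyflat}, and use \Cref{localofweakperfect} to see that weak almost perfectness is local. Two points deserve tightening.

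First, you do more work than necessary for hypercompleteness. Once you know $\mathcal{M}(\cdot)$ is a \emph{hypercomplete} sheaf and ``weakly almost perfect'' is a condition preserved by pullback and detected after any cover, the full sub-presheaf $\aperf(\spec(\widehat{(\cdot)}_I)\setminus V(I))\subset\mathcal{M}(\cdot)$ is automatically a hypercomplete sheaf; there is no need to first prove \v{C}ech descent and then patch hypercompleteness via bounded pieces. Your separate argument for hypercompleteness, modeled on \Cref{aperfrigiddesc2}, relies on $\aperf(\cdot)_{[a,b]}$ being a sheaf, which requires coconnectivity on the generic fibre to be both preserved and detected along an $I$-completely faithfully flat map---something you have not established (in \Cref{aperfrigiddesc2} that step used genuine flatness of the generic-fibre map, which is unavailable here).

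Second, your treatment of $\perf_{[a,b]}$ is incomplete. Neither of the options you sketch quite works: the map on generic fibres is not known to be faithfully flat in the classical sense, and the ``integral model'' route needs a precise statement you do not supply. The paper avoids this entirely by a dualization trick: one first observes $\perf_{\geq 0}=\aperf_{\geq 0}\cap\perf$ (both local), hence $\perf_{\geq a}$ is local for every $a$; then $\perf_{[a,b]}$ consists of those $M\in\perf$ with $M\in\perf_{\geq a}$ and $M^{\vee}\in\perf_{\geq -b}$, and since dualization commutes with the symmetric monoidal transition functors, this is a local condition.
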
 
\begin{proof} 
First, we have seen that $S \mapsto \mathcal{M}(S)$ is a hypercomplete sheaf of 
$\infty$-categories (\Cref{MRisanalyticallyflat}). 
Second, there is an embedding $\aperf( \spec(S) \setminus V(I)) \subset
\mathcal{M}(S)$ (whose inverse is given by the functor $j^*$), by
\Cref{jgivesequiv}. 
Third, 
the condition of belonging to 
$\aperf_{}$ is local 
(\Cref{localofweakperfect}). 
Combining these three assertions, we find  that 
$S \mapsto \aperf( \spec(\hat{S}_I) \setminus V(I))_{}$ is a
hypercomplete sheaf. 

We can carry out a similar 
argument for $\aperf(\cdot)_{\geq 0}$, since we also know that $S \mapsto
\mathcal{M}(S)_{\geq 0}$ is a hypercomplete sheaf as well and $\aperf( \spec( \widehat{S}_I)
\setminus V(I))_{\geq 0} \subset \mathcal{M}(S)_{\geq 0}$ (as in
\emph{loc.~cit.}). The result for $\aperf$ implies the result 
for $\perf$ by taking the subcategories of dualizable objects. The result for
$\perf_{\geq 0}$ now follows by taking the intersection of $\aperf_{\geq 0}$ and
$\perf$. Finally, $\perf_{[a, b]}$ is given by those objects of $\perf$ which
belong to $\perf_{\geq a}$ and whose dual belongs to $\perf_{\geq -b}$, so this
is also a local condition. 
\end{proof} 

\bibliographystyle{plain}
\bibliography{D}

\begin{thebibliography}{10}

\bibitem{SGA6}
{\em Th\'{e}orie des intersections et th\'{e}or\`eme de {R}iemann-{R}och}.
\newblock Lecture Notes in Mathematics, Vol. 225. Springer-Verlag, Berlin-New
  York, 1971.
\newblock S\'{e}minaire de G\'{e}om\'{e}trie Alg\'{e}brique du Bois-Marie
  1966--1967 (SGA 6), Dirig\'{e} par P. Berthelot, A. Grothendieck et L.
  Illusie. Avec la collaboration de D. Ferrand, J. P. Jouanolou, O. Jussila, S.
  Kleiman, M. Raynaud et J. P. Serre.

\bibitem{SGA1}
{\em Rev\^{e}tements \'{e}tales et groupe fondamental ({SGA} 1)}, volume~3 of
  {\em Documents Math\'{e}matiques (Paris) [Mathematical Documents (Paris)]}.
\newblock Soci\'{e}t\'{e} Math\'{e}matique de France, Paris, 2003.
\newblock S\'{e}minaire de g\'{e}om\'{e}trie alg\'{e}brique du Bois Marie
  1960--61. [Algebraic Geometry Seminar of Bois Marie 1960-61], Directed by A.
  Grothendieck, With two papers by M. Raynaud, Updated and annotated reprint of
  the 1971 original [Lecture Notes in Math., 224, Springer, Berlin; MR0354651
  (50 \#7129)].

\bibitem{EGR}
Ahmed Abbes.
\newblock {\em \'{E}l\'{e}ments de g\'{e}om\'{e}trie rigide. {V}olume {I}},
  volume 286 of {\em Progress in Mathematics}.
\newblock Birkh\"{a}user/Springer Basel AG, Basel, 2010.
\newblock Construction et \'{e}tude g\'{e}om\'{e}trique des espaces rigides.
  [Construction and geometric study of rigid spaces], With a preface by Michel
  Raynaud.

\bibitem{And21}
Grigory Andreychev.
\newblock Pseudocoherent and perfect complexes and vector bundles on analytic
  adic spaces.
\newblock {\em arXiv preprint arXiv:2105.12591}, 2021.

\bibitem{BB19}
Tobias Barthel and A.~K. Bousfield.
\newblock On the comparison of stable and unstable {$p$}-completion.
\newblock {\em Proc. Amer. Math. Soc.}, 147(2):897--908, 2019.

\bibitem{BBD}
A.~A. Be\u{\i}linson, J.~Bernstein, and P.~Deligne.
\newblock Faisceaux pervers.
\newblock In {\em Analysis and topology on singular spaces, {I} ({L}uminy,
  1981)}, volume 100 of {\em Ast\'{e}risque}, pages 5--171. Soc. Math. France,
  Paris, 1982.

\bibitem{Bha19}
Bhargav Bhatt.
\newblock Torsion completions are bounded.
\newblock {\em J. Pure Appl. Algebra}, 223(5):1940--1945, 2019.

\bibitem{BM}
Bhargav Bhatt and Akhil Mathew.
\newblock The arc-topology.
\newblock {\em Duke Math. J.}, 170(9):1899--1988, 2021.

\bibitem{BMS2}
Bhargav Bhatt, Matthew Morrow, and Peter Scholze.
\newblock Topological {H}ochschild homology and integral {$p$}-adic {H}odge
  theory.
\newblock {\em Publ. Math. Inst. Hautes \'{E}tudes Sci.}, 129:199--310, 2019.

\bibitem{BS15}
Bhargav Bhatt and Peter Scholze.
\newblock The pro-\'{e}tale topology for schemes.
\newblock {\em Ast\'{e}risque}, (369):99--201, 2015.

\bibitem{Prisms}
Bhargav Bhatt and Peter Scholze.
\newblock Prisms and prismatic cohomology.
\newblock {\em arXiv preprint arXiv:1905.08229}, 2019.

\bibitem{Bosch}
Siegfried Bosch.
\newblock {\em Lectures on formal and rigid geometry}, volume 2105 of {\em
  Lecture Notes in Mathematics}.
\newblock Springer, Cham, 2014.

\bibitem{BG}
Siegfried Bosch and Ulrich G\"{o}rtz.
\newblock Coherent modules and their descent on relative rigid spaces.
\newblock {\em J. Reine Angew. Math.}, 495:119--134, 1998.

\bibitem{BL}
Siegfried Bosch and Werner L\"{u}tkebohmert.
\newblock Formal and rigid geometry. {II}. {F}lattening techniques.
\newblock {\em Math. Ann.}, 296(3):403--429, 1993.

\bibitem{Conrad}
Brian Conrad.
\newblock Descent for coherent sheaves on rigid analytic spaces.
\newblock Available at
  \url{http://math.stanford.edu/~conrad/papers/cohdescent.pdf}.

\bibitem{Dri06}
Vladimir Drinfeld.
\newblock Infinite-dimensional vector bundles in algebraic geometry: an
  introduction.
\newblock In {\em The unity of mathematics}, volume 244 of {\em Progr. Math.},
  pages 263--304. Birkh\"{a}user Boston, Boston, MA, 2006.

\bibitem{Dri18}
Vladimir Drinfeld.
\newblock A stacky approach to crystals.
\newblock 2018.
\newblock arXiv:1810.11853.

\bibitem{Ducros}
Antoine Ducros.
\newblock Flatness in non-{A}rchimedean analytic geometry.
\newblock Available at
  \url{https://webusers.imj-prg.fr/~antoine.ducros/Platitude.pdf}.

\bibitem{DG02}
W.~G. Dwyer and J.~P.~C. Greenlees.
\newblock Complete modules and torsion modules.
\newblock {\em Amer. J. Math.}, 124(1):199--220, 2002.

\bibitem{Elkik}
Ren\'{e}e Elkik.
\newblock Solutions d'\'{e}quations \`a coefficients dans un anneau
  hens\'{e}lien.
\newblock {\em Ann. Sci. \'{E}cole Norm. Sup. (4)}, 6:553--603 (1974), 1973.

\bibitem{FGK}
Kazuhiro Fujiwara, Ofer Gabber, and Fumiharu Kato.
\newblock On {H}ausdorff completions of commutative rings in rigid geometry.
\newblock {\em J. Algebra}, 332:293--321, 2011.

\bibitem{Gabberaffine}
Ofer Gabber.
\newblock Affine analog of the proper base change theorem.
\newblock {\em Israel J. Math.}, 87(1-3):325--335, 1994.

\bibitem{GR}
Ofer Gabber and Lorenzo Ramero.
\newblock {\em Almost ring theory}, volume 1800 of {\em Lecture Notes in
  Mathematics}.
\newblock Springer-Verlag, Berlin, 2003.

\bibitem{Glaz}
Sarah Glaz.
\newblock {\em Commutative coherent rings}, volume 1371 of {\em Lecture Notes
  in Mathematics}.
\newblock Springer-Verlag, Berlin, 1989.

\bibitem{HPV}
Benjamin Hennion, Mauro Porta, and Gabriele Vezzosi.
\newblock Formal gluing along non-linear flags, 2016.
\newblock arXiv:1607.04503.

\bibitem{HPS}
Mark Hovey, John~H. Palmieri, and Neil~P. Strickland.
\newblock Axiomatic stable homotopy theory.
\newblock {\em Mem. Amer. Math. Soc.}, 128(610):x+114, 1997.

\bibitem{Gabbertravaux}
Luc Illusie, Yves Laszlo, and Fabrice Orgogozo, editors.
\newblock {\em Travaux de {G}abber sur l'uniformisation locale et la
  cohomologie \'{e}tale des sch\'{e}mas quasi-excellents}.
\newblock Soci\'{e}t\'{e} Math\'{e}matique de France, Paris, 2014.
\newblock S\'{e}minaire \`a l'\'{E}cole Polytechnique 2006--2008. [Seminar of
  the Polytechnic School 2006--2008], With the collaboration of
  Fr\'{e}d\'{e}ric D\'{e}glise, Alban Moreau, Vincent Pilloni, Michel Raynaud,
  Jo\"{e}l Riou, Beno\^{\i}t Stroh, Michael Temkin and Weizhe Zheng,
  Ast\'{e}risque No. 363-364 (2014) (2014).

\bibitem{Katosurvey}
Fumiharu Kato.
\newblock Topological rings in rigid geometry.
\newblock In {\em Motivic integration and its interactions with model theory
  and non-{A}rchimedean geometry. {V}olume {I}}, volume 383 of {\em London
  Math. Soc. Lecture Note Ser.}, pages 103--144. Cambridge Univ. Press,
  Cambridge, 2011.

\bibitem{KL16}
Kiran~S. Kedlaya and Ruochuan Liu.
\newblock Relative {$p$}-adic {H}odge theory, {II}: Imperfect period rings.
\newblock 2016.
\newblock arXiv:1602.06899.

\bibitem{HA}
Jacob Lurie.
\newblock {\em Higher algebra}.
\newblock Available at \url{https://www.math.ias.edu/~lurie/papers/HA.pdf}.

\bibitem{SAG}
Jacob Lurie.
\newblock {\em Spectral algebraic geometry}.
\newblock Available at
  \url{https://www.math.ias.edu/~lurie/papers/SAG-rootfile.pdf}.

\bibitem{MGal}
Akhil Mathew.
\newblock The {G}alois group of a stable homotopy theory.
\newblock {\em Adv. Math.}, 291:403--541, 2016.

\bibitem{Msurvey}
Akhil Mathew.
\newblock Examples of descent up to nilpotence.
\newblock In {\em Geometric and topological aspects of the representation
  theory of finite groups}, volume 242 of {\em Springer Proc. Math. Stat.},
  pages 269--311. Springer, Cham, 2018.

\bibitem{Ogus}
Arthur Ogus.
\newblock {$F$}-isocrystals and de {R}ham cohomology. {II}. {C}onvergent
  isocrystals.
\newblock {\em Duke Math. J.}, 51(4):765--850, 1984.

\bibitem{RG}
Michel Raynaud and Laurent Gruson.
\newblock Crit\`eres de platitude et de projectivit\'{e}. {T}echniques de
  ``platification'' d'un module.
\newblock {\em Invent. Math.}, 13:1--89, 1971.

\bibitem{RognesGal}
John Rognes.
\newblock Galois extensions of structured ring spectra. {S}tably dualizable
  groups.
\newblock {\em Mem. Amer. Math. Soc.}, 192(898):viii+137, 2008.

\bibitem{Rydh}
David Rydh.
\newblock Submersions and effective descent of \'{e}tale morphisms.
\newblock {\em Bull. Soc. Math. France}, 138(2):181--230, 2010.

\bibitem{AnalyticGeometry}
Peter Scholze.
\newblock Lectures on analytic geometry.
\newblock Available at
  \url{https://www.math.uni-bonn.de/people/scholze/Analytic.pdf}.

\bibitem{Diamonds}
Peter Scholze.
\newblock {\'E}tale cohomology of diamonds.
\newblock 2017.
\newblock arXiv:1709.07343.

\bibitem{stacks-project}
The {Stacks project authors}.
\newblock The {S}tacks {P}roject.
\newblock \url{https://stacks.math.columbia.edu}, 2019.

\bibitem{Thomason}
R.~W. Thomason.
\newblock The classification of triangulated subcategories.
\newblock {\em Compositio Math.}, 105(1):1--27, 1997.

\end{thebibliography}

\end{document}